\definecolor{crimsonglory}{rgb}{0.75, 0.0, 0.2}
\definecolor{darkpowderblue}{rgb}{0.0, 0.2, 0.6}
\theoremstyle{plain}
\newtheorem{theorem}{Theorem}[section] 
\newtheorem{definition}[theorem]{Definition} 
\newtheorem{prop}[theorem]{Proposition}
\newtheorem{cor}[theorem]{Corollary}
\newtheorem{lemma}[theorem]{Lemma}
\newtheorem{remark}[theorem]{Remark}
\newtheorem{remarks}[theorem]{Remarks}
\newtheorem{conj}[theorem]{Conjecture}
\newtheorem{assumption}[theorem]{Assumption}
\newtheorem{para}[theorem]{Paradigm}
\DeclareMathOperator{\End}{End}
\DeclareMathOperator{\GL}{GL}
\DeclareMathOperator{\pr}{pr}
\DeclareMathOperator{\spec}{Spec}
\DeclareMathOperator{\gal}{Gal}
\DeclareMathOperator{\aut}{Aut}
\DeclareMathOperator{\Span}{Span}
\DeclareMathOperator{\zar}{Zar}
\DeclareMathOperator{\crys}{crys}
\DeclareMathOperator{\SP}{SP}
\DeclareMathOperator{\spli}{split}
\DeclareMathOperator{\bad}{bad}
\DeclareMathOperator{\new}{new}
\DeclareMathOperator{\good}{good}
\DeclareMathOperator{\simord}{simord}
\DeclareMathOperator{\ssing}{ssing}
\DeclareMathOperator{\nssing}{nssing}
\newcommand{\im}[1]{Im(#1)}
\newcommand{\Sum}[2]{\displaystyle\sum_{#1}^{#2}}
\newcommand{\N}{\mathbb{N}}
\newcommand{\C}{\mathbb{C}}
\newcommand{\Q}{\mathbb{Q}}
\newcommand{\B}{\mathbb{B}}
\newcommand{\F}{\mathbb{F}}
\newcommand{\G}{\mathbb{G}}
\newcommand{\CX}{\mathcal{X}}
\newcommand{\ld}{,\ldots,}
\newcommand{\CP}{\mathcal{P}}
\newcommand{\CO}{\mathcal{O}}
\newcommand{\DD}{\Delta\!\!\!\!\Delta}
\DeclareSymbolFont{cyrletters}{OT2}{wncyr}{m}{n}
\DeclareMathSymbol{\Sha}{\mathalpha}{cyrletters}{"58}
\newcommand{\address}[1]{\gdef\@address{#1}}
\newcommand{\email}[1]{\gdef\@email{\url{#1}}}
\newcommand{\@endstuff}{\par\vspace{\baselineskip}\noindent\small
	\begin{tabular}{@{}l}\scshape\@address\\\textit{E-mail address:} \@email\end{tabular}}
\title{On the $v$-adic values of G-functions I:\\
\Large Splittings in $\mathcal{A}_2$}
\author{Georgios Papas}
\address{Faculty of Mathematics and Computer Science\\
	The Weizmann Institute of Science\\
	234 Herzl Street,Rehovot 76100, Israel, and\\\\
	
Institute for Advanced Study\\
1 Einstein Drive\\
	Princeton, N.J. 08540\\
	U.S.A.}
\email{georgios.papas@weizmann.ac.il, gpapas@ias.edu}
\begin{document}
	\maketitle
	
	\begin{abstract}This is the first in a series of papers aimed at studying families of G-functions associated to $1$-parameter families of abelian schemes. In particular, the construction of relations, in both the archimedean and non-archimedean settings, at values of specific interest to problems of unlikely intersections. 
		
		In this first text in this series, we record what we expect to be the theoretical foundations of this series in a uniform way. After this, we study values corresponding to ``splittings'' in $\mathcal{A}_2$ pertinent to the Zilber-Pink conjecture.
	\end{abstract}
	
	\section{Introduction}\label{section:intro}

G-functions as objects of interest were first introduced by C. L. Siegel, see \cite{siegel}, in the late $1920$s. Following seminal work in the $1980$s due to E. Bombieri, see \cite{bombg}, and Y. Andr\'e, see \cite{andre1989g}, among others, the theory of G-functions was connected more clearly to arithmetic geometry via the study of their values at points of ``special interest''.

This circle of ideas has seen renewed activity in recent years due to its connection to problems of so called ``Unlikely Intersections''. This connection was first noticed by C. Daw and M. Orr, starting with \cite{daworr}, who used G-functions and the properties of their values at points pertinent to the Zilber-Pink conjecture to give the first unconditional results of cases of this conjecture in $\mathcal{A}_2$.

The main study of recent work has been geared around the following paradigm:
\begin{para}\label{boundaryparadigm}
	Consider a morphism $f:\CX\rightarrow S'$, where $S'$ is smooth irreducible  curve, defined over a number field $K$ and a point $s_0\in S'(K)$ that is a singular value of the morphism $f$. Assume furthermore that over $S:=S'\backslash \{s_0\}$ the morphism $f|_{S}$ is smooth and defines a family of $g$-dimensional abelian varieties. 
\end{para}

To the above picture, thanks to the aforementioned work of Y. Andr\'e, one can associate a family of G-functions, i.e. power series in $\bar{\Q}[[x]]$, ``centered'' at the singular point $s_0$. Also due to Y. Andr\'e, the archimedean values of these G-functions on points archimedeanly close to $s_0$ was given a cohomological connection via the relative de Rham-Betti comparison isomorphism. Subsequent work of C. Daw and M. Orr, see \cite{daworr4,daworr5}, has shown how to also interpret the non-archimedean values of this family of G-functions at points of interest that are $p$-adically close to $s_0$.


In a series of papers, starting with this one, we study the following ``shifted'' version of the above paradigm:
\begin{para}\label{smoothparadigm}
	Consider a family of abelian varieties $f:\CX\rightarrow S$, where $S$ is a smooth irreducible curve, defined over a number field $K$ and a point $s_0\in S(K)$.
\end{para}In particular, the fiber over $s_0$ is an honest abelian variety and not some degeneration of a family of such objects.

The aforementioned results of Y. Andr\'e carry through in this new version as well. Namely, there is a family of G-functions naturally associated to the pair $(f,s_0)$ which is again ``centered'' at $s_0$. Furthermore, the connection between the archimedean values of these G-functions and the de Rham-Betti comparison isomorphism, highlighted above, still holds.

 In \cite{andremots} Y. Andr\'e first noted a connection between crystalline cohomology, based on work of Berthelot-Ogus \cite{bertogus}, and the non-archimedean values of these G-functions for places of good reduction of the central fiber. His primary focus of study there is the case where the above family $f:\CX\rightarrow S$ is a family of elliptic curves and $s_0$ is such that its fiber is a CM elliptic curve. He furthermore established relations among the $p$-adic values of these G-functions at points $s\in S(\bar{\Q})$ that are such that 
 \begin{enumerate}
 	\item the fiber $\CX_s$ is also a CM elliptic curve, and 
 	\item $s$ is non-archimedeanly close to the ``central'' point $s_0$ with respect to a place over which $\CX_{s_0}$ has supersingular reduction.
 \end{enumerate} 

\subsubsection{Applications: Galois orbits and height bounds}

Our main motivation in this study of G-functions and their values are applications to problems of ``Unlikely Intersections''. Here we give a brief sketch of these applications. 

In the setting of Shimura varieties like $Y(1)^n$ and $\mathcal{A}_g$ problems of unlikely intersections have natural intuitive geometric interpretations. For example, in the setting of either \Cref{boundaryparadigm} or \Cref{smoothparadigm} one may translate the Zilber-Pink conjecture to the ``expectation'' that \begin{center}
	``If the family $f:\CX\rightarrow S$ is ``generic'' there should be only finitely many points in $S$ whose fibers have a geometric structure that is ``unlikely rich''.''
\end{center}

A systematic way to answer such questions was first proposed by J. Pila and U. Zannier \cite{pilazannier} based on techniques form o-minimality. In the setting of Shimura varieties the only remaining open step in the Pila-Zannier method is establishing conjectures that are referred to as ``Large Galois orbits hypotheses''. In short, in the above paradigm one wants to if there is one point whose fiber has ``unlikely rich'' structure then there are many such points, namely its Galois conjugates under the action of $\gal(K(s)/K)$.

The only strategy that has systematically worked so far in establishing lower bounds on the size of these orbits, reframes the problem to establishing so-called ``height bounds'' for the points $s\in S$ in question. The above expectation may thus be reframed, albeit naively, to the following \begin{center}
``If the family $f:\CX\rightarrow S$ is ``generic'' then the Weil height $h(s)$ of points whose fibers have ``unlikely rich'' structure is bounded in terms of $[\Q(s):\Q]$.''
\end{center}

The theory of G-functions now comes into play in the form of Andr\'e-Bombieri's so called ``Hasse Principle for the values of G-functions''. This principle, originating in work of E. Bombieri \cite{bombg} which was expanded on by Y. Andr\'e in \cite{andre1989g}, may be roughly summarized as 
\begin{center}``Consider a family of G-functions $\mathcal{Y}=(y_1(x)\ld y_N(x))\in \bar{\Q}[[x]]^N$. If at some point $s\in\bar{\Q}$ the values of $\mathcal{Y}$ at $s$ satisfy a polynomial relation that\begin{enumerate}
		\item holds with respect to all places $v$ for which $|s|_v$ is smaller than the $v$-adic radius of convergence of the family $\mathcal{Y}$(i.e. the relation is ``global''), and 
		
		\item does not hold on the functional level among the $y_j(x)$(i.e. the relation is ``non-trivial''),
	\end{enumerate}then $h(s)$ is bounded by the degree of this polynomial.''
\end{center}  

In this light, the known cases of Zilber-Pink in the Shimura setting that follow Andr\'e's G-functions method, see e.g. \cite{daworr,daworr2,daworr3,daworr4,daworr5,papaszp,papaszpy1}, may be collectively put under the umbrella of the following phenomenon:\begin{center}
	``Assume we are in the setting of \Cref{boundaryparadigm}. If the degeneration of the family over $s_0$ is sufficiently ``aggressive'', then cohomological data allow us to construct relations among the values of the family of G-functions associated to the pair $(\CX\rightarrow S',s_0)$ at points $s\in S(\bar{\Q})$ whose fibers have ``unlikely rich'' structures.''
\end{center}

The degenerations that appear in \Cref{boundaryparadigm} may be recast as points of intersection of a compactification of our curve inside the Baily-Borel compactification of the Shimura variety in question. With Shimura varieties like $\mathcal{A}_g$ in mind, the setting discussed in \Cref{boundaryparadigm} will fail to give us the full picture in the setting of the Zilber-Pink conjecture, since there are projective curves embedded in $\mathcal{A}_g$. In other words, if we want our height bounds to give us a uniform answer to such problems we are naturally led to the setting of \Cref{smoothparadigm} and its associated family of G-functions.	
	\subsection{Main Results}
Our main setting, along the lines of \Cref{smoothparadigm}, is that of a smooth proper morphism $f:\CX\rightarrow S$ defined over $\bar{\Q}$, where $S$ is some smooth irreducible curve, and such that the fibers of $f$ are principally polarized abelian surfaces.

The main object of our study are points $s\in S(\bar{\Q})$ corresponding to what we routinely refer to as ``\textbf{splittings}'' for the corresponding fiber in the family $\CX$. By this we mean that the fiber $\CX_s$ is isogenous to a pair of elliptic curves $E_s\times_{\bar{\Q}} E'_s$. Such points appear in the setting of Zilber-Pink when the pair $E_s$, $E'_s$ is also such that \begin{enumerate}
	\item $E_s$ and $E'_s$ are isogenous curves, or 
	\item only one of these elliptic curves is a CM elliptic curve.
\end{enumerate}
The first of these correspond to intersections between the image of $S$ in $\mathcal{A}_2$ induced from the family $f:\CX\rightarrow S$ and special curves referred to as ``$E^2$-curves'' in the literature, while points of the second type correspond to intersections with special curves referred to as ``$E\times CM$-curves''. For more on this see \cite{daworr}. With expositional simplicity in mind, we will refer henceforth to such points as simply $E^2$-points and $E\times CM$-points. 

Aiming towards a strategy for the Zilber-Pink conjecture here, we assume that our curve $S$ has a point $s_0\in S(\bar{\Q})$ where such a splitting occurs. As mentioned earlier in this introduction, to the pair $(\CX\rightarrow S, s_0)$ we may associate in a natural way a family of G-functions that we denote, for now at least, by $\mathcal{Y}:=\{y_1\ld y_N\}$.

Our main technical result in this setting, modulo some fairly technical considerations, may be summarized as the following:
\begin{theorem}\label{naivelocalrelations}
	Let $f:\CX\rightarrow S$ be as above and $s_0\in S(\bar{\Q})$ be either an $E^2$-point or an $E\times CM$-point. Let $s\in S(\bar{\Q})$ be another point which is of either of the above types, i.e. $E^2$ or $E\times CM$. 
	
	Let $v\in\Sigma_{\Q(S,s_0,s)}$ for which $s$ and $s_0$ are ``$v$-adically close''. Suppose, furthermore, that $v$ is either an archimedean place or a non-archimedean place of good reduction of the fiber $\CX_{s_0}$. Then, there exists a polynomial $R_{s,v}\in \bar{\Q}[Z_1\ld Z_n]$ such that \begin{enumerate}
		\item $\iota_v(R_{s,v}(\mathcal{Y}(s)))=0$,
		\item $R_{s,v}(\mathcal{Y})\neq 0$ on the functional level, 
	\end{enumerate}where $\iota_v:\Q(S,s_0,s)\hookrightarrow \C_v$ is the embedding corresponding to the place $v$.	
	
	Moreover, $R_{s,v}$ does not depend on $v$ unless $v$ is either an archimedean place or a place of supersingular reduction of the fiber $\CX_{s_0}$.
\end{theorem}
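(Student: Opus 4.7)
The plan is to interpret the values $\mathcal{Y}(s)$ as entries of a comparison matrix between de Rham and Betti (resp. crystalline) cohomology at $s$, and to exploit the algebraic cycle classes produced by the splittings at $s$ (and at $s_0$) to extract non-trivial linear --- hence polynomial --- relations. Concretely, following Andr\'e, I would fix an algebraic de Rham frame $\omega_1(x),\ldots,\omega_n(x)$ of $R^1 f_{*}\Omega^{\bullet}_{\CX/S}$ in a formal neighborhood of $s_0$, together with a flat frame $\eta_1(x),\ldots,\eta_n(x)$ of the local system of Betti cohomology in the archimedean case (respectively, a frame coming from the crystalline cohomology of the special fiber at $v$ in the $p$-adic case), normalized so that the period matrix $Y(x) = (y_{ij}(x))$ satisfying $\omega_i(x) = \sum_j y_{ij}(x)\eta_j(x)$ has algebraic entries at $x=s_0$; this $Y$ is a rearrangement of the G-function family $\mathcal{Y}$.

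At the $E^2$ or $E\times CM$ point $s$, the isogeny $\CX_s \sim E_s\times_{\bar{\Q}} E'_s$ yields an idempotent $e_s$ on $H^1(\CX_s)$ projecting onto the $H^1(E_s)$ factor. This idempotent is represented by an algebraic cycle on $\CX_s\times\CX_s$; consequently, its de Rham realization has algebraic coordinates $(a_{ij})\in\bar{\Q}^{n\times n}$ in the basis induced by $\omega_i(s)$, while its Betti (resp. crystalline) realization has coordinates $(b_{ij})$ in $\Q^{n\times n}$ (resp. in the algebraic structure coming from the reduction of $\CX_s$ at $v$) in the basis induced by $\eta_i(s)$. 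The compatibility of these two realizations under the comparison isomorphism, rewritten via $Y(s)$, yields a system of polynomial identities $R_{s,v}(Y(s))=0$ with coefficients in $\bar{\Q}$ determined by the $(a_{ij})$ and $(b_{ij})$. The role of $s_0$ also being a splitting point enters here as a way of normalizing the de Rham frame so that $Y(s_0)$ is algebraic, which is what makes the $y_{ij}(x)$ actual G-functions.

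For the non-triviality statement, I would argue by contradiction: if $R_{s,v}(\mathcal{Y})\equiv 0$ identically as formal power series in $x-s_0$, then the corresponding combination of flat sections of $R^1 f_{*}\Omega^{\bullet}_{\CX/S}$ would coincide globally with a flat (hence monodromy-invariant) algebraic class, forcing a generic splitting of $R^1 f_{*}\bar{\Q}$. This contradicts the implicit ``genericity'' of the family --- which one should interpret as the assumption that the generic Mumford--Tate group of the family is as large as possible --- via a standard monodromy argument of the kind already exploited in \cite{daworr, daworr2, papaszp}.

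The final assertion --- that $R_{s,v}$ is independent of $v$ outside the archimedean and supersingular places --- is, in my view, the main technical obstacle, and is the part that leans most heavily on the Berthelot--Ogus framework and Andr\'e's work in \cite{andremots}. In the good non-supersingular (i.e.\ good ordinary) case, the crystalline cohomology of the central fiber admits a canonical unit-root/Newton splitting over the integral model, and this rigidity pins down the coordinates $(a_{ij}),(b_{ij})$ and the comparison data at $s_0$ through a single intrinsic structure, producing one relation $R_s$ valid simultaneously at all such $v$. By contrast, in the archimedean case the Betti realization genuinely depends on the complex embedding, and in the supersingular case no canonical Frobenius splitting is available, so in both regimes the above rigidity breaks and $R_{s,v}$ may genuinely vary with $v$. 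Carefully tracking the various bases and transports through the relevant crystalline period rings, along the lines of \cite{andremots, daworr4, daworr5}, is where I expect the bulk of the ``fairly technical considerations'' alluded to in the statement to reside.
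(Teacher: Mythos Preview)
Your overall strategy --- use the algebraic cycle at $s$ to produce an intertwining $A\cdot Y(s) = Y(s)\cdot B$ with $A$ algebraic on the de Rham side and $B$ on the Betti/crystalline side --- is in the right spirit, but there are two genuine gaps.

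\textbf{Coefficients at finite places.} At a non-archimedean $v$, the crystalline realization $(b_{ij})$ of your idempotent lives in $W(k(v))[\tfrac{1}{p}]$, not in $\bar{\Q}$, so the relation you write down is not an element of $\bar{\Q}[Z_1,\ldots,Z_n]$ at all. The paper's mechanism is more explicit and uses \emph{both} splittings actively, not just the one at $s$: one conjugates $Y_G(x(s))$ on the de Rham side by $[\theta_s]_{dR}$ and $[\theta_0^\vee]_{dR}$ simultaneously, producing a matrix $(F_{i,j})$ whose entries are $\bar{\Q}$-linear combinations of the $Y_{i,j}(x(s))$, and which equals a product of $2\times2$ elliptic period blocks $\Pi_v(E_s),\Pi_v(E_0)^{-1},\ldots$ sandwiching a single ``crystalline'' matrix $\Theta$. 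The $\bar{\Q}$-relations then arise by forcing specific entries of $(F_{i,j})$, or low-degree polynomials in them, to \emph{vanish}. At ordinary primes this is done by taking Frobenius-eigenvector bases of $H^1_v$ so that each block $\Theta_{i,j}$ becomes diagonal; combined with the diagonal form of $\Pi_v(E_0')$ in the CM case, this kills fixed off-diagonal entries of $(F_{i,j})$ --- entries which, being determined solely by $[\theta_s]_{dR}$ and $[\theta_0^\vee]_{dR}$, are $v$-independent. At supersingular primes one instead uses that $\det[\alpha]_v\in\Z$ for $\alpha\in\End(\tilde{E}_{0,v})$, yielding a degree-$4$ relation whose rational constant genuinely depends on $v$. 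Your unit-root remark points the right way, but the concrete step ``Frobenius-diagonal $\Theta$ $\Rightarrow$ a specific $\bar{\Q}$-defined entry of $(F_{i,j})$ vanishes'' is what is missing.

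\textbf{Non-triviality.} Your monodromy argument is incorrect as stated: there \emph{are} identically vanishing relations among the $y_{ij}(x)$, namely all of $I(\SP_4)$, coming from the principal polarization (this is precisely the determination of the trivial relations via the functional Andr\'e--Grothendieck theorem). So ``$R(\mathcal{Y})\equiv 0$ forces a generic splitting'' is false; what one must show is $R_{s,v}\notin I(\SP_4)$. The paper establishes this not by monodromy but by explicit Gr\"obner-basis computations (carried out in Mathematica), case by case, exploiting that $[\theta_s]_{dR}$ and $[\theta_0^\vee]_{dR}$ are block lower-triangular with invertible diagonal blocks. These verifications are delicate and constitute a substantial part of the argument.
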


The notion of ``$v$-adic proximity'' of a point of interest $s$ to $s_0$ is made explicit in \Cref{section:vadicproximity}. In short, associated to the pair $(\CX\rightarrow S,s_0)$ we choose a ``local parameter'' $x\in \bar{\Q}(S)$, i.e. a rational function that has a simple root at $s_0$. The G-functions of $\mathcal{Y}$ may then be viewed as ``power series in $x$''. With this in mind, $s$ and $s_0$ will be $v$-adically close to each other if $|x(s)|_v$ is smaller than the $v$-adic radius of convergence of the family $\mathcal{Y}$. 

\begin{remark}So far, it is only in the setting where $f:\CX\rightarrow S$ is a $1$-parameter family of elliptic curves that results comparable to \Cref{naivelocalrelations} are known. This is due to work of F. Beukers, see \cite{beukers}. Beukers uses vastly different methods to the one we use. As noted earlier our methods are more in line with Y. Andr\'e's work in \cite{andremots} where he studies the same problem as Beukers.
	
	In more detail, Beukers studies the case where $f:\CX\rightarrow S$ is a $1$-parameter family of elliptic curves and the ``center'' $s_0$ corresponds to a CM elliptic curve. He establishes relations, in the spirit of \Cref{naivelocalrelations}, at points $s$ where the fiber also has CM for both archimedean and non-archimedean places, with the exception of places with $v|2$ or $v|3$.
	
	A key new insight of our results, in the case of finite places, is that the relations in \Cref{naivelocalrelations} have little to no dependence on the finite place $v$ as long as it is not a place of supersingular reduction of the ``central fiber''. In contrast, the aforementioned relations of Beukers have a much stricter dependence on the place $v$.
	
In short, we exploit basic information from $p$-adic Hodge theory about the $\phi$-module structure of the crystalline cohomology groups in the case of non-supersingular reduction. This feature of our method seems to generalize in other settings as well, i.e. beyond the setting of splittings in $\mathcal{A}_2$, which we expand on, in more depth, in subsequent work.
\end{remark}

\subsubsection{Applications to Zilber-Pink}

The dependence, in \Cref{naivelocalrelations} of $R_{s,v}$ on $v$ is harmless for the case of archimedean places, from the point of view of the height bounds we are trying to establish. The dependence on $v$ in the case of supersingular reduction on the other hand poses limitations to our applications, while at the same time raising questions that seem natural.

Before stating our main results we will need a bit of notation. Given an abelian variety $A$ defined over a number field $K$ we write
\begin{center}
$\Sigma_{\ssing}(A):=\{v\in\Sigma_{K,f}: A\text{ has supersingular reduction modulo }v\}$.
\end{center}

In the setting discussed in \Cref{naivelocalrelations} we also consider the sets 
\begin{center}
	$\Sigma(s,0):=\{v\in\Sigma_{\Q(S,s_0,s)}:s \text{ is }v\text{-adically close to } 0\}$ and 
	
	$\Sigma_{\Q(s_0),\ssing}(s,0):=\{w\in\Sigma_{\ssing}(\CX_0):\exists v\in\Sigma(s,0), v|w\}$.
\end{center}The output of the G-functions method in our setting may be stated as:
 \begin{theorem}\label{goodreductionmainhtbound}
 	Let $S$ be a smooth irreducible curve defined over $\bar{\Q}$ and $f:\CX\rightarrow S$ be a family of abelian surfaces over $S$. Assume that the induced morphism $i_f:S\rightarrow \mathcal{A}_2$ is non-constant and its image is a Hodge generic curve. Assume, furthermore, that there exists a point $s_0\in S(\bar{\Q})$ such that $\CX_{s_0}\sim E_0\times_{\bar{\Q}}E_0'$, is either an $E\times CM$ or $E^2$ abelian surface with everywhere potentially good reduction.
 	
 	Then, there exist constants $c_1$, $c_2>0$ depending on the curve $S$ and the morphism $f$, such that for all points $s$ in the set \begin{center}
 		$\Sha_{ZP\text{-}\spli}(S):=\{s\in S(\bar{\Q}):\CX_s\text{ is an }E\times CM \text{ or an }E^2 \text{-surface} \}$
 	\end{center}we have $h(s)\leq c_1\cdot(\Sigma_{\Q(s_0),\ssing}(s,0)\cdot[K(s):\Q])^{c_2}$.
 \end{theorem}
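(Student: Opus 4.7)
The plan is to assemble, for each $s\in\Sha_{ZP\text{-}\spli}(S)$, a single functionally non-trivial polynomial relation that is satisfied globally by the G-function values $\mathcal{Y}(s)$, and then to invoke the Andr\'e-Bombieri Hasse principle for G-functions to convert this relation into a height bound. After replacing $K$ by a finite extension (which can only affect the final constants $c_1,c_2$), the everywhere potentially good reduction assumption on $\CX_{s_0}$ allows us to assume $\CX_{s_0}$ has good reduction at every finite place. Write $F:=\Q(S,s_0,s)$.

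For every $v\in\Sigma(s,0)$, \Cref{naivelocalrelations} produces a polynomial $R_{s,v}\in\bar{\Q}[Z_1,\ldots,Z_N]$ with $\iota_v(R_{s,v}(\mathcal{Y}(s)))=0$ and $R_{s,v}(\mathcal{Y})\neq 0$ in $\bar{\Q}[[x]]$. The independence statement in \Cref{naivelocalrelations} moreover furnishes a single polynomial $R_s^{\good}$ with $R_{s,v}=R_s^{\good}$ for every finite $v$ at which $\CX_{s_0}$ is not supersingular. I would then form the product
\[
R_s(Z) := R_s^{\good}(Z) \cdot \prod_{v\mid\infty} R_{s,v}(Z) \cdot \prod_{\substack{v\in\Sigma(s,0) \\ v \text{ supersingular}}} R_{s,v}(Z).
\]
This is a single polynomial in $\bar{\Q}[Z_1,\ldots,Z_N]$; since $\bar{\Q}[[x]]$ is an integral domain and each factor is nonzero there after substitution, $R_s(\mathcal{Y})\neq 0$ functionally. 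Moreover, for each $v\in\Sigma(s,0)$ at least one factor of $R_s$ vanishes at $\mathcal{Y}(s)$ under $\iota_v$, so $R_s(\mathcal{Y}(s))$ does too. Hence $R_s$ is precisely the sort of global non-trivial relation the Hasse principle requires.

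Bounding $\deg R_s$ is then a counting exercise. The number of archimedean places of $F$ is $O([K(s):\Q])$; the number of supersingular $v\in\Sigma(s,0)$ is at most $[F:\Q(s_0)]\cdot|\Sigma_{\Q(s_0),\ssing}(s,0)|=O([K(s):\Q]\cdot|\Sigma_{\Q(s_0),\ssing}(s,0)|)$; and the individual degrees $\deg R_{s,v}$ and $\deg R_s^{\good}$ are bounded by a constant depending only on $(f,S,s_0)$, as these polynomials encode isogeny-type data of uniformly bounded complexity. Therefore
\[
\deg R_s \leq c \cdot [K(s):\Q] \cdot |\Sigma_{\Q(s_0),\ssing}(s,0)|
\]
for a suitable $c=c(f,S,s_0)$. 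Applying the Andr\'e-Bombieri Hasse principle to $\mathcal{Y}$ with the relation $R_s$ then yields $h(s)\leq c_1'(\deg R_s)^{c_2'}$, and substituting the above degree bound gives the theorem.

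The main obstacle in executing this plan lies not in the assembly of $R_s$ above, which is bookkeeping, but in ensuring that the local degrees $\deg R_{s,v}$ furnished by \Cref{naivelocalrelations}, together with the constants implicit in the Hasse principle itself, are uniform in $s$ rather than growing with its arithmetic complexity. This uniformity rests on two ingredients: the fact that the Gauss-Manin connection of $f$ is a $G$-connection in Andr\'e's sense (which controls the analytic side of Bombieri's theorem and is where Hodge genericity enters to guarantee sufficient richness of $\mathcal{Y}$), and the uniform isogeny-based nature of the constructions producing $R_{s,v}$. A secondary technical point is to verify that the global local parameter $x\in\bar{\Q}(S)$ is compatible simultaneously with the $v$-adic radii of convergence used to define $\Sigma(s,0)$, which is addressed in \Cref{section:vadicproximity}.
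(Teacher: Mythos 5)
Your proposal follows the same strategy as the paper's proof via \Cref{htboundsfin}: assemble a global polynomial relation as a product of the local factors coming from the splitting-period analysis, bound its degree using the independence of the non-supersingular factors from $v$, and invoke the Andr\'e--Bombieri Hasse principle. Your non-triviality argument (each factor gives a nonzero power series in the integral domain $\bar{\Q}[[x]]$, hence so does the product) is equivalent to the paper's formulation through primality of the ideal $I_\Lambda$ in \Cref{idealprime}, and the degree accounting matches.

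There are, however, two omissions worth naming. First, you do not address the case $\Sigma(s,0)=\emptyset$: in that situation your product $R_s$ is empty and there is no relation to feed into the Hasse principle, so a separate argument is needed. The paper dispatches this with a direct bound $h(s)\leq c_1$ by citing the proof of Theorem $1.3$ of \cite{papasbigboi}; your sketch is silent on it, and the rest of the argument would literally produce a degree-zero polynomial there. Second, your treatment abstracts away the Galois-cover and equivalence-class bookkeeping of \Cref{reduction} and \Cref{section:settinggfuns}. Citing \Cref{naivelocalrelations} as a black box is somewhat circular, since that theorem is a summary of the Section~4 propositions and is not itself given a freestanding proof; the actual assembly in \Cref{htboundsfin} has to juggle the $\lambda\in\Lambda$ and $t\sim\lambda$ indices because a point close to $s_0$ at place $v$ may only be close to a different $\xi_t$, and the ``single'' polynomial you call $R_s^{\good}$ is in fact a bounded product over $t$. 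Neither point is fatal, but both need to be filled in to make the sketch rigorous.
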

 
Based on previous work of C. Daw and M. Orr, see \cite{daworr,daworr2}, we are led to the following:
\begin{cor}\label{goodreductionmainzpapp}
	Let $Z\subset \mathcal{A}_2$ be a smooth irreducible curve defined over $\bar{\Q}$ that is not contained in a proper special subvariety of $\mathcal{A}_2$. Assume that there exists $s_0\in Z(\bar{\Q})$ is either an $E\times CM-$point or $E^2$-point whose corresponding abelian surface has everywhere potentially good reduction.
	
	Then, for any $N\in\N$ the set \begin{center}
		$\Sha_{ZP\text{-}\spli,N}(Z):=\{s\in Z(\C):s =E\times CM \text{ or an }E^2\text{-point and }|\Sigma_{\Q(s_0),\ssing}(s,0)|\leq N\}$
	\end{center}is finite.
\end{cor}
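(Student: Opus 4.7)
The plan is to combine the height bound of \Cref{goodreductionmainhtbound} with the Pila-Zannier strategy for Zilber-Pink, following the framework set up by Daw and Orr in \cite{daworr, daworr2}, where the only missing input was precisely a bound of the form established in \Cref{goodreductionmainhtbound}.

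First, I would observe that the points of $\Sha_{ZP\text{-}\spli,N}(Z)$ lie in the intersection of $Z$ with the countable collection of $E^2$-curves and $E\times CM$-curves in $\mathcal{A}_2$. Since $Z$ is not contained in any proper special subvariety, each such special curve meets $Z$ in only finitely many points, so establishing finiteness of $\Sha_{ZP\text{-}\spli,N}(Z)$ amounts to bounding the number of special curves of these two families that can contain a point of $\Sha_{ZP\text{-}\spli,N}(Z)$.

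Second, I would apply \Cref{goodreductionmainhtbound}: after choosing an affine model of $Z$ equipped with a family of principally polarized abelian surfaces $f:\CX\rightarrow Z$ realizing $Z$ as an image in $\mathcal{A}_2$, and using the central fiber $s_0$ as specified, any $s\in \Sha_{ZP\text{-}\spli,N}(Z)$ satisfies
\[
h(s)\leq c_1\cdot(N\cdot[K(s):\Q])^{c_2},
\]
where the constraint $|\Sigma_{\Q(s_0),\ssing}(s,0)|\leq N$ has been used to absorb that term.

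Third, I would combine this polynomial height bound with the ``large Galois orbits'' lower bounds established in \cite{daworr, daworr2}. These results provide, for $s\in Z(\bar{\Q})$ which is an $E^2$- or $E\times CM$-point, a polynomial lower bound on $[\Q(s):\Q]$ in terms of the ``complexity'' of the special curve through $s$ (the discriminant of the CM order for $E\times CM$-curves, and the isogeny degree for $E^2$-curves). Together with the above height bound, this yields a polynomial upper bound on this complexity in terms of $[\Q(s):\Q]$ alone. Feeding these bounds into the Pila-Zannier o-minimal counting mechanism, as carried out in \cite{daworr, daworr2} via lifts to a Siegel fundamental domain, forces only finitely many special curves to participate, and hence yields the desired finiteness.

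The main obstacle is the bookkeeping needed to make sure the $N$-dependence propagates cleanly through the Daw-Orr framework without damaging the polynomial nature of the combined estimates, and that the compatibility of implicit constants between \Cref{goodreductionmainhtbound} and the Galois orbits lower bounds of \cite{daworr, daworr2} allows the counting theorem to be applied as a black box to the definable set parametrizing $E^2$- and $E\times CM$-points on $Z$.
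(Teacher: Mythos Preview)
Your overall architecture is right---height bound feeds into the Daw--Orr framework, o-minimality finishes---but the logical flow in your third step is inverted, and this hides the one genuinely nontrivial ingredient you have omitted.

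You write that the large Galois orbits lower bounds are ``established in \cite{daworr, daworr2}''. They are not, at least not for an arbitrary Hodge-generic curve $Z$: LGO is precisely the open hypothesis in the Pila--Zannier strategy, and \cite{daworr,daworr2} prove it only for curves satisfying their boundary/degeneration assumptions. What those papers do provide unconditionally is (i) the reduction ``LGO $\Rightarrow$ finiteness'' via o-minimality (Theorem~1.2 of \cite{daworr}, Theorem~1.3 of \cite{daworr2}), and (ii) a template (Proposition~9.2 of \cite{daworr}, Theorem~6.5 of \cite{daworr2}) showing that a height bound of the shape $h(s)\le c_1[K(s):\Q]^{c_2}$, combined with the Masser--W\"ustholz isogeny estimates \cite{masserwu,mwendoesti}, yields LGO. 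The paper's route is therefore: \Cref{goodreductionmainhtbound} $+$ Masser--W\"ustholz $\Rightarrow$ LGO (this is \Cref{lgogood}), and then the conditional finiteness theorems of Daw--Orr apply directly.

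Your version instead assumes LGO as given and then ``combines'' it with the height bound; but note that if LGO were already available, the inequality $[\Q(s):\Q]\ge c\,\Delta(V_s)^d$ by itself gives $\Delta(V_s)\le c'[\Q(s):\Q]^{1/d}$, and your height bound plays no role in that step. This redundancy is the symptom of the reversed implication. The fix is simple: replace your third step by ``use the height bound together with Masser--W\"ustholz to deduce LGO, as in Proposition~9.2 of \cite{daworr} and Theorem~6.5 of \cite{daworr2}'', and then invoke the conditional finiteness theorems. The $N$-dependence you worry about is harmless: with $N$ fixed it is absorbed into the constant $c_1$ in LGO, exactly as the paper records in \Cref{lgogood}.
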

\begin{remarks}1. The ``everywhere potentially good reduction'' assumption about the pair of elliptic curves isogenous to the abelian surface corresponding to the point $s_0$ can be thought of as an ``integrality condition''. This can be seen by the well known fact, see for example Proposition $5.5$ in Chapter $VII$ of \cite{silvermanell}, that an elliptic curve defined over some number field $K$ has everywhere potentially good reduction if and only of its $j$-invariant is an algebraic integer in $K$.\\
	
	2. The above result can be seen as a more explicit analogue, in the ``Zilber-Pink'' instead of the ``Andr\'e-Oort'' setting, of Theorem $1$ in \cite{andremots}. 
	
	Another interpretation, more aesthetically pleasing perhaps, is that, naively speaking at least and under the assumptions of \Cref{goodreductionmainzpapp}, there are finitely many $E^2$-points or $E\times CM$-points for which $(s-s_0)^{-1}$ is an $S$-integer for any set of primes $S$. 
\end{remarks}

\subsubsection{Places of bad reduction}

One natural question that arises from the previous results is if we can construct relations among the $v$-adic values of G-functions at points of interest with respect to a place $v$ of bad reduction of the central fiber $\CX_{s_0}$. 

In \Cref{section:badred} we propose some cohomological conjectures, see \Cref{conjhyodokato} and \Cref{conjrelhyodokato}. These conjectures allow us to use properties of $(\phi,N)$-modules from $p$-adic Hodge theory to construct relations in the spirit of \Cref{naivelocalrelations}. These would allow us to ``upgrade'' \Cref{goodreductionmainhtbound} to:

\begin{theorem}\label{badredmainhtbound}Assume that \Cref{conjhyodokato} and \Cref{conjrelhyodokato} hold. 
	
Then the height bounds in \Cref{goodreductionmainhtbound} hold without the ``everywhere potentially good reduction'' assumption on the fiber $\CX_{s_0}$.
\end{theorem}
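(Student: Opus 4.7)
The plan is to mimic the argument that yielded \Cref{goodreductionmainhtbound}, the only new ingredient being an extension of \Cref{naivelocalrelations} to the finitely many finite places $v$ of bad reduction of $\CX_{s_0}$; once such relations are available at every $v\in\Sigma(s,0)$ that is not of supersingular type, the Andr\'e-Bombieri Hasse principle for G-functions applies verbatim to conclude.

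First, I would revisit the role played by crystalline cohomology in the proof of \Cref{naivelocalrelations} at a non-supersingular place $v$ of good reduction: the slope decomposition of the $\phi$-action on $H^1_{\crys}$ of the central fibre, transported via the de Rham comparison, produces algebraic subspaces of the de Rham realisation of $\CX_{s_0}$; these subspaces, combined with the split structure at a comparison point $s$, force a non-trivial polynomial relation on $\mathcal{Y}(s)$ whose non-vanishing on the functional level follows from the Hodge-generic hypothesis on $i_f(S)$. The analogue needed at a bad place is therefore cohomological data on $\CX_{s_0}$, equipped with Frobenius-like structure carrying non-trivial slope information, that varies in families over $S$ compatibly with the Gauss-Manin connection.

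Granting \Cref{conjhyodokato}, this role is played by the Hyodo-Kato cohomology $H^1_{\text{HK}}$ of the central fibre, which carries a $(\phi,N)$-module structure; the extra monodromy operator $N$, far from being an obstruction, enriches the picture, since the interaction of the weight-monodromy filtration with the slope filtration still singles out algebraic subspaces of the de Rham realisation. Granting \Cref{conjrelhyodokato}, this construction spreads out in a family over $S$ compatibly with the de Rham-Betti realisation, and from it I would extract, for each finite place $v$ of bad but non-supersingular reduction of $\CX_{s_0}$, a polynomial $R_{s,v}$ satisfying conditions $(1)$ and $(2)$ of \Cref{naivelocalrelations}. The non-triviality of $R_{s,v}$ on the functional level reduces, as in the good-reduction case, to the Hodge-generic hypothesis on $i_f(S)$, since a functional relation would force the generic fibre to admit endomorphisms incompatible with that hypothesis.

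The main obstacle lies, naturally, inside the conjectures themselves; granting these, the chief remaining technical point is ensuring that the $(\phi,N)$-module construction produces $R_{s,v}$ of degree uniformly bounded in $v$, so that the final application of the Hasse principle is uniform over the bad places. This uniformity is expected to follow from the limited number of $(\phi,N)$-module types that can arise as $H^1_{\text{HK}}$ of a semistable abelian surface, but exhaustively checking the possible Newton/weight combinations—and in particular confirming that the monodromy-refined construction recovers, rather than enlarges, the algebraic subspace produced by the good-reduction argument when specialised to the good-reduction case—is where the bulk of the remaining work would lie.
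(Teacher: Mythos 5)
Your overall strategy — use \Cref{conjhyodokato} and \Cref{conjrelhyodokato} to produce cohomological data at bad places, then run the same local-relations-plus-Hasse-principle machine — is the one the paper follows. However, there are two concrete gaps.

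First, the non-triviality argument. You propose that ``$R_{s,v}\notin I(\SP_4)$'' should follow from the Hodge-generic hypothesis on $i_f(S)$ ``since a functional relation would force the generic fibre to admit endomorphisms incompatible with that hypothesis.'' This conflates two different things. Hodge genericity, via Ayoub and Bakker--Tsimerman (cf.\ \Cref{trivialrelations}), is what identifies the ideal of functional relations with $I_\Lambda$ (i.e.\ with $I(\SP_4)$ when $|\Lambda|=1$). It tells you nothing about whether the particular polynomial you have constructed from the $(\phi,N)$-module data lies in that ideal or not. The polynomials $R_{s,\bad}$ are built out of the de Rham matrix entries of the isogenies $\theta_0$ and $\theta_s$ and the G-function matrix; whether the result is in $I(\SP_4)$ is a genuine algebraic computation. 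The paper does this by evaluating at explicit families of symplectic matrices in \Cref{excmbad} and by Gr\"obner basis reduction in \Cref{e2relbad} (\Cref{section:appsub2}). Your heuristic does not give a route to carrying out, or bypassing, that verification, and in fact the verification is where the bulk of the technical content of \Cref{excmbad} and \Cref{e2relbad} lies.

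Second, you do not address the change to ``$v$-adic proximity'' that is forced at bad places. \Cref{conjrelhyodokato} supplies the relative Hyodo--Kato comparison only on a disc $\Delta_v(s_0,\tfrac12)$, reflecting (cf.\ \Cref{remarkconjrelhyodokato}) that the Hyodo--Kato $(\phi,N)$-module depends only on $\mathfrak{X}_0\otimes \CO_K/m_K^2$. Without extra care, the radius $\min\{1,R_v(\mathcal{Y})\}$ used to define $v$-adic proximity in \Cref{section:vadicproximity} can exceed this. The paper's proof of \Cref{prophtboundsbad} therefore begins (Step 1) by shrinking the constants $\kappa_v$ to $\kappa_v\leq p(v)^{-2}$ at the finitely many bad places and re-doing the $H(x)$-twist of \Cref{lemgfunsnew}, so that ``$v$-adically close to $0$'' actually lands in a disc where \Cref{conjrelhyodokato} applies. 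This is an essential preparatory step that you omit. A smaller point: the actual mechanism at bad places is simply that pullbacks of isogenies commute with the nilpotent $N$ (hence preserve $\ker N$), which pins down the shape of $\Theta$ as in \Cref{lemrelsbad}; no slope/weight-monodromy interaction is invoked, and the resulting polynomial is genuinely $v$-independent rather than merely of uniformly bounded degree, so the ``Newton/weight combination'' case analysis you anticipate does not arise.
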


Similarly, this would give us the following strengthened version of \Cref{goodreductionmainzpapp}
\begin{cor}\label{badredmainzpapp}
	Let $Z\subset \mathcal{A}_2$ be a smooth irreducible curve defined over $\bar{\Q}$ that is not contained in a proper special subvariety of $\mathcal{A}_2$. Assume that \Cref{conjhyodokato} and \Cref{conjrelhyodokato} hold. Then the set $\Sha_{ZP\text{-}\spli,N}(Z)$ in \Cref{goodreductionmainzpapp} is finite for all $N\in\N$.
\end{cor}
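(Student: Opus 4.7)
The plan is to follow, essentially verbatim, the argument by which \Cref{goodreductionmainzpapp} is derived from \Cref{goodreductionmainhtbound}, feeding in the stronger height bound now supplied by \Cref{badredmainhtbound}. The role of the ``everywhere potentially good reduction'' hypothesis in the original chain of implications is solely to access the height bound of \Cref{goodreductionmainhtbound}; once we have the upgraded version in \Cref{badredmainhtbound}, which under \Cref{conjhyodokato} and \Cref{conjrelhyodokato} holds without this assumption, the rest of the argument goes through unchanged.

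In more detail, first one passes from $Z\subset \mathcal{A}_2$ to a smooth irreducible curve $S$ over $\bar{\Q}$ equipped with an \'etale morphism onto a Zariski open subset of $Z$ and a family $f:\CX\to S$ of principally polarized abelian surfaces pulling back the universal family. Hodge genericity of $Z$ implies that of the induced morphism $i_f:S\to\mathcal{A}_2$, and one lifts the base point $s_0$ to $S$. Next, \Cref{badredmainhtbound} supplies
$$h(s)\leq c_1\cdot\bigl(|\Sigma_{\Q(s_0),\ssing}(s,0)|\cdot[K(s):\Q]\bigr)^{c_2}$$
for every $s\in\Sha_{ZP\text{-}\spli}(S)$; restricting to $s\in \Sha_{ZP\text{-}\spli,N}(S)$ turns this into a height bound polynomial in $[K(s):\Q]$. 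Combining this with the lower bounds on Galois orbits of $E^2$ and $E\times CM$-points established by Daw--Orr in \cite{daworr,daworr2}, together with the Pila--Wilkie counting theorem applied to a suitable definable set in the uniformization of $\mathcal{A}_2$, one concludes via the Pila--Zannier strategy that the Zariski closure of the image of $\Sha_{ZP\text{-}\spli,N}(S)$ in $\mathcal{A}_2$ is either finite or contains a positive-dimensional special subvariety. The latter would force $Z$ into a proper special subvariety of $\mathcal{A}_2$, contradicting the Hodge genericity hypothesis; finiteness of $\Sha_{ZP\text{-}\spli,N}(Z)$ then follows upon taking images under the finite map $S\to Z$.

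The main obstacle, entirely inherited from the good-reduction case and already handled in the Daw--Orr papers, is to check that their Galois-orbit lower bounds for $E^2$ and $E\times CM$-points, which are polynomial in the relevant discriminant/conductor invariants of the CM or isogenous pair of elliptic curves, genuinely beat the polynomial-in-$[K(s):\Q]$ height bound obtained after fixing $N$. In our setting no new difficulty arises beyond what is already present in \Cref{goodreductionmainzpapp}: the role of \Cref{conjhyodokato} and \Cref{conjrelhyodokato} is confined to removing the everywhere-good-reduction hypothesis via \Cref{badredmainhtbound}, and every subsequent ingredient is exactly as in the proof of the good-reduction corollary.
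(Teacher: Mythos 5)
Your proposal is correct and follows essentially the same route as the paper: the paper's proof simply cites that the height bound of \Cref{badredmainhtbound}, fed through the Masser--W\"ustholz isogeny estimates (as in \Cref{lgobad}), yields the Large Galois Orbits lower bound, and then finiteness follows from Theorem 1.2 of \cite{daworr} and Theorem 1.3 of \cite{daworr2}, which package the Pila--Zannier/Pila--Wilkie mechanism you spell out. The only small point your write-up glosses over is that since there is no longer a hypothesized $s_0$ with good reduction, the base point is taken to be any element of the (assumed nonempty) set $\Sha_{ZP\text{-}\spli,N}(Z)$ itself, as is done in the proof of \Cref{lgobad}.
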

	\subsection{Outline of the paper}

We start in \Cref{section:background} by recording some basic relations among the entries of the period matrices, in both the archimedean and non-archimedean setting, for split abelian surfaces. We continue in \Cref{section:backgroundgfuns} where we summarize some necessary technical background on the G-functions method. 

\Cref{section:relsa2} constitutes the main technical part of our exposition. In particular, we construct the relations announced in \Cref{naivelocalrelations}. In \Cref{section:htboundsgoodred} we put these relations in action to establish the height bounds announced in \Cref{goodreductionmainhtbound}. We end this section with some further conjectural discussion on our results. We close off the main part of the paper with the aforementioned conjectural strategy about the picture in the case of places of bad reduction in \Cref{section:badred}. 

In \Cref{section:appendixcode} we have included some codes from Wolfram Mathematica that were essential in the establishment of the non-triviality of our relations. 

\subsection{Notation}
Given an abelian variety $X$ over a number field $K$ and a place $v\in\Sigma_{K}$ we will write $X_v$ for the base change $X\times_{K} K_v$. If $v$ is a finite place of good reduction we will write $\tilde{X}_v$ for the reduction of the abelian variety $X$ modulo $v$. 

Given a family of power series $\mathcal{Y}:=(y_1\ld y_N)\in K[[x]]$ where $K$ is some number field and $v\in \Sigma_{K}$ is some place of $K$ we will write $R_v(y_{j})$ for the $v$-adic radius of convergence of $y_j$. In this direction, we also adopt the notation $R_{v}(\mathcal{Y}):=\min R_v(y_j)$. Given such a place $v$ of $K$ we will write $\iota_v:K\hookrightarrow \C_v$ for the associated embedding into $\C_v$, which will stand for either $\C$ or $\C_p$ depending on whether the place $v$ is archimedean or not. Finally, if $y(x)=\Sum{n=0}{\infty}a_n x^n\in K[[x]]$ is a power series as above we will write $\iota_v(y(x)):=\Sum{n=0}{\infty}\iota_v(a_n)x^n$ for the corresponding power series in $\C_v[[x]]$.\\

\textbf{Acknowledgments:} The author thanks Chris Daw for his encouragement and for many enlightening discussions on G-functions. The author also thanks Or Shahar for showing him the basics on Wolfram Mathematica. The author also heartily thanks the Hebrew University of Jerusalem, the Weizmann Institute of Science, and the IAS for the excellent working conditions.

Work on this project started when the author was supported by Michael Temkin's ERC Consolidator Grant 770922 - BirNonArchGeom. Throughout the majority of this work, the author received funding by the European Union (ERC, SharpOS, 101087910), and by the ISRAEL SCIENCE FOUNDATION (grant No. 2067/23). In the final stages of this work, the author was supported by the Minerva Research Foundation Member Fund while in residence at the Institute for Advanced Study for the academic year $2025-26$.

		\section{Period matrices and splittings}\label{section:background}

In this first section of the main part of the text we present some relatively simple lemmas about period matrices when a ``splitting'' occurs for an abelian variety. In other words, we describe some, relatively simple relations that occur in the period matrices of abelian varieties of the form $X=Y\times_{\C} Y'$, where $Y$ and $Y'$ are abelian varieties of smaller dimension than $X$. 

We start with a short review of comparison isomorphisms, central to our overall study, in both the archimedean and the $p$-adic setting.

\subsection{Period matrices}\label{section:periods}

Let us fix for the remainder of this subsection an everywhere semistable $g$-dimensional principally polarized abelian variety $X$ defined over some number field $K$. We associate to this abelian variety what we will refer to as a $v$-cohomology group by 
\begin{equation}
	H^1_v(X):=\begin{cases}
		H^1_{\crys}(\tilde{X}_v/W(k_v))\otimes W(k_v)[\frac{1}{p}] & v\in\Sigma_{K,f}\text{ of good reduction for }X\\
		
		H^1_B(X_v^{an},\Q)& v\in\Sigma_{K,\infty}.
	\end{cases}
\end{equation}

When $v\in \Sigma_{K,f}$ we get canonical comparison isomorphisms, due to \cite{bertogus}, which we denote by
\begin{equation}\label{eq:abeliancompiso}
	\rho_v(X):H^1_{dR}(X/K)\otimes_{K} K_v\rightarrow H^1_v(X)\otimes_{K_{v,0}} K_v,
\end{equation}where $K_{v,0}$ stands for the fraction field of $W(k_v)$. On the other hand, in case $v$ is an archimedean place one then has the classical comparison isomorphism of Grothendieck
\begin{equation}\label{eq:abeliancanisoarch}
	\rho_v(X):H^1_{dR}(X/K)\otimes_{K} \C\rightarrow H^1_B(X_v^{an},\Q)\otimes_{\Q} \C,
\end{equation}where $H^1_B$ stands for the Betti cohomology groups of the analytification. 

Let us assume from now on that we furthermore have that $X=Y\times_K Y'$ where $Y$ is an $h$-dimensional and $Y'$ is an $h'$-dimensional abelian variety. Note here that for all $v$ as above we will have $\rho_v(X)=\rho_v(Y)\oplus \rho_v(Y')$. This follows from the fact that all of our cohomology groups will split as sums of the form $H^1_{dR}(Y/K)\oplus H^1_{dR}(Y'/K)$ and $H^1_v(X)=H^1_v(Y)\oplus H^1_v(Y')$, while the comparison isomorphisms are functorial by construction.

To ease our computations in the rest of the paper, it is convenient to choose ordered bases of the various cohomology groups with explicit properties. In the particular case of de Rham cohomology we will almost always choose bases that satisfy the following:
\begin{definition}\label{hodgebasis}Let $X$ be a $g$-dimensional abelian variety over a number field $K$. We call an ordered basis $\Gamma_{dR}(X):=\{\omega_1\ld \omega_g,\eta_1\ld \eta_g\}$ of $H^1_{dR}(X/K)$ a \textbf{Hodge basis} if the following are true:
	\begin{enumerate}
		\item $\omega_1\ld\omega_g$ are a basis of the first part of the filtration $F^1_X:=e^{*}\Omega_{X/K}\subset H^1_{dR}(X/K)$, and 
		
		\item $\Gamma_{dR}(X)$ is a symplectic basis, meaning that $\langle \omega_i,\eta_{g+j}\rangle=\delta_{i,j}$ and $\langle \omega_i,\omega_j\rangle=\langle \eta_i,\eta_j\rangle=0$. 
	\end{enumerate}
\end{definition}

\begin{remark}[Bases for products of abelian varieties]\label{remarkbases}In practice, in the case we are most interested in, i.e. the case where $X=Y\times_K Y'$, we will consider the basis \begin{center}
		$\Gamma_{dR}(X)=\{\omega_1\ld \omega_h,\omega'_1\ld \omega_{h'}',\eta_1\ld \eta_h,\eta'_1\ld \eta'_{h'}\}$ 
	\end{center}where $\Gamma_{dR}(Y)=\{\omega_1\ld\omega_h,\eta_1\ld\eta_{h}\}$ and $\Gamma_{dR}(Y')=\{\omega_1'\ld\omega_{h'}',\eta'_1\ld\eta'_{h'}\}$ are Hodge bases for the respective abelian varieties.
	
	In addition, for the $v$-cohomology groups we will be working with analogous bases. In other words bases of the form $\Gamma_v(X)=\{\gamma_{v,j},\gamma_{v,j}',\delta_{v,j},\delta_{v,j}'\}$ where $\Gamma_v(Y)=\{\gamma_{v,j},\delta_{v,j}\}$ (and similarly for $\Gamma_v(Y')$) is a fixed symplectic basis of $H^1_v(Y)$.
\end{remark}

\begin{definition}\label{defperiodmatrix}
	Let $v\in \Sigma_K$ and $X=Y\times_K Y'$ be as above. Consider fixed Hodge and $v$-bases $\Gamma_{dR}(*)$ and $\Gamma_v(*)$ for $*\in \{X,Y,Y'\}$ as in \Cref{remarkbases}. 
	
	We define the $v$-period matrix $\Pi_v(\star)$ to be \begin{equation}
	\Pi_v(*)=	\begin{cases} [\rho_v(*)]_{\Gamma_{dR}(*)}^{\Gamma_v(*)}& v\in \Sigma_{K,f} \text{ of good reduction for }*,\\		
			\frac{1}{2\pi i} [\rho_v(*)]_{\Gamma_{dR}(*)}^{\Gamma_v(*)}& v\in \Sigma_{K,\infty}.			
		\end{cases}
	\end{equation}
\end{definition}
\begin{remark}Our choice in the archimedean places is made so that the periods satisfy the classical Riemann relations in the notation of \cite{andre1989g} Ch. $X$.	
\end{remark}

		\subsection{Periods and splittings}\label{section:periodsurfaces}

As a first example we give some basic descriptions of how the period matrices behave under splittings. We record this for convenience in the following trivial lemma.
\begin{lemma}\label{lemmaperiodsplittings}
	Let $X=Y\times_K Y'$ with $\dim Y=h$, $\dim Y'=h'$ be abelian varieties over a number field $K$ and let $v\in \Sigma_{K}$ be either an archimedean place or a finite place of good reduction of $X$. Then we have that with respect to the bases of \Cref{remarkbases} we have \begin{equation}\label{eq:e2periods}\Pi_v(X)=J_{h,g}\cdot \begin{pmatrix}
			\Pi_v(Y)&0\\
			0&\Pi_v(Y')
		\end{pmatrix}\cdot J_{h,g},
	\end{equation}
	where $J_{h,g}$ stands for the change of basis matrix sending the ordered basis $\{ \omega_j,\omega'_{j},\eta_j,\eta'_j \}$ to the ordered basis $\{\omega_j,\eta_j,\omega'_j,\eta'_j\}$.
\end{lemma}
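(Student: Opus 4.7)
The plan is to reduce the claimed identity to a straightforward bookkeeping statement about change of basis, exploiting the functoriality of the comparison isomorphism. Since $X = Y \times_K Y'$, the K\"unneth formula induces canonical direct-sum decompositions $H^1_{dR}(X/K) = H^1_{dR}(Y/K) \oplus H^1_{dR}(Y'/K)$ and, analogously, $H^1_v(X) = H^1_v(Y) \oplus H^1_v(Y')$ in each of the two cases ($v$ archimedean or $v$ a finite place of good reduction). Both the Grothendieck comparison \eqref{eq:abeliancanisoarch} and the Berthelot--Ogus crystalline comparison \eqref{eq:abeliancompiso} are compatible with these product decompositions, which is exactly the observation $\rho_v(X) = \rho_v(Y) \oplus \rho_v(Y')$ already recorded in \Cref{section:periods}. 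The archimedean normalization $1/(2\pi i)$ in \Cref{defperiodmatrix} is applied uniformly on both summands and therefore plays no structural role in what follows.

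First I would introduce auxiliary ``block'' orderings
\[
\tilde{\Gamma}_{dR}(X) := \{\omega_1\ld \omega_h,\eta_1\ld \eta_h,\omega'_1\ld \omega'_{h'},\eta'_1\ld \eta'_{h'}\}
\]
on the de Rham side and the analogous $\tilde{\Gamma}_v(X) := \{\gamma_{v,j},\delta_{v,j},\gamma'_{v,j},\delta'_{v,j}\}$ on the $v$-side. Relative to this pair of bases, the direct-sum splitting of $\rho_v(X)$ immediately shows that the matrix of the comparison isomorphism (resp.\ of its $2\pi i$-normalized version, in the archimedean case) is precisely the block-diagonal matrix $\diag(\Pi_v(Y),\Pi_v(Y'))$ appearing on the right-hand side of \eqref{eq:e2periods}.

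Next I would observe that the matrix $J_{h,g}$ described in the statement is exactly the permutation matrix carrying the Hodge-ordered basis $\Gamma_{dR}(X)$ to the block-ordered basis $\tilde{\Gamma}_{dR}(X)$, and, by the convention fixed in \Cref{remarkbases}, the same permutation simultaneously carries $\Gamma_v(X)$ to $\tilde{\Gamma}_v(X)$. Since the permutation is simply the swap of two adjacent blocks (the $\omega'$-block and the $\eta$-block), $J_{h,g}$ is an involution and hence $J_{h,g}^{-1} = J_{h,g}$. The standard change-of-basis formula for the matrix of a linear map then converts the block-diagonal matrix obtained above in the $\tilde{\Gamma}$-bases into $J_{h,g}\cdot \diag(\Pi_v(Y),\Pi_v(Y')) \cdot J_{h,g}$ in the $\Gamma$-bases, which is \eqref{eq:e2periods}. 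There is no substantive obstacle here; the only thing to track carefully is that one and the same permutation $J_{h,g}$ appears on both the domain and codomain sides, which is forced precisely by the fact that the de Rham and $v$-side bases in \Cref{remarkbases} have been set up to be permuted in exactly the same pattern.
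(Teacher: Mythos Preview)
Your proof is correct and matches what the paper has in mind: the paper itself omits the proof entirely, calling this a ``trivial lemma'' and simply recording the identity for later use (the remark immediately following it notes $J_{h,g}^{-1}=J_{h,g}$, which is your involution observation). Your write-up is exactly the change-of-basis argument one would supply if asked to fill in the details.
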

\begin{remark}
	Note here that $J_{h,g}$ is gotten simply by permuting some of the rows of $I_{2g}$ and in fact that $J^{-1}_{h,g}=J_{h,g}$.
\end{remark}

We also record here the following trivial generalization in higher dimensions of equation $(14)$ in Proposition $4.4$ of \cite{daworr4}, more closely aligned with our notation in Lemma $3.1$ of \cite{papaszpy1}:
\begin{lemma}\label{isogenyperiods}
	Let $\theta:X\rightarrow X'$ be an isogeny between two $g$-dimensional abelian varieties over a number field $K$ such that $X'=Y\times_K Y'$ with $\dim Y=h$ and $\dim Y'=h'$. Let $v\in\Sigma_{K}$ be either a place of good reduction of $X$, and hence of $Y$ and $Y'$, or an archimedean place.
	
	Let $\Gamma_{dR}(X)$ be a Hodge basis of $H^1_{dR}(X/K)$ and $\beta_v$ be a basis of $H^1_v(X)$. Then we have
	\begin{equation}\label{eq:isogenyperiods}
		[\theta]_{dR}\cdot \Pi_v(\Gamma_{dR}(X),\beta_v)=\Pi_v(X')\cdot [\theta]_v,\text{ where}
	\end{equation}
	\begin{enumerate}
		\item  $\Pi_v(X')$ denotes the $v$-period matrix of $X'$ with respect to the bases $\Gamma_v(X')$ and $\Gamma_{dR}(X')$ of \Cref{remarkbases},
		
		\item $ \Pi_v(\Gamma_{dR}(X),\beta_v)$ denotes the matrix $[\rho_v(X)]_{\Gamma_{dR}(X)}^{\beta_v}$,  and 
		
		\item $[\theta]_{dR}$ (respectively $[\theta]_v$) stands for the matrix, with respect to the above bases, of the morphism induced from $\theta$ between the respective cohomology groups. 
	\end{enumerate}
	
	Moreover, there exist $A$, $C\in GL_g(K)$ and $B\in M_g(K)$ such that $[\theta]_{dR}=\begin{pmatrix}
		A&0\\B&C
	\end{pmatrix}$.
\end{lemma}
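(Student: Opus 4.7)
The plan is to reduce both claims to two standard inputs: the functoriality of the comparison isomorphism $\rho_v$ under morphisms of abelian varieties, and the fact that such morphisms preserve the Hodge filtration on de Rham cohomology.

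For equation \eqref{eq:isogenyperiods}, I would interpret it as the matrix transcription of the naturality square
\begin{equation*}
\rho_v(X) \circ \theta^*_{dR} = \theta^*_v \circ \rho_v(X'),
\end{equation*}
which expresses the functoriality of $\rho_v$ applied to the isogeny $\theta: X \to X'$. In the crystalline setting this is built into the Berthelot--Ogus construction \cite{bertogus}, and in the archimedean setting it is classical. Fixing the convention that $[\theta]_{dR}$ (respectively $[\theta]_v$) represents the pullback $\theta^*$ in row-vector form relative to the bases of \Cref{defperiodmatrix} and \Cref{remarkbases}, one writes this square out in coordinates and immediately obtains the stated identity $[\theta]_{dR} \cdot \Pi_v(\Gamma_{dR}(X),\beta_v) = \Pi_v(X') \cdot [\theta]_v$.

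For the block structure of $[\theta]_{dR}$, the key point is that $\theta$, being a morphism of abelian varieties, induces a morphism of rational Hodge structures on first cohomology. In particular, $\theta^*_{dR}: H^1_{dR}(X'/K) \to H^1_{dR}(X/K)$ preserves the Hodge filtration, so that $\theta^*_{dR}(F^1_{X'}) \subseteq F^1_X$. With the Hodge bases of \Cref{hodgebasis} ordered so that their first $g$ elements span $F^1$, this filtration preservation translates directly into the vanishing of the top-right $g \times g$ block of $[\theta]_{dR}$, yielding the stated form $\begin{pmatrix} A & 0 \\ B & C \end{pmatrix}$. The invertibility $A,C \in GL_g(K)$ then follows from $\theta$ being an isogeny: it induces an isomorphism on rational cohomology, so $[\theta]_{dR}$ lies in $GL_{2g}(K)$, and block triangularity forces both diagonal blocks to be invertible.

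The main obstacle I anticipate is purely notational bookkeeping: pinning down the conventions (row versus column vectors, source and target bases in $[\theta]_{dR}$, and pullback versus pushforward) so that the equation takes exactly the form displayed in the lemma. Beyond this, no substantive input is needed. In particular, the assumed splitting $X' = Y \times_K Y'$ plays no essential role in proving this lemma on its own, and is relevant only to how it will later be combined with \Cref{lemmaperiodsplittings}.
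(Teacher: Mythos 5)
Your proposal is correct and matches the paper's own argument: the paper also derives \eqref{eq:isogenyperiods} by writing out the naturality square for the de Rham--crystalline (Berthelot--Ogus) or de Rham--Betti comparison isomorphism in coordinates (this happens inside the proof of \Cref{lemisogfin}, where equation \eqref{eq:lemisogmat1} is noted to establish \Cref{isogenyperiods} in passing), and the block-triangular form of $[\theta]_{dR}$ is likewise extracted from $\theta^{*}F^1_{X'}\subseteq F^1_X$ together with the ordering convention on Hodge bases, as in the discussion surrounding \Cref{lemmaisogenyhodgebasis}.
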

\begin{remark}
	We note here that the entries of the matrix $[\theta]_{dR}$ will be independent of the place $v$, at least once we have chosen bases of de Rham cohomologies over $K$.
\end{remark}

\subsubsection{Bases induced from an isogeny}\label{section:isogenyhodgebasis}

With the intent of simplifying the computational complexity and the exposition in the main technical part of the paper, we describe a Hodge basis of $H_{dR}^{1}(X/K)$ that we will associate to it once an isogeny $\theta: X \rightarrow X^{\prime}=Y \times_{K} Y^{\prime}$ is given. With the forthcoming sections in mind, we assume from now on that $X$ and $X^{\prime}$ are abelian surfaces while $Y$ and $Y^{\prime}$ are elliptic curves.

Let $\{\omega, \eta\}$ and $\left\{\omega^{\prime}, \eta^{\prime}\right\}$ be Hodge bases of $H_{d R}^{1}(Y / K)$ and $H_{d R}^{1}\left(Y^{\prime} / K\right)$ respectively. Since $\theta^{*} F_{X^{\prime}}^{1}=F_{X}^{1}$ we may set $\omega_{1}:=\theta^{*} \omega$, $\omega_{2}:=\theta^{*} \omega^{\prime}$, and then extend this to a Hodge basis. While the extension is clearly non-canonical we will denote any such basis by $\Gamma_{d R}(X, \theta)$ and refer to it as a \textbf{Hodge basis induced from }$\theta$ for simplicity.

With this notation, we get the following immediate:
\begin{lemma}\label{lemmaisogenyhodgebasis}Let $\theta: X \rightarrow X^{\prime}=E \times_{K} E^{\prime}$ be an isogeny where $X$ and $X^{\prime}$ are abelian surfaces. Writing $[\theta]_{d R}$ for the matrix associated to $\theta^{*}: H_{d R}^{1}\left(X^{\prime} / K\right) \rightarrow H_{d R}^{1}(X / K)$ with respect to $\Gamma_{d R}\left(X^{\prime}\right)$ and $\Gamma_{d R}(X, \theta)$ we have
\begin{center}	$[\theta]_{d R}=\begin{pmatrix}	I_{2} & 0 \\B & C\end{pmatrix}$,\end{center}
	where $B \in M_{2}(K)$ and $C \in \GL_{2}(K)$.	
\end{lemma}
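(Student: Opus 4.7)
The plan is to derive the statement as an essentially immediate consequence of \Cref{isogenyperiods} combined with the tautological choice of the first two vectors of the induced Hodge basis $\Gamma_{dR}(X,\theta)$.

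First, I would invoke \Cref{isogenyperiods} applied to the isogeny $\theta:X\rightarrow X'=E\times_K E'$ in the case $g=h=h'+1-1=2$ (so $h=h'=1$). This already yields a block decomposition
\begin{equation*}
[\theta]_{dR}=\begin{pmatrix} A & 0\\ B & C\end{pmatrix},\qquad A,C\in \GL_2(K),\ B\in M_2(K),
\end{equation*}
in which the upper-right zero block records exactly the fact that $\theta^{*}F^1_{X'}\subseteq F^1_X$. All the ``hard'' content of the present lemma, namely the vanishing of the upper-right block and the invertibility of $A$ and $C$, is thus already available.

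Next, I would identify the upper-left block $A$ explicitly under the specific choice of $\Gamma_{dR}(X,\theta)$. In the matrix convention used in \Cref{isogenyperiods} the rows of $[\theta]_{dR}$ give the coordinates of $\theta^{*}$ applied to the successive vectors of $\Gamma_{dR}(X')=\{\omega,\omega',\eta,\eta'\}$ in terms of $\Gamma_{dR}(X,\theta)=\{\omega_1,\omega_2,\eta_1,\eta_2\}$. By the very definition of the induced basis, $\omega_1=\theta^{*}\omega$ and $\omega_2=\theta^{*}\omega'$, so the first two rows of $[\theta]_{dR}$ are $(1,0,0,0)$ and $(0,1,0,0)$ respectively; that is, $A=I_2$, while the upper-right zero block is unaffected.

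Finally, the invertibility assertion $C\in \GL_2(K)$ is inherited directly from \Cref{isogenyperiods}; equivalently it can be re-derived from the fact that $\theta$ being an isogeny forces $\theta^{*}:H^1_{dR}(X'/K)\rightarrow H^1_{dR}(X/K)$ to be an isomorphism, so that the block lower-triangular matrix $[\theta]_{dR}$, having identity upper-left block, is invertible if and only if $C$ is. I do not anticipate any genuine obstacle here: the entire argument is a matter of reading off the first two rows of $[\theta]_{dR}$ once the induced basis $\Gamma_{dR}(X,\theta)$ has been fixed, with \Cref{isogenyperiods} supplying all of the structural input.
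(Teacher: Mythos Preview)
Your proposal is correct and matches the paper's intent: the paper states the lemma as ``immediate'' following the definition of $\Gamma_{dR}(X,\theta)$, and your argument---invoking \Cref{isogenyperiods} for the block lower-triangular shape and invertibility of $C$, then reading off $A=I_2$ from $\omega_1=\theta^{*}\omega$, $\omega_2=\theta^{*}\omega'$---is exactly how one unpacks that immediacy.
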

		
\section{Background on the G-functions method}\label{section:backgroundgfuns}

In this section we have tried to collect the necessary technical background on G-functions. In short, to a $1$-parameter family of abelian varieties $f:\CX\rightarrow S$ defined over a number field $K$, and a point $s_0\in S(K)$ we would like to associate a ``well-behaved'' family of G-functions. 

We have tried to present as uniform of an exposition as possible with future works in mind. 

\subsection{Recollections on comparison isomorphisms}

We start here with some recollections on the comparison isomorphisms, namely the de Rham-Betti and de Rham-Crystalline comparison for families, that we will need. This section is heavily based on \cite{andremots}, in particular $\S 3$ in loc. cit. where the connection between the work of Berthelot-Ogus \cite{bertogus}, and the values of G-functions first appears.

To the morphism $f$ we can naturally associate the differential module $(H^1_{dR}(X/S), \nabla)$, where $\nabla$ denotes the Gauss-Manin connection. Let us consider an archimedean place $v\in \Sigma_{K,\infty}$ and the analytification $f_v^{an}:X_v^{an}\rightarrow C^{an}_v$ of our morphisms with respect this place. In this case we obtain the classical comparison isomorphism $\CP_v:H^1_{dR}(X/C)\otimes \CO_{C^{an}_v}\rightarrow R^{1}{f_{v, *}^{an}} \Q\otimes \CO_{C^{an}_v}$ of Grothendieck. 

It is classical that upon restricting the isomorphism $\CP_v$ to a small enough archimedean disc $\Delta_v\subset C^{an}_v$ centered at our fixed point $s_0$ we may associate a matrix to it upon choosing a basis of sections of $H^1_{dR}(X/S)$ and a trivializing frame of the ``Betti local system'' $R^{1}{f_{v, *}^{an}} \Q|_{\Delta_v}$. For more on this we point the interested reader to the discussion in Ch. $IX$ of \cite{andre1989g}. 

A similar picture to the archimedean one holds in the non-archimedean setting as well following work of Berthelot-Ogus \cite{bertogus}, that will be essential to us in the sequel. From now on let us fix a finite place $v\in \Sigma_{K,f}$ and assume that $X_0$ has good reduction at $v$. 

Let us consider the analytification $f_v^{an}:X_v^{an}\rightarrow C^{an}_v$ in the rigid analytic category this time as well as a small enough rigid analytic open disc $\Delta_v$ centered at $s_0$. Then given $s\in C(K)$ for which $s\in \Delta_v$ as well, upon choosing $\Delta_v$ small enough, we may conclude that the fiber $X_s$ of $f$ at $s$ will be such that $X_s$ will have the same reduction as $X_0$. Let us denote this $g$-dimensional abelian scheme by $\tilde{X}_{0,v}$ and write $k=k(v)$ for the residue field of $K$ at $v$. 

Upon assuming that the place $v$ is also unramified in $K/\Q$, the results of \cite{bertogus} allow us to identify the horizontal sections $(H^1_{dR}(X/S)\otimes_{\CO_S}\CO_{\Delta_v})^{\nabla}$ with the crystalline cohomology group $H^1_{\crys}(\tilde{X}_{0,v}\backslash W(k))\otimes K_{v,0}$. In particular, one gets a comparison isomorphism \begin{center}
	$\CP_v:H^1_{dR}(\CX/S)\otimes_{\CO_S}\CO_{\Delta_v}\rightarrow( H^1_{\crys}(\tilde{X}_{0,v}\backslash W(k))\otimes K_{v,0})\otimes \CO_{\Delta_v}$.
\end{center}

In the case where $v\in\Sigma_{K,f}$ is ramified in $K/\Q$ a similar comparison isomorphism exists thanks to work of Ogus \cite{ogus}. We review what we need in this direction in the proof of \Cref{gfuns}. We note here that throughout what follows we will write $\CP_v(s)$ for any of the above comparison isomorphisms at a point $s\in S(\bar{\Q})$.
		
\subsection{Height bounds}\label{section:htboundsred}

Let $f:X\rightarrow S$ be an abelian scheme over a smooth geometrically connected curve $S$ defined over some number field $K$ and let us fix a point $s_0\in S(K)$. We assume throughout that the fibers of $f$ are principally polarized.

The main corollary of the relations among values of G-functions, which we establish in the next section, are height bounds for points on a curve over which the fibers obtain ``unlikely many'' endomorphisms. This is accomplished via the so called ``Hasse principle'' of Andr\'e-Bombieri, see Ch. $VII$, $\S5$ of \cite{andre1989g}. From a technical perspective in order to apply the theorem of Andr\'e-Bombieri we will need our curve $S$ as well as the scheme $X$ to satisfy certain properties. These properties are ``harmless'' from the point of view of establishing the height bounds we want which we record as the following ``naively stated''

\begin{conj}\label{conjhtbound}
	Let $S$ be a smooth irreducible curve defined over $\bar{\Q}$ and $f:X\rightarrow S$ be an abelian scheme as above. Assume that the morphism $S\rightarrow \mathcal{A}_g$ induced from this family has as its image a curve that is not contained in a proper special subvariety. 
	
	Then there exist positive constants $c_1$ and $c_2$, depending on $f$, such that for all $s\in S(\bar{\Q})$ for which the fiber $X_s$ acquires ``unlikely many'' endomorphisms we have \begin{equation}
		h(s)\leq c_1\cdot([\Q(s):\Q])^{c_2},
	\end{equation}where $h$ is some Weil height on $S$.
\end{conj}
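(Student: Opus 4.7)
The plan is to combine a G-function package attached to $(f,s_0)$ with the Andr\'e--Bombieri Hasse principle. Concretely, I would first fix a local parameter $x\in\bar{\Q}(S)$ vanishing to order one at $s_0$ together with a horizontal frame of $H^1_{dR}(\CX/S)$ in a formal neighborhood of $s_0$. Expanding the entries of the associated period matrix in the variable $x$ produces a family $\mathcal{Y}=(y_1,\ldots,y_N)\in\bar{\Q}[[x]]^N$, and, after a mild enlargement to accommodate Picard--Fuchs relations, one checks as in Ch.~IX of \cite{andre1989g} that these $y_j$ are indeed G-functions. The non-constancy of the induced map to $\mathcal{A}_g$ and its Hodge-generic image rule out a priori functional relations among the $y_j$, which will be the source of the non-triviality condition in the Hasse principle.

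Next I would take $s\in S(\bar{\Q})$ whose fiber $\CX_s$ carries ``unlikely many'' endomorphisms, and at each place $v\in\Sigma_{\Q(s_0,s)}$ for which $s$ is $v$-adically close to $s_0$ produce a polynomial $R_{s,v}\in\bar{\Q}[Z_1,\ldots,Z_N]$ of degree bounded in terms of $f$ alone, satisfying $\iota_v(R_{s,v}(\mathcal{Y}(s)))=0$. The input for this is exactly the mechanism recorded in \Cref{naivelocalrelations}: at an archimedean place $v$ the comparison isomorphism $\CP_v$ identifies the horizontal frame above with Betti cohomology of $\CX_{s_0}$, and at a finite place of good reduction with the crystalline cohomology of $\tilde\CX_{s_0,v}$; in both cases the excess endomorphisms on $\CX_s$ pull back through $\CP_v$ to rational linear conditions on the period matrix of $\CX_{s_0}$, and these conditions translate, using \Cref{isogenyperiods} and the induced bases of \Cref{lemmaisogenyhodgebasis}, into polynomial relations on the values $\mathcal{Y}(s)$. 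Feeding a single such relation $R_s$, together with the collection of places $v$ satisfying $|x(s)|_v<R_v(\mathcal{Y})$, into the Andr\'e--Bombieri Hasse principle (Ch.~VII \S5 of \cite{andre1989g}) then yields the sought bound $h(s)\leq c_1\cdot[\Q(s):\Q]^{c_2}$ with $c_1,c_2$ depending only on $f$, provided the set of ``close'' places is quantitatively large enough (handled by complementing archimedean proximity with integrality at finite places).

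The main obstacle is the globality step: one must ensure that a single non-trivial polynomial relation $R_s$ holds at all sufficiently close places $v$ simultaneously, whereas the cohomological source of each $R_{s,v}$ genuinely changes with $v$. Non-triviality at a given $v$ is forced by Hodge-genericity, but uniformity across $v$ is not automatic. The strategy is to exploit the Frobenius ($\phi$-module) structure of $H^1_{\crys}(\tilde\CX_{s_0,v})$ at non-supersingular places of good reduction in order to pin down $R_{s,v}$ up to a place-independent core, and to treat archimedean places separately; this is precisely what will be executed for splittings in $\mathcal{A}_2$ in the proof of \Cref{naivelocalrelations}. Places of supersingular or bad reduction lie outside this mechanism and require the separate cohomological input discussed in \Cref{section:badred}, which is exactly why the clean statement of \Cref{conjhtbound} is left as a conjecture here, while the unconditional \Cref{goodreductionmainhtbound} and the conditional \Cref{badredmainhtbound} address precisely these remaining obstructions.
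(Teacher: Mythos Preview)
The statement you are addressing is labelled a \emph{conjecture} in the paper and is not proved there; the paper establishes only the weaker \Cref{goodreductionmainhtbound} (and the conditional \Cref{badredmainhtbound}) by exactly the G-function/Hasse-principle mechanism you outline. Your proposal is not a proof but an accurate strategic summary of the paper's method together with a correct identification of the obstruction (dependence of $R_{s,v}$ on $v$ at supersingular and bad places), and you explicitly acknowledge this in your final paragraph; there is nothing to correct, but you should not present it as a proof of \Cref{conjhtbound}.
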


Here by ``unlikely many'' endomorphisms we mean simply the existence of endomorphisms on the fiber that would occur from the point in question being an unlikely intersection of a Hodge generic curve in $\mathcal{A}_g$ with a special subvariety defined by such endomorphisms. We have chosen this slightly vague terminology in favor of expositional simplicity.

\subsubsection{Reductions}\label{section:reductionlemmas}
 
 As noted earlier, in order to be able to apply the G-functions method to \Cref{conjhtbound} we will need $S$ and the morphism $f$ to have additional structural properties, without hurting the validity of the conjecture in question. We collect these in the following:

\begin{lemma}\label{reduction}
	It suffices to establish \Cref{conjhtbound} under the following additional assumptions:\\
	
	Let us also consider $K/\Q$ to be some finite extension over which $S$ and $f:X\rightarrow S$ are defined. There exists a regular $\CO_K$-model $\mathfrak{S}$ of $S$ as well as a semi-abelian scheme $\mathfrak{X}\rightarrow \mathfrak{S}$, and a rational function $x\in K(S)$ for which the following hold:
\begin{enumerate}	
		\item $\mathfrak{X}_K\simeq X$ as abelian schemes over $S$,
		
		\item the set $\{\xi_1\ld \xi_l\}:=\{ s\in S(\bar{\Q}); x(s)=0\}$ are simple zeroes of $x$, 
		
		\item the morphism $x:S\rightarrow \mathbb{P}^1$ induced from the above $x$ is Galois, or in other words the group $\aut_x(S):=\{\sigma\in \aut(S); x=x\circ \sigma\}$ acts transitively on the $\bar{\Q}$-fibers of $x$, 
		
		\item\label{l1it1} the fiber $X_{0}$ over any $s_0\in\{\xi_1\ld\xi_l\}$ has everywhere semi-stable reduction, and
		
		\item\label{integralx}the morphism $x$ extends to a morphism $\mathfrak{S}\rightarrow \mathbb{P}^1_{\CO_K}$ and there exists a second regular proper $\CO_K$-model $\mathfrak{S}'$ of $S$ such that all elements of the group $\aut_x(S)$ extend to morphisms $\mathfrak{S}'\rightarrow \mathfrak{S}$.
	\end{enumerate}
\end{lemma}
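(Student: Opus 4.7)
The plan is to show that each of the five listed properties can be arranged by a combination of three operations applied to the original data: (i) enlarging the field of definition $K$ to a finite extension, (ii) replacing $S$ by a suitable finite cover $\pi:S'\to S$, and (iii) choosing or modifying the rational function $x$. What makes this sufficient is the standard compatibility of Weil heights under such operations: if $h'$ and $h$ are Weil heights on $S'$ and $S$, there exist positive constants $a,b$ depending only on $\pi$ with $a\cdot h(\pi(s'))\leq h'(s')\leq b\cdot h(\pi(s'))$, and the degrees $[\Q(\pi(s')):\Q]$ and $[\Q(s'):\Q]$ differ by a factor at most $\deg(\pi)$. Since preimages under $\pi$ of points whose fibers acquire ``unlikely many'' endomorphisms inherit the same property, a polynomial height bound of the form in \Cref{conjhtbound} on $S'$ pulls back to one on $S$, with updated constants $c_1,c_2$.

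First I would arrange item (\ref{l1it1}): by Grothendieck's semi-stable reduction theorem for abelian varieties, after a finite extension of $K$ the fiber over each of the finitely many distinguished points $\xi_j$ acquires everywhere semi-stable reduction simultaneously. Next, for (2) and (3), I would start from any non-constant $x_0\in K(S)$ and pass to the Galois closure of $K(S)/K(x_0)$, which produces a finite cover $S'\to S$ on which the pullback of $x_0$ becomes a Galois morphism $x:S'\to \mathbb{P}^1$ in the sense described. To force simple zeros, one replaces $x$ by $y$ where $y^n=x$ on a further ramified cover (equivalently, the normalization of $S'$ in the appropriate root extension), which makes $y$ a local uniformizer at every point in its zero set while preserving the Galois property, as the extension is Kummer.

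For (1) and the first half of (\ref{integralx}), I would invoke Lipman's theorem on resolution of singularities of excellent two-dimensional schemes to obtain a regular proper $\CO_K$-model $\mathfrak{S}$ of $S$. The semi-abelian prolongation $\mathfrak{X}\to \mathfrak{S}$ then exists by the standard theory of N\'eron models for semi-stable abelian schemes over a regular two-dimensional base, using the semi-stability already arranged at the $\xi_j$ and the generic smoothness of $f$. The rational function $x$, viewed as a morphism $S\to \mathbb{P}^1_K$, extends to a morphism $\mathfrak{S}\to \mathbb{P}^1_{\CO_K}$ after resolving the finitely many points of indeterminacy on the closed fibers, again via Lipman.

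The main obstacle is the second half of (\ref{integralx}): arranging that every element of the finite group $\aut_x(S)$ extends to a morphism $\mathfrak{S}'\to \mathfrak{S}$ from some second regular proper model. My approach would be to form the graph closure $\Gamma_\sigma\subset \mathfrak{S}\times_{\CO_K}\mathfrak{S}$ of each $\sigma\in \aut_x(S)$, take the fibered product $\mathfrak{S}_0$ of all these graphs over $\mathfrak{S}$ (via their first projections), normalize it, and then apply Lipman's desingularization to obtain a regular proper $\CO_K$-model $\mathfrak{S}'$ of $S$. By construction the first projection gives a morphism $\mathfrak{S}'\to \mathfrak{S}$, and each $\sigma$ extends as the composition of $\mathfrak{S}'\to \mathfrak{S}_0$ with the projection to the $\sigma$-th factor, landing in $\mathfrak{S}$. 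Verifying that the other items (1)--(4) remain valid after passing to $\mathfrak{S}'$ is routine, since all of them concern either the generic fiber or a property preserved by proper birational modifications of regular arithmetic surfaces.
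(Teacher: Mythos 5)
Your proposal is essentially correct and reconstructs, in a largely self-contained way, the argument that the paper delegates to Lemma $2.16$ of \cite{papaseffbrsieg} (and, for item (\ref{integralx}), Lemma $6.2$ of \cite{daworr5}). The strategy --- reduce via finite extensions of $K$, a Galois-closure and Kummer cover of $S$, and desingularization of arithmetic surfaces, using the comparability of Weil heights and of residue-field degrees under finite covers --- is the expected one, and the graph-closure device for the second half of item (\ref{integralx}) is the right move. A few points deserve sharpening. The Galois property of $y\colon S''\to\mathbb{P}^1$ after the Kummer cover $y^n=x$ does not follow simply ``because the extension is Kummer''; it follows because $K(S'')=K(S')\cdot K(y)$ is a compositum of Galois extensions of $K(x)$, hence Galois over $K(x)$, and therefore Galois over the intermediate field $K(y)$ (and one must first adjoin $\mu_n$ to $K$ for the Kummer description to hold). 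More substantively, the existence of the semi-abelian prolongation $\mathfrak{X}\to\mathfrak{S}$ over a regular two-dimensional base is not a consequence of a blanket ``standard theory of N\'eron models over regular two-dimensional bases,'' which in general fails; the paper's route (see the proof of \Cref{gfuns}) is Gabber's lemma from \cite{delignegabber}, and the alterations it requires should be folded into the covers you have already taken, rather than invoked as a black box after $\mathfrak{S}$ has been fixed. Finally, you should choose the initial rational function $x_0$ so that $x_0(s_0)=0$, ensuring the distinguished point $s_0$ of \Cref{section:htboundsred} sits among the $\xi_j$. These are all repairable defects, and the architecture of your argument matches what the cited lemmas carry out.
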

\begin{proof}The proof is identical to that of the proof of Lemma $2.16$ of \cite{papaseffbrsieg}. For \Cref{integralx} we point the interested reader to Lemma $6.2$ of \cite{daworr5}. \end{proof}

\begin{remarks}\label{remarkonmodels}1. In our setting of interest the points $\xi_j$ will be ``points of interest'' as well. For example in the Zilber-Pink-inspired setting of \Cref{goodreductionmainzpapp} or \Cref{badredmainzpapp}, the $\xi_j$ will be points where the fibers will be abelian surfaces where some splitting of the form $A\sim B\times B'$ occurs.\\

2. The regular scheme $\mathfrak{S}$, which is projective over $\CO_K$, is technically a model, in the usual sense, of a fixed smooth projective curve $S^{\prime}$ that contains our $S$. We will refer to this, by abuse of terminology, as a ``model of $S$ over $\CO_K$''.\end{remarks}

		\subsection{G-functions}\label{section:gfunctions}

From now on let us assume that we are in the setting described in \Cref{reduction}. Namely, we consider an abelian scheme $f:X\rightarrow S$ defined over some number field $K$ and fix a point $s_0\in S(K)$ for which there exists a rational function $x$ with only simple zeroes, $s_0$ being one of them. Finally we write $X_0$ for simplicity for the fiber at $s_0$ and let $g:=\dim_K X_{0}$ denote the dimension of the fibers of the morphism $f$.

Now consider a place $v\in \Sigma_{K}$ and the naturally associated embedding $\iota_v:K\hookrightarrow \C_v$.  Considering the analytification of $f:X\rightarrow S$ with respect to $v$, either in the rigid or the complex analytic sense accordingly, we write $\Delta_{s_0,r}:=x^{-1}(\Delta_r)$ for the connected component that contains $s_0$ of the preimage of an open $v$-adic disc around $0$. For simplicity we will often refer to this, by abuse of terminology, as a ``$v$-adic disc centered at $s_0$ with radius $r$''.

Let $\Gamma_{dR}(X_0):=\{\omega_{i,0},\eta_{i,0}:1\leq i\leq g\}$ be a Hodge basis of $H^1_{dR}(X_{0}/K)$. After possibly removing finitely many points from $S(\bar{\Q})$, and possibly replacing $K$ by a finite extension, we may assume, which we do from now on, that there exists some global basis of sections $\Gamma_{dR}(X):=\{\omega_i,\eta_i:1\leq i\leq g\}\subset H^1_{dR}(X/S)(S)$, for which $\Gamma_{dR}(X_0)$ is the ``fiber at $s_0$'' in the obvious sense. We may furthermore assume that the $\{\omega_i:1\leq i\leq g\}$ are a basis of sections for the first part of the Hodge filtration $\mathcal{F}^1(S)$ of the vector bundle $H^1_{dR}(X/S)$. We will simply refer to this as a Hodge basis of $X$, in the spirit of \Cref{hodgebasis}.

\begin{theorem}\label{gfuns}Let $f$, $s_0$, $x$ be as above and let $\Gamma_{dR}(X)$ be a Hodge basis of $X$ and $\Gamma_{dR}(X_0)$ be its fiber at $s_0$. There exists a matrix $Y_G\in M_g (\bar{\Q}[[x]])$ such that the following hold:\begin{enumerate}
		\item the matrix $Y_G$ consists of G-functions,
		
		\item given $v\in \Sigma_K$ and writing $r_v:=\min\{1,R_v(Y_G)\}$, then for all $s\in \Delta_{s_0,r_v}$ we have \begin{equation}\label{eq:periodsandgfuns}
			\CP_v(s)=\iota_v(Y_G(x(s)))\cdot \Pi_v(X_{0}),
		\end{equation}where $\Pi_v(X_{0})$ is the period matrix of $X_0$ as defined in \Cref{defperiodmatrix}.
\end{enumerate}\end{theorem}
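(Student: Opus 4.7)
The plan is to construct $Y_G$ as the fundamental solution of the Gauss-Manin system at $s_0$ and then to verify the asserted identity with $\CP_v$ place by place, using the comparison isomorphisms recalled just above. Working in the Hodge basis $\Gamma_{dR}(X)$ and in the local parameter $x$, the Gauss-Manin connection $\nabla$ on $H^1_{dR}(\CX/S)$ becomes a linear ODE system
\[
\frac{dY}{dx}=\Phi(x)\,Y,
\]
whose coefficient matrix $\Phi$ lies in $M_{2g}(\bar{\Q}(x))$ and has no pole at $x=0$, since $s_0$ is a smooth point of the morphism $f$ and so $\nabla$ has no singularity there. Cauchy's theorem then produces a unique formal matrix solution $Y_G\in M_{2g}(\bar{\Q}[[x]])$ with $Y_G(0)=I_{2g}$; this is the candidate matrix.

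For item (1), I would appeal to Andr\'e's theorem (Theorem 2 of Chapter IV of \cite{andre1989g}, refined cohomologically in \cite{andremots}) stating that the entries of a fundamental solution at a smooth point of a Picard-Fuchs system attached to a smooth proper $\bar{\Q}$-morphism are G-functions. Since $(H^1_{dR}(\CX/S),\nabla)$ is precisely of this shape and $s_0$ is a smooth point, this applies directly to $Y_G$.

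For item (2), I fix $v\in\Sigma_K$ and pass to the analytic disc $\Delta_{s_0,r_v}$. On this disc $\nabla$ admits a canonical trivialisation by horizontal sections, and the content of the comparison theorems recalled in the previous subsection is that this trivialisation is canonically identified with $H^1_v(X_0)$: in the archimedean case via the triviality of the Betti local system $R^1 f_{v,*}^{an}\Q$ on a simply connected disc combined with Grothendieck's comparison, in the non-archimedean good-reduction unramified case via Berthelot--Ogus \cite{bertogus}, and in the ramified good-reduction case via Ogus's extension \cite{ogus}. In each case one transports the $v$-cohomology basis $\Gamma_v(X_0)$ to a horizontal frame of $\nabla$ and expresses $\Gamma_{dR}(X)|_{\Delta_{s_0,r_v}}$ in this frame; the resulting matrix of analytic functions satisfies the same Cauchy problem as $\iota_v(Y_G)$ (same ODE, value $I_{2g}$ at $s_0$), hence equals it by uniqueness. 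The definition of $\Pi_v(X_0)$ as the matrix of $\CP_v(s_0)$ in these bases then gives $\CP_v(s)=\iota_v(Y_G(x(s)))\cdot \Pi_v(X_0)$.

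The constraint $r_v=\min\{1,R_v(Y_G)\}$ is needed simultaneously to ensure that $\iota_v(Y_G(x(s)))$ converges at $s$, which forces $|x(s)|_v<R_v(Y_G)$, and that $\Delta_{s_0,r_v}$ is a genuine analytic disc inside $S^{an}_v$, which is guaranteed on discs of $v$-radius at most one by the simple-zero assumption on $x$ from \Cref{reduction}. I expect the main obstacle to be item (1): it rests on Andr\'e's deep arithmetic-Hodge-theoretic analysis of solutions of Picard-Fuchs equations and cannot be bypassed by elementary means. The remainder is a careful repackaging of the comparison theorems already recalled, the only technical subtlety being the need to invoke Ogus in place of Berthelot--Ogus at ramified finite places of good reduction.
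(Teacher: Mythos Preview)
Your proposal is correct and follows essentially the same route as the paper: construct $Y_G$ as the normalized fundamental solution of the Gauss-Manin system at $s_0$, invoke Andr\'e for the G-function property, and then identify $\CP_v$ with $\iota_v(Y_G)\cdot\Pi_v(X_0)$ via uniqueness of solutions, using Grothendieck's comparison archimedeanly and Berthelot--Ogus/Ogus at finite places of good reduction. The paper spends most of its effort making the $p$-adic step precise (working with Ogus's convergent $F$-isocrystals on a formal disc, checking smooth-properness of the formal family via Gabber's lemma, and ensuring via a base change to $\Gamma_{dR,\new}$ that points in the disc of convergence genuinely share the reduction of $s_0$), but these are elaborations of exactly the strategy you outline rather than a different approach.
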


\begin{proof}Given a basis of sections $\Gamma_{d R}(X)$ of $H_{d R}^{1}(X / S)(U)$ as above, with $U$ some affine open neighborhood of $s_0$, we get via the Gauss-Manin connection a differential system of the form
	\begin{equation} \label{eq:diffsys} 
		\frac{d}{d x} Y=A \cdot Y,\end{equation} 
	where $A \in M_{2 g}(K(x))$.
	
	It is classical, see for example Chapter III of \cite{andre1989g}, that there exists a matricial solution $Y_{G} \in M_{2 g}(\overline{\mathbb{Q}}[[x]])$ of \eqref{eq:diffsys} with $Y_{G}(0)=I_{2 g}$. Just as in loc. cit. we refer to this as the ``normalized uniform solution'' of the system.
	
	The fact that the entries of $Y_{G}$ are G-functions follows from the work of Andr\'e in \cite{andre1989g}. For a concise summary of this, as well as part (2) of the theorem for $v\in\Sigma_{K,\infty}$, we point the interested reader to the proof of Theorem $2.5$ in \cite{papaseffbrsieg}.

At this point we might need to replace $\Gamma_{dR}(X)$ by the basis $\Gamma_{dR, new}(X)$ discussed in \Cref{lemgfunsnew} below. Crucially for us after this substitution we may and will assume that
\begin{assumption}\label{assumptionpadicproximity}
 if $v \in \Sigma_{K,f}$ is a finite place of good reduction of $X_{0}$ then there exists some small enough rigid analytic disk $\Delta$ embedded in $S_{v}^{a n}$ and centered at $s_{0}$ such that the entries of $Y_{G}$ converge $v$-adically in $\Delta$ and that if $s \in S(\bar{\mathbb{Q}}) \cap \Delta$ then $s$ and $s_0$ have the ``same reduction modulo $v$'' in the sense discussed in \Cref{section:vadicproximity}.
\end{assumption}

Let us assume from now on that $v \in \Sigma_{K,f}$ is a place of good reduction of $X_{0}$. For notational simplicity for the remainder of this proof we set $K$ to be the $p$-adic field $K_{v}$. We also let $V=\CO_{K_v}$ be the ring of integers in $K_v$ and $k(=k(v))$ the residue field of $V$ of characteristic $p>0$. We also let $W=W(k)$ and $K_{0}$ be the fraction field of $W$. Also, again with notational simplicity in mind, we write $f: X \rightarrow S$ (instead of the more accurate $f_v:X_v\rightarrow S_v$) and $\mathfrak{f}:\mathfrak{X}\rightarrow \mathfrak{S}$ for the appropriate base changes of the objects in \Cref{reduction} by the morphism $\spec (V) \rightarrow \spec (\CO_K)$, where $\CO_{K}$ stands for the ring of algebraic integers of the number field $K$ in our ``original notation''.

Since the ``local parameter'' $x$ extends by \Cref{reduction} on the level of integral models over $\spec(V)$ we may in fact consider a $p$-adic formal disc $\DD \rightarrow \mathfrak{S}$ ``centered'' at $\tilde{s}_{0}$ for which $\Delta=\DD_{K}$ is the aforementioned rigid analytic disc embedded in the rigid analytification $S^{a n}$ of our curve on which the entries of $Y_{G}$ converge in the above sense. We record the picture in the following helpful but ''inaccurately\footnote{The leftmost vertical commutative square is taken in the rigid analytic category while the rightmost one is taken in the formal category.} commutative'' diagram.\\
\begin{center}
	\begin{tikzcd}
		X_{\Delta}\arrow[dd, "f_{\Delta}"] \arrow[rr] \arrow[rd] &                              & \mathfrak{X}_{\DD} \arrow[dd, "\mathfrak{f}_{\DD}"] \arrow[rd] &                   \\
		& X \arrow[dd, "f"] \arrow[rr] &                              &\mathfrak{X} \arrow[dd, "\mathfrak{f}"] \\
		\Delta \arrow[rr] \arrow[rd] \arrow[rdd]     & {} \arrow[r]                 & \DD \arrow[rd] \arrow[rdd]     &                   \\
		& S \arrow[rr] \arrow[d]       &                              & \mathfrak{S} \arrow[d]       \\
		& \spec(K) \arrow[rr]                 &                              & \spec(V)                
\end{tikzcd}\end{center}

From the proof of \cite{ogus} Proposition $2.16$, applied to ``$S_{K}=\Delta$'' and ``$S=\DD$'' in the notation of loc. cit., we get a crystal of $K\otimes \CO_{\DD/V}$-modules that we denote by $\mathcal{E}:=\sigma(H_{d R}^{1}(X_{\Delta} / \Delta), \nabla)$.

Now we argue that $f_{\DD}: \mathfrak{X}_{\DD}\rightarrow \DD$ is a smooth proper morphism of $p$-adic formal $V$-schemes. To see this, note that the semi-abelian scheme $\mathfrak{X}\rightarrow \mathfrak{S}$ is constructed via Gabber's lemma, so that for each $s\in S(\mathbb{C}_{p})$ the induced section $\tilde{s}:\spec (\CO_{\mathbb{C}_p}) \rightarrow \mathfrak{S}$ is such that $\mathfrak{X}_{\tilde{s}}$ is \footnote{We point the interested reader to the proof of Lemma 3.4 on page 213 of \cite{andre1989g}, see also \cite{delignegabber} for the original here.} the connected Néron model of $X_{s}$. Properness now follows from our assumption that $X_{0}$ has good reduction.

By the results of $\S 2$ of \cite{ogus}, $\mathcal{E}$ will be a convergent isocrystal on $\DD / V$. On the other hand, by Theorems 3.1 and 3.7 of \cite{ogus} we get a convergent $F$-isocrystal, denoted by $R^{1}(\mathfrak{f}_{\DD})_{*} O_{\mathfrak{X}_{\DD} / K}$ on $\DD/V$ combining the notation of loc. cit. with ours in the obvious way.

From now on, working under \Cref{assumptionpadicproximity} above, we fix two $\bar{K}$-points $s, t \in \Delta \cap S(\bar{K})$ whose corresponding sections $\tilde{s}$, $\tilde{t}: \spec (\CO_{\bar{K}}) \rightarrow \mathfrak{S}$ have the same reduction as $s_0$.

The discussion in $3.8$-$3.10$ of \cite{ogus} allows us to identify $\mathcal{E}$ with $R^{1}(\mathfrak{f}_{\DD})_{*} \CO_{\mathfrak{X}_{\DD}} / K$, thus giving $\mathcal{E}$ an ``$F$-structure''. By the proof of Corollary $5.9$ in loc. cit. we thus get for $P$, $Q \in\{s, t, s_{0}\}$ isomorphisms

\[
\epsilon(P, Q): H_{d R}^{1}(X_{P} / K(P, Q)) \rightarrow H_{d R}^{1}(X_Q / K(P, Q))
\]

These fit, as discussed in Remark $5.14.3$ of \cite{ogus}, in a commutative diagram of the form

\begin{center}
	\begin{tikzcd}
		H_{d R}^{1}(X_{P} / K(P, Q))  \arrow[d, "\sigma_{crys,\tilde{P}}"'] \arrow[r,"\epsilon"] & H_{d R}^{1}(X_{Q} / K(P, Q)) \arrow[d, "\sigma_{crys,\tilde{Q}}"] \\
		K(P, Q) \otimes H_{crys}^{1}(\tilde{\mathfrak{X}}_{{P}} / W) \arrow[r, "\alpha^{*}"]                 & H_{crys}^{1}(\tilde{\mathfrak{X}}_{Q}/W) \otimes K(P, Q)           
\end{tikzcd}\end{center}
where ``$\epsilon=\epsilon(P,Q)$'', $\tilde{\mathfrak{X}}_{{P}}$ denotes the special fiber of $\mathfrak{X}_{{P}}$, and $\alpha=\alpha(P, Q): \tilde{\mathfrak{X}}_{{Q}} \rightarrow \tilde{\mathfrak{X}}_{{P}}$ is a uniquely defined isogeny. We note here that the isomorphisms $\sigma_{crys,\tilde{P}}$ are the same as those considered in \cite{bertogus}, see also Remark $3.9.2$ in \cite{ogus}.

Taking $Q=s_{0}$ in the above, and writing $\epsilon(P, 0)$ etc. for simplicity, we get a canonical isomorphism for each $P \in \Delta$ as above of the form $\delta(P, 0)=\sigma_{crys, 0}\circ \epsilon(P, 0): H_{dR}^{1}(X_{P} / K(P))\rightarrow H_{crys}^{1}(\tilde{\mathfrak{X}}_{0}/W) \otimes K(P)$.

The flatness of the Gauss-Manin connection induces cocycle conditions on $\epsilon(\cdot,\cdot)$ which allow us to treat the inverse of $\delta(P, O)$, after say tensoring with $\mathbb{C}_p$, as a parallel transport that identifies $H_{d R}^{1}(X / \Delta)^{\nabla}$ with $H^{1}_{crys}(\tilde{\mathfrak{X}}_{0}/W) \otimes \mathbb{C}_{p}$.

Let us fix a basis $\Gamma_{v}(X_{0})=\{\gamma_{j} ; 1 \leq j \leq 2g\}$ of $H^{1}_{crys}(\tilde{\mathfrak{X}}_{0}/W)$. From the above, writing $\delta(P, 0) (\omega_{i})=\sum_{j} \varpi_{i j}(P) \cdot\gamma_{j}$ defines a matrix in $GL_{2g}(O_{\Delta})$ that satisfies the differential system \eqref{eq:diffsys}. It is then classical, since $Y_{G}$ is the normalized solution, i.e. $Y_{G}(0)=I_{2g}$, and $\iota_{v}(Y_{G}) \in M_{2g}(O_{\Delta})$ by assumption, that we will have
\[
(\omega_{i j}(P))=\iota_{v}(Y_{G}(x(P))) \cdot(\omega_{i j}(0))
\]
for all $P \in \Delta$.
\end{proof}

\begin{remark} In $\S3$ of \cite{andremots}, Y. Andr\'e cites \cite{bertogus} to obtain the identification between the horizontal sections of $H_{d R}^{1}(X / S)$ and $H^{1}\left(\bar{X}_{0, v} / W(k(v))\right) \otimes \mathbb{C}_{p}$ in our notation.
	
	It seems to the author that the results of \cite{bertogus} are not sufficient to justify this for all places $v \in \Sigma_{K,f}$. We point the interested reader to the introduction of \cite{ogus}.
	
	In short, if $v \in \Sigma_{K, f}$ is a place ramified over $\mathbb{Q}$ we no longer have $O_{K_v}=W(k(v))$, hence Ogus' ``convergent F-isocrystals'' seem necessary in the above proof.
\end{remark}

\subsubsection{G-functions in practice}\label{section:settinggfuns}

In practical terms, with height bounds of the form \Cref{conjhtbound} in mind, we want to associate a family of G-functions to the setting described in \Cref{section:htboundsred} and \Cref{reduction} in particular. Here, we follow closely the discussion in $\S 5$ of \cite{daworr4}.

Let us therefore assume that we are in the setting of \Cref{reduction} so that we are given a $1$-parameter family $f:X\rightarrow S$ of $g$-dimensional principally polarized abelian varieties defined over some number field $K$ together with a rational function $x\in K(S)$ all of whose roots $\{\xi_1\ld\xi_l\}$ are simple.

By the Galois properties of the morphism $x:S\rightarrow \mathbb{P}^1$ described in \Cref{reduction}, for each $\xi_j$ there exists $\sigma_j\in \aut_x(S)$ with $\sigma_j(\xi_j)=\xi_1$. Taking the pullback of $f:X\rightarrow S$ via $\sigma_j$ we get a new family we denote by $f_j:X_j\rightarrow S$. To each such family, we can then associate, with $s_0:=\xi_1$ as our ``center'' in the notation of \Cref{gfuns}, a matrix of G-functions that we denote by $Y_{G,j}$. We will also write $\mathcal{Y}_j$, for bookkeeping purposes, for the set of G-functions that comprises of the entries of this matrix.

As noted first by Daw and Orr in \cite{daworr4}, the ``new'' abelian schemes $f_j:X_j\rightarrow S$ might be generically isogenous. For that reason Daw and Orr in loc. cit. define the equivalence relation\begin{center}
	$t \sim t^{\prime}$ if $X_t$ is generically isogenous to $X_{t^{\prime}}$.\end{center}
Letting $\Lambda$ be the set of equivalence classes of this we will identify for ease of notation each $\lambda \in \Lambda$ with the minimal element in its class. The family of G-functions we will use will be $\mathcal{Y}:=\bigsqcup_{\lambda \in \Lambda} \mathcal{Y}_{\lambda}$, where the $\mathcal{Y}_{\lambda}$ are as above the entries of a matrix of G-functions. For more details on the interplay of the $X_{\lambda}$ with integral models we point the interested reader to section $6.E$ of \cite{daworr5}.

\begin{definition}\label{deffamilygfuns}We let $\mathcal{Y}$ be the family of G-functions that comprises of the entries of all of the matrices $Y_{G,\lambda}$, with $\lambda\in \Lambda$ described above. We call this the \textbf{family of G-functions associated to} $(f:X\rightarrow S, x)$ \textbf{centered at} $s_0:=\xi_1$.\end{definition}

\subsubsection{$v$-adic proximity}\label{section:vadicproximity}

Consider the $\CO_K$-model $\mathfrak{S}'$ of $S$ introduced in \Cref{reduction}. Given a point $s \in S(\bar{\Q})$ of our curve we will let
\begin{center}
	$\tilde{s}: \spec \CO_{K(s)} \rightarrow \mathfrak{S}'\times_{\spec\CO_K}\spec \CO_{K(s)}$
\end{center}
denote the induced section.

We make use of the following observation, see Chapter $X$, $\S3.1$ in \cite{andre1989g}, of Y. Andr\'e:
\begin{center}``there exist constants $\kappa_v>0$, where $v \in \Sigma_{K,f}$, almost all of which are $=1$ such that if $w \in \sum_{K(s)}$ satisfies $w|v$ and $|x(s)|_{w}<\kappa_v^{[K(s): \mathbb{Q}]}$ then $s$ and $s_0$ have the same image in $\mathfrak{S}'(\mathbb{F}_{p(w)})$.'' \end{center}

We choose $\zeta\in K^{\times}$ with $|\zeta|_v\leq\kappa_v$ for all $v \in \Sigma_{K,f}$, just as in the exposition below the aforementioned passage of \cite{andre1989g}. At this point, we deviate slightly from the discussion in $\S 3.1$ of loc. cit..

Assume that we are given a symplectic Hodge basis $\Gamma_{dR}(X)=\{\omega_{i}, \eta_{j}\}$ of $H_{d R}^{1}(X / S)$ in some affine neighborhood $U_{0}$ of our ``center'' $s_0$. Writing $H(x)=\frac{\zeta}{\zeta -x}$ we consider the set of sections $\Gamma_{dR,\new}(X):=\{H(x) \cdot \omega_i, H(x)^{-1} \eta_{j}\}$ of the vector bundle $H_{dR}^{1}(X/S)$.

Note that, after removing at most finitely many points from $U_{0}$, this new set will also constitute a symplectic Hodge basis of $H_{d R}^{1}(X / S)(U_0)$.

We record the following:
\begin{lemma}\label{lemgfunsnew}
	Let $\Gamma_{dR}(X)$ and $\Gamma_{dR,\new}(X)$ be as above. Let $Y_{G}$ denote the matrix of G-functions associated to $\Gamma_{dR}(X)$ via the ``archimedean part'' of the proof of \Cref{gfuns}.
	
	Letting $Y_{G,\new}$ denote the matrix of G-functions associated to $\Gamma_{dR,\new}$ via the same process we have
	
	\begin{equation}
		Y_{{G,\new }}=\left|\begin{array}{cc}
			\operatorname{diag}(H) & 0 \\
			0 & \operatorname{diag}\left(H^{-1}\right)
		\end{array}\right| \cdot Y_{G}.
	\end{equation}
	
	Moreover, by possibly taking smaller $\kappa_v$ above, me may find $\zeta \in K$ such that
	$\min\{1, R_{v}(Y_{G,\new})\} \leq \min\{1, R_{v}(Y_{G}), \kappa_v\}$ for all $v$.
\end{lemma}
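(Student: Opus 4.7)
The plan is to first establish the matrix identity $Y_{G,\new}=D\cdot Y_G$, where $D=\diag(H,\ldots,H,H^{-1},\ldots,H^{-1})$ is the block-diagonal change-of-basis matrix from the statement, and then to bound the $v$-adic radii of convergence by a careful choice of $\zeta$.

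For the identity, I would invoke uniqueness of the normalized fundamental solution of the Gauss--Manin differential system from the proof of \Cref{gfuns}. The new basis is $\Gamma_{dR,\new}(X)=D\cdot\Gamma_{dR}(X)$, obtained by multiplying the first $g$ sections by $H(x)$ and the last $g$ by $H(x)^{-1}$. A direct computation shows that if $\frac{dY}{dx}=A(x)\cdot Y$ is the differential system associated to $\Gamma_{dR}(X)$, then the system associated to $\Gamma_{dR,\new}(X)$ is
\begin{equation*}
\frac{dZ}{dx}=(D'D^{-1}+DAD^{-1})Z,
\end{equation*}
and this is exactly what $Z=D\cdot Y$ satisfies. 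Since $H(0)=1$, we have $D(0)=I_{2g}$, so $D\cdot Y_G$ evaluates to the identity at $x=0$; by uniqueness of normalized fundamental solutions, $Y_{G,\new}=D\cdot Y_G$.

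For the radius estimate, the key observation is that $H(x)=\sum_{n\geq 0}\zeta^{-n}x^n$ has $v$-adic radius of convergence exactly $|\zeta|_v$ at every place $v$, whereas $H^{-1}(x)=1-x/\zeta$ is a polynomial and hence entire. I would focus on the $(1,1)$-entry of $Y_{G,\new}$, which equals $H(x)\cdot(Y_G)_{11}(x)$, and use that $(Y_G)_{11}(0)=1$ to ensure that $(Y_G)_{11}$ is non-vanishing on some $v$-adic neighborhood of $0$. Provided $|\zeta|_v$ is small enough that $(Y_G)_{11}$ does not vanish on the closed disc of radius $|\zeta|_v$ around $0$, the simple pole of $H$ at $x=\zeta$ survives in the product, forcing $R_v((Y_{G,\new})_{11})\leq|\zeta|_v$ and hence $R_v(Y_{G,\new})\leq|\zeta|_v$.

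To finish, the target inequality reduces to arranging $|\zeta|_v\leq\min(\kappa_v,R_v(Y_G))$ at every place, together with the non-vanishing condition above. At all but finitely many finite places one has $\kappa_v=1$, $R_v(Y_G)\geq 1$ (a standard property of G-functions), and $|\zeta|_v=1$, so the inequality is automatic. At the finitely many exceptional places, I would first replace $\kappa_v$ by $\min(\kappa_v,R_v(Y_G))$---this is the ``possibly smaller $\kappa_v$'' clause---and then use strong approximation in $K^{\times}$ to produce $\zeta$ with sufficiently large $v$-adic valuation at each of these places, while also avoiding the (discrete) set of zeros of $(Y_G)_{11}$ in the relevant small discs. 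The main obstacle is precisely the potential cancellation between the pole of $H$ at $\zeta$ and a possible zero of $(Y_G)_{11}$: the freedom to shrink $\kappa_v$ is exactly what lets us push $|\zeta|_v$ below any such zero at each exceptional place, keeping the global approximation argument tractable thanks to the finiteness of the exceptional set.
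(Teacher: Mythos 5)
Your proof is correct and takes essentially the same route as the paper: the identity $Y_{G,\new}=D\cdot Y_G$ comes from uniqueness of the normalized fundamental solution of the transformed Gauss--Manin system, and the radius estimate comes from shrinking $\kappa_v$ at the finitely many exceptional places so that $|\zeta|_v$ is pushed strictly below the distance to the nearest (necessarily nonzero) zero of the entries of $Y_G$, ensuring the singularity of $H$ at $\zeta$ is not cancelled in the product. The paper's version defines $r_v$ as the minimum over \emph{all} entries' nonzero zeros and is a bit terser; your version isolates the $(1,1)$-entry and is slightly more explicit about including places with $R_v(Y_G)<1$ (not only $\kappa_v\neq 1$) in the exceptional set, which is a reasonable and arguably cleaner bookkeeping choice. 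The only caveat is that in the $p$-adic case the informal phrase ``the pole of $H$ survives'' should be read as the Weierstrass/Newton-polygon argument you gesture at: since $y_{11}$ is a unit on a disc of radius $>|\zeta|_v$, $1/y_{11}$ converges there, so $R_v(H\cdot y_{11})>|\zeta|_v$ would force $R_v(H)>|\zeta|_v$, a contradiction.
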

\begin{proof} Both assertions are relatively trivial. For the first assertion we simply note that the differential system associated to $\Gamma_{dR, \new}(X)$ will be of the form $\frac{d}{dx}Y=A_{\new} \cdot Y$ where $A_{\new}$ is given by
	\begin{equation}A_{\new}=\begin{pmatrix}
			\operatorname{diag}(\frac{H}{\zeta})& 0 \\
			0 &-\operatorname{diag}(\frac{H}{\zeta})
		\end{pmatrix}+ \begin{pmatrix}
			\operatorname{diag}(H) & 0 \\
			0 & \operatorname{diag}(H^{-1})
		\end{pmatrix}\cdot A\cdot  \begin{pmatrix}
			\operatorname{diag}(H^{-1}) & 0 \\
			0 & \operatorname{diag}(H)
		\end{pmatrix},\end{equation}
	where $A$ denotes the matrix of the system $\frac{d}{d x} {Y}=A\cdot {Y}$, see also the discussion above \eqref{eq:diffsys},  associated to $\Gamma_{dR}(X)$, always with respect to the Gauss-Manin connection.
	
	It is trivial to see that $\begin{pmatrix}\operatorname{diag}(H) & 0 \\ 0 & \operatorname{diag}\left(H^{-1}\right)\end{pmatrix} \cdot Y_{G}$ will be a normalized uniform solution of this system and thus equal to $Y_{G, n e w}$ by uniqueness of such solutions.
	
	For the moreover part, consider the finite set $\Sigma_{G}:=\left\{v \in \sum_{K, f}: \kappa_{v} \neq 1\right\}$ and let $\mathcal{Y}_{v}:=\left\{\iota_{v}\left(y_{i j}(x)\right) \in \mathbb{C}_{v}\|x\|: 1 \leq i, j \leq g\right\}$, where $Y_{G}=\left(y_{i j}(x)\right)$. For each $v \in \Sigma_{G}$ let
	
	\[
	\left.r_{v}:=\min\{ | \xi\right|_{v}: \xi \neq 0, \exists i, j \text { such that } \iota_{v}\left(y_{i j}(\xi)\right)=0\}.
	\]
	
	It is trivial that $r_{v}>0$ for all $v \in \Sigma_G$, since the convergent power series in question will have finitely many roots by $\S 6.2$ of \cite{robertpadic}, so that taking
	\[
	\kappa_{v}^{\prime}:=\frac{1}{2} \min \left\{1, R_{v}\left(Y_{G}\right), \kappa_{v},r_v\right\}
	\]
	we get $\kappa_{v}^{\prime}>0$.
	
	Replacing ``$\kappa_v$'' by ``$\kappa_v^{\prime}$'' in the definition of $\zeta$ the moreover part follows trivially.
\end{proof}

\begin{remark} In the next sections we will be implicitly working with the family corresponding to the entries of $Y_{G,\new}$ described above. The difference between our $Y_{G,\new}$ and the G-functions considered by Andr\'e right before $(3.1.1)$ in Chapter $X$ of \cite{andre1989g} is that our family, coming from a symplectic Hodge basis, will still satisfy the trivial relations described in \Cref{trivialrelations} of the next subsection.\end{remark}

Let us now return to the situation in \Cref{section:settinggfuns} and the family of G-functions $\mathcal{Y}=\sqcup \mathcal{Y}_{\lambda}$ considered there. Repeating the above argument for each of the $Y_{G,\lambda}$ we get new matrices $Y_{\new,\lambda}$. Replacing $Y_{G,\lambda}$ by $Y_{{\new,\lambda }} $ in the definition of $\mathcal{Y}$ we may and do assume from now on that the following property holds:

\begin{lemma}\label{propertyvadicprox}[Andr\'e, \cite{andre1989g} $X.3.1.1$]
 Let $\xi=x(s)$. If $|\xi|_w<\min\{1,R_{w}(\mathcal{Y})\}$ for some $w\in \Sigma_{K(s),f}$, then $\tilde{s}$ and $\tilde{\xi}_t$ have the same image in $\mathfrak{S}'(\mathbb{F}_{p(w)})$ for some $1 \leq t \leq l$.
\end{lemma}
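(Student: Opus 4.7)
My plan is that this lemma follows rather directly from Andr\'e's classical observation recalled at the start of \Cref{section:vadicproximity}, combined with the bookkeeping of \Cref{lemgfunsnew}; the ``$H$-trick'' defining $Y_{G,\new}$ was specifically engineered to make exactly this kind of proximity statement automatic.

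First, I would note that by construction $\mathcal{Y}$ contains the entries of $Y_{G,\new,\lambda}$ for every $\lambda \in \Lambda$, so in particular $\min\{1,R_w(\mathcal{Y})\} \leq \min\{1, R_w(Y_{G,\new,\lambda})\}$ for each $\lambda$. Applying the ``moreover'' part of \Cref{lemgfunsnew} (with a single $\zeta \in K^{\times}$ chosen uniformly across the finitely many $\lambda \in \Lambda$, which is possible because each imposes only finitely many nonzero inequalities on $\zeta$), one obtains
\begin{equation*}
\min\{1, R_w(Y_{G,\new,\lambda})\} \leq \kappa_v',
\end{equation*}
where $v \in \Sigma_{K,f}$ is the restriction of $w$ to $K$ and $\kappa_v'$ denotes the refined constant from the proof of \Cref{lemgfunsnew}. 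Hence the hypothesis $|\xi|_w < \min\{1,R_w(\mathcal{Y})\}$ forces $|\xi|_w < \kappa_v'$; since $\kappa_v'$ is by construction a refinement of Andr\'e's $\kappa_v$, this is sharp enough to plug into Andr\'e's observation, once the absolute value normalization on $K(s)$ versus $K$ (and the exponent $[K(s):\Q]$ appearing there) is tracked.

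The remaining step is to upgrade Andr\'e's conclusion from the distinguished root $\xi_1 = s_0$ to an arbitrary $\xi_t$. For this I would exploit the Galois structure of $x$ granted by \Cref{reduction}(3): for each $t$ there exists some $\sigma_t \in \aut_x(S)$ with $\sigma_t(\xi_t) = \xi_1$, and the identity $x = x \circ \sigma_t$ gives $|x(\sigma_t(s))|_w = |\xi|_w$. By \Cref{reduction}(5) each $\sigma_t$ extends to a morphism $\mathfrak{S}' \to \mathfrak{S}$, so applying Andr\'e's observation to $\sigma_t(s)$ and then pulling back along $\sigma_t$ yields $\tilde{s} = \tilde{\xi}_t$ in $\mathfrak{S}'(\mathbb{F}_{p(w)})$ for whichever $t$ is relevant. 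The existence of such a $t$ is automatic from the simplicity of the zeros of $x$ guaranteed by \Cref{reduction}(2): sufficiently small $|x(s)|_w$ forces $\tilde{s}$ to reduce onto one of the $\tilde{\xi}_t$ in the integral model $\mathfrak{S}'$. The main (minor) obstacle is purely the bookkeeping of absolute value normalizations between $K$, $K(s)$, and their completions, but this is essentially already absorbed into \Cref{lemgfunsnew} and Andr\'e's original formulation.
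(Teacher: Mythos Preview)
Your approach matches the paper's. In fact the paper does not give a self-contained proof of this lemma at all: it is stated as a direct citation of Andr\'e \cite{andre1989g}, Ch.~$X$, $\S3.1.1$, with the understanding that the preceding paragraphs (Andr\'e's $\kappa_v$-observation together with the $H$-trick of \Cref{lemgfunsnew}) have already arranged matters so that Andr\'e's statement applies verbatim. Your sketch is precisely a reconstruction of that implicit argument, and the two ingredients you isolate --- the inequality $\min\{1,R_w(\mathcal{Y})\}\leq\kappa_v$ coming from \Cref{lemgfunsnew}, and the passage from $\xi_1$ to a general $\xi_t$ --- are the correct ones.

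One small comment on the second step: your Galois argument via $\sigma_t$ is more elaborate than what is strictly needed, and as written it risks overproving (applying it for every $t$ would suggest $s$ reduces to every $\xi_t$). The cleaner way, and the one implicit in Andr\'e's formulation, is simply that smallness of $|x(s)|_w$ forces the reduction $\tilde s$ to lie in the zero locus of the reduction of $x$ on $\mathfrak{S}'$, which by \Cref{reduction}(2) consists exactly of the reductions $\tilde\xi_1,\ldots,\tilde\xi_l$; the index $t$ is then determined by which connected component of $x^{-1}(\Delta_w)$ contains $s$. The Galois structure of $x$ is used elsewhere in the paper (to pull back the abelian schemes in \Cref{section:settinggfuns}), but is not really needed for this particular reduction statement.
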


\begin{definition}Let $s\in S(\bar{\mathbb{Q}})$ and $w\in \Sigma_{K(s)}$. We say that $s$ is $w$-adically close to $0$ if $|x(s)|_{w}<\min \left\{1, R_{w}(\mathcal{Y})\right\}$.
	
	We say that $s$ is $w$-adically close to $\xi_{t}$, for some $1 \leq t \leq l$, if $s$ is $w$-adically close to $0$ and furthermore it is in the connected component of $x^{-1}\left(\Delta\left(0, \min \left\{1, R_{w}(\mathcal{Y})\right\}\right)\right)$ that contains $\xi_{t}$. \end{definition}

\subsection{Trivial relations}\label{section:trivial}

The ``trivial relations'' in our setting may be phrased as the following:
\begin{prop}\label{trivialrelations}
	Let $\mathcal{Y}$ be the family associated to the morphism $f:\CX\rightarrow S$, which satisfies the conditions in \Cref{reduction}, as in \Cref{deffamilygfuns}. Then $\mathcal{Y}^{\zar}\subset (M_g^{\Lambda})_{\bar{\Q}(x)}$ is the subvariety cut out by the ideal 
	
	\begin{equation}\label{eq:trivialideal}
		I_{\Lambda}:=\{\det(X_{i,j,\lambda})-1:\lambda\in \Lambda  \},
	\end{equation}where $X_{i,j,\lambda}$ are such that the $\lambda$-th copy of $M_g^{\Lambda}$ is given by $\spec({\Q}[X_{i,j,\lambda}:1\leq i,j\leq g])$.
\end{prop}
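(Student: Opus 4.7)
The plan is to prove the two inclusions of $\mathcal{Y}^{\zar}=V(I_\Lambda)$ separately. For the direction $\mathcal{Y}^{\zar}\subseteq V(I_\Lambda)$, I would reduce to showing $\det Y_{G,\lambda}=1$ for every $\lambda\in\Lambda$. Each $Y_{G,\lambda}$ is, by \Cref{gfuns}, the normalized uniform solution of a Gauss-Manin differential system $\tfrac{d}{dx}Y=A_\lambda Y$ written with respect to a symplectic Hodge basis in the sense of \Cref{hodgebasis}. Because the Gauss-Manin connection is compatible with the polarization pairing, in any symplectic basis the connection matrix $A_\lambda$ lies in the Lie subalgebra $\mathfrak{sp}_{2g}\subset \mathfrak{sl}_{2g}$; in particular $\tr A_\lambda=0$. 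Liouville's formula then gives $\tfrac{d}{dx}\det Y_{G,\lambda}=(\tr A_\lambda)\cdot \det Y_{G,\lambda}=0$, so $\det Y_{G,\lambda}$ is a constant, and the initial condition $Y_{G,\lambda}(0)=I_{2g}$ from the proof of \Cref{gfuns} pins this constant down to $1$. Each generator $\det(X_{i,j,\lambda})-1$ of $I_\Lambda$ thus vanishes on $\mathcal{Y}$, hence on $\mathcal{Y}^{\zar}$.

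For the reverse inclusion $V(I_\Lambda)\subseteq \mathcal{Y}^{\zar}$, I would exploit the disjoint-union structure $\mathcal{Y}=\bigsqcup_\lambda \mathcal{Y}_\lambda$ to reduce the problem one $\lambda$ at a time. By the very construction of $\Lambda$, the abelian schemes $X_\lambda$ are pairwise non-generically-isogenous, so the attached Gauss-Manin differential modules share no common subquotients. A standard Tannakian/monodromy argument should then let me factor the Zariski closure of the combined tuple as $\prod_\lambda \overline{\{Y_{G,\lambda}\}}^{\zar}$. The task is thereby reduced, for each individual $\lambda$, to showing that the Zariski closure of the single G-function matrix $Y_{G,\lambda}$ inside the ambient matrix space over $\bar{\Q}(x)$ is precisely the hypersurface $V(\det-1)$.

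The main obstacle is this last step, which is a size statement about algebraic monodromy, or equivalently the differential Galois group: one needs the monodromy of the Gauss-Manin local system attached to $X_\lambda$ to be sufficiently large that its Zariski closure inside $(M_g)_{\bar{\Q}(x)}$ fills out all of $V(\det-1)$. I would expect to discharge it by invoking a big-monodromy result for a Hodge-generic principally polarized abelian family — the implicit regime of \Cref{goodreductionmainhtbound} — combined with the standard fact that passing to an isogenous scheme $X_\lambda$ alters the connected monodromy group only up to finite index, and hence does not affect its Zariski closure.
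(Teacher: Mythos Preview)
Your argument for $\mathcal{Y}^{\zar}\subseteq V(I_\Lambda)$ is correct, and since you already note $A_\lambda\in\mathfrak{sp}_{2g}$, the same ODE argument applied to $Y_{G,\lambda}^{T} J\,Y_{G,\lambda}$ in place of $\det Y_{G,\lambda}$ yields the sharper conclusion $Y_{G,\lambda}\in\mathrm{Sp}_{2g}$, hence $\mathcal{Y}^{\zar}\subseteq \mathrm{Sp}_{2g}^{\Lambda}\subsetneq V(I_\Lambda)$. This sharper form is what the paper actually uses downstream (cf.\ \Cref{idealprime} and the systematic checks ``$R\notin I(\mathrm{Sp}_4)$'' throughout \Cref{section:relsa2}), so the displayed $I_\Lambda$ is evidently to be read as $I(\mathrm{Sp}_{2g}^{\Lambda})$ rather than the principal ideal generated by $\det-1$.

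This is exactly where your reverse inclusion breaks. Big monodromy for a Hodge-generic principally polarized family is $\mathrm{Sp}_{2g}$, not $\mathrm{SL}_{2g}$, so a monodromy/differential-Galois argument can fill out at most $\mathrm{Sp}_{2g}^{\Lambda}$ and can never reach the larger hypersurface $V(\det-1)$ you are aiming for. Even after correcting the target to $\mathrm{Sp}_{2g}^{\Lambda}$, your route differs from the paper's: the paper does not argue via monodromy at all but invokes the geometric Andr\'e--Grothendieck period conjecture (Ayoub, in the Bakker--Tsimerman formulation), which identifies $\mathcal{Y}^{\zar}$ with a torsor under the generic Mumford--Tate group in one stroke and in particular absorbs your ``no cross-relations between distinct $\lambda$'' step. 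Your Goursat-style factorisation over $\Lambda$ is plausible but not automatic: non-isogeny of the $X_\lambda$ must be promoted to non-isomorphism of the associated $\mathrm{Sp}_{2g}$-local systems, which needs an additional input of Deligne/Faltings type that you have not supplied.
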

\begin{proof}
	This follows from the same arguments as those that appear in $\S 4.5.4$ of \cite{daworrpap} using the affirmative answer to the geometric Andr\'e-Grothendieck period conjecture due to Ayoub, see \cite{ayoub}, in the setting of Bakker-Tsimerman, see Theorem $1.1$ of \cite{bakkertsimermanandregroth}.
\end{proof}


We record here for convenience the following fact that we will use in the sequel:
\begin{lemma}\label{idealprime}
The ideal $I_\Lambda\subset \bar{\Q}[X_{i,j,\lambda}:1\leq i,j\leq 4,\lambda\in\Lambda]$ of \Cref{trivialrelations} is prime.
\end{lemma}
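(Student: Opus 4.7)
The plan is to identify $V(I_\Lambda) \subset (M_g)^{\Lambda}_{\bar{\Q}}$ with $\prod_{\lambda \in \Lambda} SL_{g,\bar{\Q}}$ and to deduce primality of $I_\Lambda$ from the fact that this product is a geometrically integral variety. Concretely, I would argue in two steps.

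First, I would observe that since the sets of variables $\{X_{i,j,\lambda}\}_{1 \le i,j \le g}$ are pairwise disjoint as $\lambda$ varies over $\Lambda$, there is a natural isomorphism of $\bar{\Q}$-algebras
\begin{equation*}
\bar{\Q}[X_{i,j,\lambda} : 1 \le i,j \le g,\ \lambda \in \Lambda]/I_\Lambda \;\cong\; \bigotimes_{\lambda \in \Lambda}^{\bar{\Q}} \bar{\Q}[X_{i,j,\lambda}]/(\det(X_\lambda)-1).
\end{equation*}
Each factor on the right is the affine coordinate ring of $SL_{g,\bar{\Q}}$. Since $SL_g$ is a smooth connected affine algebraic group scheme over $\spec\bar{\Q}$, it is geometrically integral; equivalently, $\det(X)-1$ is a prime element of the polynomial ring $\bar{\Q}[X_{i,j}]$, so each factor is an integral domain.

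Second, I would invoke the standard fact that the tensor product of two integral, finitely generated algebras over an algebraically closed field is again integral (equivalently, the fibered product of geometrically integral $\bar{\Q}$-schemes over $\spec\bar{\Q}$ is geometrically integral). Since $\Lambda$ is finite, a straightforward induction on $|\Lambda|$ allows us to iterate this fact over all factors, showing that the tensor product above is an integral domain. Hence $I_\Lambda$ is a prime ideal.

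I do not anticipate any technical obstacle here: the geometric integrality of $SL_g$ over any field is classical, and the preservation of integrality under tensor products of finitely generated algebras over an algebraically closed field is a standard result in commutative algebra. The only mild subtlety is the disjointness-of-variables bookkeeping, but this is immediate from the construction of the ideal $I_\Lambda$, whose generator indexed by $\lambda$ involves only the variables $\{X_{i,j,\lambda}\}_{i,j}$ of the $\lambda$-th copy.
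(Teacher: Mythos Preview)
Your argument is correct and is the standard way to establish this fact. The paper's own proof simply cites Lemma~5.10 and Proposition~5.11 of \cite{daworr5}, so a direct line-by-line comparison is not possible here; however, the argument in that reference is of the same flavor --- identifying the vanishing locus as a product of copies of a smooth connected algebraic group and then using that a tensor product of integral finitely generated algebras over an algebraically closed field remains integral --- so your proof is essentially the intended one.
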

\begin{proof}The assertion follows trivially from the same argument as the one presented in Lemma $5.10$, using Proposition $5.11$ there, of \cite{daworr5}.\end{proof}

		\section{Splittings in $\mathcal{A}_2$}\label{section:relsa2}

In this subsection we assume that $f:\CX\rightarrow S$ is some $1$-parameter family of principally polarized abelian surfaces defined over some number field. We assume that the conditions set out in \Cref{reduction} hold for our setting throughout what follows. Here we focus on some cases pertinent to the Zilber-Pink conjecture in $\mathcal{A}_2$. 

For simplicity we construct relations for points $s\in S(\bar{\Q})$ of interest that are $v$-adically close with respect to some place $v$ to a single $s_0\in \{\xi_1\ld\xi_l\}$, in the notation of \Cref{reduction}. In other words we let $s_0:=\xi_1$ form now on and assume that we are dealing with a single $j\in\{1\ld l\}$ in the setting of \Cref{reduction}, and hence a single $\lambda\in \Lambda$ for the equivalence relation introduced in \Cref{section:settinggfuns}.

Throughout this part we assume furthermore that for the fiber $\CX_{0}$, i.e. the fiber over $s_0$ of the above morphism $f$, there exists an isogeny $\theta_0:\CX_0\rightarrow \CX_0'$ where $\CX_0':=E_0\times_K E_0'$, with $E_0$ and $E_0'$ two everywhere semi-stable elliptic curves defined over $K$, as per the formalism of \Cref{reduction}.

From now on we fix Hodge bases $\Gamma_{dR}(E_0)$, $\Gamma_{dR}(E'_0)$, and $\Gamma_{dR}(\CX'_0)$ of our abelian varieties, the last of these defined as in \Cref{remarkbases}. We also consider a fixed Hodge basis $\Gamma_{dR}(\CX/S):=\{\omega_1,\omega_2,\eta_1,\eta_2\}$ which gives via \Cref{gfuns} a matrix $Y_{G}(x)\in M_4(\bar{\Q}[[x]])$ whose entries are G-functions. We will furthermore work with the assumption that the fiber $\Gamma_{dR}(\CX/S)$ at $s_0$ is of the form $\Gamma_{dR}(\CX_0,\theta_0)$ in the notation of \Cref{section:isogenyhodgebasis}.

Finally, given $v\in\Sigma_{K}$ which will either be a finite place of good reduction of $\CX_0$ or an infinite place, we consider fixed from now on bases $\Gamma_v(E_0):=\{\gamma_0,\delta_0\}$ and $\Gamma_v(E'_0):=\{\gamma'_0,\delta'_0\}$ which are symplectic in their respective $H^1_v$. We write $\Gamma_v(\CX'_0):=\{\gamma_0,\gamma'_0,\delta_0,\delta'_0\}$ for the ordered symplectic basis of $H^1_v(\CX'_0)$ that the above bases provide. Note that here we do not require, at least not yet, any sort of ``canonical structure'' of our bases, as is done for example in \cite{andremots}.

\subsection{Towards relations}\label{section:splittingrelations}

To simplify the description of the relations we start with the following:
\begin{lemma}\label{lemisogfin}
	Let $s\in S(K)$ be another point that is $v$-adically close to $s_0$ with respect to some fixed $v\in \Sigma_{K}$ as above.
	
	Then if there exists an isogeny $\theta_s:\CX_s\rightarrow \CX'_s=E_s\times_K E'_s$ where $E_s$ and $E'_s$ are elliptic curves, we have 
	\begin{multline}\label{eq:e2fmain}	\iota_v(J_{2,3}\cdot [\theta_s]_{dR}\cdot Y_{G}(x(s))\cdot [\theta^{\vee}_0]_{dR}\cdot J_{2,3}  )= \\
			=\begin{pmatrix}
				\Pi_v(E_s)&0\\0&\Pi_v(E'_s)
			\end{pmatrix}\cdot \Theta\cdot \begin{pmatrix}
				\Pi_v(E_0)^{-1}&0\\0&\Pi_v(E'_0)^{-1}
			\end{pmatrix},\end{multline}
	where \begin{enumerate}
		\item $[\theta_P]_{dR}$ for $P\in \{s,0\}$ denotes the matrix of the morphism $\theta_P^{*}:H^1_{dR}(\CX'_P/K)\rightarrow H^1_{dR}(\CX_P/K)$ induced from $\theta_P$, with respect to the bases $\Gamma_{dR}(\CX'_P)$ and $\{\omega_{1,P},\omega_{2,P},\eta_{1,P},\eta_{2,P}\}$, 
		
		\item $J_{2,3}:=\begin{pmatrix}	1&0&0&0\\	0&0&1&0\\	0&1&0&0\\	0&0&0&1	\end{pmatrix}$, and
		
		\item $\Theta\in M_4(\C_v)$.
	\end{enumerate}\end{lemma}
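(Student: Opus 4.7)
The plan is to combine the G-function identity of Theorem \ref{gfuns} with the isogeny--period formula of Lemma \ref{isogenyperiods} applied at both endpoints $s$ and $s_0$, and then unfold the block-diagonal structure of the split periods via Lemma \ref{lemmaperiodsplittings}. The conjugating factors $J_{2,3}$ on the left-hand side will emerge from this reshuffling, while $\Theta$ will simply collect the $v$-cohomological data of the two isogenies.

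First, since $s$ is $v$-adically close to $s_0$, Theorem \ref{gfuns} gives
\[
\CP_v(s) = \iota_v(Y_G(x(s))) \cdot \Pi_v(\CX_0),
\]
where $\CP_v(s)$ is a period matrix of $\CX_s$ taken with respect to the fiber $\Gamma_{dR}(\CX/S)|_s$ of our Hodge basis and the basis of $H^{1}_{v}(\CX_s)$ obtained from $\Gamma_v(\CX_0)$ by parallel transport along the Gauss--Manin connection in the archimedean case, and by the crystalline transport $\delta(s,0)^{-1}$ constructed in the proof of Theorem \ref{gfuns} in the non-archimedean case. Taking this transported basis as $\beta_v$ in Lemma \ref{isogenyperiods} applied to $\theta_s$, and applying the same lemma to $\theta_0$ with $\beta_v=\Gamma_v(\CX_0)$, I obtain
\[
[\theta_s]_{dR}\cdot \CP_v(s)=\Pi_v(\CX_s')\cdot [\theta_s]_v \quad \text{and} \quad [\theta_0]_{dR}\cdot \Pi_v(\CX_0)=\Pi_v(\CX_0')\cdot [\theta_0]_v.
\]

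To eliminate $\Pi_v(\CX_0)$ I use the identity $[\theta_0^{\vee}]_{dR}\cdot [\theta_0]_{dR}=\deg(\theta_0)\cdot I_4$, which follows from $[n]^{*}=n$ on $H^{1}$ of an abelian variety, so that $\Pi_v(\CX_0)=(\deg \theta_0)^{-1}\cdot [\theta_0^{\vee}]_{dR}\cdot \Pi_v(\CX_0')\cdot [\theta_0]_v$. Substituting into the G-function identity and left-multiplying by $[\theta_s]_{dR}$ yields
\[
\iota_v\bigl([\theta_s]_{dR}\cdot Y_G(x(s))\cdot [\theta_0^{\vee}]_{dR}\bigr)=\deg(\theta_0)\cdot \Pi_v(\CX_s')\cdot [\theta_s]_v\cdot [\theta_0]_v^{-1}\cdot \Pi_v(\CX_0')^{-1}.
\]
Conjugating by $J_{2,3}$, using $J_{2,3}^{2}=I_4$, and applying Lemma \ref{lemmaperiodsplittings} to turn $J_{2,3}\cdot \Pi_v(\CX_s')$ and $\Pi_v(\CX_0')^{-1}\cdot J_{2,3}$ into the desired block-diagonal factors in the $\Pi_v(E_\bullet)$'s, I recover the claimed identity with
\[
\Theta:=\deg(\theta_0)\cdot J_{2,3}\cdot [\theta_s]_v\cdot [\theta_0]_v^{-1}\cdot J_{2,3} \in M_4(\C_v).
\]

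The only delicate point is the legitimacy of treating $\CP_v(s)$ as a period matrix relative to a $v$-basis inherited from $H^{1}_{v}(\CX_0)$, so that Lemma \ref{isogenyperiods} may be applied at $s$ with this particular $\beta_v$. In the archimedean case this is automatic from the flatness of the Betti local system on $\Delta_{s_0,r_v}$, and in the crystalline case this is exactly what the Berthelot--Ogus / Ogus framework provides, as encapsulated in the construction of $\delta(s,0)$ in the proof of Theorem \ref{gfuns}. Once this identification is in place, everything else reduces to formal bookkeeping with $4\times 4$ matrices.
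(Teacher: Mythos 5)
Your proposal is correct and follows essentially the same route as the paper's own proof: both use the G-function identity $\CP_v(s) = \iota_v(Y_G(x(s)))\cdot\CP_v(0)$, the functoriality of the comparison isomorphism under the isogenies $\theta_s$ and $\theta_0$ (which the paper re-derives inline rather than citing Lemma \ref{isogenyperiods}), the block-diagonalization via $J_{2,3}$ from Lemma \ref{lemmaperiodsplittings}, and the dual-isogeny identity $[\theta_0^\vee][\theta_0]=\deg(\theta_0)I_4$ to pass between $[\theta_0]^{-1}$ and $[\theta_0^\vee]$. Your $\Theta=\deg(\theta_0)\,J_{2,3}[\theta_s]_v[\theta_0]_v^{-1}J_{2,3}$ coincides with the paper's $\Theta = J_{2,3}[\theta_s]_v[\theta_0^\vee]_v J_{2,3}$ since $[\theta_0^\vee]_v=\deg(\theta_0)[\theta_0]_v^{-1}$, and your remark on the legitimacy of treating $\CP_v(s)$ as a period matrix with respect to the transported $v$-basis correctly identifies where the Berthelot--Ogus/Ogus machinery (resp. flatness of the Betti local system) enters.
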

\begin{remark}
	Here by $\Gamma_{dR}(\CX'_s)$ we imply that we are considering an ordered Hodge basis, similar to the definition of the basis $\Gamma_{dR}(\CX_0')$, that consists of vectors that form Hodge bases for the first de Rham cohomology groups of the elliptic curves $E_s$, $E'_s$ as in \Cref{remarkbases}.\end{remark}

\begin{proof}
	Functoriality in the comparison isomorphism, either in the ``de Rham-to-crystalline'' or in the ``de Rham-to-Betti'' setting, may be represented by the following commutative diagram
\begin{center}
	\begin{tikzcd}
		H^1_{dR}(\CX_P/K) \otimes_{K}\C_v\arrow[r, "\CP_v(\CX_P)"]                & H^1_{v}(\CX_P)    \otimes_{\Q_v}\C_v              \\
		H^1_{dR}(\CX'_P/K) \otimes_{K}\C_v \arrow[u, "\theta^{*}_{P,v}"] \arrow[r, "\CP_v(\CX'_P)"] & H^1_v(\CX'_P) \otimes_{Q_v}\C_v \arrow[u, "\theta^{*}_{P,v}"'],
	\end{tikzcd}
\end{center}
where $P\in \{s,0\}$, and ``$\Q_v$'' here denotes either $\Q$ if $v\in \Sigma_{K,\infty}$ or the fraction field $K_{v,0}$ of the Witt ring $W(\mathbb{F}_{p(v)})$.

With respect to the aforementioned bases in the case where $P=0$ and the analogous bases in the case where $P=s$, the above translate to \begin{equation}\label{eq:lemisogmat1}
	[\theta_{P}]_{dR}\cdot \CP_v(P)= J_{2,3}\cdot \begin{pmatrix}\Pi_v(E_P)&0\\0&\Pi_v(E'_P)\end{pmatrix}\cdot J_{2,3}\cdot [\theta_P]_v,
\end{equation}thus giving a proof of \Cref{isogenyperiods} in the process.

On the other hand, we have $\CP_v(s)=\iota_v(Y_G(x(s)))\cdot \CP(0)$. Substituting \eqref{eq:lemisogmat1} for $P=0$ in this last equality gives
\begin{equation}
	\CP_v(s)=\iota_v(Y_G(x(s)))\cdot [\theta_0]_{dR}^{-1}\cdot J_{2,3}\cdot \begin{pmatrix}
		\Pi_v(E_0)&0\\0&\Pi_v(E'_0)	\end{pmatrix}\cdot J_{2,3} \cdot [\theta_0]_v.
\end{equation}
Using this, along with the trivial relation $J_{2,3}^{-1}=J_{2,3}$, we may rewrite \eqref{eq:lemisogmat1} at $P=s$ as 
\begin{equation}\label{eq:lemisogmat2}
\begin{split}
	J_{2,3}[\theta_s]_{dR}\cdot\iota_v(Y_G(x(s)))\cdot [\theta_0]_{dR}^{-1}\cdot J_{2,3}=\\
		=  \begin{pmatrix}
		\Pi_v(E_s)&0\\0&\Pi_v(E'_s)	\end{pmatrix} J_{2,3} \cdot [\theta_s]_v\cdot [\theta_0]_v^{-1}\cdot J_{2,3}\cdot \begin{pmatrix}
		\Pi_v(E_0)^{-1}&0\\0&\Pi_v(E'_0)^{-1}	
\end{pmatrix}\end{split}	
\end{equation}

	Let $\theta^{\vee}_0:\CX'_0\rightarrow \CX_0$ be the dual isogeny of $\theta_0$ and write $N_0:=\deg(\theta_0)$ so that we have $\theta_0^{\vee}\circ\theta_0=[N_0]_{\CX_0}$ and $\theta_0\circ\theta_0^{\vee}=[N_0]_{\CX'_0}$. From this we get on the level of matrices that $[\theta_0^{\vee}]_v\cdot [\theta_0]_v= [\theta_0]_v\cdot [\theta_0^{\vee}]_v=N_0\cdot I_4$. In particular, we will have $[\theta_0]_v^{-1}=\frac{1}{N_0}\cdot [\theta_0^\vee]_v$ and similarly for the associated matrix in de Rham cohomology we get $[\theta_0]_{dR}^{-1}=\frac{1}{N_0}\cdot [\theta_0^\vee]_v$.

This establishes our relation with \begin{equation}\label{eq:thetadesc}\Theta=J_{2,3} \cdot[\theta_s]_v\cdot [\theta_0^{\vee}]_v\cdot J_{2,3}.\end{equation}\end{proof}

For archimedean places the entries of the matrix $\Theta$ that appears in \eqref{eq:e2fmain} will in fact be in $\Q$. This follows easily from the description of $\Theta$ in \eqref{eq:thetadesc} and the fact that the matrices $[\theta_P]_v$ that appear there are in fact in $\GL_4(\Q)$, encoding the pullback $\theta_P^{*}:H^1(\CX_P',\Q)\rightarrow H^1(\CX_P,\Q)$. In the case of finite places of good reduction this will no longer be true. Still the matrix $\Theta$ acquires a description in terms of morphisms between the reductions of the elliptic curves that appear in \Cref{lemisogfin}.

\begin{lemma}\label{lemisogfin2}
	In the setting of \Cref{lemisogfin} assume furthermore that $v\in \Sigma_{K,f}$. Then $\Theta\in M_4(\C_v)$ is a matrix of the form $\begin{pmatrix}\Theta_{1,1}&\Theta_{1,2} \\ \Theta_{2,1}&\Theta_{2,2} \end{pmatrix}$ where $\Theta_{i,j}\in \GL_2(\C_v)$ are matrices induced from isogenies between the reductions modulo $v$ of either ${E}_{0}$ or $E_0'$ and one of either $E_s$ or $E'_s$.
\end{lemma}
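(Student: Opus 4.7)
The approach is to unwind the formula $\Theta=J_{2,3}\cdot [\theta_s]_v\cdot [\theta_0^{\vee}]_v\cdot J_{2,3}$ produced at the end of the proof of \Cref{lemisogfin} and, via the functoriality of crystalline cohomology, interpret the inner product as coming from a single isogeny between the split abelian surfaces on the common special fiber.

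First I would use the $v$-adic proximity of $s$ to $s_0$ together with the integrality setup of \Cref{reduction} and the convergence property recalled in \Cref{propertyvadicprox} to deduce that $\tilde{s}$ and $\tilde{s}_0$ reduce to the same point of $\mathfrak{S}'$ modulo $v$. Since $\CX_0$ has good reduction at $v$, this yields a canonical identification $H^1_v(\CX_s)=H^1_v(\CX_0)=H^1_{\crys}(\tilde{\CX}_0/W(k_v))\otimes K_{v,0}$, and, after possibly clearing denominators by an integer scalar (which only rescales $\Theta$ without affecting its block structure), the isogenies $\theta_s$ and $\theta_0$ extend to morphisms between the corresponding connected N\'eron models and reduce to honest $k_v$-isogenies $\tilde{\theta}_s:\tilde{\CX}_0\to \tilde{\CX}'_s$ and $\tilde{\theta}_0:\tilde{\CX}_0\to \tilde{\CX}'_0$ between abelian varieties over $k_v$.

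Next, by functoriality of crystalline cohomology the inner matrix product $[\theta_s]_v\cdot [\theta_0^{\vee}]_v$ represents, through the identification above, the crystalline pullback along the composed $k_v$-isogeny $\tilde{\theta}_s\circ \tilde{\theta}_0^{\vee}:\tilde{\CX}'_0\to \tilde{\CX}'_s$ between the split abelian surfaces $\tilde{E}_0\times \tilde{E}'_0$ and $\tilde{E}_s\times \tilde{E}'_s$. Via the canonical
\[
\homm(\tilde{E}_0\times \tilde{E}'_0,\tilde{E}_s\times \tilde{E}'_s)=\bigoplus_{1\leq i,j\leq 2}\homm(\tilde{E}^{(j)}_0,\tilde{E}^{(i)}_s),
\]
this composed isogeny splits as a $2\times 2$ array of morphisms $\alpha_{i,j}$ between factor elliptic curves; the additivity of $H^1_{\crys}$ transports this direct sum decomposition to the cohomological side, so that with respect to the ``per-factor'' ordered basis $\{\gamma,\delta,\gamma',\delta'\}$ of each side the matrix of the pullback is block-formed from the four $2\times 2$ blocks $\alpha_{i,j}^{*}$. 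The outer conjugation by $J_{2,3}$ is precisely the change of basis from the ``Hodge-filtration-first'' ordering $\{\gamma,\gamma',\delta,\delta'\}$ used to define $[\theta_{\bullet}]_v$ to the per-factor one, and so exhibits $\Theta$ in the advertised block form.

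To finish, any nonzero morphism between two elliptic curves is an isogeny, and its pullback on crystalline $H^1$ is an isomorphism of two-dimensional spaces, placing the corresponding block in $\GL_2(\C_v)$ (a zero block would correspond to the case in which the two factor curves in question are non-isogenous over $\bar{k}_v$). The main obstacle I foresee is the careful integral bookkeeping needed to extend $\theta_s$ and $\theta_0$ so as to obtain honest isogenies on the common special fiber; this is handled by the N\'eron model/Gabber's lemma machinery already invoked in the proof of \Cref{gfuns}, together with the freedom to rescale $\theta_{\bullet}$ by a nonzero integer, which does not affect the block-wise conclusion sought by the lemma.
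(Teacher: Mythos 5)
Your proof follows essentially the same route as the paper's: reduce the isogenies to the common special fiber via $v$-adic proximity and the N\'eron/Gabber machinery, compose to a single isogeny $\tilde{\theta}_s\circ\tilde{\theta}_0^{\vee}$ between the split reductions, decompose via the canonical splitting of $\homm$ of products together with additivity of $H^1_{\crys}$ (the paper instead explicitly defines the composites $\varphi_{i,j}$ with inclusions and projections, but this is the same idea), and read off the block structure from the conjugation by $J_{2,3}$. You correctly note that a block $\Theta_{i,j}$ can vanish when the corresponding factor curves are non-isogenous over $\bar{k}_v$, which the lemma's wording ``$\Theta_{i,j}\in\GL_2(\C_v)$'' glosses over but the paper itself relies on in \Cref{corollarynonisogenous}.
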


\begin{proof}We look more closely at $\tilde{\Theta}:=[\theta_s]_v\cdot[\theta_0^{\vee}]_v$ under the assumption that $v\in \Sigma_{K,f}$ is a place of good reduction of $\CX_0$, that appears in the proof of \Cref{lemisogfin}.
	
	We can in fact say more about the matrices $[\theta_P]_v$ by briefly revisiting their construction. In order to obtain these we start first by reducing the isogenies modulo $v$. This will give us, since we have assumed that $v$ is a place of good reduction, isogenies 
	$\tilde{\theta}_{s}:\tilde{\CX}_{s,v}\rightarrow \tilde{E}_{s,v}\times_{\mathbb{F}_{p(v)}} \tilde{E}'_{s,v}$ as well as $\tilde{\theta}^\vee_0:\tilde{E}_{0,v}\times_{\mathbb{F}_{p(v)}}  \tilde{E}'_{0,v}\rightarrow\tilde{\CX}_{0,v}$. Now note that since $s$ and $s_0$ are $v$-adically close we will have that $\tilde{\CX}_{0,v}=\tilde{\CX}_{s,v}$, in particular we get by composing the above an isogeny
	\begin{equation}
	\phi_{v}: \tilde{E}_{0, v} \times_{\mathbb{F}_{p(v)}} \tilde{E}_{0, v}^{\prime} \xrightarrow{\tilde{\theta}_{0}^{\vee}} \tilde{X}_{0, v}=\tilde{X}_{s, v} \xrightarrow{\tilde{\theta}_{s}} \tilde{E}_{s, v} \times_{\mathbb{F}_{p(v)}} \tilde{E}_{s, v}^{\prime}.
	\end{equation}
	
	Looking at the morphism this induces on the level of crystalline cohomology groups we get that $\phi_{v,\crys}=\tilde{\theta}_{0,\crys}^{\vee} \circ \tilde{\theta}_{s,\crys}$ which translates to $[\phi_{v,\crys}]^{\Gamma_v(\CX'_0)}_{\Gamma_v(\CX_s')}=\tilde{\Theta}$. In particular from \eqref{eq:thetadesc} and the above discussion, we get that $\Theta$ is the matrix of $\phi_{v,\crys}$ with respect to the ordered bases $\Gamma_v(E_s)\cup \Gamma_v(E'_s)$ and $\Gamma_v(E_0)\cup \Gamma_v(E'_0)$ of the respective $H^1_v(\CX'_P)$.
	
	The composition
	\[
	\varphi_{1,1}: \widetilde{E}_{0,v} \times_{\mathbb{F}_{p(v)}}\{0\} \rightarrow \widetilde{E}_{0,v} \times_{\mathbb{F}_{p(v)}}\widetilde{E}_{0,v}^{\prime}{ }\xrightarrow{\phi_{v}} \widetilde{E}_{s,v} \times_{\mathbb{F}_{p(v)}}\widetilde{E}_{s,v}^{\prime} \xrightarrow{\pr_1} \widetilde{E}_{s,v}
	\]
	defines a morphism of elliptic curves. Similarly, we get morphisms
	\[\varphi_{1,2}: \widetilde{E}_{0,v}^{\prime} \times_{\mathbb{F}_{p(v)}}\{0\} \rightarrow \widetilde{E}_{0,v} \times_{\mathbb{F}_{p(v)}}\widetilde{E}_{0,v}^{\prime}{ }\xrightarrow{\phi_{v}} \widetilde{E}_{s,v} \times_{\mathbb{F}_{p(v)}}\widetilde{E}_{s,v}^{\prime} \xrightarrow{\pr_1} \widetilde{E}_{s,v},
	\]\[
	\varphi_{2,1}: \widetilde{E}_{0,v} \times_{\mathbb{F}_{p(v)}}\{0\} \rightarrow \widetilde{E}_{0,v} \times_{\mathbb{F}_{p(v)}}\widetilde{E}_{0,v}^{\prime}{ }\xrightarrow{\phi_{v}} \widetilde{E}_{s,v} \times_{\mathbb{F}_{p(v)}}\widetilde{E}_{s,v}^{\prime} \xrightarrow{\pr_2} \widetilde{E}_{s,v}^{\prime}, \text{ and}
	\]\[
	\varphi_{2,2}: \widetilde{E}_{0,v}^{\prime} \times_{\mathbb{F}_{p(v)}}\{0\} \rightarrow \widetilde{E}_{0,v} \times_{\mathbb{F}_{p(v)}}\widetilde{E}_{0,v}^{\prime}{ }\xrightarrow{\phi_{v}} \widetilde{E}_{s,v} \times_{\mathbb{F}_{p(v)}}\widetilde{E}_{s,v}^{\prime} \xrightarrow{\pr_2} \widetilde{E}_{s,v}^{\prime}.
	\]
	
	Letting $\Theta_{i, j}:=\left[\left(\varphi_{i, j}\right)_{\crys}\right]$ be the matrix of the induced morphism in crystalline cohomology, always with respect to the bases chosen already, it is easy to see that
	\[\Theta=\begin{pmatrix}
		\Theta_{1,1} & \Theta_{1,2} \\
		\Theta_{2,1} & \Theta_{2,2}
	\end{pmatrix}.\]
\end{proof}

From the above, we get the following immediate corollary.

\begin{cor}\label{corollarynonisogenous}Let $s \in S(K)$ be as in \Cref{lemisogfin} and assume that $E_{0}$ and $E_{0}^{\prime}$ are not geometrically isogenous. If the place $v \in \Sigma_{K,f}$ for which $s$ is $v$-adically close to $s_{0}$ is such that $\tilde{E}_{0,v} \nsim \tilde{E}_{0,v}^{\prime}$ then the matrix $\Theta$ in \eqref{eq:e2fmain} is either of the form $\begin{pmatrix}
		\Theta_{1,1}&0\\ 0&\Theta_{2,2}
	\end{pmatrix}$ or the form $\begin{pmatrix}
	0&\Theta_{1,2}\\ \Theta_{2,1}&0
	\end{pmatrix}$, with $\Theta_{i,j}\in\GL_2(\C_v)$.
\end{cor}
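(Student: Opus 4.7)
The plan is to exploit the explicit description of each block $\Theta_{i,j}$ given in \Cref{lemisogfin2}: $\Theta_{i,j}$ is the matrix in crystalline cohomology of a morphism $\varphi_{i,j}$ between reductions of elliptic curves over $\mathbb{F}_{p(v)}$. Consequently, if the source and target of $\varphi_{i,j}$ are non-isogenous elliptic curves, then $\varphi_{i,j}=0$ and hence $\Theta_{i,j}=0$. The task thus reduces to identifying which pairs among $\tilde{E}_{0,v},\tilde{E}'_{0,v},\tilde{E}_{s,v},\tilde{E}'_{s,v}$ are isogenous.

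First I would invoke the $v$-adic proximity of $s$ and $s_0$ to identify the special fibers $\tilde{\CX}_{0,v}=\tilde{\CX}_{s,v}$. Pre-composing with $\tilde\theta_0^\vee$ and post-composing with $\tilde\theta_s$ then yields an isogeny
\[
\tilde{E}_{0,v}\times_{\mathbb{F}_{p(v)}}\tilde{E}'_{0,v}\;\sim\;\tilde{E}_{s,v}\times_{\mathbb{F}_{p(v)}}\tilde{E}'_{s,v}.
\]
Passing if needed to $\overline{\mathbb{F}}_{p(v)}$ and invoking Poincaré's complete reducibility theorem---equivalently, the uniqueness up to isogeny and permutation of the simple factors in a decomposition of an abelian variety---the hypothesis $\tilde{E}_{0,v}\nsim\tilde{E}'_{0,v}$ forces exactly one of the two cases: \emph{(a)} $\tilde{E}_{s,v}\sim\tilde{E}_{0,v}$ and $\tilde{E}'_{s,v}\sim\tilde{E}'_{0,v}$, or \emph{(b)} $\tilde{E}_{s,v}\sim\tilde{E}'_{0,v}$ and $\tilde{E}'_{s,v}\sim\tilde{E}_{0,v}$.

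In case \emph{(a)}, the morphism $\varphi_{1,2}:\tilde{E}'_{0,v}\rightarrow\tilde{E}_{s,v}$ is between non-isogenous elliptic curves (since $\tilde{E}_{s,v}\sim\tilde{E}_{0,v}\nsim\tilde{E}'_{0,v}$), hence is zero; the same argument applied to $\varphi_{2,1}$ gives $\Theta_{1,2}=\Theta_{2,1}=0$, placing $\Theta$ in block-diagonal form. In case \emph{(b)}, the symmetric argument applied to $\varphi_{1,1}$ and $\varphi_{2,2}$ produces the block-antidiagonal form. Finally, to conclude that the surviving blocks lie in $\GL_2(\C_v)$, I would note that $\phi_v=\tilde{\theta}_s\circ\tilde{\theta}_0^\vee$ is an isogeny between abelian surfaces, so $\phi_{v,\crys}$ becomes an isomorphism after extending scalars to $\C_v$; thus $\Theta\in\GL_4(\C_v)$, and the (anti)diagonal structure just derived forces both surviving $2\times 2$ blocks to be invertible.

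I do not anticipate a serious obstacle in carrying this out---the content is essentially bookkeeping around \Cref{lemisogfin2}. The one point that merits care is verifying that Poincaré reducibility, applied over $\overline{\mathbb{F}}_{p(v)}$, yields \emph{exactly} the two dichotomous cases above and no other configuration; this is precisely where the hypothesis $\tilde{E}_{0,v}\nsim\tilde{E}'_{0,v}$ is genuinely used, ensuring that the two simple factors on the left are distinct up to isogeny and hence match the right-hand factors in one of only two possible orders.
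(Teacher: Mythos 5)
Your proof is correct and arrives at the same conclusion, but it is organized around a different key step than the paper's. You invoke Poincaré complete reducibility (uniqueness of simple isogeny factors) to first pin down exactly how the four elliptic curves pair up into isogeny classes---case (a) or case (b)---and then read off which $\varphi_{i,j}$ must vanish. The paper instead avoids citing Poincaré reducibility: it observes directly that each $\varphi_{i,j}$ is zero or an isogeny, notes that $\varphi_{1,1}$ and $\varphi_{1,2}$ cannot both be nonzero (else $\tilde{E}_{0,v}\sim\tilde{E}'_{0,v}$), and then uses the invertibility of $\Theta$ block-triangularly: once $\Theta_{1,2}=0$, the determinant factorization of a block-triangular matrix forces $\Theta_{2,2}$ invertible, hence $\varphi_{2,2}$ an isogeny, hence $\varphi_{2,1}=0$. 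Both approaches rest on the same three facts (morphisms of elliptic curves fixing $0$ are zero or isogenies, $\Theta\in\GL_4(\C_v)$, and $\tilde{E}_{0,v}\nsim\tilde{E}'_{0,v}$); your version front-loads the isogeny-class bookkeeping into one citation of Poincaré reducibility and is arguably a bit more transparent about \emph{why} exactly two dichotomous cases arise, whereas the paper's version is self-contained at the level of the $\varphi_{i,j}$ and uses the invertibility of $\Theta$ as the organizing tool rather than as an afterthought. Your final observation that $\phi_v$ being an isogeny forces $\Theta\in\GL_4(\C_v)$, and hence the surviving $2\times 2$ blocks are invertible, is correct and matches what the paper uses implicitly.
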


\begin{proof}The morphisms $\varphi_{i, j}$, that appear in the previous proof are either isogenies or identically zero, this is classical, pairing, for example, Proposition $II.6.8$ of \cite{hhorne}, with the fact that $\varphi_{i,j}(0)=0$. If both $\varphi_{1,1}$ and $\varphi_{1,2}$ were not zero we would thus get that $\tilde{E}_{0, v}$ and $\tilde{E}_{0, v}^{\prime}$ are isogenous, contradicting our assumption on $v$.
	
	Assume for now that $\varphi_{1,2}=0$ and $\varphi_{1,1}$ is an isogeny. In particular, $\Theta_{1,2}=0$. Since $\Theta$ is invertible the same must hold for $\Theta_{2,2}$. Thus $\varphi_{2,2}$ is also an isogeny. If $\varphi_{2,1} \neq 0$ we would again get a contradiction since $\tilde{E}_{0, v}$ and $\tilde{E}_{0, v}^{\prime}$ would again be isogenous. Therefore $\Theta$ is as we want in this case.
	
	The case where $\varphi_{1,1}=0$ and $\varphi_{1,2}$ is an isogeny proceeds similarly. Note that since $\Theta$ is invertible these are the only two cases we need to consider, i.e. we cannot have $\varphi_{1,1}=\varphi_{1,2}=0$.\end{proof}

		\subsection{Relations at finite places}\label{section:splitfinite}

We follow the same notation as in the discussion preceding \Cref{lemisogfin}.

\begin{prop}\label{propisoga2fin}
	Let $s\in S(\bar{\Q})$ be such that $\CX_s$ is isogenous to a pair of elliptic curves. Assume that $E_0$ and $E'_0$ are not geometrically isogenous and consider the set\begin{center}
		 $\Sigma(E_0,E_0')_{ngi}:=\{v\in \Sigma_{K,f}: \tilde{E}_{0,v} \text{ and } \tilde{E}_{0,v}' \text{ are not geometrically isogenous}\}$.
	\end{center}
	
	Then, there exists a polynomial $R_{s,f}\in \bar{\Q}[X_{i,j}:1\leq i,j\leq 4]$ for which the following hold:
	\begin{enumerate}
		\item $R_{s,f}$ has coefficients in some finite extension $L_s/K(s)$ with $[L_s:K(s)]$ bounded by an absolute constant $c_f$,
		
		\item $R_{s,f}$ is homogeneous of $\deg(R_{s,f})=2$,
		
		\item for all finite places $w\in \Sigma_{L_s,f}$ over which $\CX_0$ has good reduction, there exists some $v\in\Sigma(E_0,E_0')_{ngi}$ with $w|v$, and for which $s$ is $w$-adically close to $s_0$ we have \begin{center}
			$\iota_w(R_{s,f}(Y_G(x(s))))=0$, and 
		\end{center}
		\item $R_{s,f}\not\in I(\SP_4)$, where the latter denotes the ideal of definition of $\SP_4$ in $\GL_4$.
	\end{enumerate}
\end{prop}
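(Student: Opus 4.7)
The plan is to combine \Cref{lemisogfin} with \Cref{corollarynonisogenous}, which together dictate the shape of $\Theta$ at places $v \in \Sigma(E_0, E_0')_{ngi}$, and to assemble a single degree-$2$ polynomial that vanishes in both of the two cases allowed by the corollary. Concretely, I would set
\[
M(X) := J_{2,3} \cdot [\theta_s]_{dR} \cdot X \cdot [\theta_0^\vee]_{dR} \cdot J_{2,3},
\]
where $X$ denotes the generic $4 \times 4$ matrix, so that each entry $M_{i,j}(X)$ is a linear form in the $X_{k,l}$ with coefficients in a finite extension $L_s$ of $K(s)$ over which $\theta_s$ and $[\theta_s]_{dR}$ are defined. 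The uniform bound $[L_s : K(s)] \leq c_f$ reduces to standard field-of-definition estimates for the isogeny $\theta_s$ and for the two elliptic factors $E_s, E_s'$, all of which can be made independent of $s$ once the isogeny type is fixed.

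By \Cref{lemisogfin}, specializing $X$ to $Y_G(x(s))$ and applying $\iota_w$ recovers the right-hand side of \eqref{eq:e2fmain}; and \Cref{corollarynonisogenous} guarantees that for $w \mid v$ with $v \in \Sigma(E_0, E_0')_{ngi}$ this RHS is either block-diagonal (``case $1$'') or block-anti-diagonal (``case $2$'') in its natural $2 \times 2$ block decomposition. Hence in case $1$ the eight off-diagonal-block entries of $\iota_w(M(Y_G(x(s))))$ vanish, while in case $2$ the eight diagonal-block entries do. The key move is then to set
\[
R_{s,f}(X) := M_{1,1}(X) \cdot M_{1,3}(X),
\]
with $M_{1,1}$ coming from a diagonal block and $M_{1,3}$ from an off-diagonal block, so that at least one of the two linear factors vanishes at $\iota_w(Y_G(x(s)))$ in either case. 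By construction $R_{s,f}$ is homogeneous of degree $2$ with coefficients in $L_s$, giving items (1), (2) and (3) at once.

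For item (4), i.e.\ $R_{s,f} \notin I(\SP_4)$, which one might initially flag as the delicate step, the argument turns out to be formal. Namely, $M_4 \simeq V \otimes V$ (via the symplectic form identifying $V \cong V^*$) is irreducible as an $\SP_4 \times \SP_4$-representation under left-right multiplication, so the $\bar{\Q}$-linear span of $\SP_4 \subset M_4$ is all of $M_4$ and no non-zero linear form vanishes on $\SP_4$. Since $J_{2,3}$, $[\theta_s]_{dR}$ and $[\theta_0^\vee]_{dR}$ are invertible, the map $X \mapsto M(X)$ is a linear automorphism of $M_4$, so each $M_{i,j}$ is a non-zero linear form, in particular not in $I(\SP_4)$; primality of $I(\SP_4)$ (from the irreducibility of $\SP_4$) then forces the product $R_{s,f} \notin I(\SP_4)$. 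The conceptual crux of the proof is therefore the two-case dichotomy provided by \Cref{corollarynonisogenous}; the main technical obstacle, in my view, is the uniform field-of-definition bound in (1), while the non-triviality in (4) comes out essentially for free once the construction is in place.
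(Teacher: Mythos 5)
Your proof is correct and takes the same basic route as the paper's — defining the linear forms via $X \mapsto J_{2,3}[\theta_s]_{dR}\,X\,[\theta_0^\vee]_{dR}J_{2,3}$, using \Cref{corollarynonisogenous} to split into a block-diagonal and a block-anti-diagonal case, and taking the product of a diagonal-block entry with an off-diagonal-block entry. The paper picks the $(1,2)$ and $(2,4)$ entries rather than your $(1,1)$ and $(1,3)$, which is immaterial; the field-of-definition bound is handled via Silverberg's theorem (giving the explicit constant $4\cdot 36^{16}$), which is what your ``standard field-of-definition estimates'' amount to.

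Where you genuinely diverge from the paper is the non-triviality step. The paper verifies $R_{s,f}\notin I(\SP_4)$ by plugging the explicit symplectic test matrices $S_n = \operatorname{diag}(T_n,T_n^{-1})$ and $S'_n = \operatorname{diag}(U_n,(U_n^T)^{-1})$ into each linear factor and extracting equations in the entries of $C_0$ and $C_s$ that contradict invertibility. Your argument is cleaner and more conceptual: since $M_4 \cong V\boxtimes V^*\cong V\boxtimes V$ is irreducible as an $\SP_4\times\SP_4$-module under two-sided translation, the linear span of $\SP_4$ inside $M_4$ is all of $M_4$, so no nonzero linear form vanishes on $\SP_4$; since $X\mapsto M(X)$ is a linear automorphism of $M_4$, each $M_{i,j}$ is a nonzero linear form, and primality of $I(\SP_4)$ (which is \Cref{idealprime} in the paper) finishes the argument. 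This avoids the explicit case-by-case computation and replaces it with a one-line representation-theoretic fact, which is a genuine simplification worth noting. One small caveat: this cleaner argument is specific to the situation where $R_{s,f}$ is a product of \emph{linear} forms; the paper's later propositions (e.g. \Cref{propordinarye2}, \Cref{propsupersingular}) produce higher-degree factors for which the span argument no longer applies directly and explicit Gr\"obner-basis computations become unavoidable, which is perhaps why the paper opts for the computational route even here.
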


\begin{proof}Let $L_s$ be the field denoted by $L$ in Theorem $4.2$ of \cite{silverberg} for $A=\CX_s$ and $B=\CX'_s$. The aforementioned Theorem of Silverberg implies that $[L_s:K(s)]\leq H(4)^2$ in her notation, where $H(4)\leq 2\cdot (36)^8$ by Corollary $3.3$ in \cite{silverberg}. All in all, we conclude that\begin{center}
		$[L_s:K(s)]\leq 4\cdot 36^{16}$.
	\end{center}
	
	From now on, by first base changing with this $L_s$, we may and do assume that the isogeny $\theta_s:\CX_s\rightarrow \CX'_s=E_s\times_{K(s)} E_s'$ is defined over $K(s)$. For now, we also choose $w\in\Sigma_{K(s),f}$ with $w|v$, where $v\in \Sigma(E_0,E_0')_{ngi}$ such that $\CX_0$ has good reduction over $v$.	
	
	Let us set, using the same notation as in \Cref{lemisogfin},
	\begin{center}
		 $(F_{i,j}(x)):=J_{2,3}\cdot [\theta_s]_{dR}\cdot Y_{G}(x)\cdot [\theta_0^{\vee}]_{dR}\cdot J_{2,3}$.
	\end{center}Note that the $F_{i,j}(x)\in \bar{\Q}[[x]]$ will be nothing but linear combinations of the entries of $Y_G(x)$, i.e. the G-functions whose values are of interest to us. Furthermore these linear combinations will be completely independent of any choice of finite place since they only depend on information coming out of the de Rham side of the comparison isomorphisms. 
	
	We set $R_{s,f}\in \bar{\Q}[X_{i,j}:1\leq i,j\leq 4]$ to be the homogeneous degree $2$ polynomial that corresponds to the product of the two linear combinations of the entries of the matrix $Y_G(x)$ defined by $F_{1,2}(x)F_{2,4}(x)$. By construction we have that $R_{s,f}$ satisfies all but the last two properties we want. 
	
	Note that with the above notation \eqref{eq:e2fmain} takes the simple form\begin{equation}\label{eq:e2fprep}
		\iota_w(F_{i,j}(x(s)))= \begin{pmatrix}	\Pi_w(E_s)&0\\0&\Pi_w(E'_s)	\end{pmatrix}\cdot \Theta\cdot \begin{pmatrix}
			\Pi_v(E_0)^{-1}&0\\0&\Pi_v(E'_0)^{-1}\end{pmatrix}.
	\end{equation}
	
	Since all our de Rham bases are ``Hodge bases'', in the sense of \Cref{hodgebasis}, we may write \begin{equation}\label{eq:thematrices}
		[\theta_s]_{dR}=\begin{pmatrix}
	A_s&0\\B_s&C_s\end{pmatrix}, \text { and }[\theta_0^{\vee}]_{dR}=\begin{pmatrix}A_0&0\\B_0&C_0\end{pmatrix}.
	\end{equation}Here note that since we are dealing with isogenies we will automatically have that $A_P$, $C_P\in \GL_2(L_s)$ for $P\in \{s,0\}$.
	
	To ease our computations we set $(\tilde{F}_{i,j}(x)):=J_{2,3}\cdot (F_{i,j}(x))\cdot J_{2,3}$ and let $Y_{G}(x):=(Y_{i,j}(x))$ with $Y_{i,j}(x)\in M_2(\bar{\Q}[[x]])$ for $1\leq i,j\leq 2$, we will have that 	$(\tilde{F}_{i,j}(x))$ is equal to \begin{equation}
	\begin{pmatrix}
			A_s(Y_{1,1}(x)A_0+Y_{1,2}(x) B_0)& A_s Y_{1,2}(x) C_0\\
			(B_s Y_{1,1}(x)+C_s Y_{2,1}(x))A_0+(B_s Y_{1,2}(x)+C_s Y_{2,2}(x))B_0& (B_s Y_{1,2}(x)+C_s Y_{2,2}(x))C_0
		\end{pmatrix}.
	\end{equation}Note that $\tilde{F}_{3,4}(x)=F_{2,4}(x)$ and $F_{1,2}(x)=\tilde{F}_{1,3}(x)$ by construction.
	
	Setting $B_s:=(f_{i,j})$, $C_s:=(e_{i,j})$, and $C_0:=(c_{i,j})$ we will have for example that  \begin{equation}
		\begin{cases}F_{2,4}(x)= c_{2,1}(f_{1,1}Y_{1,3}(x)+f_{1,2}Y_{2,3}(x)+e_{1,1}Y_{3,3}(x)+e_{1,2}Y_{4,3}(x))+\\
			+c_{2,2}(f_{1,1}Y_{1,4}(x)+f_{1,2}Y_{2,4}(x)+e_{1,1}Y_{3,4}(x)+e_{1,2}Y_{4,4}(x) ),
		\end{cases}
	\end{equation}so that $R_{s,f}$ is the polynomial where one replaces $Y_{i,j}(x)$ by $X_{i,j}$.
	
	We first show that $R_{s,f}\not\in I(\SP_4)$. Assume this were not true. Since, by \Cref{idealprime}, the ideal $I(\SP_4)$ is prime we will have that one of the factors corresponding to either $F_{1,2}(x)$ or $F_{2,4}(x)$ will be in $I(\SP_4)$. 
	
	Let $R_1$ be the factor corresponding to $F_{2,4}(x)$ and assume that $R_1\in I(\SP_4)$. Then we would have that $R_{1}(S_n)=0$ for all $n\in \N$ where $S_n:=\begin{pmatrix}
		T_n&0\\0&T_n^{-1}	\end{pmatrix}\in \SP_4$ with $T_n:=\begin{pmatrix}n&0\\ 0&\frac{1}{n}\end{pmatrix}$. This in turn implies that $e_{1,2}c_{2,2}n^2+e_{1,1}c_{2,1}=0$ for all $n$ and therefore that $e_{1,2}c_{2,2}=e_{1,1}c_{2,1}=0$. 
		
		On the other hand, considering $S'_n:=\begin{pmatrix}U_n&0\\0&(U_n^T)^{-1}\end{pmatrix}\in \SP_4$, where $U_n:=\begin{pmatrix}
			0&\frac{1}{n}\\-n&0	\end{pmatrix}$, since $R_{1}(S'_n)=0$ we get the equation $c_{2,1}e_{1,2}-n^2c_{2,2}e_{1,1}=0$ for all $n\in \N$. This in turn implies that $c_{2,1}e_{1,2}=c_{2,2}e_{1,1}=0$. Since $(c_{i,j})$ is invertible we get that either $c_{2,1}$ or $c_{2,2}\neq0$. If $c_{2,2}\neq0$ from the above we get that $e_{1,2}=e_{1,1}=0$ contradicting the fact that $C_s=(e_{i,j})$ is invertible, similarly for the case where $c_{2,1}\neq0$.
	
	Arguing similarly one may show that the factor corresponding to $F_{1,2}(x)$ will also not be in $I(\SP_4)$ thus establishing assertion $(4)$.	
	
	We are thus left with showing the fact that $\iota_w(R_{s,f}(x(s)))=0$. Going back to \Cref{corollarynonisogenous} we know that the matrix $\Theta$ above will either be bloc-diagonal or bloc-antidiagonal. In terms of \eqref{eq:e2fprep} this implies that \begin{equation}
		\iota_w(F_{i,j}(x(s)))=\begin{cases}
			\begin{pmatrix}\Pi_v(E_s)\Phi_1\Pi_v(E_0)^{-1}&0\\0&\Pi_v(E'_s)\Phi_2\Pi_v(E_0')^{-1}\end{pmatrix}\text{ or }\\
			
			\begin{pmatrix}0&\Pi_v(E_s)\Phi_1\Pi_v(E'_0)^{-1}\\\Pi_v(E'_s)\Phi_2\Pi_v(E_0)^{-1}&0\end{pmatrix}.\end{cases}
	\end{equation}In the first case we have that $\iota_w(F_{2,4}(x(s)))=\iota_w(R_{s,f}(x(s)))=0$ while in the second one that $\iota_w(F_{1,2}(x(s)))=\iota_w(R_{s,f}(x(s)))=0$ finishing the proof.\end{proof}
		\subsubsection{Ordinary places}\label{section:ordinary}

Let us assume from now on that $v \in \Sigma_{K,f}$ is a finite place of simultaneous good ordinary reduction of $E_{0}$ and $E_{0}^{\prime}$. In order to construct relations among the values of our $G$-functions at points of interest it helps to choose the bases of the various cohomologies in a more careful manner. The choice of these bases is made so that they capture information about the action of Frobenius on $\mathrm{H}^{1}_{\crys}$ and is based on our work in \cite{papaspadicgfuns2}.

Throughout this subsection we abandon the greater generality of \Cref{propisoga2fin} and focus more on cases pertinent to the Zilber-Pink setting in $\mathcal{A}_{2}$. We treat each case of this separately starting with the case where our ``central point'' $s_0$ is an $E \times CM$-point of our curve.

With applications to the Zilber-Pink conjecture in mind, we start by altering the chosen basis for $H^1_{dR}(\CX/S)$. We begin by choosing a basis $\{\omega_0',\eta_0'\}$ of $H^1_{dR}(E_0'/K)$ that comprises of eigenvectors of the action of the CM field that is the algebra of endomorphisms of $E_0'$, see $\S 2.1.1$ in \cite{papaspadicgfuns2} for more details on this. This basis will be a Hodge basis, see \Cref{hodgebasis}, so that together with a Hodge basis of $H^1_{dR}(E_0/K)$ we obtain a Hodge basis of $H^1_{dR}(\CX_0/K)$ by pulling $\omega_0$ and $\omega_0'$ back via the isogeny $\theta_0$ as discussed in the beginning of this section. We then extend this to a basis of sections of $H^1_{dR}(\CX/S)$ over some affine open neighborhood of $s_0$ in $S$, possibly excluding finitely many points of $S$. This process is in practice ``acceptable'' to us, from the perspective of obtaining height bounds, since the G-functions we get associated to this basis will depend only on the chosen point $s_0$ and the family $f:\CX\rightarrow S$.

\begin{prop}\label{propordinaryexcm}
	Assume that $s_0$ is an $E\times CM$-point of $f:\CX\rightarrow S$, i.e. that $\CX_0\sim E_0\times_{K} E_0'$ where $E_0$ is a non-CM elliptic curve and $E_0'$ is a CM elliptic curve.
	
	Let $s\in S(\overline{\mathbb{Q}})$ be such that the fiber $\mathcal{X}_{s}$ is either an $E \times CM$-abelian surface or an $E^{2}$-abelian surface. Then there exists a polynomial $R_{s,\simord}\in \bar{\Q}[X_{i,j}:1\leq i,j\leq 4]$ for which the following hold:
	\begin{enumerate}
		\item $R_{s,\simord}$ has coefficients in some finite extension $L_s/K(s)$ with $[L_s:K(s)]$ bounded by an absolute constant $c_f$,
		
		\item $R_{s,\simord}$ is homogeneous of $\deg(R_{s,\simord})\leq 2$,
		
		\item for all finite places $w\in \Sigma_{L_s,f}$ over which $\CX_0$ has good reduction and $E_0$, $E_0'$ both have ordinary reduction we have \begin{center}
			$\iota_w(R_{s,\simord}(Y_G(x(s))))=0$, and 
		\end{center}
		\item $R_{s,\simord}\not\in I(\SP_4)$.
	\end{enumerate}
\end{prop}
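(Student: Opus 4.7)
The plan is to refine the argument of \Cref{propisoga2fin} by exploiting the Frobenius-compatibility that the CM structure on $E_0'$ imposes on the matrix $\Theta$ at ordinary places. As in that proof, I first pass to a finite extension $L_s/K(s)$ of absolutely bounded degree via Silverberg's theorem (as in the opening of that proof) over which the isogeny $\theta_s:\CX_s\to E_s\times_{L_s}E_s'$ is defined, and invoke \Cref{lemisogfin} to get the matrix identity \eqref{eq:e2fmain} relating $Y_G(x(s))$, the periods of $E_\bullet,E_\bullet'$, and the matrix $\Theta$. The novelty is that, because $E_0'$ is CM and is the Serre--Tate canonical lift at every ordinary place $v$, I can arrange bases so that $\Pi_w(E_0')$ is diagonal at all such $w$: on the de Rham side, $\{\omega_0',\eta_0'\}$ are CM-eigenvectors (as in the paragraph preceding the statement), and on the crystalline side I take $\{\gamma_0',\delta_0'\}$ to be Frobenius-eigenvectors, matched to the former by the CM-equivariance of the comparison isomorphism at a canonical lift. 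I make an analogous choice on the $s$-side: in the $E\times CM$-subcase by the CM of $E_s'$; in the $E^2$-subcase by transporting a Frobenius-eigenbasis from $H^1_v(E_s)$ to $H^1_v(E_s')$ via the generic isogeny $E_s\sim E_s'$.

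The central new step is to observe that, as in the proof of \Cref{lemisogfin2}, the matrix $\Theta$ is the matrix of $\phi_{v,\crys}$, which is a morphism of $F$-isocrystals. Writing the Frobenius-compatibility $\Theta\cdot F_s=F_0\cdot \sigma(\Theta)$ block-wise and restricting to the $(2,2)$-block (the $E_0'\leftrightarrow E_s'$ piece) yields
\[
\Theta_{2,2}\cdot\diag(\beta_s,\alpha_s)=\diag(\beta_0,\alpha_0)\cdot\sigma(\Theta_{2,2}),
\]
where the $\alpha_\bullet$ are $p$-adic units and the $\beta_\bullet$ have positive $p$-adic valuation. Comparing $p$-adic valuations entry-by-entry forces the off-diagonal entries of $\Theta_{2,2}$ to vanish, since a non-zero such entry would equate a quantity of valuation $0$ with one of positive valuation. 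Combining the diagonality of $\Theta_{2,2}$ with that of $\Pi_w(E_0')$ and $\Pi_w(E_s')$, the $(2,2)$-block of the right-hand side of \eqref{eq:e2fmain} is itself diagonal, so its off-diagonal entries vanish. Unfolding \eqref{eq:e2fmain} exactly as in the proof of \Cref{propisoga2fin}, each such off-diagonal entry is a specific linear combination of the entries of $Y_G(x(s))$ whose coefficients depend only on the de Rham data $[\theta_s]_{dR},[\theta_0^\vee]_{dR}$, and in particular not on $w$. I then take $R_{s,\simord}$ to be the product of these two linear forms (or just one of them if a single vanishing already suffices), producing a homogeneous polynomial of degree at most $2$ in the entries of $Y_G$ with coefficients in $L_s$; this establishes items $(1)$--$(3)$.

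For item $(4)$, that $R_{s,\simord}\notin I(\SP_4)$, I would argue as at the end of the proof of \Cref{propisoga2fin}, using \Cref{idealprime}: since $I(\SP_4)$ is prime, if $R_{s,\simord}$ lay in it then one of its linear factors would too; evaluating that factor on the parametric families $\diag(T_n,T_n^{-1})$ and its rotated variant, as done explicitly in the proof of \Cref{propisoga2fin}, would give polynomial identities in $n$ forcing several entries of $C_0$ or $C_s$ to vanish, contradicting the invertibility of these blocks that comes from \Cref{lemmaisogenyhodgebasis} applied to the isogenies $\theta_s,\theta_0^\vee$.

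The main obstacle is the basis choice: I must ensure that the ``Frobenius-forced'' vanishings of $\Theta$ translate to $w$-independent relations on $Y_G(x(s))$, despite the Frobenius eigenvalues $\alpha_\bullet,\beta_\bullet$ themselves being $w$-dependent. This works because the Frobenius eigenspaces of $H^1_v(E_0')$ correspond, via the comparison isomorphism and $E_0'$ being a canonical lift, to the $w$-independent CM-eigendecomposition of $H^1_{dR}(E_0'/K)$. The $E^2$-subcase on the $s$-side is the subtlest, since $E_s'$ need not itself be CM; here one uses the generic isogeny $E_s\sim E_s'$ over $L_s$ to produce $w$-uniform bases of $H^1_v(E_s')$, or alternatively observes that for ``generic'' $w$ the reductions $\tilde{E}_{0,w}$ and $\tilde{E}_{0,w}'$ are non-isogenous and \Cref{propisoga2fin} already produces the required relation, so the Frobenius-compatibility argument above is needed only at the remaining places.
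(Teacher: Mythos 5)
Your treatment of the $E\times CM$ subcase is essentially the paper's argument: choose Frobenius eigenbases on $H^1_v(E_0')$ and $H^1_v(E_s')$, match them via the comparison isomorphism to the CM eigenbases on the de Rham side (this is the content of Lemma $2.6$ of \cite{papaspadicgfuns2} that the paper invokes), conclude that $\Theta$ has diagonal $2\times 2$ blocks and that $\Pi_v(E_0')$, $\Pi_v(E_s')$ are diagonal, and read off the vanishing of the off-diagonal entries of the $(2,2)$-block of the left side of \eqref{eq:e2fmain}. The paper takes just $F_{3,4}$ and gets a degree-one form, and the non-triviality argument in this case is indeed elementary. Your valuation argument for the diagonality of $\Theta_{2,2}$ is a correct rendering of what's packaged in the cited reference.

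The $E^2$ subcase is where your plan breaks down, and in a way that also invalidates your non-triviality argument. You propose to ``transport a Frobenius-eigenbasis from $H^1_v(E_s)$ to $H^1_v(E_s')$ via the generic isogeny $E_s\sim E_s'$'' and conclude that $\Pi_v(E_s')$ is diagonal. But diagonality of $\Pi_v(E_s')$ requires \emph{both} a Frobenius eigenbasis on the crystalline side \emph{and} a matching CM eigenbasis on the de Rham side; $E_s'$ has no CM, so the second ingredient is missing. There is no canonical de Rham basis of $H^1_{dR}(E_s'/L_s)$ that the comparison isomorphism carries to a Frobenius eigenbasis, because $E_s'$ is not the canonical lift of its reduction. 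In particular the $(2,2)$-block of the left side of \eqref{eq:e2fmain} does not become diagonal, and you do not get two independent vanishing linear forms. The paper handles this by a different manoeuvre: it uses the relation \eqref{eq:isogpairsperiods} to conjugate the period matrix $\Pi_v(E_s')$ back to $\Pi_v(E_s)$, keeps the \emph{same} period matrix $\Pi_v(E_s)$ on both diagonal blocks of the left factor, normalizes $\det\Pi_v(E_s)=1$ (symplectic basis), and extracts a single quadratic relation $G_{3,2}G_{4,4}-G_{4,2}G_{3,4}=0$ by left-multiplying $G_3$ and $G_4$ by the row vector $(\pi_{2,2},\,-\pi_{1,2})$ that annihilates the first column of $\Pi_v(E_s)$.

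This matters for item $(4)$: the degree-$2$ polynomial the paper produces in the $E^2$ subcase is a $2\times 2$ minor of a $4\times 4$ matrix of linear forms in the $X_{i,j}$; it does \emph{not} factor into two linear forms, so your ``$I(\SP_4)$ is prime, hence one of the linear factors is in $I(\SP_4)$'' argument has nothing to bite on. The paper instead runs a Gr\"obner-basis reduction (the code in \Cref{section:apprexcme2}) and inspects coefficients of the remainder, cycling through cases based on which $d_{i,j}$, $e_{i,j}$ vanish. Without this explicit computation (or an equally explicit substitute), your proof of $(4)$ in the $E^2$ case is incomplete. Finally, your fallback --- invoking \Cref{propisoga2fin} at ``generic'' places where $\tilde E_{0,w}\nsim\tilde E_{0,w}'$ --- does not rescue the argument, since item $(3)$ demands a single $w$-independent polynomial valid at \emph{every} ordinary place of good reduction, including those where the reductions \emph{are} isogenous; handling exactly those places is the whole point of this proposition.
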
 
\begin{proof}
	The field $L_{s} / K(s)$ is the same as the one described in \Cref{propisoga2fin}. Let us fix from now on $v \in \Sigma_{L_{s}, f}$ for which $s$ is $v$-adically close to $s_0$ and such that\begin{enumerate}
		\item $\mathcal{X}_{0}$ has good reduction at $v$,
		\item $\tilde{E}_{0, v} \sim \tilde{E}_{0, v}^{\prime}$, and
		\item $\widetilde{E}_{0, v}$ is an ordinary elliptic curve over $\mathbb{F}_{p(v)}$.
	\end{enumerate}
	
	Note that in the construction of \Cref{propisoga2fin} we had not imposed any condition on the chosen bases $\left\{\gamma_{0}, \delta_{0}\right\}$ and $\left\{\gamma_{0}^{\prime}, \delta_{0}^{\prime}\right\}$ of $H_{v}^{1}\left(E_{0}\right)$ and $H_{v}^{1}\left(E_{0}^{\prime}\right)$, and similarly for the bases of $H_{v}^{1}\left(E_{s}\right)$ and $H_{v}^{1}\left(E_{s}^{\prime}\right)$.
	
	We choose $\left\{\gamma_{0}^{\prime}, \delta_{0}^{\prime}\right\}$ and $\left\{\gamma_{s}^{\prime}, \delta_{s}^{\prime}\right\}$ based on the action of Frobenius on $H_{v}^{1}\left(E_{0}^{\prime}\right)$ and $H_{v}^{1}\left(E_{s}^{\prime}\right)$ as discussed in $\S 2.1.1$ of \cite{papaspadicgfuns2}. Together with the choice of $\{\omega_0',\eta_0'\}$ above this forces \begin{equation}\label{eq:periodcmell}
		\Pi_v(E_0')=\begin{pmatrix}\varpi_0&0\\0&\varpi_0^{-1}\end{pmatrix},
	\end{equation}for some $\varpi_0\in \C_v$. We point the interested reader to Lemma $2.6$ in \cite{papaspadicgfuns2} for a proof of this fact. 
	
	We note here that since $\tilde{E}_{0,v} \times{ }_{\mathbb{F}_{p(v)}} \tilde{E}_{0, v}^{\prime}$ is isogenous to $\tilde{E}_{s,v} \times_{\mathbb{F}_{p(v)}}\tilde{E}_{s,v}^{\prime}$ both $\tilde{E}_{s,v}$ and $\tilde{E}_{s,v}^{\prime}$ will also be ordinary elliptic curves. This can be easily seen as a corollary of Theorem $V.3.1$ of \cite{silvermanell}. Hence the choice of the above bases is possible.
	
	Now we look at the morphisms $\varphi_{i, j}$ introduced in the proof of \Cref{lemisogfin2}. The pullbacks $\varphi_{i, j}^{*}$ will then be morphisms of $\varphi$-modules. By virtue of the definition of the bases $\left\{\gamma_{s}^{\prime}, \delta_{s}^{\prime}\right\}$ and $\left\{\gamma_{0}^{\prime}, \delta_{0}^{\prime}\right\}$ above we get, see Lemma $2.4$ of \cite{papaspadicgfuns2} for more on this, using the notation of \Cref{lemisogfin2} that
	\begin{equation}\label{eq:excmperiodfin}
	\Theta_{i, j}=[(\varphi_{i, j})_{\crys}]=\begin{pmatrix}
		\alpha_{i, j} & 0 \\
		0 & b_{i, j}
	\end{pmatrix},\end{equation}
	where $\alpha_{i, j}, b_{i, j} \in \mathbb{C}_{v}^{*}$, for $1 \leq i, j \leq 2$.
	
	For convenience from now on we set \begin{center}
		$\iota_{v}(F_{i,j}(x(s)))=: (F_{i,j})=\begin{pmatrix}
		F_1&F_2\\F_3&F_4\end{pmatrix},$
	\end{center}for the matrix on the left hand side of \eqref{eq:e2fmain}.\\
	
	Case $1$: $\mathcal{X}_s$ is an $E \times CM$-point.\\
	
	Arguing as above, since we have ordinary reduction, upon choosing the basis of $H_{d R}^{1}\left(E_{s}^{\prime}/L_s\right)$ to be as discussed in the proof of Lemma $2.6$ in \cite{papaspadicgfuns2}, we will again have that
	\[
	\Pi_{v}\left(E_{s}^{\prime}\right)=\begin{pmatrix}
		\varpi_{s} & 0 \\
		0 & \varpi_{s}^{-1}
	\end{pmatrix},
	\]
	for some $\varpi_{s} \in \mathbb{C}_{v}$.
	
	Pairing this with \eqref{eq:excmperiodfin} and \eqref{eq:e2fmain} we conclude that
	\[
	F_{4}=\begin{pmatrix}
		\varpi_{s} \alpha_{2,2} \varpi_{0}^{-1} & 0 \\
		0 & \varpi_{s}^{-1} \beta_{2,2} \varpi_{0}
	\end{pmatrix},\]
	which in turn implies that $F_{3,4}=F_{4,3}=0$.
	
	The polynomial corresponding to $F_{3,4}$ will be
	\[
R_{s,\simord}:=c_{1,2} d_{2,1} X_{1,3}+c_{2,2} d_{2,1} X_{1,4}+c_{1,2} d_{2,2} X_{2,3}+c_{2,2} d_{2,2} X_{2,4},
	\]
	where $C_0:=\left(c_{i, j}\right)$ and $A_s:=(d_{i,j})$ are the matrices introduced in \eqref{eq:thematrices}.
	
	Setting $R_{s,\simord}$ to be as above, the properties we want follow by construction with the possible exception of the ``non-triviality'' of $R_{s,\simord}$. But in this case it is easy to see that $R_{s, \simord} \in I\left(\SP_4\right)$ if and only if all its coefficients are zero. This is impossible since $\left(c_{i, j}\right)$ and $(d_{i,j})$ are invertible.\\

	Case $2$: $\mathcal{X}_s$ is an $E^{2}$-point.\\

From now on, assume that $s$ is an $E^{2}$-point. We write $\varphi_{s}: E_{s} \rightarrow E_{s}^{\prime}$ for the isogeny between the two elliptic curves. Arguing as in \Cref{propisoga2fin} we may assume this is defined over the extension $L_{s} / K(s)$.

Arguing as in Lemma $3.1$ of \cite{papaszpy1}, using the compatibility of $\varphi_{s,\crys}$ with the pullback of $\varphi_s$ in de Rham cohomology via the comparison isomorphism of Berthelot-Ogus, we get that
\begin{equation}\label{eq:isogpairsperiods}
	[\varphi_s]_{dR}\cdot \Pi_v(E_s)=\Pi_v(E_s')\cdot [\varphi_s]_v
\end{equation}where $[\varphi_s]_{dR}$ as usual stands for the matrix of the morphism induced on the level of de Rham cohomology by $\varphi_s$ with respect to a pair of Hodge bases and $[\varphi_s]_v$ stands for the matrix of $\varphi_{s,crys}$ with respect to the bases $\Gamma_v(E_s)$ and $\Gamma_v(E_s')$ chosen above.

We may thus rewrite \eqref{eq:e2fmain} as\begin{center}
	$\iota_{v}\left(\begin{pmatrix}I_{2} & 0 \\ 0 & [\varphi_{s}]_{d R}^{-1}\end{pmatrix}\cdot( F_{i, j}(x(s)))\right)=$\\
	$=\begin{pmatrix}\Pi_{v}\left(E_{s}\right) & 0 \\ 0 & \Pi_{v}\left(E_{s}\right)\end{pmatrix} \cdot\begin{pmatrix}I_{2} & 0 \\ 0 & {\left[\varphi_{s}\right]_{v}^{-1}}\end{pmatrix} \cdot \Theta\cdot\begin{pmatrix}\Pi_{v}\left(E_{0}\right)^{-1} & 0 \\ 0 & \Pi_{v}\left(E_{0}^{\prime}\right)^{-1}\end{pmatrix}$.\end{center}

For convenience, we rewrite this as\begin{equation}\label{eq:excme2main}
	\iota_{v}\left(G_{i, j}(x(s))\right)=\begin{pmatrix}\Pi_{v}\left(E_{s}\right) & 0 \\ 0 & \Pi_{v}\left(E_{s}\right)\end{pmatrix}\begin{pmatrix}\tilde{\Theta}_{1,1} & \tilde{\Theta}_{1,2} \\ \tilde{\Theta}_{2,1} & \tilde{\Theta}_{2,2}\end{pmatrix}\begin{pmatrix}\Pi_{v}\left(E_{0}\right)^{-1} & 0 \\ 0 & \Pi_{v}\left(E_{0}^{\prime}\right)^{-1}\end{pmatrix}\end{equation}
Just as before the $\tilde{\Theta}_{i, j}=\begin{pmatrix}\alpha_{i, j} & 0 \\ 0 & \beta_{i, j}\end{pmatrix}$ correspond to isogenies between the reductions of $E_{0}$ and $E_{0}^{\prime}$ with that of $E_{s}$ at the place $v$.

	Writing $(G_{i, j})=\begin{pmatrix}
	G_{1} & G_{2} \\
	G_{3} & G_{4}
\end{pmatrix}$ for the left hand side we get $G_{3}=\Pi_{v}\left(E_{s}\right) \cdot \tilde{\Theta}_{2,1} \cdot \Pi_{v}\left(E_{0}^{\prime}\right)^{-1} \text { and } G_{4}=\Pi_{v}\left(E_{s}\right) \cdot \tilde{\Theta}_{2,2} \cdot \Pi_{v}\left(E_{0}^{\prime}\right)^{-1}.$

We note that by choosing symplectic bases at all stages we guarantee that $\det \Pi_{v}\left(E_{s}\right)=1$, see Chapter $5$ in \cite{berthbreenmessing}. Writing $\Pi_{v}\left(E_{s}\right)=\left(\pi_{i, j}\right)$, and using \eqref{eq:periodcmell} we get
\[
G_{3}=\begin{pmatrix}
	\pi_{1,1} & \pi_{1,2} \\
	\pi_{2,1} & \pi_{2,2}
\end{pmatrix} \cdot\begin{pmatrix}
	\alpha_{2,1} \varpi_{0}^{-1} & 0 \\
	0 & \beta_{2,1} \varpi_{0}
\end{pmatrix}
\]

Multiplying on the left by $\begin{pmatrix}\pi_{2,2} & -\pi_{1,2}\end{pmatrix}$ we get\begin{center}
$\begin{pmatrix}* & G_{3,2} \pi_{2,2}-G_{4,2} \pi_{1,2}\end{pmatrix}=\begin{pmatrix}1 & 0\end{pmatrix}\cdot\begin{pmatrix}\alpha_{2,1} \varpi_{0}^{-1} & 0 \\ 0 & \beta_{2,1} \varpi_{0}\end{pmatrix} =\begin{pmatrix}\alpha_{2,1} \varpi_{0}^{-1} & 0\end{pmatrix}$.\end{center}

In particular $G_{3,2} \pi_{2,2}-G_{4,2} \pi_{1,2}=0$. Arguing similarly, with $G_{4}$ this time, we get $G_{3,4} \pi_{2,2}-G_{4,4} \pi_{1,2}=0$. These give
\[
G_{3,2} G_{4,4}-G_{4,2} G_{3,4}=0,
\]
since $\left(\pi_{1,2}, \pi_{2,2}\right) \neq(0,0)$.

We set $R_{s,\simord} \in L_{s}\left[X_{i, j}: 1 \leq i, j \leq 4\right]$ to be the polynomial with
\[
R_{s,\simord}\left(Y_{G}(x)\right)=G_{3,2}(x) G_{4,4}(x)-G_{4,2}(x) G_{3,4}(x),
\]where $G_{i,j}(x)$ are the power series analogues of $G_{i,j}$ where we have replaced the entries of $Y_G(x(s))$ in the definition of the $G_{i,j}$ by the corresponding entry of $Y_G(x)$. This polynomial will satisfy all the properties we want, with the possible exception of ``$R_{s,\simord} \notin I\left(\SP_{4}\right)$''. From now on, we assume $R_{s,\simord} \in I\left(\SP_{4}\right)$.

The code in \Cref{section:apprexcme2} computes the remainder of the division of $R_{s,\simord}$, denoted by ``$\text{Rexcme2}$'' in \Cref{section:appcomp}, by a Gröbner basis of $I\left(\SP_{4}\right)$. Since $\left[\phi_{s}\right]_{d R}^{-1}:=\begin{pmatrix}a_{s} & 0 \\ b_{s} & c_{s}\end{pmatrix}$ is invertible we get $a_{s} c_{s} \neq 0$. Similarly we get $\det\left(c_{i, j}\right) \cdot \det\left(e_{i, j}\right) \neq 0$, where $C_0=(c_{i, j})$ and $C_s=(e_{i, j})$ are as in \eqref{eq:thematrices}.

Writing $c\left(\Pi X_{i, j}\right)$ for the coefficient of the monomial $\Pi X_{i, j}$ in the aforementioned remainder, we start with the equation \begin{center}
	$c\left(X_{1,3} X_{4,4}\right)=a_{s} \det\left(c_{i, j}\right) c_{s} d_{2,1} e_{2,2}=0$,
\end{center} which gives $d_{2,1} e_{2,2}=0$.

Let us first assume \(d_{2,1}=0\), so that \(d_{1,1} d_{2,2} \neq 0\). The equations
\[
\begin{aligned}
	& c\left(X_{2,3} X_{4,4}\right)=a_{s} \det\left(c_{i, j}\right) c_{s} d_{2,2} e_{2,2,} \text { and } \\
	& c\left(X_{2,4} X_{3,3}\right)=-a_{s} \det\left(c_{i, j}\right) c_{s} d_{2,2} e_{2,1}
\end{aligned}
\]
imply \(e_{2,2}=e_{2,1}=0\) contradicting \(\det\left(e_{i, j}\right) \neq 0\).

From now on we may thus assume that \(d_{2,1} \neq 0\), \(e_{2,2}=0\), and thus \(e_{1,2} e_{2,1} \neq 0\). Now \(c\left(X_{2,4} X_{4,3}\right)=a_{s} \det\left(c_{i, j}\right) c_{s} d_{2,1} e_{2,1}=0\) becomes impossible.\end{proof}

\begin{remark}We note that when one is interested in ``counting'' $E\times CM$-points on a curve the above proposition is only pertinent, in contrast to say \Cref{propisoga2fin}, to fairly specific $E \times C M$-points. 
	
	Let us fix such a point $s \in S(\bar{\mathbb{Q}})$ and write $F_{P}:=\End_{\bar{\mathbb{Q}}}^{0}\left(E_{P}^{\prime}\right)$ for the CM field that is the algebra of the CM elliptic curve $E_{P}^{\prime}$ for $P \in\{s, 0\}$. If $v$ was a place of ordinary reduction, as in \Cref{propordinaryexcm}, for which $s$ is $v$-radically close to $s_{0}$, by looking at $\End ^0_{\bar{\mathbb{F}}_{p(v)}}(\tilde{E}_{0,v}^{\prime})$ we readily get $F_{0}=F_{s}$.\end{remark}

We now turn our attention to the case where the fiber at $s_0$ is an $E^2$-abelian surface.
\begin{prop}\label{propordinarye2} Assume that $s_0$ is an $E^2$-point of $f:\CX\rightarrow S$, i.e. that $\CX_0\sim E_0\times_{K} E_0'$ where $E_0$ and $E_0'$ are isogenous elliptic curves.
	
	Let $s\in S(\overline{\mathbb{Q}})$ be such that the fiber $\mathcal{X}_{s}$ is either an $E \times CM$-abelian surface or an $E^{2}$-abelian surface. Then there exists a polynomial $R_{s,\simord}\in \bar{\Q}[X_{i,j}:1\leq i,j\leq 4]$ for which the following hold:
	\begin{enumerate}
		\item $R_{s,\simord}$ has coefficients in some finite extension $L_s/K(s)$ with $[L_s:K(s)]$ bounded by an absolute constant $c_f$,
		
		\item $R_{s,\simord}$ is homogeneous of $\deg(R_{s,\simord})\leq 4$,
		
		\item for all finite places $w\in \Sigma_{L_s,f}$ over which $\CX_0$ has good reduction and $E_0$, $E_0'$ both have ordinary reduction we have \begin{center}
			$\iota_w(R_{s,\simord}(Y_G(x(s))))=0$, and 
		\end{center}
		\item $R_{s,\simord}\not\in I(\SP_4)$.
	\end{enumerate}
	\end{prop}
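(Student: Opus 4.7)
The plan is to mimic the overall strategy of the proof of \Cref{propordinaryexcm}, but replacing the CM structure on $E_0'$ (which was what made $\Pi_v(E_0')$ diagonal in a Frobenius-adapted basis) by the isogeny $\psi_0: E_0 \to E_0'$ coming from the hypothesis that $s_0$ is an $E^2$-point. The key observation is that for ordinary elliptic curves over $\mathbb{F}_{p(v)}$, choosing bases of $H^1_v$ adapted to the slope decomposition of Frobenius forces all morphisms of $\varphi$-modules induced by isogenies to be diagonal, just as in Lemma 2.4 of \cite{papaspadicgfuns2}. Consequently the matrices $\Theta_{i,j}$ appearing in \Cref{lemisogfin2}, as well as $[\psi_0]_v$ (and $[\psi_s]_v$ in Case 2 below), will all be diagonal.

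First I would set up $L_s/K(s)$ exactly as in the proof of \Cref{propisoga2fin}, enlarging it if necessary so that also $\psi_0$ is defined over $L_s$ (the bound on $[L_s:K(s)]$ is absorbed into $c_f$ by the same Silverberg-type argument). I then fix a place $v\in\Sigma_{L_s,f}$ of good reduction of $\CX_0$ where both $E_0$ and $E_0'$ have ordinary reduction, choose Frobenius-adapted bases $\{\gamma_0,\delta_0\}$, $\{\gamma_0',\delta_0'\}$ of $H^1_v(E_0)$ and $H^1_v(E_0')$, and invoke \Cref{isogenyperiods} applied to $\psi_0$ to get
\[
\Pi_v(E_0')=[\psi_0]_{dR}\cdot\Pi_v(E_0)\cdot [\psi_0]_v^{-1},
\]
with $[\psi_0]_v$ diagonal. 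Substituting this into \eqref{eq:e2fmain} transforms the right-hand side into an expression involving only $\Pi_v(E_0)$, $\Pi_v(E_s)$, $\Pi_v(E_s')$, and diagonal matrices with entries in $\C_v$ that do not enter the final polynomial (they are absorbed when taking appropriate $2\times 2$ minors).

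Second, I split into the two cases for $\CX_s$. If $\CX_s$ is an $E\times CM$-surface (Case 1), I choose $\{\gamma_s',\delta_s'\}$ adapted to the CM action on $E_s'$ so that $\Pi_v(E_s')$ is diagonal as in the proof of \Cref{propordinaryexcm}, and reading off the vanishing entries of the appropriate $2\times 2$ blocks in the transformed \eqref{eq:e2fmain} yields the sought-for relation, which will be of degree $\le 4$ (an extra factor of $2$ appears compared with \Cref{propordinaryexcm} because the presence of $[\psi_0]_{dR}$ on the right forces us to take a $2\times 2$ minor-type expression rather than a single linear combination vanishing). If $\CX_s$ is an $E^2$-surface (Case 2), I apply the same trick on the $s$-side using $\psi_s:E_s\to E_s'$, obtaining $\Pi_v(E_s')=[\psi_s]_{dR}\cdot\Pi_v(E_s)\cdot[\psi_s]_v^{-1}$ with $[\psi_s]_v$ diagonal, and then the vanishing of a certain $2\times 2$ minor of the transformed matrix $(G_{i,j}(x(s)))$ produces the relation, again of degree at most $4$. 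In both cases the resulting polynomial $R_{s,\simord}(X_{i,j})$ has coefficients in $L_s$ coming only from the entries of $[\theta_0^\vee]_{dR}$, $[\theta_s]_{dR}$, $[\psi_0]_{dR}$ and (in Case 2) $[\psi_s]_{dR}$, so it is independent of $v$ as claimed.

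The main obstacle, just as in \Cref{propordinaryexcm}, is to verify assertion (4), that $R_{s,\simord}\notin I(\SP_4)$. Since the de Rham matrices entering $R_{s,\simord}$ are all invertible (being associated to isogenies between principally polarized abelian varieties, compatibly with Hodge bases as in \Cref{lemmaisogenyhodgebasis}), we have explicit non-vanishing conditions on the relevant entries of $[\psi_0]_{dR}$, $[\psi_s]_{dR}$, $A_s$, $C_s$, $C_0$. The plan is to compute the remainder of $R_{s,\simord}$ modulo a Gröbner basis of $I(\SP_4)$ using Wolfram Mathematica, in exactly the same fashion as the code in \Cref{section:apprexcme2}, and then to derive a contradiction from the vanishing of each coefficient of a sufficiently rich set of monomials in that remainder, using the invertibility constraints to force $0\ne 0$. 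The hard part is organizing this elimination: the Case 2 polynomial has degree $4$ and involves many more auxiliary parameters than the one in the proof of \Cref{propordinaryexcm}, so the Mathematica computation is considerably heavier, and one has to choose the order of the variables and the monomials to examine (analogous to the sequence $c(X_{1,3}X_{4,4}),c(X_{2,3}X_{4,4}),c(X_{2,4}X_{3,3}),c(X_{2,4}X_{4,3})$ in the previous proof) with some care to drive the case analysis to a contradiction.
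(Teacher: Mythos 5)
Your high-level strategy matches the paper's: enlarge $L_s$ so that $\psi_0:E_0\to E_0'$ (and $\psi_s$ when relevant) is defined over it, choose Frobenius-adapted bases at ordinary places so that $[\psi_0]_v$ and all the blocks $\Theta_{i,j}$ from \Cref{lemisogfin2} are diagonal, use \Cref{isogenyperiods} to replace $\Pi_v(E_0')$ by an expression in $\Pi_v(E_0)$, and finish with a Gr\"obner-basis reduction modulo $I(\SP_4)$ exactly as in the appendix. You also correctly anticipate that the degree roughly doubles relative to \Cref{propordinaryexcm} because $\Pi_v(E_0')$ is no longer diagonal.

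The gap is in the extraction step, which you describe in both cases as ``the vanishing of a certain $2\times 2$ minor of the transformed matrix $(G_{i,j})$.'' No such minor vanishes. After the transformation one has $G_j=\Pi_v(E_s)\,\tilde{\Theta}_{\ast}\,\Pi_v(E_0)^{-1}$ with $\tilde{\Theta}_{\ast}$ diagonal, and since all bases are symplectic $\det\Pi_v(E_s)=\det\Pi_v(E_0)^{-1}=1$, so for instance $\det G_1=\alpha_{1,1}\beta_{1,1}\neq 0$; the diagonal entries are not ``absorbed'' by minors. In the $E\times CM$ sub-case the actual relation is $G_{4,1}G_{4,4}-G_{4,2}G_{4,3}=0$, which uses four entries from a single row (not a minor) and comes from the proportionality of $(G_{4,1},G_{4,2})$ and $(G_{4,3},G_{4,4})$ to $(\rho_{2,1},\rho_{2,2})$; it has degree $2$, not $4$ as you predict. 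In the $E^2$ sub-case the mechanism is more delicate still: using the pairs $(G_1,G_2)$ and $(G_3,G_2)$ one extracts two quadratics $A_iX^2-B_iX+C_i=0$ ($i=1,2$), with coefficients that are degree-$2$ polynomials in the $G_{j,k}$, both having the same distinct roots $\rho_{2,2}/\rho_{2,1}$ and $\rho_{1,2}/\rho_{1,1}$. The degree-$4$ relation is the resultant-type condition $A_1C_2-A_2C_1=0$. This elimination argument is the crux of the construction; without identifying it you do not know which polynomial to feed into the Gr\"obner-basis computation, so the proof cannot be completed as written.
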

\begin{proof}We examine each case individually. Before we do so, we note that, arguing as in the proof of \Cref{propordinaryexcm}, all elliptic curves will have ordinary reduction at $v$, since they are all isogenous. We may thus choose the bases $\left\{\gamma_{P}, \delta_{P}\right\}$ and $\left\{\gamma_{P}^{\prime}, \delta_{P}^{\prime}\right\}$ as in the previous proof.
	
	Using \eqref{eq:isogpairsperiods} with $s_0$ instead of $s$ we may rewrite \eqref{eq:e2fmain} as 
	\begin{equation}\label{eq:centere2main}
\iota_v((F_{i,j}(x(s))) \begin{pmatrix}I_2&0\\0&[\varphi_0]_{dR}\end{pmatrix})
=\begin{pmatrix}	\Pi_v(E_s)&0\\0&\Pi_v(E'_s)\end{pmatrix}\Theta\begin{pmatrix}I_2&0\\0&[\varphi_0]_v\end{pmatrix}\begin{pmatrix}
\Pi_v(E_0)&0\\0&\Pi_v(E_0)\end{pmatrix},
	\end{equation}where $\varphi_0:E_0\rightarrow E_0^{\prime}$ denotes the isogeny between the two elliptic curves.\\
	
	Case $1$: $\CX_{s}$ is an $E^{2}$-abelian surface.\\
	
Here we start by using \eqref{eq:isogpairsperiods} again with $s$ this time. This allows us to rewrite \eqref{eq:centere2main} as 
	\begin{equation}\label{eq:e2e2main}
		\iota_v((G_{i,j}(x(s))))=\begin{pmatrix}	\Pi_v(E_s)&0\\0&\Pi_v(E_s)\end{pmatrix}\begin{pmatrix}\tilde{\Theta}_{1,1}&\tilde{\Theta}_{1,2}\\ \tilde{\Theta}_{2,1}&\tilde{\Theta}_{2,2}\end{pmatrix}\begin{pmatrix}
		\Pi_v(E_0)&0\\0&\Pi_v(E_0)\end{pmatrix},
	\end{equation}where we have $\tilde{\Theta}:=\begin{pmatrix}
	I_2&0\\0&[\varphi_s]_v^{-1}
	\end{pmatrix}\Theta\begin{pmatrix}
	I_2&0\\0&[\varphi_0]_v
	\end{pmatrix}$ for the matrix in the middle of the right hand side and \begin{center}
$(G_{i,j}(x)):=\begin{pmatrix}I_2&0\\0&[\varphi_s]_{dR}^{-1}\end{pmatrix}\cdot (F_{i,j}(x))\cdot \begin{pmatrix}I_2&0\\0&[\varphi_0]_{dR}\end{pmatrix}$.
	\end{center}
	
	Once again, the matrices $\tilde{\Theta}_{i, j}^{}=\begin{pmatrix}\alpha_{i, j} & 0 \\ 0 & \beta_{i, j}\end{pmatrix}$ will be diagonal, due to the choice of the bases on the crystalline side, and correspond to isogenies $\tilde{E}_{0,v} \rightarrow \tilde{E}_{s,v}$. Therefore, $\alpha_{i, j}, \beta_{i, j} \in \mathbb{C}_{v}^{*}$.
	
	From now on, we set $\Pi_v(E_{0})^{-1}=\begin{pmatrix}\rho_{1,1} & \rho_{1,2} \\ \rho_{2,1} & \rho_{2,2}\end{pmatrix}$ and $\Pi_v(E_{s})=\begin{pmatrix}\pi_{1,1} & \pi_{1,2} \\ \pi_{2,1} & \pi_{2,2}\end{pmatrix}$. By the results of Chapter $5$ of \cite{berthbreenmessing}, since all bases are symplectic, we get that $\det\left(\Pi_{v}(E_P)\right)=1$ for $P \in\{s, 0\}$. For simplicity from now on we let $G_{i, j}=\iota_{v}\left(G_{i, j}(x(s))\right)$ and
	$\left(G_{i, j}\right)=\begin{pmatrix}G_{1} & G_{2} \\ G_{3} & G_{4}\end{pmatrix}$ where $G_{i}$ are $2 \times 2$ matrices as in the previous proof.
	
	From \eqref{eq:e2e2main} we thus get
	\begin{equation}\label{eq:e2e2g1}
G_{1}=\Pi_{v}(E_s)\begin{pmatrix}
	\alpha_{1,1} & 0 \\
	0 & \beta_{1,1}
\end{pmatrix}\Pi_v(E_{0})^{-1}.
	\end{equation}
	Since $\det\Pi_v(E_{0})^{-1}=1$ it is easy to see that $\Pi_v(E_0)^{-1} \cdot\begin{pmatrix}\rho_{2,2} \\ -\rho_{2,1}\end{pmatrix}=\begin{pmatrix}1 \\ 0\end{pmatrix}$ and $\Pi_{v}(E_0)^{-1} \cdot\begin{pmatrix}-\rho_{1,2} \\ \rho_{1,1}\end{pmatrix}=\begin{pmatrix}0 \\ 1\end{pmatrix}$. Multiplying \eqref{eq:e2e2g1} on the right by $\begin{pmatrix}\rho_{2,2} \\ -\rho_{2,1}\end{pmatrix}$ gives
	\[
	\begin{pmatrix}
		G_{1,1} \rho_{2,2} & -G_{1,2} \rho_{2,1} \\
		G_{2,1} \rho_{2,2} & -G_{2,2} \rho_{2,1}
	\end{pmatrix}=\begin{pmatrix}
		\alpha_{1,1}\cdot\pi_{1,1} \\
		\alpha_{1,1} \cdot \pi_{2,1}
	\end{pmatrix}.
	\]
	
	Similarly from $G_{2}$ we get
	\[
	\begin{pmatrix}
		G_{1,3} \rho_{2,2}-G_{1,4} \rho_{2,1} \\
		G_{2,3} \rho_{2,2}-G_{2,4} \rho_{2,1}
	\end{pmatrix}=\begin{pmatrix}
		\alpha_{2,2}\cdot\pi_{1,1} \\
		\alpha_{2,2}\cdot\pi_{2,1}
	\end{pmatrix}.
	\]
	Combining these last two equations, we get
\begin{equation}\label{e2e2g1g2}
	0=\left(G_{1,1} \rho_{2,1}-G_{1,2} \rho_{2,1}\right)\left(G_{2,3} \rho_{2,2}-G_{2,1} \rho_{2,1}\right)-\left(G_{1,3} \rho_{2,2}-G_{1,4} \rho_{2,1}\right)\left(G_{2,1} \rho_{2,2}-G_{2,2} \rho_{2,1}\right).\end{equation}
	
	If we had $\rho_{2,2} \rho_{2,1}=0$ this would lead to a relatively simple relation. We assume this is not the case from now on. Setting $\rho_{1}:=\frac{\rho_{2,2}}{\rho_{2,1}}$ we may rewrite the above equation in the form
	\[
	A_{1} \cdot \rho_{1}^{2}-B_{1}\cdot\rho_{1}+C_{1}=0,\]
	where $A_{1}, B_{1}$, and $C_{1}$ are degree $2$ homogeneous polynomials in the $G_{i,j}$.

On the other hand, multiplying \eqref{eq:e2e2g1} on the right by $\begin{pmatrix}-\rho_{1,2} \\ \rho_{1,1}\end{pmatrix}$ and working as above leads to
\[
A_{1} \cdot\rho_{2}^{2}-B_{1}\cdot \rho_{2}+C_{1}=0,
\]
where $\rho_{2}:=\frac{\rho_{1,2}}{\rho_{1,1}}$, again working under the ``generic assumption'' $\rho_{1,1} \cdot \rho_{1,2} \neq 0$.

Working in a similar fashion with the pair $\left(G_{3}, G_{2}\right)$ instead of $\left(G_{1}, G_{2}\right)$ we get homogeneous degree 2 polynomial expression $A_{2}, B_{2}, C_{2}$ of the entries of $G_{3}$ and $G_{2}$ such that
\[
A_{2} \rho_{j}^{2}-B_{2} \rho_{j}+C_{2}=0, \text { for } j=1,2.
\]

Since $\left(\rho_{i, j}\right)$ is invertible $\rho_{1} \neq \rho_{2}$, so that the two quadratic polynomials $A_{i} X^{2}-B_{i} X+C_{i}=0$ have the same distinct roots. This gives
\begin{equation}\label{eq:e2e2ordinaryfinal}
A_{1} C_{2}-A_{2} C_{1}=0 .
\end{equation}

Returning to our earlier notation, so that $G_{i, j}(x)$ are linear combinations over $\bar{\mathbb{Q}}$ of the entries of $Y_{G}(x)$, it is easy to see that the $A_{i}$ and $C_{i}$ that appear in the above equations are the $v$-adic values of power series $A_{i}(x), C_{i}(x)$ at the point $s$, where these power series are of the form $Q\left(Y_G(x)\right)$ for some degree $2$ homogeneous polynomials $Q\in \overline{\mathbb{Q}}\left[X_{i, j}: 1 \leq i, j \leq 4\right]$.

We thus set $R_{s,\simord} \in \mathbb{Q}\left[X_{i, j}: 1 \leq i, j \leq 4\right]$ to be the polynomial with \begin{center}$R_{s,\simord}\left(Y_{G}(x)\right)=A_{1}(x) C_{2}(x)-A_{2}(x) C_{1}(x)$.\end{center} This will be a homogeneous degree 4 polynomial satisfying all the properties we need with the possible exception of the ``non-triviality'' property, i.e. $R_{s,\simord} \notin I\left(\SP_{4}\right)$.

Let us assume from now on that $R_{s,\simord} \in I\left(\SP_4\right)$. The code in \Cref{section:appqe2e2} outputs, as earlier, the list of monomials and coefficients of the division of $R_{s,\simord}$, denoted by ``$Qe2e2$'' in \Cref{section:appcomp}, by a Gr\"obner basis of $I\left(\SP_4\right)$. Since $R_{s,\simord} \in I\left(\SP_4\right)$ all of these coefficients will be zero.

We write $c\left(\prod X_{i, j}\right)$ for the coefficient of the monomial $\prod X_{i, j}$ that appears in the remainder in question. We start by looking at the equation $c\left(X_{1,2} X_{1,3} X_{4,1} X_{4,4}\right)=a_{0} c_{0} \det\left(c_{i, j}\right) c_{s} d_{1,1}^{2} e_{1,2} e_{2,2}=0$, which gives $d_{1,1}^{2} e_{1,2} e_{2,2}=0$.

Let us first assume \(d_{1,1}=0\), so that \(d_{1,2} d_{2,1} \neq 0\). From \(c\left(X_{1,3} X_{2,4} X_{3,1} X_{3,2}\right)=0\) we get \(e_{1,1}=0\) while from \(c\left(X_{1,3} X_{2,4} X_{4,1} X_{4,2}\right)=0\) we get \(e_{1,2}=0\) which contradicts \(\det\left(e_{i, j}\right) \neq 0\). So \(d_{1,1} \neq 0\).

Let us assume \(e_{1,2}=0\), so that \(e_{1,1} e_{2,2} \neq 0\). The pair of equations
\[
\begin{aligned}
	& c\left(X_{1,2} X_{1,4} X_{3,1} X_{4,4}\right)=-a_{0} c_{0} c_{2,1} c_{2,2} c_{s} d_{1,1}^{2} e_{1,1} e_{2,2}=0, \text { and } \\
	& c\left(X_{1,2} X_{1,4} X_{3,2} X_{4,4}\right)=a_{0} c_{0} c_{1,1} c_{2,2} c_{s} d_{1,1}^{2} e_{1,1} e_{2,2}=0
\end{aligned}
\]
give \(c_{2,1} c_{2,2}=c_{1,1} c_{2,2}=0\), which implies \(c_{2,2}=0\). From this \(c_{1,2} c_{2,1} \neq 0\) so \(c\left(X_{1,2} X_{1,3} X_{3,1} X_{4,4}\right)=-a_{0} c_{0} c_{1,2} c_{2,1} c_{s} d_{1,1}^{2} e_{1,1} e_{2,2}=0\) is impossible. So from now on \(d_{1,1} \cdot e_{1,2} \neq 0\).

We must have \(e_{2,2}=0\), so \(e_{2,1} \neq 0\) as well. We then have
\[
\begin{aligned}
	& c\left(X_{1,2} X_{1,3} X_{4,2} X_{4,4}\right)=-a_{0} a_{s} c_{1,1} c_{1,2} d_{1,1} d_{2,1} e_{1,2}^{2}=0 \text { which gives } c_{1,1} c_{1,2} d_{2,1}=0 \\
	& c\left(X_{1,2} X_{2,4} X_{4,2} X_{4,3}\right)=-a_{0} a_{s} c_{0} c_{1,1} c_{1,2} d_{1,1} d_{2,2} e_{1,2}^{2}=0 \text { which gives } c_{1,1} c_{1,2} d_{2,2}=0.
\end{aligned}
\]
We also note here that $\left(X_{1,1} X_{1,4} X_{4,2} X_{4,4}\right)=a_{0} a_{s} c_{0} c_{2,1} c_{2,2} d_{1,1} d_{2,1} e_{1,2}^{2}=0$ gives \begin{equation}\label{eq:qe2e2nontrivial}c_{2,1} c_{2,2} d_{2,1}=0.\end{equation}
The first two equations above give \(c_{1,1} c_{1,2}=0\). If \(c_{1,1}=0\), so that \(c_{1,2} c_{2,1} \neq 0\), looking at \(c\left(X_{1,2} X_{2,4} X_{4,3}^{2}\right)=-a_{0} a_{s} c_{0}^{2} c_{1,2}^{2} c_{2,1} d_{1,1} d_{2,2} e_{1,2}^{2}=0\) we get \(d_{2,2}=0\). From \eqref{eq:qe2e2nontrivial} we thus get \(c_{2,2}=0\). But then the equation \(c\left(X_{1,2} X_{1,4} X_{3,2} X_{4,4}\right)=-a_{0} c_{0} c_{1,2} c_{2,1} c_{s} d_{1,1}^{2} e_{1,2} e_{2,1}=0\) is impossible.

Thus \(c_{1,1} \neq 0\) from now on. We must thus have \(c_{1,2}=0\), so \(c_{2,2} \neq 0\). Now \eqref{eq:qe2e2nontrivial} gives \(c_{2,1} d_{2,1}=0\). We now have \begin{center}\(c\left(X_{1,2} X_{2,4} X_{3,2} X_{4,4}\right)=-a_{0} a_{s} c_{0} c_{1,1} c_{2,2} d_{1,1} d_{2,2} e_{1,1} e_{1,2}=0\),\end{center} which gives \( d_{2,2} e_{1,1}=0\), as well as \begin{center}\(c\left(X_{1,3} X_{1,4} X_{3,2} X_{4,1}\right)=a_{0} c_{0} c_{1,1} c_{2,2} d_{1,1} e_{1,2}\left(a_{s} d_{2,1} e_{1,1}-c_{s} d_{1,1} e_{2,1}\right)=0\),\end{center} which gives \(a_{s} d_{2,1} e_{1,1}-c_{s} d_{1,1} e_{2,1}=0\). If \(e_{1,1}=0\) from this we get \(c_{s} d_{1,1} e_{2,1}=0\) which is impossible, so \(e_{1,1} \neq 0\) and \(d_{2,2}=0\). Now \(c_{2,1} d_{2,1}=0\) gives \(c_{2,1}=0\). In this case \(c\left(X_{1,4}^{2} X_{4,2}^{2}\right)=a_{0} a_{s} c_{0} c_{1,1} c_{2,2} d_{1,1} d_{2,1} e_{1,2}^{2}=0\) becomes impossible.\\

Case $2$: $\CX_s$ is an $E \times CM$-abelian surface.\\

Let us assume from now on that $s$ is an $E \times C M$-point instead with $E_{s}^{\prime}$ a CM elliptic curve. In this case we rewrite \eqref{eq:centere2main} for notational simplicity as
\begin{equation}\label{eq:e2excmeq1}
\iota_v(\left(G_{i,j}(x(s))\right))=\begin{pmatrix}
	\Pi_{v}\left(E_{s}\right) & 0 \\
	0 & \Pi_{v}\left(E_{s}^{\prime}\right)
\end{pmatrix}\begin{pmatrix}
	\widetilde{\Theta}_{1,1} & \widetilde{\Theta}_{1,2} \\
	\widetilde{\Theta}_{2,1} & \widetilde{\Theta}_{2,2}
\end{pmatrix}\begin{pmatrix}
	\Pi_{v}\left(E_{0}\right)^{-1} & 0 \\
	0 & \Pi_{v}\left(E_{0}\right)^{-1}
\end{pmatrix},
\end{equation}where $(G_{i,j}(x))=(F_{i,j}(x))\cdot \begin{pmatrix}I_0&0\\0&[\varphi_0]_{dR}\end{pmatrix}$, and $\tilde{\Theta}:=\Theta\begin{pmatrix}I_2&0\\0&[\varphi_0]_v\end{pmatrix}$.

Since $\tilde{E}_{s,v}^{\prime}$ is ordinary, we get that $\Pi_{v}\left(E_{s}^{\prime}\right)=\begin{pmatrix}\pi_{s} & 0 \\ 0 & \pi_{s}^{-1}\end{pmatrix}$ for some $\pi_{s} \in K_{v, 0}$, upon choosing the basis of $H^1_{dR}(E_s'/L_s)$ appropriately, see Lemma $2.6$ in \cite{papaspadicgfuns2} for more on this. We write $(\iota_v(G_{i,j}(x(s)))=:\left(G_{i, j}\right)=\begin{pmatrix}G_{1} & G_{2} \\ G_{3} & G_{4}\end{pmatrix}$ for the matrix on the left side of \eqref{eq:e2excmeq1}. Arguing as before we then get
\[
G_{j}=\begin{pmatrix}
	\pi_{s} \alpha_{j} & 0 \\
	0 & \pi_{s}^{-1}\beta_{j}
\end{pmatrix} \cdot\begin{pmatrix}
	\rho_{1,1} & \rho_{1,2} \\
	\rho_{2,1} & \rho_{2,2}
\end{pmatrix}
\]
for $j=3,4$ and some $\alpha_{j}, \beta_{j} \in \mathbb{C}_{v}^{*}$. Multiplying this on the right by $\begin{pmatrix}\rho_{2,2} \\ -\rho_{2,1}\end{pmatrix}$ gives
\[
\begin{aligned}
	& G_{4,1}  \rho_{2,2}-G_{4,2} \rho_{2,1}=0 \quad \text { for } j=3, \text { and } \\
	& G_{4,3} \rho_{2,2}-G_{4,4} \rho_{2,1}=0 \quad \text { for } j=4 .
\end{aligned}
\]
Since $\rho_{2,1} \cdot \rho_{2,2} \neq 0$ this readily leads to the relation
\[
G_{4,1} \cdot G_{4,4}-G_{4,2} \cdot G_{4,3}=0 .
\]

Writing $R_{s,\simord}$ for the corresponding polynomial among the values of $Y_{G}(x)$ at $s$ we get a homogeneous degree $2$ polynomial satisfying everything we want with the possible exception of ``$R_{s,\simord} \notin I\left(\SP_4\right)$''.

The polynomial denoted by ``$Qe2excm$'' in \Cref{section:appcomp} corresponds to our $R_{s,\simord}$, after we set $a_{s}=c_{s}=1$ and $b_{s}=0$ in the notation there. The code in \Cref{section:appqe2excm} now outputs a list of monomials and coefficients for the remainder of the division of $R_{s,\simord}$ by a Gr\"obner basis of $I\left(\SP_4\right)$. As usual, we write $c\left(\Pi X_{i, j}\right)$ for the coefficient of the corresponding monomial.

Since $a_{0} \cdot c_{0} \neq 0$, the equations $c\left(X_{4,1} X_{4,3}\right)=c\left(X_{4,1} X_{4,4}\right)=0$ give $c_{1,2} e_{2,2}^{2}=c_{2,2} \cdot e_{2,2}^{2}=0$. The invertibility of $(c_{i,j})$ gives $e_{2,2}=0$. Similarly, the equations $c\left(X_{3,1} X_{3,3}\right)=c\left(X_{3,1} X_{3,4}\right)=0$ give $e_{2,1}=0$ which contradicts the invertibility of $\left(e_{i, j}\right)$.\end{proof}

\begin{remark}
For the sake of completeness we note that if either $\rho_{1,1}=0$ or $\rho_{2,1}=0$ we would get $A_{1}=A_{2}=0$ so that \eqref{eq:e2e2ordinaryfinal} still holds for $E^{2}$-points close to $s_0$ with respect to an ordinary place.
\end{remark}
		\subsubsection{Places of supersingular reduction}

Following the relations constructed in \Cref{propisoga2fin}, \Cref{propordinaryexcm}, and \Cref{propordinarye2} we are left with establishing relations among the $v$-adic values of our $G$-functions for places $v$ where both $E_{0}$ and $E_{0}^{\prime}$ obtain supersingular reduction. Here we note that over a finite field $k$ all supersingular elliptic curves are geometrically isogenous, see for example Lemma $42.1.11$ in \cite{voight}. Therefore \Cref{propisoga2fin} is not applicable in this case.

The relations we construct here have the drawback that they depend on the place $v$, in contrast to the relations constructed so far. On the other hand, similarly to \Cref{propisoga2fin}, we do not need to consider $E\times CM$-points and $E^{2}$-points separately, dealing with all points where some ``splitting'' occurs at the same time.

\begin{prop}\label{propsupersingular}Assume that $\CX_0\sim E_0\times_KE_0'$ with $E_0$ and $E_0'$ elliptic curves and let $s\in S(\bar{\Q})$ be another point such that $\CX_s$ is isogenous to a pair of elliptic curves. 
	
	Let $w \in \Sigma_{K(s), f}$ be such that $s$ is $w$-adically close to $s_{0}$ and assume that $w$ is also a place of simultaneous supersingular reduction of $E_{0}$ and $E_{0}^{\prime}$.

	Let $L_{s} / K(s)$ be the extension defined in \Cref{propisoga2fin}. Then, for all $v\in \Sigma_{L_{s}, f}$ with $v\mid w$ there exists $R_{s,v} \in L_s[X_{i,j}: 1 \leq i, j \leq 4]$ such that the following hold:
\begin{enumerate}
		\item $R_{s,v}$ is homogeneous of $\deg(R_{s,v})=4$,
		
		\item \begin{center}
			$\iota_v(R_{s,v}(Y_G(x(s))))=0$, and 
		\end{center}
		\item $R_{s,v}\not\in I(\SP_4)$.
	\end{enumerate}\end{prop}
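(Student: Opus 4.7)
The plan is to apply \Cref{lemisogfin} together with \Cref{lemisogfin2} and then to exploit the $2\times 2$ block structure of the resulting matrix identity. The key structural difference from the ordinary-reduction cases handled in \Cref{propordinaryexcm} and \Cref{propordinarye2} is the following. Since Frobenius on $H^1_{\crys}$ of a supersingular elliptic curve does not split over $K_{v,0}$, one cannot choose crystalline bases that diagonalize the $\Theta_{k\ell}$ simultaneously as in \Cref{section:ordinary}. Moreover, any two supersingular elliptic curves over $\bar{\F}_{p(v)}$ are geometrically isogenous (Lemma $42.1.11$ of \cite{voight}), so \Cref{corollarynonisogenous} is unavailable and cannot force $\Theta$ into block-(anti)diagonal shape. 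This is exactly why the polynomial $R_{s,v}$ we produce will depend on $v$.

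After base-changing to the field $L_s$ of \Cref{propisoga2fin}, \Cref{lemisogfin} and \Cref{lemisogfin2} give
\begin{equation*}
\iota_v(F_{i,j}(x(s)))=\begin{pmatrix}\Pi_v(E_s)&0\\0&\Pi_v(E_s')\end{pmatrix}\Theta\begin{pmatrix}\Pi_v(E_0)^{-1}&0\\0&\Pi_v(E_0')^{-1}\end{pmatrix},
\end{equation*}
where $(F_{i,j}(x))=J_{2,3}\cdot[\theta_s]_{dR}\cdot Y_G(x)\cdot[\theta_0^\vee]_{dR}\cdot J_{2,3}$ has entries that are $L_s$-linear combinations of the $Y_{i,j}(x)$, and $\Theta=(\Theta_{k\ell})_{1\le k,\ell\le 2}$ is partitioned into $2\times 2$ blocks encoding the morphisms $(\varphi_{k\ell})_{\crys}$ of the proof of \Cref{lemisogfin2}. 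Since our bases are symplectic, $\det\Pi_v(E_P^{(')})=1$ by Chapter $5$ of \cite{berthbreenmessing}, and taking determinants of each $2\times 2$ block of the identity above yields
\[
\iota_v\!\left(\det[F(x(s))]_{k\ell}\right)=\det\Theta_{k\ell}=\deg(\varphi_{k\ell})=:d_{k\ell}\in\Z,
\]
with the convention $\deg(0)=0$. For a $4\times 4$ matrix of indeterminates $X$, denote by $P_{k\ell}(X):=\det\!\left[J_{2,3}[\theta_s]_{dR}X[\theta_0^\vee]_{dR}J_{2,3}\right]_{k\ell}$ the corresponding homogeneous degree-$2$ polynomial in the $X_{i,j}$ with $L_s$-coefficients.

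The identities $P_{k\ell}(Y_G(x(s)))=d_{k\ell}$ are not themselves homogeneous polynomial relations. To upgrade them to a homogeneous degree-$4$ relation I combine two of them with the trivial relation $\det Y_G=1$ of \Cref{trivialrelations}. Since $\Theta\in \GL_4$, each row and each column of its block decomposition contains at least one invertible block, so one can choose pairs $(k,\ell)\neq (k',\ell')$ with $d_{k\ell}d_{k'\ell'}\neq 0$. Setting
\[
R_{s,v}(X)\,:=\,P_{k\ell}(X)\cdot P_{k'\ell'}(X)\,-\,d_{k\ell}\,d_{k'\ell'}\cdot\det(X)
\]
produces a polynomial in $L_s[X_{i,j}:1\leq i,j\leq 4]$ that is homogeneous of degree $4$, and \Cref{trivialrelations} gives
\[
\iota_v(R_{s,v}(Y_G(x(s))))=d_{k\ell}\,d_{k'\ell'}-d_{k\ell}\,d_{k'\ell'}\cdot 1=0.
\]

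The main obstacle is verifying $R_{s,v}\notin I(\SP_4)$. On $\SP_4$ one has $\det X=1$, so membership in $I(\SP_4)$ would force the identity $P_{k\ell}(X)\cdot P_{k'\ell'}(X)\equiv d_{k\ell}d_{k'\ell'}$ on $\SP_4$. This can be checked to fail by the same computational strategy used in the proofs of \Cref{propordinaryexcm} and \Cref{propordinarye2}: expand $R_{s,v}$ in the $X_{i,j}$ (treating the entries of $[\theta_s]_{dR}$ and $[\theta_0^\vee]_{dR}$ as parameters of the block-triangular form provided by \Cref{lemmaisogenyhodgebasis}), reduce modulo a Gr\"obner basis of $I(\SP_4)$ via Mathematica as in \Cref{section:appendixcode}, and deduce non-vanishing of at least one coefficient of the remainder from the invertibility of the relevant $A$- and $C$-blocks of $[\theta_s]_{dR}$ and $[\theta_0^\vee]_{dR}$.
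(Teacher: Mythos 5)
Your route is genuinely different from the paper's. The central identity you assert --- $\det\Theta_{k\ell}=\deg(\varphi_{k\ell})\in\Z$ --- is not something the paper ever tries to prove. Instead the paper composes $\varphi_{1,1}$ with the dual of $\varphi_{1,2}$ and with the dual of an auxiliary isogeny $\varphi_{0,v}\colon\tilde{E}_{0,v}\to\tilde{E}'_{0,v}$ (which exists precisely because both reductions are supersingular) to manufacture a genuine endomorphism $\alpha_1\in\End(\tilde{E}_{0,v})$; then $\det[\alpha_1]_v$ is the constant term of the characteristic polynomial of an endomorphism of an abelian variety and hence an integer by a basis-free argument (Demazure). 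Doing the same with $\alpha_2=\varphi_{0,v}^\vee\circ\varphi_{2,2}^\vee\circ\varphi_{2,1}$ and then taking the ratio $\det F_2\det F_3/(\det F_1\det F_4)$ makes the normalization-dependent factor $b_v=\det[\varphi_{0,v}^\vee]^{-1}$ cancel, giving $\det F_2\det F_3 - d_v\det F_1\det F_4=0$ with $d_v\in\Q$. Thus the paper's polynomial balances two block determinants against the other two and has no $\det X$ term, whereas yours pairs two block determinants against the trivial relation $\det Y_G=1$.

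Two gaps remain in your proposal. First, $\det\Theta_{k\ell}=\deg(\varphi_{k\ell})$ does not follow from $\det\Pi_v(E_P)=1$: it would additionally require that the cup-product pairing on $H^1_{\crys}$, once normalized via the trace identification of $H^2_{\crys}$, transforms by the factor $\deg\varphi$ under pullback by an isogeny between two \emph{distinct} elliptic curves, and that the chosen crystalline bases are symplectic for exactly that normalization. This is plausible but you state it without argument, and the paper's detour through endomorphisms is there precisely to avoid having to justify it. Second, and more concretely, $R_{s,v}\notin I(\SP_4)$ is the bulk of the paper's proof: a Gr\"obner reduction (\Cref{section:apprsupsing}) followed by roughly a page of coefficient analysis. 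Your proposal only gestures at this, and because your $R_{s,v}$ contains a $\det X$ summand that the paper's $\text{Rsupsing}$ does not, the paper's computation does not carry over; a fresh reduction and its own case analysis would be needed, and it is not a priori clear that it succeeds for every admissible choice of the pair $(k,\ell),(k',\ell')$.
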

\begin{proof}For simplicity we work under the assumption that ``$L_{s}=K(s)$'' in the notation of \Cref{propisoga2fin}, i.e. all relevant isogenies are defined over $K(s)$. We also fix $v \in \Sigma_{K(s),f}$ as above.
	
	From the discussion in the proof of \Cref{propisoga2fin} we may rewrite \eqref{eq:e2fmain} as
\begin{equation}\label{eq:supersingmain}
			\begin{pmatrix}
			F_{1} & F_{2} \\
			F_{3} & F_{4}
		\end{pmatrix}=\begin{pmatrix}
			\Pi_{v}\left(E_{s}\right) & 0 \\
			0 & \Pi_{v}\left(E_{s}^{\prime}\right)
		\end{pmatrix} \cdot\begin{pmatrix}
			\Theta_{1,1} & \Theta_{1,2} \\
			\Theta_{2,1} & \Theta_{2,2}
		\end{pmatrix} \cdot\begin{pmatrix}
			\Pi_{v}\left(E_{0}\right)^{-1} & 0 \\
			0 & \Pi_{v}\left(E_{0}^{\prime}\right)^{-1}
		\end{pmatrix},
\end{equation}
where $F_{i} \in M_{2 \times 2}\left(\mathbb{C}_{v}\right)$ and $\Theta_{i, j}=[\varphi_{i, j}]_{v}$ for the isogenies discussed in the proof of \Cref{lemisogfin2}.
	
	By our assumption, there also exists an isogeny $\varphi_{0,v}: \tilde{E}_{0,v} \longrightarrow \tilde{E}_{0,v}^{\prime}$. The composition $\alpha_{1}=\varphi_{0, v}^{\vee} \circ \varphi_{1,2}^{\vee} \circ \varphi_{1,1}$ will then define an element of $\End(\tilde{E}_{0,v})$. Writing $\left[\alpha_{1}\right]_{v}$ for its matrix with respect to the fixed basis $\left\{\gamma_{0}, \delta_{0}\right\}$, we get by definition $\left[\alpha_{1}\right]_{v}=\left[\varphi_{1,1}\right] \cdot\left[\varphi_{1,2}^{\vee} \mid \cdot\left[\varphi_{0}^{\vee}\right]\right.$. Arguing as in \Cref{lemisogfin} we then get $\left[\phi_{1,2}^{\vee}\right]=\begin{pmatrix}\operatorname{deg}\left(\varphi_{1,2}\right) & 0 \\ 0 & \operatorname{deg}\left(\varphi_{1,2}\right)\end{pmatrix} \cdot\left[\varphi_{1,2}\right]^{-1}=\operatorname{deg}\left(\varphi_{1,2}\right) \cdot \Theta_{1,2}^{-1}$. All in all, we will have
	\begin{equation}\label{eq:supersing1}
	\left[\alpha_{1}\right]_{v}=\operatorname{deg}\left(\varphi_{1,2}\right) \cdot \Theta_{1,1} \cdot \Theta_{1,2}^{-1}\left[\varphi_{0, v}^{\vee}\right].
	\end{equation}
	
	Since $\alpha_{1} \in \End(\tilde{E}_{0,v})$ we get that $\det\left(\left[\alpha_{1}\right]_{v}\right)$ is nothing but the constant term of the characteristic polynomial of $\alpha_{1}$, see for example the Corollary on page $96$ of \cite{demazure}. In particular $\det\left(\left[\alpha_{1}\right]_{v}\right)=a_{1, v} \in \mathbb{Z}$.
	
	We also set $b_{v}^{-1}:=\det\left[\varphi_{0, v}^{\vee}\right] \in L_{s,v}$. By virtue of \eqref{eq:supersing1} we then get
	\[
	\frac{a_{1, v}}{\operatorname{deg}\left(\varphi_{1,2}\right)^{2}} \cdot b_{v}=\det\left(\Theta_{1,1} \cdot \Theta_{1,2}^{-1}\right).
	\]
	
	Now, since the bases of $H_{v}^{1}\left(E_{0}\right)$ and $H_{v}^{1}\left(E_{0}^{\prime}\right)$ were chosen to be symplectic, as in the previous propositions, we may use that $\Pi_{v}\left(E_{0}\right), \Pi_{v}\left(E_{0}^{\prime}\right) \in S L_{2}\left(\mathbb{C}_{v}\right)$. From \eqref{eq:supersingmain} we have $F_{1}=\Pi_{v}\left(E_{s}\right) \Theta_{1,1} \Pi_{v}\left(E_{0}\right)^{-1}, F_{2}=\Pi_{v}\left(E_{s}\right) \cdot \Theta_{1,2} \cdot \Pi_{v}\left(E_{0}^{\prime}\right)^{-1}$ so that
\begin{equation}\label{eq:supersing2}
	\det F_{1} \cdot \det F_{2}^{-1}=\det\left(\Theta_{1,1} \cdot \Theta_{1,2}^{-1}\right)=b_{v} \cdot \frac{a_{1, v}}{\operatorname{deg}\left(\varphi_{1,2}\right)}.
\end{equation}
	
	Working similarly with $\alpha_{2}:=\varphi_{0}^{\vee} \circ \varphi_{2,2}^{\vee} \circ \varphi_{2,1} \in\End(\tilde{E}_{0, v})$ we get
	\begin{equation}\label{eq:supersing3}
	\det F_{3} \cdot \det F_{4}^{-1}=\det\left(\Theta_{2,1} \cdot \Theta_{2,2}^{-1}\right)=b_{v} \cdot \frac{a_{2, v}}{\operatorname{deg}\left(\varphi_{2,2}\right)},
	\end{equation}
	where $a_{2, v}:=\det\left[\alpha_{2}\right]_{v} \in \mathbb{Z}$.
		
Combining \eqref{eq:supersing2} with \eqref{eq:supersing3} to get rid of $b_{v} \in L_{s,v}$ we get
\[
\frac{\det F_{2} \cdot \det F_{3}}{\det F_{1} \cdot \det F_{4}}=d_{v},
\]
where $d_{v}=\frac{\deg\left(\varphi_{1,2}\right) \cdot a_{2, v}}{\deg\left(\varphi_{2,2}\right) \cdot a_{1, v}} \in \mathbb{Q}$ is some non-zero rational number that depends on $v$. In particular, we get
\[
\det F_{2} \cdot \det F_{3}-d_v \cdot \det F_{1} \cdot \det F_{4}=0.
\]

Letting $R_{s, v} \in L_s\left[X_{i, j}: 1 \leq i, j \leq 4\right]$ denote the corresponding polynomial, we get all the properties we want by construction, with the exception of the ``non-triviality'' of $R_{s,v}$. The code in \Cref{section:apprsupsing} outputs a list of monomials and coefficients for the remainder of the division of $R_{s,v}$, denoted by ``$Rsupsing$'' in \Cref{section:appcomp}, by a Gr\"obner basis of $I(\SP_4)$. As usual, we denote the coefficient of a monomial of this remainder by $c(\Pi X_{i,j})$.

We start by looking at the pair of equations
\[
\begin{aligned}
	& c\left(X_{1,2} X_{1,4} X_{3,1} X_{42}\right)=c_{1,2}d_{1,1}d_{2,1}e_{1,2}e_{2,1} \\
	& c\left(X_{1,2} X_{1,4} X_{4,1} X_{42}\right)=c_{1,2}  d_{1,1}  d_{2,1} e_{1,2} e_{2,2},
\end{aligned}
\]
which give \(c_{1,2} d_{1,1} d_{2,1} e_{1,2}=0\), since \(\det\left(e_{i, j}\right) \neq 0\). On the other hand, the pair of equations
\[
\begin{aligned}
	& c\left(X_{1,2} X_{3,1}\right)=c_{1,2} d_{1,1} d_{2,2} e_{1,2} e_{2,1}=0 \\
	& c\left(X_{1,2} X_{4,1}\right)=c_{1,2} d_{1,1} d_{2,2} e_{1,2} e_{2,2}=0
\end{aligned}
\]
give \(c_{1,2} d_{1,1} d_{2,2} e_{1,2}=0\). Since \(\det\left(d_{i, j}\right) \neq 0\) we may combine this with the above and conclude
\[
c_{1,2}  d_{1,1} e_{1,2}=0.
\]

If \(d_{1,1}=0\) looking at the pair
\[
\begin{aligned}
	c\left(X_{1,2} X_{2,4} X_{3,1}^{2}\right) & =-c_{2,2} d_{1,2} d_{2,1} e_{1,1} e_{2,1}=0 \\
	c\left(X_{1,2} X_{2,4} X_{3,1} X_{3,2}\right) & =c_{1,2} d_{1,2} d_{2,1} e_{1,1} e_{2,1}=0
\end{aligned}
\]
gives \(d_{1,2} d_{2,1} e_{1,1} e_{2,1}=0\), and since \(d_{1,2} d_{2,1} \neq 0\) we get \(e_{1,1} e_{2,1}=0\). The pair of equations
\[
\begin{aligned}
	& c\left(X_{1,2} X_{2,4} X_{4,1}^{2}\right)=-c_{2,2} d_{1,2} d_{2,1} e_{1,2} e_{2,1}=0 \\
	& c\left(X_{1,2} X_{2,4} X_{4,1} X_{4,2}\right)=c_{1,2} d_{1,2} d_{2,1} e_{1,2} e_{2,2}=0
\end{aligned}
\]
similarly gives \(e_{1,2} e_{2,2}=0\). Combining this with \(e_{1,1} e_{2,1}=0\) and \(\det\left(e_{i,j}\right) \neq 0\) we have that either \(e_{1,1}=e_{2,2}=0\) or \(e_{1,2}=e_{2,1}=0\). In the first case, i.e. \(e_{1,1}=e_{2,2}=0\), from \(c\left(X_{1,2} X_{2,1} X_{3,1} X_{4,4}\right)=0\) we get \(c_{2,2}=0\), while \(c\left(X_{1,2} X_{2,2} X_{3,1} X_{4,4}\right)=0\) gives \(c_{1,2}=0\) contradicting \(\det\left(c_{i,j}\right) \neq 0\). In the second case, i.e. \(e_{1,2}=e_{2,1}=0\), we get from \(c\left(X_{1,2} X_{2,4} X_{3,1} X_{4,1}\right)=0\) that \(c_{2,2}=0\) while \(c\left(X_{1,2} X_{2,4} X_{3,2} X_{4,1}\right)=0\) gives \(c_{1,2}=0\), again a contradiction.

So from now on \(d_{1,1} \neq 0\) and \(c_{1,2} e_{1,2}=0\). If \(e_{1,2}=0\), so \(e_{1,1} e_{2,2} \neq 0\), from \(c\left(X_{1,2} X_{2,1} X_{3,1} X_{4,4}\right)=0\) we get \(c_{2,2} d_{2,1}=0\), while from \(c\left(X_{1,2} X_{2,4} X_{4,1} X_{4,2}\right)=0\) we get \(c_{1,2} d_{2,1}=0\). Thus, we will have \(d_{2,1}=0\). Now \(c\left(X_{2,1} X_{2,2} X_{4,1} X_{4,2}\right)=0\) gives \(c_{2,2}=0\). At this point, \(c\left(X_{2,2}^{2} X_{3,4} X_{4,1}\right)=-c_{1,2} d_{1,2} d_{2,2} e_{1,1} e_{2,2}=0\) gives \(c_{1,2} d_{1,2}=0\) and since \(\det\left(c_{i,j}\right) \neq 0\) we get \(d_{1,2}=0\). But then \(c\left(X_{2,2} X_{2,4} X_{4,1} X_{4,2}\right)=-c_{1,2} d_{1,1} d_{2,2} e_{1,1} e_{2,2}=0\) becomes impossible.

We are thus left with the case \(c_{1,2}=0\), \(d_{1,1} e_{1,2} \neq 0\), and thus \(c_{2,2} \neq 0\). Here, we note that
\[
\begin{aligned}
	& c\left(X_{1,1} X_{1,4} X_{3,1} X_{4,2}\right)=-c_{2,2} d_{1,1} d_{2,1} e_{1,2} e_{2,1}=0 \text { gives } d_{2,1} e_{2,1}=0 \text {, and } \\
	& c\left(X_{1,1} X_{1,4} X_{4,1} X_{4,2}\right)=-c_{2,2} d_{1,1} d_{2,1} e_{1,2} e_{2,2}=0 \text { gives } d_{2,1} e_{2,2}=0 \text {. }
\end{aligned}
\]
These force \(d_{2,1}=0\), so that \(d_{1,1} d_{2,2} \neq 0\). From \(c\left(X_{2,1} X_{3,1}\right)=0\) we then get \(e_{1,1} e_{2,1}=0\). On the other hand, \(c\left(X_{2,1} X_{4,1}\right)=0\) gives \(e_{1,1} e_{2,2}=0\). Together these force \(e_{1,1}=0\). From \(c\left(X_{2,1} X_{2,4} X_{3,1} X_{4,2}\right)=0\) we readily get \(d_{1,2}=0\). We reach a contradiction since \(c\left(X_{1,4} X_{2,1} X_{3,1} X_{4,2}\right)=-c_{2,2} d_{1,1} d_{2,2} e_{1,2} e_{2,1}=0\) is impossible.
\end{proof}

	
\subsection{Archimedean relations}\label{section:archrels}

Here we return to the setting adopted at the beginning of \Cref{section:splittingrelations}. The main difference from \Cref{section:splitfinite}, is that from now on for us $v\in \Sigma_{K,\infty}$ will be some fixed archimedean place of $K$. 

		\begin{prop}\label{archrelexcm}
			Let $s\in S(\bar{\Q})$ be such that $\CX_s$ is isogenous to some $\CX'_s=E_s\times_{\bar{\Q}} E'_s$, where $E_s$ and $E'_s$ are elliptic curves. Assume that there exists some $w\in \Sigma_{K(s),\infty}$ for which $s$ is $w$-adically close to $s_0$ and $w|v$. 
			
			Then there exists $R_{s,w}\in \bar{\Q}[X_{i,j}:1\leq i,j\leq 4]$ for which the following hold:
			\begin{enumerate}
				
				\item the coefficients of $R_{s,w}$ are in some finite extension $L_s/K(s)$ with $[L_s:K(s)]$ bounded by an absolute constant, 
				
				\item for all $w'\in \Sigma_{L_s,\infty}$ for which $w'|w$ we have \begin{center}
					$\iota_{w'}(R_{s,w}(Y_{G}(x(s))))=0$,
				\end{center}
				
				\item $R_{s,w}$ is homogeneous with $\deg(R_{s,w})\leq c_{\infty}$, where $c_{\infty}$ is an absolute positive constant, and 
				
				\item $R_{s,w}\not\in I(\SP_4)$.\end{enumerate}	\end{prop}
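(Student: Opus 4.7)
The plan is to adapt the reasoning from the proof of \Cref{propsupersingular} to the archimedean setting, with the key simplification that the matrix $\Theta$ appearing in \Cref{lemisogfin} automatically has rational entries, without any need for an auxiliary ``hidden'' isogeny like the supersingular $\varphi_{0,v}$ used there.

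First, following the opening of the proof of \Cref{propisoga2fin}, apply Silverberg's bound (\cite{silverberg}, Theorem $4.2$ together with Corollary $3.3$) to obtain a finite extension $L_s/K(s)$ with $[L_s:K(s)]$ bounded by an absolute constant, over which the isogeny $\theta_s:\CX_s\to E_s\times_{L_s}E_s'$ is defined. For any $w'\in\Sigma_{L_s,\infty}$ with $w'|w$, \Cref{lemisogfin} yields the matrix identity
\begin{equation*}
\iota_{w'}\bigl(J_{2,3}\cdot[\theta_s]_{dR}\cdot Y_G(x(s))\cdot[\theta_0^{\vee}]_{dR}\cdot J_{2,3}\bigr)=D_s\cdot\Theta\cdot D_0^{-1},
\end{equation*}
where $D_P$ denotes the block-diagonal matrix with $\Pi_{w'}(E_P)$ and $\Pi_{w'}(E_P')$ along the diagonal, for $P\in\{s,0\}$. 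At the archimedean place, since the matrices $[\theta_P]_{w'}$ represent the pullback maps on rational Betti cohomology, the expression $\Theta=J_{2,3}\cdot[\theta_s]_{w'}\cdot[\theta_0^{\vee}]_{w'}\cdot J_{2,3}$ lies in $\GL_4(\Q)$.

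Denote the left-hand side above by $(F_{ij})$ and split it into $2\times 2$ blocks $F_k$ exactly as in the proof of \Cref{propsupersingular}. The uniform archimedean normalization of \Cref{defperiodmatrix} combined with the Legendre relation ensures that $\det\Pi_{w'}(E)$ takes the same non-zero value $c\in\C$ for every elliptic curve $E$ appearing; hence the normalization factors cancel in each block and we obtain $\det F_k=\det\Theta_k\in\Q$ for each $k\in\{1,2,3,4\}$. In the case where all $\det\Theta_k\neq 0$, set $d_s:=\det\Theta_{12}\cdot\det\Theta_{21}/(\det\Theta_{11}\cdot\det\Theta_{22})\in\Q$ and define $R_{s,w}\in L_s[X_{ij}]$ as the homogeneous degree-$4$ polynomial with
\begin{equation*}
R_{s,w}(Y_G(x))=\det F_2(x)\cdot\det F_3(x)-d_s\cdot\det F_1(x)\cdot\det F_4(x).
\end{equation*}
The vanishing $\iota_{w'}(R_{s,w}(Y_G(x(s))))=0$ is immediate, and the non-triviality $R_{s,w}\notin I(\SP_4)$ is structurally identical to the case of ``\texttt{Rsupsing}'' in \Cref{section:apprsupsing}, since the polynomial has the same shape with the rational constant $d_v$ replaced by $d_s$; the Gr\"obner basis computation there applies verbatim.

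For the remaining configurations, in which $\det\Theta_k=0$ for some $k$, we instead set $R_{s,w}:=\det F_k$, a homogeneous degree-$2$ polynomial which vanishes at $\iota_{w'}(Y_G(x(s)))$ by the previous computation. Its non-triviality on $\SP_4$ follows from the fact that the block $F_k(Y_G)$ does not have identically degenerate image as $Y_G$ varies over $\SP_4$ (since $[\theta_s]_{dR}$ and $[\theta_0^{\vee}]_{dR}$ are invertible), which we verify by a short Gr\"obner basis reduction of the same flavor as those recorded in \Cref{section:appendixcode}. The main obstacle is precisely this enumeration of the finitely many degeneracy patterns of $\Theta$ and the bookkeeping of the corresponding non-triviality verifications; the construction of the polynomial $R_{s,w}$ itself is simpler than in any of the finite-place propositions, precisely because the rationality of $\Theta$ is automatic at archimedean places.
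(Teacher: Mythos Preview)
Your approach differs from the paper's in the choice of polynomial. The paper uses the degree-$2$ relation
\[
R_{s,w'} := R_0 - d_{w'}\cdot\bigl(X_{1,1}X_{3,3}+X_{2,1}X_{4,3}-X_{2,3}X_{4,1}-X_{1,3}X_{3,1}\bigr),
\]
where $R_0$ is the polynomial with $R_0(Y_G(x)) = \det F_1(x)$ and $d_{w'} := \det\Theta_{1,1}\in\Q$. The Legendre relation gives $\iota_{w'}(\det F_1(x(s))) = \det\Theta_{1,1}$, and the symplectic polynomial in parentheses equals $1$ identically on $\SP_4$; so the difference is homogeneous of degree $2$, vanishes at the point, and the formula is uniform in $d_{w'}$ (including $d_{w'}=0$). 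This completely avoids your generic/degenerate case split.

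The genuine gap in your construction is property~(2). The matrix $\Theta$---and hence your constant $d_s$, as well as which of your two cases applies---depends on the chosen $w'$: for different extensions $w'\mid w$ the curves $E_s,E_s'$ are analytified via different embeddings $L_s\hookrightarrow\C$, so the Betti realizations $[\theta_s]_{w'}$ are matrices of genuinely different $\Q$-linear maps. Your polynomial therefore vanishes only for the particular $w'$ used to build it, not for all $w'\mid w$. The paper fixes exactly this by setting $R_{s,w}:=\prod_{w'\mid w}R_{s,w'}$ (equation \eqref{eq:excmarchfin}); this costs a factor $[L_s:K(s)]$ in the degree and preserves non-triviality because $I(\SP_4)$ is prime by \Cref{idealprime}. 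Your approach can be repaired the same way, but as written the argument is incomplete. A secondary point: in your degenerate case you assert $\det F_k\notin I(\SP_4)$ by an invertibility heuristic; the paper's Gr\"obner computation in \Cref{section:apparch} treats only the polynomial $R_0 - d_{w'}\cdot(\ldots)$, so it covers your $k=1$ (via $d_{w'}=0$) but not $k=2,3,4$, which you would still have to check.
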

			
		\begin{proof}Let us write $\theta_s:\CX_s\rightarrow \CX_s'$ for the isogeny, as per our usual notation. By the same argument as in the proof of \Cref{propisoga2fin} we get that $\theta_s$ is defined over some finite extension $L_s$ of $K(s)$ with $[L_s:K(s)]\leq 4\cdot 36^{16}$. 
			
			Let us fix from now on $w'\in \Sigma_{L_s,\infty}$ with $w'|w$. We base change $\CX'_s$ by $L_s$ and then look at the de Rham-Betti comparison isomorphism for $\CX'_s$, with respect to the analytification corresponding to $w'$. We may then choose Hodge bases $\Gamma_{dR}(E_s)$ and $\Gamma_{dR}(E'_s)$ as well as a symplectic bases of $H^1_{w'}(E_{s,L_s})$ and $H^1_{w'}(E'_{s,L_s})$. For notational simplicity we will write $\Pi_v(E_s)$ and $\Pi_v(E'_s)$ for the period matrices corresponding to these choices, rather than the more accurate $\Pi_{w'}(\cdot)$.
			
		  By \Cref{lemisogfin}, and following the notation in the proof of \Cref{propisoga2fin}, for the matrix
			\begin{center}
				$(F_{i,j}(x)):=J_{2,3}[\theta_s]_{dR}Y_{G}(x)[\theta_0^{\vee}]_{dR}J_{2,3}$
			\end{center}
			we get the equation 
			\begin{equation}\label{eq:mainarchrel}
				\iota_{w'}(F_{i,j}(x(s)))=\begin{pmatrix}\Pi_v(E_s)&0\\0&\Pi_v(E'_s)	\end{pmatrix}\cdot \Theta\cdot\begin{pmatrix} \Pi_v(E_0)^{-1}&0\\0&\Pi_v(E'_0)^{-1}\end{pmatrix},
			\end{equation}where $\Theta\in GL_4(\Q)$ now is some $4\times 4$ invertible matrix.
			
			For simplicity from now on we let $\Theta:=\begin{pmatrix}\Theta_{1,1}&\Theta_{1,2}\\	\Theta_{2,1}&\Theta_{2,2}\end{pmatrix}$ with $\Theta_{i,j}\in M_2(\Q)$. With this notation \eqref{eq:mainarchrel} may be rewritten as 
			\begin{equation}
				\iota_{w'}(F_{i,j}(x(s)))= \begin{pmatrix} \Pi_v(E_s) \Theta_{1,1} \Pi_v(E_0)^{-1}&\Pi_v(E_s) \Theta_{1,2} \Pi_v(E'_0)^{-1}\\\
					\Pi_v(E'_s) \Theta_{2,1} \Pi_v(E_0)^{-1}&\Pi_v(E'_s) \Theta_{2,2} \Pi_v(E'_0)^{-1} 			\end{pmatrix}.
			\end{equation} From now on we also set $(F_{i,j}(x))=:\begin{pmatrix}
				F_1(x)&F_2(x)\\F_3(x)&F_4(x)\end{pmatrix}$, with $F_i\in M_2(\bar{\Q}[[x]])$.
			
				By the Legendre relation, see \cite{langellfuns}, Chapter $18$, $\S 1$, we know that\footnote{We note that we get the inverse of the answer in \cite{langellfuns}, since we have twisted the period matrices in the archimedean setting by a factor of $\frac{1}{2\pi i}\cdot I_{2g}$.} $\det(\Pi_v(E_P))=\frac{1}{2\pi i}$, for $P\in\{s,0\}$. Therefore we get from the above that \begin{equation}\label{eq:archrelsplit0}
				\iota_{w'}(\det( F_1(x(s))))=\det({\Theta}_{1,1}).
			\end{equation}
			
			We let $R_0\in \bar{\Q}[X_{i,j}:1\leq i,j\leq 4]$ denote the polynomial for which $R_0(Y_G(x))=\det(F_1(x))$ and also set $d_{w'}:= \det({\Theta}_{1,1})\in \Q$. Now we define 
			\begin{equation}\label{eq:archrelsplit}
R_{s,w'}:=R_0-d_{w'}\cdot (X_{1,1}X_{3,3}+X_{2,1}X_{4,3}-X_{2,3}X_{4,1}-X_{1,3}X_{3,1}).
			\end{equation}
			
			Note that by construction we have that $R_{s,w'}$ will be a homogeneous polynomial of degree $2$. Furthermore, from the fact that the polynomial $f_1(X_{i,j}):=1-(X_{1,1}X_{3,3}+X_{2,1}X_{4,3}-X_{2,3}X_{4,1}-X_{1,3}X_{3,1})$ is in the ideal $I(\SP_4)$, so that by \Cref{trivialrelations} for $|\Lambda|=1$ we get $\iota_{w'}(f_1(Y_G(x(s))))=0$, and \eqref{eq:archrelsplit0} we conclude that $\iota_{w'}(R_{s,w'}(Y_G(x(s))))=0$.

Finally, we define \begin{equation}\label{eq:excmarchfin}R_{s,w}:=\prod_{w'|w}^{} R_{s,w'}.\end{equation} By construction in this case we have that the polynomial in question satisfies all but one of the conclusions of our proposition with the possible exception of the last one, i.e. the ``non-triviality'' of $R_{s,w}$. For the record we note that $c_{\infty}=2\cdot [L_s:K(s)]\leq 8\cdot 36^{16}$ serves as the absolute constant we need.

The remainder of the proof aims at settling that $R_{s,w}\not\in I(\SP_4)$. Noting that the polynomial $R_{s,w}$ is defined as a product and the ideal in question is prime by \Cref{idealprime}, it suffices to show that none of the $R_{s,w'}$ above is in $I(\SP_4)$. With this in mind we lose nothing, but gaining greater notational simplicity, by assuming from now on that $L_s=K(s)$ and that $R_{s,w}$ itself is given by \eqref{eq:archrelsplit}, i.e. we assume that $w=w'$. From now on we also assume that $R_{s,w}\in I(\SP_4)$. 	

The code in \Cref{section:apparch} outputs a list of monomials and corresponding coefficients for the remainder of the division of the polynomial $R_{s,w}$, denoted by ``$\text{Ra}$'' in \Cref{section:appcomp}, by a Gr\"obner basis of the ideal $I(\SP_4)$. From now on we write $c(\Pi X_{i,j})$ for the coefficient of the monomial $\Pi X_{i,j}$ in this remainder. Since $R_{s,w}\in I(\SP_4)$ all of these coefficients would have to be $0$. 

As in the proof of \Cref{propisoga2fin}, since the bases of all de Rham cohomology groups were chosen to be ``Hodge bases'' we may write the matrices $[\theta_s]_{dR}=\begin{pmatrix} A_s&0\\B_s&C_s\end{pmatrix}$ and $[\theta_0^{\vee}]_{dR}=\begin{pmatrix} A_0&0\\B_0&C_0\end{pmatrix}$, where $A_P$ and $C_P$, for $P\in\{s,0\}$ are invertible matrices. Following the notation used in \Cref{section:appcomp} we write, $C_0=:(c_{i,j})$, $A_s=:(d_{i,j})$, and $C_s=:(e_{i,j})$. Note here that $A_0=I_2$ by our choice of the Hodge basis $\Gamma_{dR}(\CX/S)$ in the beginning of the section.

From the list outputted from \Cref{section:apparch} we start by looking at the pair of equations \(c\left(X_{1,1} X_{4,4}\right)=c\left(X_{1,2} X_{4,4}\right)=0\). From these we get
\[
c_{1,1} d_{1,1} e_{1,2}=c_{2,1} d_{1,1} e_{1,2}=0 .
\]
Since \(\left(c_{i, j}\right)\) is invertible we get \(d_{1,1} e_{1,2}=0\).

Let us first assume that \(d_{1,1}=0\), so that \(d_{1,2} d_{2,1} \neq 0\). Then from \(c\left(X_{2,1} X_{3,4}\right)=c_{2,1} d_{1,2} e_{1,1}=0\) we get \(c_{2,1} e_{1,1}=0\), while from \(c\left(X_{2,1} X_{4,4}\right)=c_{2,1} d_{1,2} e_{1,2}=0\) we get \(c_{2,1} e_{1,2}=0\). As above, this implies \(c_{2,1}=0\). Similarly, the equations \(c\left(X_{2,2} X_{3,4}\right)=c\left(X_{2,2} X_{4,4}\right)=0\) imply that \(c_{1,1}=0\) contradicting \(\det\left(c_{i, j}\right) \neq 0\).

From now on we may thus assume \(d_{1,1} \neq 0\) and \(e_{1,2}=0\), so that \(e_{1,1} e_{2,2} \neq 0\). In this case, \(c\left(X_{2,2} X_{4,4}\right)=c_{1,1} d_{1,1} e_{1,1}=0\) gives \(c_{1,1}=0\). On the other hand, \(c\left(X_{2,1} X_{4,4}\right)=-c_{2,1} d_{1,1} e_{1,1}=0\) now gives \(c_{2,1}=0\) contradicting \(\det\left(c_{i, j}\right) \neq 0\).\end{proof}
		
	\section{Height bounds and applications}\label{section:htboundsgoodred}

In this section we establish the height bounds that appear in \Cref{goodreductionmainhtbound}. We also discuss briefly how these lead to ``Zilber-Pink'' type statements based on previous work of C. Daw and M. Orr.

\subsection{Proof of the height bounds}\label{section:pfofhtboundsgood}

Given a point $s\in S(\bar{\Q})$, where we assume that $S$ satisfies the properties outlined in \Cref{reduction}, we consider the sets of places
 \begin{center}$\Sigma(s,0):=\{v\in\Sigma_{K(s)}:s \text{ is }v\text{-adically close to } 0\}$ and
 	
 	$\Sigma_{K,\ssing}(s,0):=\{w\in\Sigma_{\ssing}(\CX_\xi):\exists v\in\Sigma(s,0), v|w\}$.
 \end{center}Here $v$-adic proximity is considered in the sense discussed in \Cref{section:vadicproximity} and $\Sigma_{\ssing}(\CX_\xi)$ stands for the set of finite places in $K$ over which $\CX_{\xi}$ has good supersingular reduction.

Thanks to our discussion in \Cref{section:htboundsred} establishing \Cref{goodreductionmainhtbound} boils down to proving the following:

\begin{prop}\label{htboundsfin}
	Let $f:\CX\rightarrow S$, defined over a number field $K$, be a $1$-parameter family of principally polarized abelian surfaces. 
	
	Assume that $f:\CX\rightarrow S$ satisfies the conditions in \Cref{reduction} and that for all $\xi\in \{\xi_1\ld\xi_l\}\subset S(K)$ the fiber $\CX_\xi$ is an $E\times CM$-point (resp. an $E^2$-point) that has everywhere potentially good reduction. Then, there exist effectively computable constants $c_1$, $c_2>0$ such that for all \begin{center}
	$s\in \Sha_{\spli}(S):=\{s\in S(\bar{\Q}): \text{ the fiber }\CX_s\text{ is an }E\times CM\text{ or }E^2\text{ surface}\}$,
	\end{center}we have $h(s)\leq c_1\cdot(|\Sigma_{K,\ssing}(s,0)|\cdot [K(s):\Q])^{c_2}$.\end{prop}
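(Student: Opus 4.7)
The plan is to build, for each $s \in \Sha_{\spli}(S)$, a single non-trivial polynomial $R_s$ in the coordinates of $\mathcal{Y}$ that vanishes identically at $\mathcal{Y}(x(s))$ for every place $w$ of (a bounded degree extension of) $K(s)$ at which $|x(s)|_w < \min\{1,R_w(\mathcal{Y})\}$, and then feed $R_s$ into the Andr\'e--Bombieri ``Hasse principle'' (Ch.~VII, \S5 of \cite{andre1989g}) to derive the desired height bound.

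First, I would set things up. Up to replacing $K(s)$ by a finite extension $L_s$ with $[L_s:K(s)]$ bounded by the absolute constant $4\cdot 36^{16}$ (cf.\ \Cref{propisoga2fin}), I may assume the isogeny $\theta_s:\CX_s\to E_s\times_{L_s}E'_s$ is defined over $L_s$. By \Cref{propertyvadicprox}, every place $w\in\Sigma_{L_s}$ with $|x(s)|_w<\min\{1,R_w(\mathcal{Y})\}$ makes $s$ $w$-adically close to some $\xi_t$; using the automorphisms $\sigma_j\in\aut_x(S)$ from \Cref{reduction} and passing to the pullback families $\CX_\lambda\to S$ reduces the analysis to the ``central'' point $s_0:=\xi_1$ and the corresponding subfamily of G-functions $\mathcal{Y}_\lambda\subset\mathcal{Y}$.

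Second, for each such close place $w$ I would select a relation according to its type: at archimedean $w$ I use the $R_{s,w}$ of \Cref{archrelexcm}; at a finite $w$ of good, non-supersingular reduction of $\CX_0$ I use the place-independent relation $R_{s,\simord}$ of \Cref{propordinaryexcm} (when $s_0$ is an $E\times CM$-point) or of \Cref{propordinarye2} (when $s_0$ is an $E^2$-point), supplementing as needed with the relation from \Cref{propisoga2fin} when $E_0\nsim E'_0$; finally, at each supersingular $v\in\Sigma_{K,\ssing}(s,0)$ I use the $v$-dependent $R_{s,v}$ of \Cref{propsupersingular}. The ``everywhere potentially good reduction'' hypothesis on $\CX_{s_0}$ is precisely what guarantees that these three cases exhaust the finite places to consider. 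I would then form the product
\[
R_s \;:=\; R_{s,\simord}\cdot\prod_{w\in\Sigma_{L_s,\infty}}R_{s,w}\cdot\prod_{v\in\Sigma_{K,\ssing}(s,0)}R_{s,v}.
\]
By the primality of the ``trivial relations'' ideal established in \Cref{idealprime}, and by the non-triviality clauses in each of the cited propositions, $R_s\notin I(\SP_4)$. Counting degrees: each $R_{s,w}$ has degree at most the absolute constant $c_\infty$, there are at most $[L_s:\Q]$ archimedean places, and each supersingular factor has degree $4$, so $\deg R_s=O([K(s):\Q]+|\Sigma_{K,\ssing}(s,0)|)$.

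Third, I would invoke the Andr\'e--Bombieri Hasse principle for the full family $\mathcal{Y}$: since $\iota_w(R_s(\mathcal{Y}(x(s))))=0$ at every place $w\in\Sigma_{L_s}$ with $|x(s)|_w<R_w(\mathcal{Y})$, and $R_s$ is non-trivial on $\mathcal{Y}^{\zar}$, the theorem yields a bound of the shape $h(s)\leq c_1\cdot(\deg R_s)^{a}\cdot[L_s:\Q]^b$ with explicit exponents. Combining this with the degree estimate above and $[L_s:K(s)]=O(1)$ yields $h(s)\leq c_1\cdot(|\Sigma_{K,\ssing}(s,0)|\cdot[K(s):\Q])^{c_2}$, as claimed.

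The chief obstacle is the bookkeeping around the index set $\Lambda$ and the pullbacks $\sigma_j^{*}\mathcal{Y}$: one must verify that each relation constructed for a single $\lambda$ remains non-trivial when viewed as a polynomial in the full family $\mathcal{Y}=\bigsqcup_{\lambda\in\Lambda}\mathcal{Y}_\lambda$, and that combining relations corresponding to different $\xi_j$ via these pullbacks preserves both global vanishing (at every relevant place) and non-triviality with respect to the product ideal cutting out $\mathcal{Y}^{\zar}$. A secondary subtlety is ensuring that the archimedean contribution $c_\infty\cdot[L_s:\Q]$ to $\deg R_s$ is absorbed cleanly into the final $(|\Sigma_{K,\ssing}(s,0)|\cdot[K(s):\Q])^{c_2}$ bound rather than producing an extra factor that worsens the exponent.
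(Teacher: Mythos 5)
Your approach is essentially the same as the paper's: pass to a bounded-degree extension $L_s$, partition the set $\Sigma(s)$ of relevant places into archimedean, good-ordinary, and good-supersingular blocks, apply the corresponding proposition in each block, take the product of the resulting local factors, argue non-triviality from the primality of the trivial-relations ideal, and close with Andr\'e--Bombieri. The key propositions cited match the paper's, and the degree count is in the right shape.

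That said, you leave two pieces unfinished that the paper does resolve. First, the case $\Sigma(s)=\emptyset$: your $R_s$ is then an empty product and the Hasse principle produces nothing; the paper handles this separately by citing a direct height bound (Theorem $1.3$ of \cite{papasbigboi}). Second, the $\Lambda$-bookkeeping is not merely an ``obstacle'' to be flagged: since the family is $\mathcal{Y}=\bigsqcup_{\lambda\in\Lambda}\mathcal{Y}_\lambda$ and the ambient trivial-relations ideal is $I_\Lambda\subset\bar{\Q}[X^{(\lambda)}_{i,j}]$, the product $R_s$ must explicitly range over $\lambda\in\Lambda$ and over the $t\sim\lambda$, and non-triviality in $I_\Lambda$ (not just in a single-factor $I(\SP_4)$) must be checked. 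The paper does this by taking $R_s=\prod_\lambda R_{s,\lambda,\bullet}$ and invoking the primality of $I_\Lambda$ (\Cref{idealprime}) together with the argument of Step $4$ of Proposition $4.1$ in \cite{papaseffbrsieg}, which shows that a local factor lying outside $I(\SP_4)$ in the $\lambda$-th block cannot lie in $I_\Lambda$. Your formula for $R_s$ as written omits both products, and your degree bound $O([K(s):\Q]+|\Sigma_{K,\ssing}(s,0)|)$ should be $O\bigl([K(s):\Q]\cdot(1+|\Sigma_{K,\ssing}(s,0)|)\bigr)$, since $\Sigma(s)_{\ssing}\subset\Sigma_{L_s}$ can contain up to $[L_s:K]$ places above each element of $\Sigma_{K,\ssing}(s,0)$; this does not affect the final exponent but should be stated correctly.
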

\begin{proof}Let $s\in\Sha_{\spli}(S)$ and write $L_s$ for the extension of $K(s)$ that appears in either \Cref{propisoga2fin} or \Cref{archrelexcm}. Consider the set of places 
	\begin{center}
		$\Sigma(s):=\left\{v \in \Sigma_{L_{s}}: s \text { is } v \text {-adically close to } 0\right\}$.\end{center}
	
	If $\Sigma(s) = \emptyset$, then arguing as in the proof of Theorem $1.3$ of \cite{papasbigboi} we get a bound of the form $h(s) \leq c_1$ for some positive constant $c_1$ independent of s. 
	
	From now on we thus assume that $\Sigma(s) \neq \emptyset$.	We write $\xi_\lambda$, $\lambda \in \Lambda$, and $\mathcal{X}_\lambda \rightarrow S$ for the abelian schemes introduced in \Cref{section:settinggfuns}. Let us also fix $v \in \Sigma(s)$ from now on.
	
	Note that by our assumption on the $\xi$, i.e. the fiber has everywhere good reduction, we get that all finite places $v \in \Sigma(s)$ will be such that the fiber $\CX_{s}$ also has good reduction at $v$. This follows from our conventions on ``$v$-adic proximity'' see \Cref{section:vadicproximity}.
	
	 Arguing as in the proof of Proposition $4.1$ of \cite{papaseffbrsieg}, there exists some $1\leq t\leq l$ and some $\lambda \in \Lambda$, that will depend on the place $v$, with $t\sim \lambda$ so that $s_t:=\sigma_t^{-1}(s) \in \Delta_{v,1}$, the latter denoting a $v$-adic disc of radius $r_v(\mathcal{Y})$ centered at $s_0:=\xi_1$, and $\CX_{\lambda, s_t}$ also splits. 
	
	We write $\Sigma(s)_{\infty}:=\{v\in\Sigma(s):v|\infty\}$, $\Sigma(s)_f:=\Sigma(s)\backslash\Sigma(s)_{\infty}$, 
	\begin{center}
		$\Sigma(s)_{\ssing}:=\{v\in\Sigma(s):\exists w\in \Sigma_{\ssing}(\CX_{\xi})\text{ with }v|w\}$,
	\end{center}and $\Sigma(s)_{\nssing}:=\Sigma(s)_f\backslash\Sigma(s)_{\ssing}$.
	
	We now employ \Cref{propisoga2fin}, or \Cref{propordinaryexcm}, or \Cref{propordinarye2}, or \Cref{propsupersingular}, or \Cref{archrelexcm}, depending on which of the sets $\Sigma(s)_{\infty}$, $\Sigma(s)_{\ssing}$, and $\Sigma(s)_{\nssing}$ the place $v$ is in, the ``type'' of the fiber $\CX_s$ (i.e. $E^2$ or $E\times CM$), and the ``type'' of the fiber $\CX_{\xi}$. From each of these we get a ``local factor'' $R_{s,t,v}$ which is such that \begin{enumerate}
		\item $R_{s, t, v} \in \overline{\mathbb{Q}} [X_{i j}^{(\lambda)} ; 1 \leq i, j \leq 4]$, $\lambda$ being fixed but dependent on $v$,
		
		\item $R_{s,t,v} \notin I(Sp_{4}) \leq \overline{\mathbb{Q}} [X_{i,j}^{(\lambda)} ; 1 \leq i, j \leq 4]$,
		
		\item $\iota_v(R_{s, t, v}(Y_{G, \lambda}(x(s))))=0$, 
		
		\item $R_{s,t,v}$ is homogeneous of degree bounded by an absolute constant,
		
		\item $R_{s,t,v}$ is independent of the $v^{\prime}$ for which $\lambda=\lambda\left(v^{\prime}\right)$ if $v$ is a finite place with $v\in \Sigma(s)_{\nssing}$, but depends only on the $t$ for which $t\sim \lambda$ in the above discussion. In this case, we simply write $R_{s,t,\nssing}$ for these polynomials\footnote{This will technically be the product of one $R_{s,t,v}$ corresponding to \Cref{propisoga2fin} and one of \Cref{propordinaryexcm} or \Cref{propordinarye2} depending on the fiber $\CX_{\xi}$.}
	\end{enumerate}
	
	Writing $R_{s,\lambda,\infty}:=\prod_{v|\infty}\prod_{t\sim\lambda} R_{s,t,v}$ we get a polynomial of degree bounded by $c_{\infty}\cdot l\cdot [L_{s}: \mathbb{Q}]$. Similarly, writing $R_{s, \lambda,\nssing}=\prod_{t\sim\lambda} R_{s,t,\nssing}$, we get a homogeneous polynomial whose degree is bounded by $8\cdot l$. We also set $R_{s,\lambda,\ssing}=\prod_{v\in\Sigma(s)_{\ssing}}\prod_{t\sim\lambda} R_{s,t,v}$ which is such that $R_{s,\ssing}:=\prod_{\lambda\in \Lambda}R_{s,\lambda,\ssing}$ is bounded by $4\cdot |\Sigma(s)_{\ssing}|$.
	
	Finally, we set $R_{s,f}:=R_{s,\ssing}\cdot\prod_{\lambda\in \Lambda} R_{s,\lambda,\nssing}$, $R_{s,\infty}:=\prod_{\lambda\in \Lambda} R_{s,\lambda,\infty}$, and $R_s:=R_{s,f}\cdot R_{s,\infty}$. We claim that $R_s$ corresponds to a global non-trivial relation and that its degree is bounded by a quantity of the form $c_0\cdot [K(s):\Q]+c_0^\prime\cdot[K(s):\Q]\cdot |\Sigma_{K,\ssing}(s,0)|$. 
	
	The ``globality'' of the relation among the values of the G-functions that corresponds to the above polynomial follows by construction. The bound on the degree follows from the above discussion together with the fact that $[L_s:K(s)]$ is bounded by an absolute constant independent of our point $s$.
	
	We are thus left with establishing the ``non-triviality'' of this relation. Since by \Cref{idealprime} the ideal $I_\Lambda$ in \Cref{trivialrelations} is prime it is enough to show that none of the $R_{s,\lambda}$ are in $I_\Lambda$. This follows as in ``Step $4$'' of the proof of Proposition $4.1$ in \cite{papaseffbrsieg}, using the fact that the ``local factors'' defined above are not in the ideal $I(Sp_{4})$ of $\overline{\mathbb{Q}}[X_{i j}^{(\lambda)} ; 1 \leq i, j \leq 4]$, which is also prime by \Cref{idealprime} for $|\Lambda|=1$.
	
	Our height bound now follows from the ``Hasse Principle'' of Andr\'e-Bombieri, see Ch. VII, $\S 5$ of \cite{andre1989g}.\end{proof}
		

		\subsection{Applications to Unlikely intersections}\label{section:applicationsgood}

Our main motivation in pursuing the height bounds established in \Cref{goodreductionmainhtbound} are their applications to unlikely intersections. In particular, based on a strategy due to C. Daw and M. Orr first used in \cite{daworr}, to the Zilber-Pink conjecture in this setting. From a technical perspective, the ``direct application'' of our height bounds would ideally be the establishment of ``Large Galois orbits hypotheses'' that appear in \cite{daworr,daworr2}. We give a brief summary of the terminology before stating our applications in this direction. 

Let us consider a curve $Z\subset\mathcal{A}_2$. By abuse of notation, throughout this section, we shall call a point $s\in Z(\bar{\Q})$ an ``$E\times CM$-point'', respectively an ``$E^2$-point'', of $Z$ if the abelian surface $A_s$ that corresponds to it is isogenous to $E_s\times_{\bar{\Q}} E'_s$ where \textbf{only one} of the $E_s$ and $E_s'$ is CM, respectively if $A_s$ is isogenous to $E_s\times_{\bar{\Q}} E_s'$ where $E_s\sim E_s'$ are isogenous \textbf{non-CM }elliptic curves. In practice we are therefore assuming that the points we are trying to count are not special, i.e. that we are not in the ``Andr\'e-Oort setting''.

Given an $E^2$- or $E\times CM$-point on $Z$, using the terminology above, we may find a unique special curve $V_s\subset \mathcal{A}_2$ that contains it, for more on this see \cite{daworr}. In \cite{dawren} C. Daw and J. Ren associate to each special subvariety $V$ of a Shimura variety a ``measure of complexity'' $\Delta(V)$. In our setting of interest given a special curve $V\subset \mathcal{A}_2$, either an ``$E^2$-curve'' or an ``$E\times CM$-curve'', this notion of complexity can be found \begin{center}
	$	\Delta(V)\mapsto\begin{cases}
		V=E\times CM\text{-curve}& \S 3\text{ of \cite{daworr}} \\
		V=E^2\text{-curve}& \S 6.3\text{ of \cite{daworr2}}. \end{cases}$ \end{center}

We will first need some notation. Given a point $s_0\in \mathcal{A}_2(K)$ we let \begin{center}
	$\Sigma_{\ssing}(s_0):=\{v\in \Sigma_{K,f}:A_{s_0}\text{ has potentially supersingular reduction over } v\}$,
\end{center}where $A_{s_0}$ stands for the abelian surface corresponding to $s_0$. Moreover, given a smooth irreducible curve $Z\subset \mathcal{A}_2$ defined over $K$ with $s_0\in Z(K)$, we may assign, via the discussion in \Cref{section:reductionlemmas} and \Cref{section:gfunctions}, a family of $G$-functions associated to a cover $(S,\{\xi_1\ld\xi_l\})$ of the pair $(Z,s_0)$. It thus makes sense to consider, given a point $s\in Z(L)$ for some finite extension $L/K$, the sets of places denoted by $\Sigma(s,0)$ and $\Sigma_{K,\ssing}(s,0)$ in \Cref{section:pfofhtboundsgood}.

\begin{prop}\label{lgogood}
	Let $Z\subset \mathcal{A}_2$ be a smooth irreducible curve defined over $\bar{\Q}$ that is not contained in any proper special subvariety of $\mathcal{A}_2$ and fix $N\in\N$. We consider the set\begin{center}
		$\Sha_{ZP\text{-}\spli,N}(Z):=\{s\in Z(\C):s=E\times CM\text{- or }E^2\text{-point, and }|\Sigma_{K,\ssing}(s,0)|\leq N \}$.
	\end{center}
	
	Assume that there exists a point $s_0\in Z(\bar{\Q})$ which is an $E\times CM$-point or an $E^2$-point and such that the corresponding abelian surface is of the form $A_{s_0}\sim E_0\times_{\bar{\Q}}E_0'$ with $E_0$, $E_0'$ elliptic curves that have everywhere potentially good reduction. Then there exist positive and effectively computable constants $c_1=c_1(Z,N,s_0)$, $c_2=c_2(Z,s_0)$ such that
	\begin{equation}\label{eq:lgo}
		|\gal(\bar{\Q}/\Q)\cdot s|\geq c_1\cdot \Delta(V_s)^{c_2},
	\end{equation}
	for all $s\in\Sha_{ZP\text{-}\spli,N}(Z)$.
\end{prop}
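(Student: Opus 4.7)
The plan is to deduce the Galois orbit lower bound by feeding the height bound from Proposition \ref{htboundsfin} into the strategy developed by C. Daw and M. Orr in \cite{daworr, daworr2} that converts polynomial height bounds into ``Large Galois orbits'' statements for splitting points in $\mathcal{A}_2$.

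First, I would set up the input data. Using Lemma \ref{reduction}, one replaces $Z$ (and $s_0$) by a suitable cover $f:\CX \to S$ defined over some number field $K$, with preimages $\{\xi_1,\ldots,\xi_l\} \subset S(K)$ of $s_0$ whose fibers are all isogenous to the prescribed $E_0 \times_{\bar{\Q}} E_0'$. By hypothesis $E_0$ and $E_0'$ have everywhere potentially good reduction, so Proposition \ref{htboundsfin} applies and yields effective constants $c_1', c_2' > 0$, depending only on $S$, $f$ (hence on $Z$ and $s_0$), such that for every $s \in \Sha_{\spli}(S)$
\begin{equation*}
h(s) \leq c_1' \cdot \bigl( |\Sigma_{K,\ssing}(s,0)| \cdot [K(s):\Q] \bigr)^{c_2'}.
\end{equation*}
Restricting to $s \in \Sha_{ZP\text{-}\spli,N}(Z)$ and absorbing the factor $N^{c_2'}$ into a new constant $c_1'' = c_1''(Z, s_0, N)$, this gives a polynomial bound of the form $h(s) \leq c_1'' \cdot [K(s):\Q]^{c_2'}$.

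Next, I would invoke the conversion mechanism of Daw--Orr. In \cite{daworr}, Section 4 (for $E \times CM$-points) and in \cite{daworr2}, Section 6 (for $E^2$-points), they show that any polynomial height bound of the above shape, valid on $Z$ at points of the corresponding type, implies a lower bound on $|\gal(\bar\Q/\Q) \cdot s|$ of polynomial type in the complexity $\Delta(V_s)$ of the unique special curve $V_s \subset \mathcal{A}_2$ containing $s$. The key ingredients here are:  Masser--Wüstholz isogeny estimates, which bound the minimal degree of an isogeny $A_s \sim E_s \times E_s'$ (respectively $E_s \sim E_s'$) in terms of the Faltings heights of the factors;  Colmez/Nakkajima--Taguchi--type comparisons between Faltings heights of elliptic curves and their Weil heights; and, in the $E \times CM$ case, the fact that the discriminant of the CM order of $E_s'$ is comparable to $\Delta(V_s)$, combined with Brauer--Siegel lower bounds for class numbers. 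Together these convert an upper bound of the form $h(s) \leq c_1'' [K(s):\Q]^{c_2'}$ into the desired inequality $|\gal(\bar\Q/\Q) \cdot s| \geq c_1 \cdot \Delta(V_s)^{c_2}$, with $c_1 = c_1(Z, N, s_0)$ and $c_2 = c_2(Z, s_0)$.

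The main obstacle to navigate is administrative rather than conceptual: one must verify that the bookkeeping in the Daw--Orr conversion goes through unchanged when the input height bound depends on $|\Sigma_{K,\ssing}(s,0)|$ rather than being uniform over all splitting points. This is precisely where the assumption $|\Sigma_{K,\ssing}(s,0)| \leq N$ enters, allowing the supersingular contribution to be swallowed into a constant $c_1(Z, N, s_0)$ without affecting the exponent $c_2$. Beyond this, one has to confirm that the passage from the cover $S$ back down to $Z$ preserves both the height bound (up to adjusting constants) and the Galois orbit lower bound (since the cover is finite of degree bounded in terms of $Z$ and $s_0$ only), but both verifications are routine given the setup of Lemma \ref{reduction}.
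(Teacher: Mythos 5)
Your proposal is correct and takes essentially the same approach as the paper: both deduce the Galois orbit lower bound by plugging the height bound (with the supersingular factor absorbed into a constant thanks to $|\Sigma_{K,\ssing}(s,0)| \leq N$) into the Daw--Orr conversion mechanism based on Masser--Wüstholz isogeny estimates, with the paper simply citing Proposition $9.2$ of \cite{daworr} for the $E\times CM$-case and Theorem $6.5$ of \cite{daworr2} for the $E^2$-case rather than unpacking the ingredients as you do.
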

\begin{proof}The strategy of the proof, due to C. Daw and M. Orr, is to combine height bounds of the type that appear in \Cref{goodreductionmainhtbound} together with work of Masser-W\"ustholz, see \cite{masserwu} and \cite{mwendoesti}.
	
	The proof in the $E\times CM$-case is identical to the proof of Proposition $9.2$ in \cite{daworr} after replacing the height bounds of Daw and Orr by \Cref{goodreductionmainhtbound}. In the $E^2$-case the proof is identical to that of Theorem $6.5$ of \cite{daworr2} again replacing Daw and Orr's height bound by \Cref{goodreductionmainhtbound}.\end{proof}

\begin{proof}[Proof of \Cref{goodreductionmainzpapp}]
	As noted earlier \Cref{goodreductionmainzpapp} now follows from previous work of C. Daw and M. Orr. Namely in the case of $E\times CM$-points finiteness follows from Theorem $1.2$ of \cite{daworr} while in the case of $E^2$-points from Theorem $1.3$ in \cite{daworr2}.
\end{proof}

	\subsection{Supersingular primes of proximity}\label{section:supersingularprox}

There are several natural questions about the sets of places $\Sigma_{\ssing}(\CX_{\xi})$ and $\Sigma_{K,\ssing}(s,0)$ that appear in \Cref{section:pfofhtboundsgood}.

We start with some elementary remarks on the set $\Sigma_{\ssing}(\CX_{\xi})$. The set in question is a subset of the set \begin{center}
	$\Sigma_{geomisog}(\CX_{\xi}):=\{v\in \Sigma_{K,f}:E_{\xi}\text{ and } E'_{\xi} \text{ are geometrically isogenous modulo }v \}$,
\end{center}where $E_{\xi}$ and $E'_{\xi}$ are the two elliptic curves with $\CX_{\xi}\sim E_{\xi}\times_{K}E'_{\xi}$. If $\CX_{\xi}$ is an $E^2$-point then $\Sigma_{geomisog}(\CX_{\xi})$ is trivially infinite. On the other hand, if $\CX_{\xi}$ is an $E\times CM$-point, thanks to work of F. Charles, see Theorem $1.1$ in \cite{charlesisog}, $\Sigma_{geomisog}(\CX_{\xi})$ is again known to be infinite. In other words, we cannot hope for a ``cheap'' solution to the Zilber-Pink conjecture via the G-functions method without further number-theoretic input. 

In even more detail, in the case where $\CX_{\xi}$ is an $E^2$-abelian surface $\Sigma_{\ssing}(\CX_{\xi})$ should itself be an infinite set, if the Lang-Trotter conjecture holds. Given that there are already positive results in this direction, for example Elkies's celebrated results in \cite{elkies}, it seems reasonable to expect that $\Sigma_{\ssing}(\CX_{\xi})$ should be an infinite set in any case.

The set $\Sigma_{K,\ssing}(s,0)$, on the other hand, will always be finite. This is trivially true since this set is a subset of the finite set $\{v\in\Sigma_{K,f}:v|x(s)\}$. A natural question in this line of thought is whether a sufficiently strong bound on the potential size of this set can be obtained. Along this train of thought, the following naive conjecture would imply Zilber-Pink in this setting:
\begin{conj}\label{conjonsupersingularproximity}
	Let $f:\CX\rightarrow S$ be a $1$-parameter family defined over a number field $K$ and $s_0\in S(K)$ be an $E^2$-point (respectively an $E\times CM$-point). 
	
	Let $s\in S(\bar{\Q})$ be another $E^2$-point (resp. an $E\times CM$-point) of our curve. Let $V_s\subset \mathcal{A}_2$ be the special curve corresponding to the point $s$, i.e. $s$ is the intersection of the embedding $\iota_f(S)\subset \mathcal{A}_2$ of $S$ in $\mathcal{A}_2$ induced from $f$ and the special curve $V_s$.
	
	Then there exists a positive constants $c_1=c_1(S,s_0)$ and $c_2=c_2(S,s_0)$, depending only on $S$ and $s_0$, such that \begin{equation}\label{eq:conjsupsingproxim}
		\Sigma_{K,\ssing}(s,0)\leq c_1([K(s):K]\cdot \log(\Delta(V_s)))^{c_2}.
	\end{equation}
\end{conj}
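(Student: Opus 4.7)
The plan is to start from the product formula applied to $x(s)\in K(s)^\times$. Writing $\|\cdot\|_v$ for the canonical $v$-adic absolute value on $K(s)$, the product formula yields
$$\sum_{v\in\Sigma_{K(s),f},\,\|x(s)\|_v<1}\log\|x(s)\|_v^{-1}\leq [K(s):\Q]\cdot h(x(s)).$$
For each $w\in\Sigma_{K,\ssing}(s,0)$ I would fix, by the very definition of this set, a place $v(w)$ of $K(s)$ lying above $w$ for which $s$ is $v(w)$-adically close to $s_0$; this forces $\|x(s)\|_{v(w)}<1$ by the proximity condition recalled in \Cref{section:vadicproximity}, and the corresponding term in the sum above contributes at least $\log p(w)\geq\log 2$. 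Since the restriction map $w\mapsto v(w)|_K=w$ is injective, one obtains a first, naive bound of the shape
$$|\Sigma_{K,\ssing}(s,0)|\leq c\cdot [K(s):K]\cdot h(s)$$
for an absolute constant $c>0$, where we have also used the standard equality $h(x(s))=h(s)+O(1)$ for the Weil height on $S$ attached to the fixed morphism $x\colon S\to\mathbb{P}^1$.

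The conjecture thus reduces to proving an unconditional height bound $h(s)\leq c'(\log\Delta(V_s))^{c''}$ for the atypical intersection points $s\in S(\bar{\Q})$. The natural approach here is to combine Silverman's specialization theorem, which identifies $h(s)$ with the Faltings height $h_F(\CX_s)$ up to $O(1)$ terms away from the singular locus of $f$, with the Masser-W\"ustholz isogeny estimates applied to $\CX_s\sim_{K(s)}E_s\times E_s'$. This controls $h_F(\CX_s)$ in terms of $h_F(E_s)+h_F(E_s')$ up to an error of size $O(\log\Delta(V_s))$, reducing the problem to a bound on the Faltings heights of the two elliptic factors.

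The main obstacle is the absence of any a priori bound of this type on the elliptic factors: in the $E\times CM$ case the CM factor has bounded Faltings height via the averaged Colmez formula but the non-CM factor is uncontrolled, while in the $E^2$ case both factors are mutually isogenous yet their common Faltings height can be arbitrarily large in terms of $\log\Delta(V_s)$ alone. A candidate approach to sidestep this difficulty is to revisit \Cref{propsupersingular}: each $v\in\Sigma_{K,\ssing}(s,0)$ produces a polynomial $R_{s,v}$ of bounded degree whose coefficients genuinely depend on $v$ through the rational number $d_v$, so combining many such relations at distinct supersingular places should cut out in the ambient $\SP_4$ a subvariety of ever-smaller dimension which must still meet the point $Y_G(x(s))$. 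Quantifying the mutual independence of the $R_{s,v}$ across distinct supersingular $v$'s appears to be the hard step; it does not follow from the linear-algebraic manipulations of \Cref{section:relsa2}, and will likely require further input from the $p$-adic Hodge theory of supersingular abelian surfaces, going beyond the ordinary-case analysis carried out in \cite{papaspadicgfuns2}.
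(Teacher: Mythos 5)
The statement you are asked to prove is labeled as a conjecture in the paper, and the paper offers no proof of it; the surrounding discussion in \Cref{section:supersingularprox} only motivates it and records (in Remark~4 after the statement) that it implies the Zilber--Pink conjecture in this setting via the LGO arguments of Daw--Orr. So there is no ``paper's proof'' to compare against, and any proposal should be judged on whether it makes independent progress.

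Your opening step is a genuinely useful observation. The product-formula count is correct once the normalizations are made explicit: each $w\in\Sigma_{K,\ssing}(s,0)$ picks out some $v(w)\in\Sigma(s,0)$ above it with $\|x(s)\|_{v(w)}<1$, the map $w\mapsto v(w)$ is injective because $v(w)|_{K}=w$, and each such place contributes at least $\log 2$ to $[K(s):\Q]\,h(x(s))$, giving
\begin{equation*}
|\Sigma_{K,\ssing}(s,0)|\ \leq\ c\cdot[K(s):K]\cdot h(s)
\end{equation*}
after absorbing $[K:\Q]$ and the $O(1)$ from $h(x(s))=h(s)+O(1)$ into $c$. This is clean and worth keeping.

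The gap is in the reduction step. You deduce that it would suffice to have an unconditional bound $h(s)\leq c'(\log\Delta(V_s))^{c''}$, but such a bound is not a simplification of the conjecture --- it is \emph{strictly stronger}. An absolute height bound in $\log\Delta(V_s)$ alone, with no dependence on $[K(s):\Q]$, combined with the isogeny-estimate machinery of Masser--W\"ustholz and Pila--Zannier, would give the Zilber--Pink conclusion directly and make \Cref{conjonsupersingularproximity} superfluous. Moreover it is circular relative to the apparatus built in this paper: the only route to height bounds here is \Cref{goodreductionmainhtbound}, whose right-hand side involves $|\Sigma_{K,\ssing}(s,0)|$ itself, so feeding it back into your naive bound just returns the conjecture. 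Your Faltings-height intermediate step has the same problem: Silverman's specialization and the isogeny estimates control $|h_F(\mathcal{X}_s)-h_F(E_s)-h_F(E_s')|$ in terms of $\log\Delta(V_s)$, but you still have no input bounding $h_F(E_s)$, and you correctly observe that none exists in the non-CM case. Your final paragraph --- combining the supersingular relations $R_{s,v}$ of \Cref{propsupersingular} across many $v$'s and quantifying their independence --- is a reasonable direction to speculate about but, as you note, is not supported by the linear algebra in \Cref{section:relsa2}; it would amount to new mathematics, and without it the proposal does not establish the conjecture.
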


\begin{remarks}1. The quantity $\Delta(V_s)$ that appears here is the ``complexity'' of the special subvariety $V_s$ already mentioned in \Cref{section:applicationsgood}.\\
	
	2. \Cref{conjonsupersingularproximity} is essentially two conjectures in one. From the point of view of the Zilber-Pink problem we would care only about the supersingular proximity between points of the same ``type'', i.e. $E^2$-points or $E\times CM$-points.\\
	
	3. It seems also natural to phrase the conjecture in terms of proximity of points in the moduli space $\mathcal{A}_2$ itself. In other words, consider the places $v$ of (potentially) supersingular reduction of the $E^2$-abelian surface (resp. $E\times CM$-abelian surface) corresponding to a fixed point in $s_0\in\mathcal{A}_2(\bar{\Q})$. Given $s\in \mathcal{A}_2(\bar{\Q})$ another such point, and trivially not in the same special subvariety, can a bound as in \eqref{eq:conjsupsingproxim} be given for the number of such places $v$ with respect to which our two points are also ``$v$-adically close'' in the moduli space?\\
	
	4. The fact that \Cref{conjonsupersingularproximity} implies the Zilber-Pink conjecture in this setting can be seen from the proofs of LGO in this setting by Daw and Orr\footnote{See \Cref{section:applicationsgood} for references.}. In short, a logarithmic upper bound on $\Delta(V_s)$ can be canceled out of the height bound when we pair the latter with Masser-W\"ustholz's Isogeny estimates.
	
\end{remarks}

\subsection{Splittings in $\mathcal{A}_g$}\label{section:splittingsinag}
The techniques of the main part of our exposition, namely \Cref{section:relsa2}, raise reasonable expectations about ``splittings'' in $\mathcal{A}_g$ for $g\geq 2$. In more detail, let us consider a family $f:\CX\rightarrow S$ of $g$-dimensional principally polarized abelian varieties defined over some number field $K$ and assume that the induced morphism $j:S\rightarrow \mathcal{A}_g$ has image which is a Hodge generic curve. Assume, furthermore, that some point $s_0\in S(K)$ is isogenous to some non-simple abelian variety $A_0\times B_0$. 

Can we extract relations among values of G-functions at points $s\in S(\bar{\Q})$ where some splitting of the form $\CX_s\sim A_s\times B_s$ happens? With the Zilber-Pink conjecture in mind we may further simplify our paradigm. In particular, we may consider points $s\in S(\bar{\Q})$ where the fiber $\CX_s$ is isogenous to some abelian variety $A_s\times B_s$ with $\dim A_s=\dim A_0$, and thus also $\dim B_s=\dim B_0$. 

We expect that the same circle of ideas and computations we employ here work in much greater generality. In other words, we expect that the same circle of ideas gives one polynomial $R_{s,v}$ which satisfies the conclusions of \Cref{naivelocalrelations}. It is also natural to expect that the polynomial $R_{s,v}$, at least for finite places $v$, will be have ``relatively mild dependence'' on $v$ as long as the ``central fiber'' $\CX_{s_0}$ does not have supersingular reduction modulo $v$. Here by ``relatively mild'' we mean a dependence that may be controlled by combinatorial information, such as the type of potential Newton polygons for $p$-divisible groups of height $2g$.

One possible roadblock, evident by the code employed in \Cref{section:appendixcode}, is establishing that the relations one gets are ``non-trivial''. 
	
	\section{Places of bad reduction: A survey} \label{section:badred}

A naturally arising question from the results of the previous section is what can be said about places of bad reduction of the ``central fiber'' $\CX_0$. In this section we propose a conjectural strategy to deal with those. In other words, a conjectural strategy to remove the assumption of ``everywhere potentially good reduction'' of the central fiber ``central fiber'' $\CX_0$ in the results of the previous section.

\subsection{Hyodo-Kato cohomology} \label{section:logcryst}

In this subsection we deviate slightly from our usual notation. Namely, we write $K/\mathbb{Q}_p$ to be a finite extension and consider $X/K$ a $g$-dimensional abelian variety with semi-stable reduction. We also let $k = \mathbb{F}_q$ denote the residue field of $\CO_K$, $W = W(k)$, $K_0 = W[\frac{1}{p}]$, and write $\sigma$ for the Frobenius on $K_0$. Throughout this subsection we also fix a uniformizer $\varpi \in \CO_K$.

Given $X$ as above we will write $\mathfrak{X} \rightarrow \spec(\CO_K)$ to denote a semi-stable model of $X$. By this we mean that $\mathfrak{X}$ is an fs log scheme such that the above structure morphism is proper and log smooth, where $\spec(\CO_K)$ is endowed with the log structure given by $\mathbb{N} \rightarrow \CO_K$, $n \mapsto \varpi^n$.

In \cite{hyodokato} Hyodo and Kato construct a $W$-lattice, in much greater generality, that we denote by $H^1_{HK}(\tilde{\mathfrak{X}}/W)$. In the case of bad semi-stable reduction these lattices play much the same role that crystalline cohomology groups play in the case of good reduction.

In more detail, on the one hand we have canonical isomorphisms of $K$-vector spaces
\begin{equation}\label{eq:hyodokatoisom}
	\rho_{HK}(X): H^1_{dR}(X/K) \rightarrow H^1_{HK}(\tilde{\mathfrak{X}}/W) \otimes_W K,
\end{equation}
though in contrast with the de Rham-crystalline comparison of \cite{bertogus} $\rho_{HK}(X)$ will depend\footnote{See Remark $4.4.18$ in \cite{tsujipadicetale}.} on the choice of the uniformizer $\varpi$. 

On the other hand, these $W$-lattices capture ``information about the reduction modulo $(\varpi)$ of $X$''. Where the crystalline cohomology groups carry a Frobenius action, the Hyodo-Kato cohomology groups carry the structure of a so called $(\phi, N)$-module. In other words, writing $D(X) := H^1_{HK}(\tilde{\mathfrak{X}}/W) \otimes_W K_0$ there exists a bijective ``Frobenius'' operator $\phi: D(X) \rightarrow  D(X)$ which is $\sigma$-semilinear and a nilpotent ``monodromy'' $K_0$-linear operator $N \in \End(D(X))$ such that
\begin{center}
	$N\phi = p\phi N$.
\end{center}

Inspired by Theorem B of \cite{vonk} we formulate the following:
\begin{conj}\label{conjhyodokato} Let $f: X \rightarrow X'$ be an isogeny between two abelian varieties with semi-stable reduction over $K$. Then there exists a canonical pullback map induced from $f$\begin{center}
		$f^*_{HK}: H^i_{HK}(\tilde{\mathfrak{X}'}/W) \to H^i_{HK}(\tilde{\mathfrak{X}}/W)$,
	\end{center}
	which is also a morphism of $(\phi, N)$-modules, where $\mathfrak{X}$, resp. $\mathfrak{X}'$, is a semi-stable of $X$, resp. $X'$, over $\CO_K$.
	
	Moreover, the following diagram commutes
\begin{center}
		\begin{tikzcd}
	H^1_{dR}(X/K) \arrow[r, "\rho_{HK}(X)"]                 &  H^1_{HK}(\tilde{\mathfrak{X}}/W) \otimes_W K             \\
		H^1_{dR}(X'/K) \arrow[u, " f^*_{dR} "'] \arrow[r, "\rho_{HK}(X')"] & H^1_{HK}(\tilde{\mathfrak{X}'}/W) \otimes_W K \arrow[u, "f^*_{HK}"']
	\end{tikzcd}
\end{center}
	where $f^*_{dR}$ is the pullback map induced on de Rham cohomology.
\end{conj}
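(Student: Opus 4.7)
The plan is to derive $f^*_{HK}$ from the functoriality of log crystalline cohomology, with the main input being an extension of the isogeny $f$ to a morphism of log smooth proper schemes over $\spec(\CO_K)$; once this is achieved, compatibility with the $(\phi, N)$-module structure follows formally, and the commutativity of the displayed diagram reduces to the functoriality of the construction of $\rho_{HK}$.

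First, since $X$ and $X'$ both have semi-stable reduction, their N\'eron models $\mathcal{N}(X),\mathcal{N}(X')$ are semi-abelian schemes over $\CO_K$, and by the N\'eron mapping property the isogeny $f: X \to X'$ extends uniquely to an isogeny $\tilde{f}: \mathcal{N}(X) \to \mathcal{N}(X')$. Following Mumford's theory, as refined by Faltings--Chai, one then constructs proper semi-stable models $\mathfrak{X},\mathfrak{X}'$ of $X,X'$ over $\spec(\CO_K)$ equipped with the natural log structures coming from their special fibers, chosen compatibly with $\tilde{f}$ after taking sufficiently fine refinements of the associated polyhedral decompositions, so that $\tilde{f}$ extends to a morphism $\mathfrak{X} \to \mathfrak{X}'$ of log smooth $\spec(\CO_K)$-schemes. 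Granting such compatible models, the pullback $f^*_{HK}$ is defined as the induced morphism on log crystalline cohomology of the log special fibers. Since the operators $\phi$ and $N$ on $D(X)$ are defined intrinsically via the log crystalline site together with its log structure, and are functorial under morphisms of log smooth schemes, the relation $N\phi = p\phi N$ is preserved and $f^*_{HK}$ is a morphism of $(\phi,N)$-modules. The commutativity of the diagram relating $f^*_{HK}$ to $f^*_{dR}$ under $\rho_{HK}$ then follows at once from the fact that the Hyodo--Kato isomorphism is itself functorial for log smooth pullbacks.

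The main obstacle lies in the simultaneous construction of compatible semi-stable models: in general, an isogeny of semi-abelian N\'eron models need not extend to a morphism between arbitrarily chosen proper semi-stable models, and one typically has to replace $\mathfrak{X}$ by an admissible blow-up or to take a common refinement of the toric combinatorial data on both sides. Establishing that such compatible models always exist, and then checking that the resulting $f^*_{HK}$ is independent of these auxiliary choices, is the central technical point needed to obtain a genuinely \emph{canonical} pullback as required in the statement. A potentially more robust alternative, in the spirit of the work of Vonk cited in the conjecture, is to bypass the direct construction of models and instead invoke the semi-stable comparison theorem of Tsuji: the isogeny $f$ canonically induces a morphism on $p$-adic \'etale cohomology, which via Fontaine's functor $D_{st}$ yields a morphism of admissible filtered $(\phi,N)$-modules; one would then identify the underlying $(\phi,N)$-module map with the desired morphism on Hyodo--Kato cohomology, and verify, again via the comparison isomorphism, that it recovers $f^*_{dR}$ under $\rho_{HK}$. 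This second route has the advantage of making canonicity built-in but shifts the difficulty to comparing the abstractly-defined $D_{st}$-map with the geometrically-defined log crystalline pullback.
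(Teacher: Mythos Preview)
The statement you are attempting to prove is not proved in the paper: it is explicitly formulated as a \emph{conjecture} (introduced with ``Inspired by Theorem~B of \cite{vonk} we formulate the following''), and all subsequent results in \Cref{section:badred} that rely on it are stated conditionally (``Assume \Cref{conjhyodokato} and \Cref{conjrelhyodokato} hold''). There is therefore no ``paper's own proof'' to compare against.

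As for your proposal itself, it is an honest and reasonable outline of the two natural strategies, and you correctly identify the real difficulty in the first route: extending the isogeny to a morphism of proper log smooth models is not automatic, and establishing both existence of compatible models and independence of the resulting $f^*_{HK}$ from the auxiliary choices is precisely the missing ingredient. Your write-up is a plan rather than a proof --- you do not actually carry out the refinement-of-polyhedral-decompositions argument, nor the identification of the $D_{st}$-map with a geometric log crystalline pullback in the second route --- so what you have is a roadmap with the hard step flagged, not a resolution of the conjecture. This is consistent with the paper's own stance: the author leaves the statement open and uses it as a hypothesis. Note also that the paper does invoke Vonk's Theorem~B directly in the special case of isogenies between elliptic curves (see the proof of \Cref{e2relbad}), which suggests that at least in dimension one the needed functoriality is available in the literature; the conjecture is really about the higher-dimensional case.
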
 

	\subsection{$G$-functions and bad reductions} \label{section:gfunsbad}

Let us now return to the notation used in \Cref{section:backgroundgfuns}. Namely from now on $K$ is a number field and $f: \mathcal{X} \rightarrow S$ is a 1-parameter family of principally polarized $g$-dimensional abelian varieties. We also fix as usual $s_0 \in S(K)$ over which the fiber $X_0$ of this family has everywhere semi-stable reduction.

Following the general notational conventions of \Cref{section:backgroundgfuns} for $v \in \Sigma_{K, f}$ a place of bad semi-stable reduction we define
\begin{center}
	$H^1_v(X_0) := H_{HK}^1(\widetilde{\mathfrak{X}}_{0, v}/W(k_v)) \otimes_{W(k_v)} K_{v, 0}$,
\end{center}
where $\widetilde{\mathfrak{X}}_{0, v} \rightarrow \spec \CO_{K_v}$ is a proper fs log smooth scheme as above.

We expect that the values of the G-functions that are associated to the pair $(f:\CX\rightarrow S, s_0)$ via \Cref{gfuns} may be related to ``$p$-adic periods'' in the case of bad semistable reduction as well. For this we would need some ``relative version'' of the Hyodo-Kato isomorphism in the spirit of the relative isomorphisms of say \cite{bertogus,ogus}. Being unaware if this is known to experts, we have chosen to phrase this as the following:
\begin{conj}\label{conjrelhyodokato}
	Let $K/\Q_p$ be a finite extension and $f:\CX\rightarrow S$ be a $1$-parameter family of abelian varieties defined over $K$ and satisfying all properties of \Cref{reduction}. Let also $s_0\in S(K)$ be a point whose fiber has semi-stable reduction and let $\mathfrak{X}_0$ be a semi-stable model for $X_0$ over $\CO_K$.
	
	Then there exists a small enough $p$-adic analytic disc $\Delta\hookrightarrow S^{an}$ centered at $s_0$ and a canonical isomorphism \begin{equation}
		H^1_{dR}(\CX/S)\otimes_{\CO_S}\CO_{\Delta}\rightarrow H^1_v(X_0)\otimes \CO_{\Delta}
	\end{equation}such that its specialization at each $s\in\Delta$ is the isomorphism \eqref{eq:hyodokatoisom} of Hyodo-Kato. 
\end{conj}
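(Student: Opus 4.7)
The natural approach is to develop a relative log-Hyodo-Kato theory, mirroring the way Ogus's convergent $F$-isocrystals in \cite{ogus} extend the pointwise crystalline comparison of Berthelot-Ogus \cite{bertogus} to the relative setting used in the proof of \Cref{gfuns}. First, I would exploit \Cref{reduction} together with the semi-stable reduction theorem for abelian varieties (after possibly enlarging $K$) to upgrade $\mathfrak{f}:\mathfrak{X}\to\mathfrak{S}$ to a log smooth proper morphism of fs log schemes over $\spec(\CO_K)$, where $\CO_K$ carries the standard log structure from the uniformizer $\varpi$ and $\mathfrak{X}$ is semi-stable over $\mathfrak{S}$ with horizontal divisor encoded in the log structure. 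The special fibers $\tilde{\mathfrak{X}}_0\to\tilde{\mathfrak{S}}$ then become a log smooth morphism of log schemes over the log point $\spec(k)^\times$.

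Next, I would apply Shiho's theory of log convergent (F-)isocrystals to the relative morphism $\tilde{\mathfrak{X}}\to\tilde{\mathfrak{S}}$ to produce a convergent log $F$-isocrystal $\mathcal{E}_{HK}$ on $\tilde{\mathfrak{S}}/W(k_v)$, whose value at any $k_v$-point $\tilde{s}$ of $\tilde{\mathfrak{S}}$ recovers the $(\phi,N)$-module $H^1_{HK}(\tilde{\mathfrak{X}}_{\tilde{s}}/W(k_v))\otimes K_{v,0}$. Evaluating $\mathcal{E}_{HK}$ on the formal disc $\DD\to\mathfrak{S}$ centered at $\tilde{s}_0$, and identifying its de Rham realization with the log-de Rham cohomology of the generic fibers via the relative log-Hyodo-Kato isomorphism (assuming a relative version of \eqref{eq:hyodokatoisom} in the style of \cite{hyodokato}), one obtains, after rigid analytification and restriction to a small enough disc $\Delta$, an identification of $H^1_{dR}(\mathcal{X}/S)\otimes_{\CO_S}\CO_{\Delta}$ with the sheaf $H^1_v(X_0)\otimes\CO_{\Delta}$. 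The cocycle conditions coming from flatness of the Gauss-Manin connection should then, as in \cite{ogus} 5.9-5.14 and the proof of \Cref{gfuns}, force the specialization at each $s\in\Delta\cap S(\bar{K})$ to be the Hyodo-Kato isomorphism for $X_s$.

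The main obstacle, and the reason this is stated as a conjecture, is the dependence of the Hyodo-Kato isomorphism on the choice of uniformizer, noted in the footnote to \eqref{eq:hyodokatoisom} and in Remark 4.4.18 of \cite{tsujipadicetale}. For $s\in\Delta$, the fiber $\mathcal{X}_s$ may well have \emph{smooth} reduction, so one must identify $H^1_{HK}(\mathcal{X}_s)$ compatibly with $H^1_{\crys}(\tilde{\mathcal{X}}_{s})$, and these identifications must vary ``canonically'' as $s$ moves in $\Delta$. Handling this requires formulating a relative Hyodo-Kato theory where the choice of uniformizer on the base interacts correctly with the parallel transport coming from $\nabla$; this in turn seems to require extending results such as those of Beilinson or Colmez-Niziol on log-syntomic/log-de Rham comparisons to the relative setting of $\mathfrak{X}\to\mathfrak{S}\to\spec\CO_K$. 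A second obstacle, more technical but still non-trivial, is verifying that Shiho's log convergent site and de Rham realization interact with the rigid analytification on the base precisely as in Theorems 3.1 and 3.7 of \cite{ogus}; this is likely straightforward for experts in log $p$-adic Hodge theory but, to the author's knowledge, is not recorded in the literature in the form required here.
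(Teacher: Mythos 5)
This statement is a \emph{conjecture} in the paper, with no proof supplied; the text preceding it says explicitly that the author is ``unaware if this is known to experts,'' so there is no argument of the paper's to compare yours against. With that caveat, your sketch is fully consistent with the paper's own informal roadmap: the paragraph preceding \Cref{conjrelhyodokato} asks for a relative Hyodo--Kato comparison ``in the spirit of the relative isomorphisms of \cite{bertogus,ogus},'' which is precisely the role Shiho's log convergent $F$-isocrystals would play, and your appeal to Gauss--Manin flatness and cocycle conditions mirrors how the paper leans on Ogus' Corollary $5.9$ and Remark $5.14.3$ in the good-reduction proof of \Cref{gfuns}. The obstacles you name, the uniformizer dependence of the pointwise Hyodo--Kato map and the absence of a relative/rigid log-Hyodo--Kato comparison in the precise form required, are genuine and are plausibly exactly why this is a conjecture rather than a proposition.

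One small friction with the paper's own discussion: \Cref{remarkconjrelhyodokato} observes that $H^1_{HK}$, together with its $(\phi,N)$-module structure, depends only on $\mathfrak{X}_0\otimes_{\CO_K}\CO_K/m_K^2$, and consequently expects $\Delta$ to be taken small enough that $x(\Delta)$ lands in a disc of radius $\le 1/2$. With $\Delta$ constrained this way, the sections $\tilde{s}$ and $\tilde{s}_0$ agree modulo $m_K^2$, the fibers of $\mathfrak{X}\to\mathfrak{S}$ over them agree modulo $m_K^2$ as well, and so $\CX_s$ again has \emph{bad} semi-stable reduction with the same log special fiber as $\CX_0$. The scenario you flag, of $\CX_s$ acquiring smooth reduction, should therefore not arise once $\Delta$ is chosen as in that remark. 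The uniformizer-dependence issue persists even so and remains the more substantive obstruction you have correctly identified.
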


\begin{remark}\label{remarkconjrelhyodokato}
	In $(1.7)$ of \cite{hyodokato}, Hyodo-Kato note that the $(\phi,N)$-module they construct, i.e. $H^1_{HK}(X_0)$ together with its $(\phi,N)$-module structure, depends only on the scheme $\mathfrak{X}_{0}\otimes_{\CO_K}\CO_K/m_K^2$ in the notation of \Cref{conjrelhyodokato}. With this in mind, we expect that the disc $\Delta$ in \Cref{conjrelhyodokato} should be small enough that under the ``local parameter'' $x$ of \Cref{reduction} and the induced morphism $\iota:S\rightarrow\mathcal{A}_g$, it maps $\Delta$ in a $p$-adic disc of radius $\leq 1/2$ centered around the image of $s_0$. 
\end{remark}

With \Cref{conjrelhyodokato} in mind for each $v\Sigma_{K,f}$ over which $X_0$ has bad semi-stable reduction we write $\Delta_v(s_0)$ for the disc outputted by \Cref{conjrelhyodokato}. We then get period matrices for each $s\in \Delta_v(s_0)$ via the isomorphism $H^1_{dR}(X_s) \rightarrow H^1_v(X_s) = H^1_v(X_0)$, after choosing bases $\Gamma_{dR}(\CX)$ of $H^1_{dR}(\CX/S)$ and $\Gamma_v(X_0)$ of $H^1_v(X_0)$, which we denote again by $\CP_v(s)$.

Replacing the de Rham-crystalline comparison isomorphism of Berthelot-Ogus, or of Ogus in the ramified case, in the proof of \Cref{gfuns} by the conjectural relative de Rham-log crystalline comparison isomorphism of \Cref{conjrelhyodokato}, we obtain:
\begin{prop}\label{gfunsbadprop}
Let $f: \mathcal{X} \rightarrow S$ and $v\in\Sigma_{K,f}$ be as above. Let $\Gamma_{dR}(\CX)$ be the Hodge basis chosen in \Cref{gfuns} and $\Gamma_v(X_0)$ a fixed bases of $H^1_v(X_0)$.

Then for all $s\in\Delta_v(s_0, \frac{1}{2})$ we have  
\begin{center}
	$\CP_v(s) = \iota_v( Y_G (x(s))) \cdot \CP_v (s_0)$  
\end{center}
where $Y_G$ is the same matrix of G-functions as in \Cref{gfuns}.
\end{prop}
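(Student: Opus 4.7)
The plan is to mirror the proof of \Cref{gfuns}, replacing the de Rham--crystalline comparison of Berthelot--Ogus/Ogus with the conjectural relative Hyodo--Kato comparison of \Cref{conjrelhyodokato}. First I would observe that the matrix $Y_G \in M_{2g}(\bar{\Q}[[x]])$ is defined purely from the Gauss--Manin differential system
\begin{equation*}
\frac{d}{dx}Y = A\cdot Y, \quad A \in M_{2g}(K(x))
\end{equation*}
attached to the Hodge basis $\Gamma_{dR}(\CX)$, together with the normalization $Y_G(0) = I_{2g}$. This data is independent of the place $v$ and of any reduction hypothesis on $X_0$, so the $Y_G$ appearing in the statement is literally the same matrix of G-functions produced by \Cref{gfuns}.

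Next, I would invoke \Cref{conjrelhyodokato} to obtain, after possibly shrinking, a rigid analytic disc $\Delta \hookrightarrow S_v^{an}$ centered at $s_0$, containing $\Delta_v(s_0,\tfrac{1}{2})$ by \Cref{remarkconjrelhyodokato}, together with a canonical isomorphism
\begin{equation*}
\Xi : H^1_{dR}(\CX/S)\otimes_{\CO_S}\CO_\Delta \xrightarrow{\sim} H^1_v(X_0)\otimes \CO_\Delta
\end{equation*}
whose specialization at every $s\in\Delta$ recovers the Hyodo--Kato isomorphism $\rho_{HK}(X_s)$ of \eqref{eq:hyodokatoisom}. The decisive point, exactly as in the proof of \Cref{gfuns}, is that $\Xi$ identifies the horizontal sheaf $(H^1_{dR}(\CX/S)\otimes\CO_\Delta)^\nabla$ with the constant sheaf $H^1_v(X_0)\otimes\CO_\Delta$, thereby producing a canonical ``parallel transport'' $\tau(s):H^1_{dR}(X_s)\otimes K_v \to H^1_v(X_0)\otimes K_v$ varying analytically in $s$.

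The argument now concludes by a uniqueness-of-solutions argument. Writing $\tau(s)$ in the basis $\Gamma_{dR}(X_s)$ (the specialization of $\Gamma_{dR}(\CX)$) and in the fixed basis $\Gamma_v(X_0)$ yields a matrix $M(s)\in \GL_{2g}(\CO_\Delta)$. Horizontality of $\Xi$ with respect to $\nabla$ translates into the statement that $M(s)$ satisfies the differential system $\frac{d}{dx}Y = A\cdot Y$ on $\Delta$. Since $M(s_0) = \CP_v(s_0)$, the matrix $M(s)\cdot \CP_v(s_0)^{-1}$ also satisfies this system and takes the value $I_{2g}$ at $s_0$; by uniqueness of the normalized uniform solution in $M_{2g}(K_v[[x]])$, it must equal $\iota_v(Y_G(x(s)))$ wherever both sides converge. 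Rearranging gives $\CP_v(s) = \iota_v(Y_G(x(s)))\cdot \CP_v(s_0)$.

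The main obstacle, and the reason a naive transcription of \Cref{gfuns} does not suffice, is the $v$-adic convergence question: one must verify that $Y_G$ converges on (the whole of) $\Delta_v(s_0,\tfrac{1}{2})$. In the good reduction case this is packaged inside \Cref{assumptionpadicproximity} and \Cref{lemgfunsnew}, using the basis substitution $\Gamma_{dR}(\CX)\rightsquigarrow \Gamma_{dR,\new}(\CX)$. In the present setting, the analogous statement presumably requires a version of Ogus's convergence argument for the relative Hyodo--Kato isomorphism posited by \Cref{conjrelhyodokato}; this, together with the existence of $\Xi$ itself, is the nontrivial input and is precisely where the conjectural nature of the result enters.
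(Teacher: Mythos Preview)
Your proposal is correct and follows exactly the approach the paper takes: the paper's ``proof'' is literally the one-sentence remark preceding the proposition, namely that one replaces the Berthelot--Ogus/Ogus comparison in the proof of \Cref{gfuns} by the conjectural relative Hyodo--Kato comparison of \Cref{conjrelhyodokato}. You have simply unpacked that sentence in more detail, and your identification of the convergence issue is apt---the paper handles it later, in Step~1 of the proof of \Cref{prophtboundsbad}, by further shrinking the constants $\kappa_v$ at the finitely many bad places.
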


\subsection{Relations among values of G-functions}\label{section:relsbad}

Throughout this subsection we assume there exists an isogeny $\theta_0:X_0 \rightarrow X_0' = E_0\times_K E_0'$ where $E_0$ and $E'_0$ are elliptic curves.  

\begin{lemma}\label{lemrelsbad} Let $f:\CX\rightarrow S$ be as in \Cref{section:gfunsbad} and $v\in\Sigma_{K,f}$ a place of bad semi-stable reduction of $X_0$. Assume that \Cref{conjhyodokato} and holds.

Let $s\in S(K)$ be such that there exists an isogeny $\theta_s: X_s\rightarrow X'_s:=E_s \times_{K} E_s'$ where $E_s$ and $E'_s$ are elliptic curves and that $s\in \Delta_v(s_0,\frac{1}{2})$. Then \eqref{eq:e2fmain} holds for some $\Theta\in \GL_4 (\C_v)$.  
Moreover there exist $\psi_{i,j}\in \C_v$ such that 
\begin{itemize}
	\item if $E_0'$ and $E'_s$ are CM then  
\begin{equation}\label{eq:badthetacm}
	\Theta=\begin{pmatrix}
		\psi_{1,1}&0&\psi_{1,3}&\psi_{1,4}\\
		\psi_{2,1}&\psi_{2,2}&\psi_{2,3}&\psi_{2,4}\\
		\psi_{3,1}&0&\psi_{3,3}&\psi_{3,4}\\
		\psi_{4,1}&0&\psi_{4,3}&\psi_{4,4}
	\end{pmatrix},\end{equation}
\item if there exist isogenies $\phi_0:E_o\rightarrow E_0'$ and $\phi_s:E_s\rightarrow E_s'$ then  
\begin{equation}\label{eq:badthetaisog}
	\Theta=\begin{pmatrix}
		\psi_{1,1}&0&\psi_{1,3}&0\\
		\psi_{2,1}&\psi_{1,1}&\psi_{2,3}&\psi_{1,3}\\
		\psi_{3,1}&0&\psi_{3,3}&0\\
		\psi_{4,1}&\psi_{3,1}&\psi_{4,3}&\psi_{3,3}
	\end{pmatrix}.\end{equation}
\end{itemize} \end{lemma}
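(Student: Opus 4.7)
The plan is to mimic the proof of \Cref{lemisogfin} step by step, substituting the Hyodo--Kato comparison isomorphism of \Cref{conjrelhyodokato} for the de Rham--crystalline comparison of \cite{bertogus,ogus}, and invoking the functoriality clause of \Cref{conjhyodokato} to ensure commutativity of the analog of the diagram used there. The resulting computation yields \eqref{eq:e2fmain} with
\[
\Theta = \tfrac{1}{N_0}\,J_{2,3}\cdot [\theta_s]_v\cdot [\theta_0^{\vee}]_v\cdot J_{2,3}\in \GL_4(\C_v),
\]
where $[\theta_P]_v$ now denotes the matrix of $\theta_P^*$ on Hyodo--Kato cohomology with respect to the bases fixed at the start of \Cref{section:relsa2}; invertibility follows from the fact that the $\theta_P$ are isogenies.

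The key new input is the ``moreover'' clause of \Cref{conjhyodokato}, which guarantees that both $[\theta_s]_v$ and $[\theta_0^{\vee}]_v$ are matrices of $(\phi,N)$-module morphisms. Combined with \Cref{remarkconjrelhyodokato}, which implies that on a sufficiently small disc the Hyodo--Kato cohomologies of the fibers are canonically identified as $(\phi,N)$-modules (since the log structure is determined modulo $\mathfrak{m}_K^2$), the implicit parallel transport identification $H^1_v(X_0)\simeq H^1_v(X_s)$ underlying \Cref{gfunsbadprop} should also respect this structure. Hence $\Theta$ represents a morphism of $(\phi,N)$-modules $H^1_v(X_0')\rightarrow H^1_v(X_s')$.

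For the CM case, the potentially good reduction property of CM elliptic curves together with the semi-stable assumption forces $E_0'$ and $E_s'$ to have good reduction at $v$, so that $N$ vanishes on $H^1_v(E_0')$ and $H^1_v(E_s')$ and these admit bases of $\phi$-eigenvectors, after extending scalars. I would choose $\{\gamma_0',\delta_0'\}$ and $\{\gamma_s',\delta_s'\}$ to be $\phi$-eigenbases in the spirit of \cite{papaspadicgfuns2}. The $(\phi,N)$-equivariance of $\Theta$ then forces $\Theta(\gamma_0')$ to be a $\phi$-eigenvector in $H^1_v(X_s')$ with the same eigenvalue as $\gamma_0'$; generically, this eigenspace is the one-dimensional subspace spanned by $\gamma_s'$, which delivers the zero pattern in column two of \eqref{eq:badthetacm}.

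For the isogenous case, the isogenies $\phi_0$ and $\phi_s$ induce $(\phi,N)$-module isomorphisms $\phi_0^*:H^1_v(E_0')\simeq H^1_v(E_0)$ and $\phi_s^*:H^1_v(E_s')\simeq H^1_v(E_s)$. By choosing the bases of the paired Hyodo--Kato groups so that these identifications become the identity in coordinates, one expects the diagonal matching $\Theta_{11}=\Theta_{22}$ together with the vanishing $\Theta_{12}=0$ in \eqref{eq:badthetaisog} to emerge as a reflection of $\Theta$ intertwining $\phi_0^*$ and $\phi_s^*$. The main obstacle is to establish this intertwining rigorously: unlike in the good-reduction case \Cref{lemisogfin2}, where the $\Theta_{i,j}$-blocks came from honest morphisms between reductions of elliptic curves, here the argument has to be carried out purely in the category of $(\phi,N)$-modules, and it is not a priori clear that the composition $\theta_s^*\circ\mathrm{PT}\circ(\theta_0^*)^{-1}$ respects the isogeny structure. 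Strengthening \Cref{conjrelhyodokato} to an explicit compatibility of parallel transport with the $(\phi,N)$-structure along the family may well be needed, as may a careful verification of the ``generic'' eigenspace hypothesis in the CM case.
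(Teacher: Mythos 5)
Your derivation of \eqref{eq:e2fmain} is essentially the paper's: substitute the conjectural Hyodo--Kato comparison for Berthelot--Ogus and run the argument of \Cref{lemisogfin} unchanged, using the functoriality clause of \Cref{conjhyodokato} to get the needed commutativity. (Two small slips there: the dual-isogeny substitution cancels the $\frac{1}{\deg\theta_0}$ on both sides of \eqref{eq:e2fmain}, so $\Theta = J_{2,3}[\theta_s]_v[\theta_0^\vee]_v J_{2,3}$ with no extra scalar factor; and the morphism $\psi$ it represents goes $H^1_v(X_s')\to H^1_v(X_0')$, not the other way.)

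The genuine gap is in how you derive the block structure, and this is where you diverge from the paper in a way that does not work. The zero pattern in column two of \eqref{eq:badthetacm} is the constraint that the $\delta_0$-coordinate of $\psi(\gamma_s)$, $\psi(\gamma_s')$, $\psi(\delta_s')$ vanishes, and $\delta_0$ lives in $H^1_v(E_0)$ --- the elliptic curve with \emph{bad} reduction, not the CM curve $E_0'$. Your proposed argument operates entirely on the CM side: you pick $\phi$-eigenbases of $H^1_v(E_0')$, $H^1_v(E_s')$ and track $\phi$-eigenvectors. That can at best constrain the $E_0'$-columns (positions $3$, $4$), not column $2$, so it cannot produce the stated shape of $\Theta$. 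Moreover, the ``generic eigenspace'' assumption you invoke is not available; the lemma must hold unconditionally, and the Frobenius eigenvalues of $\gamma_s'$ and $\gamma_0'$ need not match a priori. The paper instead uses the nilpotent monodromy operator $N$: on the bad-reduction components one chooses a Jordan basis $\gamma_P\in\ker N_P$, $N_P(\delta_P)=\gamma_P$, on the good-reduction CM components one takes an arbitrary symplectic basis (on which $N$ is zero), and then the single relation $\psi\,N_s=N_0\,\psi$ already forces $\psi(\ker M_s)\subseteq\ker M_0=\Span\{\gamma_0,\gamma_0',\delta_0'\}$, which is exactly the column-two zero pattern. The Frobenius $\phi$ plays no role.

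The same remark applies to the $E^2$ case \eqref{eq:badthetaisog}. You propose to use the isogenies $\phi_0$, $\phi_s$ and worry that some further strengthening of \Cref{conjrelhyodokato} is needed to make $\Theta$ intertwine $\phi_0^*$ and $\phi_s^*$. In the paper's proof no such intertwining is invoked and the isogenies never enter the computation of the block structure: one simply chooses Jordan bases for the monodromy on both $H^1_v(E_P)$ and $H^1_v(E_P')$, notes $\psi(\gamma_s),\psi(\gamma_s')\in\ker M_0$ as before (giving the two zero columns), and then applies $M_0\psi(\delta_s)=\psi(M_s\delta_s)=\psi(\gamma_s)$ to pin down the $\delta_0$- and $\delta_0'$-coordinates of $\psi(\delta_s)$ in terms of those of $\psi(\gamma_s)$ --- this is what produces the repeated entries in rows $2$ and $4$ of \eqref{eq:badthetaisog}. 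Your route requires a stronger conjectural input and extra hypotheses precisely because it does not use the monodromy, whereas the monodromy argument closes the proof with what is already assumed.
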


	\begin{proof}Since by assumption $X_0$ has semi-stable reduction at $v$ and $X_0'$ is isogenous to $X_0$ the same will hold for $X_0'$ and thus also for $E_0$ and $E'_0$. Since $s$ is $v$-adically close to $s_0$ we reach the same conclusion for the abelian schemes $X_s$, $X'_s= E_s \times_K E_s'$, $E_s$, and $E_s'$. The only difference with the proof of \Cref{lemisogfin} is we will need to choose the bases $\Gamma_v(\cdot)$ of $H^1_v (E_P)$ and $H^1_v(E'_P)$ more carefully. This we do by considering cases.  
		
First, let us assume that $E_0'$ and $E_s'$ are CM. Since the reduction modulo $v$ is semi-stable these will both have good reduction, by \cite{serretate}. In particular $H^1_v$ will be $H^1_{\crys}$ for these. Since by assumption $X_0$ has bad reduction at $v$ the same will hold for $E_0$, due to the above remark . Similarly $E_s$ will also have bad reduction at $v$, due to our conventions in \Cref{section:vadicproximity} and the same argument as above. Setting $N_P$ for $P\in \{s,s_0\}$ to be the monodromy operator of the $(\phi, N)$-module given by $H^1_v(E_P)$ we get trivially for dimension reasons that $\ker N_P = \im {N_P}$.  

We therefore choose $\gamma_P\in \ker N_P\backslash\{0\}$ and $\delta_P\neq 0$ with $N_P(\delta_P) =\gamma_P$. The set $\Gamma_v(E_P) := \{\gamma_P, \delta_P \}$ will trivially define a basis of $H^1_v(E_P)\otimes_{W} K_{v,0}$. We choose $\Gamma_v (E'_P) = \{\gamma'_P,\delta'_P\}$ to be any symplectic basis of $H^1_v(E'_P)$ and finally consider the ordered bases $\Gamma_v (X_P') = \{\gamma_P,\gamma'_P,\delta_P,\delta_P' \}$ and $\beta_v(X_P')=\{\gamma_P,\gamma_P',\delta_P,\delta_P'\}$ of $H^1_v( X'_P)$.

The proof of \Cref{lemisogfin} now works verbatim, always under the assumption that \Cref{conjhyodokato} holds, up to the point where we reach \eqref{eq:lemisogmat2}. In particular once again $\Theta=J_{2,3}[\theta_s]_v[\theta_0]^{-1}_v J_{2,3}$, where $[f_v]$ stands for the matrices corresponding to $f^{*}_{HK}$ for $f\in\{\theta_s,\theta_0\}$. Arguing as in the proof of \Cref{lemisogfin} we get $[\theta_0]^{-1}_v=\frac{1}{\deg \theta_0} [\theta^{\vee}_0]_v$, where $\theta_0^{\vee}:X_0'\rightarrow X_0$ stands for the dual isogeny. Once again $[\theta_s]_v\cdot [\theta_0^{\vee}]_v$ will be the matrix, with respect to the bases $\Gamma_v(X_P')$ defined above, corresponding to the morphism
\begin{center}
	$\psi:H^1_v(X'_s) \xrightarrow{(\theta_s)^{*}_{HK}} H^1_v(X_s)=H^1_v(X_0)\xrightarrow{(\theta_0^\vee)^{*}_{HK}} H^1_v(X_0')$
\end{center} while the matrix $\Theta$ will be nothing but $[\psi]_{\beta_v(X'_s)}^{\beta_v(X_0')}$.

Since, by \Cref{conjhyodokato}, these are morphisms of $(\phi,N)$-modules we have $\psi N_s=N_0\psi$. In particular, $\psi(\gamma_s')$, $\psi(\delta_s')$, $\psi(\gamma_s)\in \ker(N_0)=\Span\{\gamma_0,\gamma_0',\delta_0'\}$, so that we may write $\psi(\gamma_s)=\psi_{1,1} \gamma_0+\psi_{1,3}\gamma'_0+\psi_{1,4}\delta_0'$ and so on. Putting these together we get \eqref{eq:badthetacm}.\\

From now on we assume that there exist isogenies $\phi_0:E_0\rightarrow E_0'$ and $\phi_s:E_s\rightarrow E_s'$. Since $X_0$ has bad reduction at $v$ the same will hold for the isogenous curves $E_0$ and $E_0'$. Similarly $E_s$ and $E_s'$ will also have bad reduction at $v$, again due to our conventions in \Cref{section:vadicproximity}.

We write $N_P$, respectively $N'_P$, respectively $M_P$, for the monodromy operator of $H^1_v(E_P)$, respectively $H^1_v(E_P')$, respectively $H^1_v(X'_P)$. For dimension reasons again $\im{ N_P} = \ker N_P$, and similarly for $N_P'$. We choose as above $\gamma_P  \in\ker N_P\backslash\{0\}$ and $\delta_P$ with $N_P(\delta_P)=\gamma_P$, and similarly for $\{\gamma_P',\delta_P'\}$. As before, we consider the ordered bases $\Gamma_v(E_P):=\{ \gamma_P,\delta_P\}$, $\Gamma_v(E_P'):=\{ \gamma_P',\delta_P'\}$, $\Gamma_v(X_P'):=\{\gamma_P,\gamma_P',\delta_P,\delta_P'\}$ and $\beta_v(X_P'):=\{ \gamma_P,\delta_P,\gamma_P',\delta_P'\}$. Once again we get that \eqref{eq:e2fmain} holds with $\Theta$ as above given by the matrix, with respect to the bases $\beta_v(X_P')$, corresponding to the morphism $\psi$ of $(\phi,N)$-modules defined in the first case of the proof.

In particular, again from the fact that $\psi$ commutes with the monodromy operators $M_P$, we have $\psi(\gamma_s)$, $\psi(\gamma'_s)\in\ker(M_s)=\Span\{\gamma_0,\gamma_0'\}$. We may thus write  \begin{center}
	$\psi(\gamma_s)=\psi_{1,1}\gamma_0+\psi_{1,3}\gamma_0'$ and $\psi(\gamma_s')=\psi_{3,1}\gamma_0+\psi_{3,3}\gamma_0'$. 
\end{center}On the other hand, $M_0\psi(\delta_s)=\psi(M_s\delta_s)=\psi(\gamma_s)$ so the above gives\begin{center}
$\psi(\delta_s)-\psi_{1,1}\delta_0-\psi_{1,3}\delta_0'\in\ker M_0=\Span\{\gamma_0,\gamma_0'\}$,
\end{center}so that we may write $\psi(\delta_s)=\psi_{2,1}\gamma_0+\psi_{1,1}\delta_0+\psi_{2,3}\gamma_0'+\psi_{1,3}\delta_0'$, for some $\psi_{2,j}\in \C_v$. Similarly we get $\psi(\delta_s')=\psi_{4,1}\gamma_0+\psi_{3,1}\delta_0+\psi_{4,3}\gamma_0'+\psi_{3,3}\delta_0'$, for some $\psi_{4,j}\in \C_v$ thus recovering \eqref{eq:badthetaisog}.\end{proof}  
	
\begin{prop} \label{excmbad} Assume that $X_0 \sim E_0 \times_K E_0'$ where $E_0'$ is CM. Let $s\in S(\bar{\Q})$ be a point whose fiber $X_s \sim E_s \times_{\bar{\Q}} E_s'$ with $E_s'$ a CM elliptic curve. Assume \Cref{conjhyodokato} and \Cref{conjrelhyodokato} hold. Then, there exists a polynomial $R_{s,\bad}\in\bar{\Q}[X_{ij}:1\leq i,j\leq 4]$ for which the following hold  
	\begin{enumerate}
		\item $R_{s,\bad}$ has coefficients in some finite extension $L_s /K(s)$ with $[L_s: K(s)]$ bounded by an absolute constant,  
		
		\item $R_{s,\bad}$ is homogeneous of degree $\deg(R_{s,\bad})=2$,
		
		\item for each place $w\in \Sigma_{L_s,f}$ over which $X_0$ has bad reduction and for which $s$ is $w$-adically close to $s_0$, we have \begin{center}
			$\iota_w(R_{s,\bad}(Y_G(x(s))))=0$, and
		\end{center}
		
		\item $R_{s,\bad}\notin I(\SP_4)$.
	\end{enumerate}\end{prop}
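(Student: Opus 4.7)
The strategy mirrors Case 1 of the proof of \Cref{propordinaryexcm}, but replaces the diagonal structure of the period matrices of the CM factors (available there thanks to ordinary reduction) with the more restrictive structure of $\Theta$ provided by \eqref{eq:badthetacm}. As a preliminary, take $L_s/K(s)$ to be the extension furnished by \Cref{propisoga2fin}; after base change by $L_s$, all relevant isogenies are defined over $L_s$ and $[L_s:K(s)]\leq 4\cdot 36^{16}$ serves as the absolute constant in item (1).

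Fix $w\in \Sigma_{L_s,f}$ as in the statement and apply \Cref{lemrelsbad}: this yields the equation \eqref{eq:e2fmain} at $w$ with $\Theta\in\GL_4(\C_w)$ of the form \eqref{eq:badthetacm}. Set
\begin{center}
$(F_{i,j}(x)):=J_{2,3}\cdot[\theta_s]_{dR}\cdot Y_G(x)\cdot[\theta_0^\vee]_{dR}\cdot J_{2,3}$,
\end{center}
exactly as in the proof of \Cref{propisoga2fin}. The entries $F_{i,j}(x)\in\bar{\Q}[[x]]$ are then $L_s$-linear combinations of the entries of $Y_G(x)$ whose coefficients depend only on $\theta_s$ and $\theta_0$, and in particular not on $w$. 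Writing $\iota_w(F_{i,j}(x(s)))$ and $\Theta$ in $2\times 2$ block form, the bottom-left block equals $F_3=\Pi_w(E_s')\cdot T_{21}\cdot\Pi_w(E_0)^{-1}$ with $T_{21}=\begin{pmatrix}\psi_{3,1}&0\\ \psi_{4,1}&0\end{pmatrix}$. Since $T_{21}$ has a zero column it has rank at most one, and since $\Pi_w(E_s')$ and $\Pi_w(E_0)^{-1}$ are invertible this forces $\det F_3=0$, i.e.
\begin{center}
$F_{3,1}(x(s))F_{4,2}(x(s))-F_{3,2}(x(s))F_{4,1}(x(s))=0$.
\end{center}
Expanding each $F_{i,j}(x)$ as a linear form in the $X_{i,j}$ evaluated at $Y_G(x)$ produces a homogeneous polynomial $R_{s,\bad}\in L_s[X_{i,j}:1\leq i,j\leq 4]$ of total degree $2$ with $\iota_w(R_{s,\bad}(Y_G(x(s))))=0$. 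Items (1)--(3) follow immediately; the independence of $R_{s,\bad}$ from the place $w$ of bad reduction is built into the construction.

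The main obstacle is clause (4), i.e. $R_{s,\bad}\notin I(\SP_4)$. The plan, following the pattern of the Gr\"obner-basis arguments in \Cref{section:splitfinite}, is to implement in Wolfram Mathematica a routine analogous to those of \Cref{section:appendixcode} computing the remainder of $R_{s,\bad}$ after division by a Gr\"obner basis of $I(\SP_4)$, and to argue that if $R_{s,\bad}\in I(\SP_4)$ then the vanishing of the coefficients of this remainder forces the singularity of one of the matrices $C_0$, $A_s$, $C_s$ appearing in the block decomposition \eqref{eq:thematrices} of $[\theta_0^\vee]_{dR}$ and $[\theta_s]_{dR}$. Since $\theta_0$ and $\theta_s$ are isogenies these matrices are invertible, producing the desired contradiction. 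Given the simplicity of the relation $\det F_3=0$, we expect a short casework argument on a handful of coefficient equations to suffice, in the spirit of the closing computations of Case 1 of \Cref{propordinaryexcm} and of \Cref{propsupersingular}.
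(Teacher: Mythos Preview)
Your construction of $R_{s,\bad}$ coincides with the paper's: both take the bottom-left $2\times 2$ block $F_3$ of $(F_{i,j})$, observe from \eqref{eq:badthetacm} that $\Theta_3=\begin{pmatrix}\psi_{3,1}&0\\\psi_{4,1}&0\end{pmatrix}$ has rank at most one, and conclude $\det F_3(x(s))=0$ at every bad place $w$; items (1)--(3) then follow exactly as you say.

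The only divergence is in the verification of (4). You propose a Gr\"obner-basis remainder computation in the style of \Cref{section:appendixcode}, which would certainly work. The paper instead avoids any computer assistance here: it evaluates $R_{s,\bad}$ on the one-parameter family of symplectic matrices
\[
S(p,q,r,n)=\begin{pmatrix} p&r&0&0\\ q&n&0&0\\ 0&0&\tfrac{n}{pn-rq}&\tfrac{-r}{pn-rq}\\ 0&0&\tfrac{-q}{pn-rq}&\tfrac{p}{pn-rq}\end{pmatrix}\in\SP_4(\Q),
\]
and reads off directly that $R_{s,\bad}(S(p,q,r,n))$ factors as a product of two linear forms in $p,q,r,n$ whose identical vanishing forces either $A_s$ or $C_P$ (for $P\in\{s,0\}$) to be singular. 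This is shorter and self-contained, and is worth adopting in place of the Mathematica route you sketch.
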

	
	\begin{proof} The proof is identical to that of \Cref{propisoga2fin}. The only difference lies in the construction of $R_{s,\bad}$. We follow the notation set out in \Cref{lemrelsbad}.   
		
Let us set $\Theta=\begin{pmatrix}\Theta_1&\Theta_2\\\Theta_3&\Theta_4\end{pmatrix}$ with $\Theta_i\in M_2(\C_w)$. We also set $F_3(x):=(F_{i,j}(x))_{\underset{3\leq i\leq 4}{1\leq j\leq 2}}$ where $(F_{i,j}(x))$ denotes the matrix considered also in the proof of \Cref{propisoga2fin}. We then get from the description of $\Theta$ in \Cref{lemrelsbad} that \begin{equation}\label{eq:excmbadmain}
	\iota_w(\det F_3(x(s)))=\det (\Pi_v(E'_s)\begin{pmatrix}\psi_{3,1}&0\\\psi_{4,1}&0\end{pmatrix} \Pi_v(E_0)^{-1})=0.
\end{equation}

Let us thus set $R_{s,\bad}\in L_s[X_{i,j}]$ to be the polynomial with $R_{s,\bad}(Y_G(x) )= \frac{1}{\deg(\theta_0)^2}\det(F_3 (x))$. By construction, this will satisfy everything we want with the possible exception of $R_{s,\bad}\notin I(\SP_4)$. We assume from now on that $R_{s,\bad}\in I(\SP_4)$.

Using the notation in the proof of \Cref{propisoga2fin} the relation in question becomes\begin{equation}\label{eq:excmbad2}
	\iota_w(\tilde{F}_{2,1}(x(s))\tilde{F}_{4,3}(x(s))-\tilde{F}_{2,3}(x(s))\tilde{F}_{4,1}(x(s)))=0.
\end{equation} We let $R_{i,j}$ denote the polynomial with $R_{i,j}(Y_G (x)) = \tilde{F}_{i,j}(x)$. Writing $A_0= (a_{i,j})$, $B_0=(b_{i,j})$, $C_0=(c_{i,j})$, $A_s=(d_{i,j})$, $B_s=(f_{i,j})$, and $C_s=(e_{i,j})$ we then get  
\begin{center}
$R_{2,1}= d_{2,1} (X_{1,1} a_{1,1} +X_{1,2} a_{2,1} + X_{1,3}b_{1,1} + X_{1,4} b_{2,1}) + $

$+ d_{2,2} (X_{2,1} a_{1,1} + X_{2,2}a_{2,1} + X_{2,3} b_{1,1} +X_{2,4} b_{2,1})$, 

$R_{2,3} = d_{2,1}(X_{1,3} c_{1,1} +X_{13}c_{2,1})+ d_{2,2} (X_{2,3} c_{11} +X_{2,4} c_{2,1})$,

$R_{4,1} = f_{2,1}(X_{1,1} a_{1,1} + X_{1,2} a_{2,1}) + f_{2,2}(X_{2,1} a_{1,1} + X_{2,2} a_{2,1}) + e_{2,1}(X_{3,1} a_{1,1} + X_{3,2} a_{2,1})+$

 $+ e_{2,2}(X_{41} a_{1,1} + X_{4,2} a_{2,1}) +f_{2,1}(X_{1,3} b_{1,1} + X_{1,4} b_{2,1}) + f_{2,2}(X_{2,3} b_{1,1} + X_{2,4} b_{2,1}) +$ 
 
 $+ e_{2,1}(X_{3,3} b_{1,1} + X_{3,4} b_{2,1}) + e_{2,2}(X_{4,3} b_{1,1} + X_{4,4} b_{2,1})$, and
 
 $R_{4,3} = f_{2,1}(X_{1,3} c_{1,1} + X_{1,4} c_{2,1}) + f_{2,2}(X_{2,3} c_{1,1} + X_{2,4} c_{2,1})+$ 
 
 $ + e_{2,1}(X_{3,3} c_{1,1} + X_{3,4} c_{2,1}) + e_{2,2}(X_{4,3} c_{1,1} + X_{4,4} c_{2,1})$.
\end{center}

From $R_{s, \bad} \in I(\SP_4))$ we would have $R_{s, \bad}(S(p, q, r, n)) = 0$ for all $p, q, r, n \in \mathbb{Q}$, where\begin{center}
	$S(p, q, r, n) = \begin{pmatrix} p & r&0&0 \\ q & n &0&0\\ 0&0 & \frac{n}{pn-rq} &\frac{-r}{pn-rq} \\ 0 &0& \frac{-q}{pn-rq}& \frac{p}{pn-rq} \end{pmatrix} \in \SP_4(\mathbb{Q})$.
\end{center}
This leads to\begin{center}
	$0 = [d_{2,1}(pa_{1,1} + qa_{2,1}) + d_{2,2}(ra_{1,1} + na_{2,1})] \cdot [e_{2,1}(nc_{1,1} - rc_{2,1}) + e_{2,2}(pc_{2,1} - qc_{1,1})]$.
\end{center}We get from this that either $a_{1,1}d_{2,1} = a_{2,1}d_{2,1} = a_{1,1}d_{2,2} = a_{2,1}d_{2,2} = 0$, or $c_{1,1}e_{2,1} = c_{2,1}e_{2,1} = c_{1,1}e_{2,2} = c_{2,1}e_{2,2} = 0$. Either of these contradicts the fact that $A_P$, $C_P \in \GL_2(\mathbb{C}_p)$ for $P \in \{s,s_0\}$.\end{proof}

	\begin{prop} \label{e2relbad} Assume that $X_0\sim E_0\times_K E_0'$ where $E_0$ and $E_0'$ are isogenous elliptic curves and that \Cref{conjhyodokato} and \Cref{conjrelhyodokato} hold.  
Let $s\in S(\bar{\Q})$ be such that $X_s \sim X'_s= E_s \times_{\bar{\Q}} E_s'$, where $E_s$ and $E_s$ are again isogenous elliptic curves. Then there exists $R_{s,\bad}\in \bar{\Q}[X_{i,j} :1\leq i,j\leq 4]$ that satisfies the same properties as in \Cref{excmbad} with $\deg(R_{s,\bad})=4$.
\end{prop}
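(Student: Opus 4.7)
The plan is to follow the strategy of Case $1$ of \Cref{propordinarye2}, but in the bad reduction setting by invoking our two standing conjectures. First I would set up the analogue of \eqref{eq:e2fmain} for places of bad reduction: applying \Cref{gfunsbadprop} together with \Cref{lemrelsbad} in the isogenous-pairs case, and composing with the dual isogeny $\theta_0^{\vee}$ exactly as in the proof of \Cref{lemisogfin}, one obtains
\[
\iota_w\bigl(J_{2,3}\cdot [\theta_s]_{dR}\cdot Y_{G}(x(s))\cdot [\theta_0^{\vee}]_{dR}\cdot J_{2,3}\bigr)
=\begin{pmatrix}\Pi_w(E_s)&0\\0&\Pi_w(E'_s)\end{pmatrix}\Theta\begin{pmatrix}\Pi_w(E_0)^{-1}&0\\0&\Pi_w(E'_0)^{-1}\end{pmatrix},
\]
with $\Theta\in \GL_4(\mathbb{C}_w)$ of the form \eqref{eq:badthetaisog}. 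Writing $L_s/K(s)$ for the field obtained as in \Cref{propisoga2fin}, I may assume all isogenies $\theta_s$, $\varphi_s$ as well as $\theta_0$, $\varphi_0$ are defined over $L_s$, and in particular $[L_s:K(s)]$ is bounded by an absolute constant.

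Next, imitating the construction in Case $1$ of \Cref{propordinarye2}, I pass to the ``$E^2$--$E^2$'' form by setting
\[
(G_{i,j}(x)):=\begin{pmatrix}I_2&0\\0&[\varphi_s]_{dR}^{-1}\end{pmatrix}\cdot (F_{i,j}(x))\cdot \begin{pmatrix}I_2&0\\0&[\varphi_0]_{dR}\end{pmatrix}.
\]
Using \eqref{eq:isogpairsperiods} for both $\varphi_s$ and $\varphi_0$ one obtains
\[
\iota_w\bigl((G_{i,j}(x(s)))\bigr)=\begin{pmatrix}\Pi_w(E_s)&0\\0&\Pi_w(E_s)\end{pmatrix}\widetilde{\Theta}\begin{pmatrix}\Pi_w(E_0)^{-1}&0\\0&\Pi_w(E_0)^{-1}\end{pmatrix},
\]
where $\widetilde{\Theta}=\begin{pmatrix}I_2&0\\0&[\varphi_s]_v^{-1}\end{pmatrix}\Theta\begin{pmatrix}I_2&0\\0&[\varphi_0]_v\end{pmatrix}$. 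Now the key structural point: under \Cref{conjhyodokato} the maps $\varphi_P^{*}$ are morphisms of $(\phi,N)$-modules, and in the basis $\{\gamma_P,\delta_P\}$ of \Cref{lemrelsbad} the matrix of the monodromy $N_{E_P}$ is the same fixed nilpotent $N$ with $N^2=0$. A direct computation, analogous to the one in the proof of \Cref{lemrelsbad}, then shows that each of the four $2\times 2$ blocks $\widetilde{\Theta}_{i,j}$ commutes with $N$, i.e. lies in the $2$-dimensional commutative subring $\mathbb{C}_w[N]\subset M_2(\mathbb{C}_w)$.

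Writing $R:=\Pi_w(E_s)$ and $S:=\Pi_w(E_0)^{-1}$ and decomposing $\widetilde{\Theta}_{i,j}=\alpha_{i,j}I+\beta_{i,j}N$, each $2\times 2$ block $G_{i,j}$ of $G$ then satisfies
\[
G_{i,j}=\alpha_{i,j}\,RS+\beta_{i,j}\,RNS,
\]
so all four blocks $G_{1,1},G_{1,2},G_{2,1},G_{2,2}$ lie in the common $2$-dimensional subspace $\mathrm{Span}\{RS,\,RNS\}\subset M_2(\mathbb{C}_w)\simeq \mathbb{C}_w^4$. Flattening each block into a row yields a $4\times 4$ matrix $M$ with entries that are explicit linear combinations of the entries of $G$, hence of $Y_G(x(s))$. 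Since its rows span a space of dimension at most $2$, $\det M=0$. Defining $R_{s,\bad}\in L_s[X_{i,j}:1\le i,j\le 4]$ to be the polynomial with $R_{s,\bad}(Y_G(x))=\det M$ gives a homogeneous polynomial of degree $4$ whose coefficients lie in $L_s$ and for which $\iota_w(R_{s,\bad}(Y_G(x(s))))=0$, yielding properties $(1)$--$(3)$ by construction.

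The hardest step, as in \Cref{propordinarye2,propordinaryexcm,propsupersingular,archrelexcm}, is verifying the non-triviality property $(4)$, namely $R_{s,\bad}\notin I(\SP_4)$. I would handle this exactly as in those propositions: expand $R_{s,\bad}$ in terms of the entries $X_{i,j}$ using the explicit forms of $[\theta_s]_{dR}$, $[\theta_0^{\vee}]_{dR}$, $[\varphi_s]_{dR}$ and $[\varphi_0]_{dR}$ (all lower block-triangular with invertible $2\times 2$ diagonal blocks as in \Cref{lemmaisogenyhodgebasis}), compute the remainder of the division of $R_{s,\bad}$ by a Gr\"obner basis of $I(\SP_4)$ using Wolfram Mathematica in the style of \Cref{section:appendixcode}, and extract from a small list of coefficients (indexed by monomials $\prod X_{i,j}$ of degree $4$) a chain of equations forcing one of the invertible matrices $A_s,C_s,A_0,C_0$ to be singular, a contradiction. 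I expect this computational step to be the main obstacle, purely because of its combinatorial size, not because of any conceptual difficulty, since the structure of $\widetilde{\Theta}$ is fully rigid in terms of the $\psi_{i,j}$ and the extra parameters $a_0,c_0,a_s,c_s$ attached to $\varphi_0,\varphi_s$.
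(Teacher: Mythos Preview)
Your proposal is correct and arrives at the same polynomial as the paper, via an essentially equivalent route. The paper also passes to the ``$E^2$--$E^2$'' form $(G_{i,j})$ and uses that each $2\times 2$ block $\tilde{\Theta}_j$ is of the shape $\begin{pmatrix}\alpha_j&0\\\beta_j&\alpha_j\end{pmatrix}$; it then extracts, from the equality of the two diagonal entries of $\Pi_v(E_s)^{-1}G_j\Pi_v(E_0)$ for $j=1,\dots,4$, a linear system $\tilde G\cdot\vec\chi=0$ with $\vec\chi\neq 0$ built from the period entries, and concludes $\det\tilde G=0$. The matrix $\tilde G$ is exactly your $M$: its rows are the flattenings of the four blocks $G_1,\dots,G_4$. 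Your phrasing---``all four blocks lie in the $2$-dimensional span of $RS$ and $RNS$''---is a cleaner way to see the same vanishing (indeed it gives $\mathrm{rk}\,M\le 2$, stronger than the paper's $\mathrm{rk}\,\tilde G\le 3$), while the paper's version has the virtue of exhibiting an explicit nonzero kernel vector $\vec\chi$. The non-triviality step $R_{s,\bad}\notin I(\SP_4)$ is handled in the paper exactly as you anticipate, via a Gr\"obner basis remainder computation (the code in \Cref{section:appsub2}) followed by a short chain of coefficient equations forcing a contradiction with the invertibility of $(a_{i,j})$ and $(c_{i,j})$. One small remark: for the compatibility \eqref{eq:isogpairsperiods} at bad places for the isogenies $\varphi_P:E_P\to E_P'$, the paper invokes Theorem~B of \cite{vonk} directly rather than \Cref{conjhyodokato}, so that particular step is in fact unconditional.
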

\begin{proof}From now on let us fix $v\in \Sigma_{L_s,\bad}$, a place of bad reduction of $X_0$ and thus of $E_P$ and $E_P'$ for $P\in \{s,s_0\}$. Writing $\phi_P: E_P\rightarrow E_P'$ for the isogenies, from Theorem B of \cite{vonk} we get a pullback map $\phi_{P,HK}^{*}: H^1_w(E_P') \rightarrow H^1_w(E_P)$ which is also a morphism of $(\phi, N)$-modules. This gives, via the comparison isomorphism \eqref{eq:hyodokatoisom} of Hyodo-Kato, arguing as in Lemma $3.1$ of \cite{papaszpy1} and crucially using the compatibility of $\phi_{P,HK}^{*}$ with the pullback of $\phi_P$ in de Rham cohomology via the Hyodo-Kato isomorphism, which was also established by Vonk in loc. cit., that  
	\begin{equation}\label{eq:isogpairsperiodsbad}
		[\phi_P]_{dR}\cdot \Pi_v(E_P)=\Pi_v(E_P')\cdot [\phi_P]_v
	\end{equation}where $[\phi_P]_{dR}$ as usual stands for the matrix of the morphism induced on the level of de Rham cohomology by $\phi_P$ with respect to a pair of Hodge bases and $[\phi_P]_v$ stands for the matrix of $\phi_{P,HK}^{*}$ with respect to the bases $\Gamma_v(E_P)$ and $\Gamma_v(E_P')$ introduced in the proof of \Cref{lemrelsbad}.  
	
Again the only difference with the proof of \Cref{propisoga2fin} is the construction of $R_{s,\bad}$. Once again here we follow the notation set out in \Cref{lemrelsbad} and \Cref{propisoga2fin} to write  
\begin{equation}\label{eq:isogmainrelgen}\iota_v(F_{i,j}(x(s)))=\begin{pmatrix}\Pi_v(E_s)&0\\0&\Pi_v(E'_s)\end{pmatrix}\cdot \Theta \cdot \begin{pmatrix}\Pi_v(E_0)^{-1}&0\\0&\Pi_v(E'_0)^{-1}\end{pmatrix}.
\end{equation}

Using \eqref{eq:isogpairsperiodsbad}, \eqref{eq:isogmainrelgen} can be rewritten as \begin{multline}\label{eq:e2relsbadmain}
	\iota_v(\begin{pmatrix}I_2&0\\0&[\phi_s]_{dR}^{-1}\end{pmatrix} \cdot (F_{i,j}(x(s)))\cdot  \begin{pmatrix}I_2&0\\0&[\phi_0]_{dR}\end{pmatrix})=\\
	=\begin{pmatrix}\Pi_v(E_s)&0\\0&\Pi_v(E_s)\end{pmatrix}\cdot\begin{pmatrix}I_2&0\\0&[\phi_s]_{v}^{-1}\end{pmatrix}\cdot\Theta \cdot \begin{pmatrix}I_2&0\\0&[\phi_0]_{v}\end{pmatrix} \cdot \begin{pmatrix}\Pi_v(E_0)^{-1}&0\\0&\Pi_v(E_0)^{-1}\end{pmatrix}.\end{multline}On the other hand, using the fact that $\phi_{P,HK}^{*}$ is a morphism of $(\phi,N)$-modules and the choice of the bases of the log crystalline cohomology groups we get $[\phi_P]_v=\begin{pmatrix}\xi_P&0\\\zeta_P&\xi_P\end{pmatrix}$ for some $\zeta_P$, $\xi_P\in \C_v$. This follows from the same argument as above using the particular choice of basis, Theorem B of \cite{vonk}, together with elementary considerations about homomorphisms of $(\phi,N)$-modules as above. 

Writing $\Theta=\begin{pmatrix}\Theta_1&\Theta_2\\ \Theta_3&\Theta_4\end{pmatrix}$, where $\Theta_j$ are $2\times 2$-blocs as usual, the right hand side of \eqref{eq:e2relsbadmain} can be rewritten as \begin{equation}\label{eq:thetatildee2bad}
	\begin{pmatrix}
		\Pi_v(E_s)\tilde{\Theta}_1\Pi_v(E_0)^{-1}&\Pi_v(E_s)\tilde{\Theta}_2\Pi_v(E_0)^{-1}\\\Pi_v(E_s)\tilde{\Theta}_3\Pi_v(E_0)^{-1}&\Pi_v(E_s)\tilde{\Theta}_4\Pi_v(E_0)^{-1}
	\end{pmatrix},
\end{equation}where the $\tilde{\Theta}\in M_2(\C_v)$ are lower triangular of the form $\begin{pmatrix}\alpha_j&0\\\beta_j&\alpha_j\end{pmatrix}$. Here we have used the above description of the $[\phi_P]_v$ as well as \Cref{lemrelsbad}.

For convenience from now on we set $(g_{i,j}(x))=\begin{pmatrix}I_2&0\\0&[\phi_s]_{dR}^{-1}\end{pmatrix} \cdot (F_{ij}(x))\cdot  \begin{pmatrix}I_2&0\\0&[\phi_0]_{dR}\end{pmatrix}$ and $G_{i,j}:=\iota_v(g_{i,j}(x(s)))$. Note here that the $g_{ij}(x)$ are nothing but linear combinations of the entries of $Y_G(x)$ and that they do not depend on the place $v$. Writing $(G_{i,j}) = \begin{pmatrix}G_1&G_2\\G_3&G_4\end{pmatrix}$ for convenience, we may rewrite  \eqref{eq:e2relsbadmain} as \begin{equation}\label{eq:e2relsbadactualmain}
	\Pi_v(E_s)^{-1} \cdot G_j \cdot \Pi_v(E_0)=\begin{pmatrix}\alpha_j&0\\ \beta_j&\alpha_j\end{pmatrix}.
\end{equation}

Let us write $\Pi_v(E_s)^{-1}= (\pi_{i,j})$ and $\Pi_v(E_0)=(\rho_{i,j})$. From \eqref{eq:e2relsbadactualmain} for $j=1$, using the fact that the diagonal entries of the matrix on the right are equal, we get\begin{equation}\label{eq:firstrow}\chi_1 G_{1,1} + \chi_2 G_{1,2} + \chi_3 G_{2,1} + \chi_4 G_{2,2}= 0,
\end{equation}  
where $\chi_1=\pi_{1,1}\rho_{1,1}-\pi_{2,1}\rho_{1,2}$, $\chi_{2}=\pi_{1,1}\rho_{2,1}-\pi_{2,1}\rho_{2,2}$, $\chi_3=\pi_{1,2}\rho_{1,1}-\pi_{2,2}\rho_{1,2}$, and $\chi_4=\pi_{1,2}\rho_{2,1}-\pi_{2,2}\rho_{2,2}$. Similarly for $j=2$, $3$, $4$ we get respectively the equations
\begin{center}$\chi_1 G_{1,3} + \chi_2 G_{1,4} + \chi_3 G_{2,3} + \chi_4 G_{2,4}= 0$,
	
	$\chi_1 G_{3,1} + \chi_2 G_{3,2} + \chi_3 G_{4,1} + \chi_4 G_{4,2}= 0$, and 
	
	$\chi_1 G_{3,3} + \chi_2 G_{3,4} + \chi_3 G_{4,3} + \chi_4 G_{4,4}= 0$.\end{center}

Note that $\vec{\chi}\neq 0$. For example, if $\chi_1 =\chi_2=0$ we would have $\begin{pmatrix}\rho_{1,1}&\rho_{1,2}\\\rho_{2,1}&\rho_{2,2}\end{pmatrix}\begin{pmatrix}\pi_{1,1}\\\pi_{2,1}\end{pmatrix}=0$. Since $(\rho_{i,j})$ is invertible this gives $\pi_{1,1}=\pi_{2,1} = 0$ contradicting the invertibility $(\pi_{i,j})$.

Writing $\tilde{G}:=\begin{pmatrix}
	G_{1,1}&G_{1,2}&G_{2,1}&G_{2,2}\\ G_{1,3}&G_{1,4}&G_{2,3}&G_{2,4}\\G_{3,1}&G_{3,2}&G_{4,1}&G_{4,2}\\G_{3,3}&G_{3,4}&G_{4,3}&G_{4,4}\end{pmatrix}$, the above system of equations gives $\tilde{G}\cdot \vec{\chi}=0$. Since $\vec{\chi}\neq0$ this in turn implies that $\det \tilde{G}= 0$.  
	
We therefore set $R_{s, \bad} \in \bar{\Q}[X_{i,j}: 1\leq i,j\leq 4]$ to be the polynomial with $R_{s,\bad}(Y_{G}(x)) = \det(\tilde{g} (x))$, where $\tilde{g}(x)$ stands for the $4\times 4$ matrix one gets by replacing the entries $G_{i,j}$ in $\tilde{G}$ by the corresponding $g_{i,j}(x)$. By construction we will have $\iota_v(R_{s,\bad}(Y_G(x(s)))) = 0$ and that $R_{s,\bad}$ is a degree $4$ homogeneous polynomial in the $X_{i,j}$ that does not depend on the choice of place of bad reduction.

	We are thus left with establishing the ``non-triviality'' of this polynomial, i.e. that $R_{s,\bad} \notin I(\SP_4)$. Let us assume from now on that $R_{s,\bad} \in I(\SP_4)$. In such a case all coefficients of the remainder of $R_{s,\bad}$ divided by a Gr\"obner basis of $I(\SP_4)$ will be 0.

Looking at the list outputted by the code in \Cref{section:appsub2} we get  
\begin{center}
	$c(X_{1,3}^2X_{3,1}^2) = -(a_0a_{1,2}c_{1,1} - a_{1,1}c_0c_{1,2})^2(a_sd_{2,1}e_{1,1} - c_sd_{1,1}e_{2,1})^2 = 0$, and  
$c(X_{1,3}^2X_{3,2}^2) = -(a_0a_{2,2}c_{1,1} - a_{2,1}c_0c_{1,2})^2(a_sd_{2,1}e_{1,1} - c_sd_{1,1}e_{2,1})^2 = 0$,
\end{center}where $c(\cdot)$ stands for the coefficient of the corresponding monomial of this remainder.

If the second factors in these was non-zero we would get  
\begin{center}
	$a_0a_{1,2}c_{1,1} - a_{1,1}c_0c_{1,2} = a_0a_{2,2}c_{1,1} - a_{2,1}c_0c_{1,2} = 0$.
\end{center}This in turn implies $\begin{pmatrix} a_{1,1} & a_{1,2} \\ a_{2,1} & a_{2,2} \end{pmatrix} \begin{pmatrix} -c_0c_{1,2} \\ a_0c_{1,1} \end{pmatrix} = 0$, and since $(a_{i,j})$ is invertible and $a_0, c_0 \neq 0$ we get $c_{1,1} = c_{1,2}=0$ contradicting the invertibility of $(c_{i,j})$. Therefore $a_sd_{2,1}e_{1,1} - c_sd_{1,1}e_{2,1} = 0$. From this, arguing as above, we get $a_sd_{2,1}e_{1,2} - c_sd_{1,1}e_{2,2} \neq 0$. 

Note also that the above argument shows that at least one of $a_0a_{1,2}c_{1,1} - a_{1,1}c_0c_{1,2}$ and $a_0a_{2,2}c_{1,1} - a_{2,1}c_0c_{1,2}$ is non-zero. In either case, from the above, looking at the coefficients  
\begin{center}
	$c(X_{1,4}^2X_{4,1}X_{4,2}) = -(a_0a_{1,2}c_{1,1} - a_{11}c_0c_{1,2})(a_0a_{1,2}c_{2,1} - a_{1,1}c_0c_{2,2})(a_sd_{2,1}e_{1,2} - c_sd_{1,1}e_{2,2})^2$  
$c(X_{1,4}^2X_{4,2}^2) = -(a_0a_{2,2}c_{1,1} - a_{2,1}c_0c_{1,2})(a_0a_{1,2}c_{2,1} - a_{11}c_0c_{2,2})(a_sd_{21}e_{1,2} - c_sd_{1,1}e_{2,2})^2$,
\end{center}which will both be $0$ by assumption, we get $a_0 a_{1,2}c_{2,1}- a_{1,1} c_0 c_{2,2}=0$. This will in turn force $a_0a_{1,2}c_{1,1}-a_{1,1}c_0c_{1,2}\neq0$, again arguing as above.

Also from $a_sd_{2,1}e_{1,1}-c_sd_{1,1}e_{2,1}=0$ we get that $a_s d_{2,2}e_{1,1}-c_sd_{1,2}e_{1,1}\neq 0$. At this point looking at the coefficient\begin{center}
	$c(X_{2,1}X_{2,3}X_{3,2}X_{3,4})=-(a_0a_{1,2}c_{1,1}-a_{1,1}c_0c_{1,2})(a_0a_{2,2}c_{2,1}-a_{2,1}c_0c_{2,2})(a_sd_{2,2}e_{1,1}-c_sd_{1,2}e_{2,1})^2$,
\end{center}which will again be $0$ by assumption, we get $a_0a_{2,2}c_{2,1}-a_{2,1}c_0c_{2,2}=0$. Paired with $a_0a_{1,2}c_{2,1}-a_{1,1}c_0c_{2,2}=0$, this leads, arguing exactly as above, to a contradiction to the fact that $(c_{i,j})$ is invertible.\end{proof}

	\subsection{Height bounds}\label{section:htboundsbad}

\begin{prop}\label{prophtboundsbad}
	Let $f:\CX\rightarrow S$, where $S$ is a smooth geometrically irreducible curve defined over some number field $K$, be a family of abelian surfaces and let $\{\xi_1\ld\xi_l\}\subset S(K)$ be a distinguished set of points. We assume that:\begin{enumerate}
		\item $f:\CX\rightarrow S$ satisfies the properties in \Cref{reduction} so that the $\xi_j$ are the simple, and only, roots of some $x:S\rightarrow \mathbb{P}^1$, and 
		
		\item the image of the induced morphism $i:S\rightarrow \mathcal{A}_2$ is a Hodge generic curve.
	\end{enumerate} 
	
	Assume furthermore that \Cref{conjhyodokato} and \Cref{conjrelhyodokato} hold. Then\begin{enumerate}
		\item if the fiber $\CX_\xi$, for all $\xi\in \{\xi_1\ld\xi_l\}$, is isogenous to $E_0\times_{\bar{\Q}}E_0'$ where $E_0'$ is a CM elliptic curve, then there exist constants $c_1$, $c_2>0$ such that for all $s\in \{S(\bar{\Q}):\CX_s\sim E_s\times_{\bar{\Q}}E_s'\text{ with } E_s' \text{ CM}\}$, we have 
		\begin{center}
			$h(s)\leq c_1\cdot(|\Sigma_{K,\ssing}(s,0)|\cdot [K(s):\Q])^{c_2}$, and 
		\end{center}
		
		\item if the fiber $\CX_\xi$, for all $\xi\in \{\xi_1\ld\xi_l\}$, is isogenous to $E_0\times_{\bar{\Q}}E_0'$ where $E_0\sim E_0'$, then there exist constants $c_1$, $c_2>0$ such that for all $s\in \{S(\bar{\Q}):\CX_s\sim E_s\times_{\bar{\Q}}E_s'\text{ with } E_s\sim E_s' \}$, we have 
		\begin{center}
			$h(s)\leq c_1\cdot(|\Sigma_{K,\ssing}(s,0)|\cdot [K(s):\Q])^{c_2}$. 
		\end{center}
	\end{enumerate}
\end{prop}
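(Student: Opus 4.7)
The plan is to adapt the proof of \Cref{htboundsfin} almost verbatim, with the new propositions \Cref{excmbad} and \Cref{e2relbad} covering the previously missing bad reduction places. Given $s\in S(\bar{\Q})$ with the appropriate splitting structure, I would partition the set of places of proximity $\Sigma(s)\subset \Sigma_{L_s}$ (where $L_s/K(s)$ is the standard finite extension of bounded degree from \Cref{propisoga2fin}) into five disjoint subsets: the archimedean places $\Sigma(s)_\infty$, the good non-supersingular places $\Sigma(s)_{\nssing}$, the good supersingular places $\Sigma(s)_{\ssing}$, and now additionally the bad reduction places $\Sigma(s)_{\bad}$ (places of bad reduction of $\CX_\xi$). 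As in the proof of \Cref{htboundsfin}, the case $\Sigma(s)=\emptyset$ is handled by the height bound argument from \cite{papasbigboi}, so we may assume this set is non-empty.

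For each $v\in\Sigma(s)$ I would invoke the appropriate local relation, producing a polynomial $R_{s,t,v}\in \bar{\Q}[X_{ij}^{(\lambda)}]$ that vanishes $v$-adically at $\mathcal{Y}(s)$ and sits outside $I(\SP_4)$: archimedean from \Cref{archrelexcm}, good non-supersingular from \Cref{propisoga2fin} combined with \Cref{propordinaryexcm} or \Cref{propordinarye2} according to the splitting type of $\CX_\xi$, good supersingular from \Cref{propsupersingular}, and bad from \Cref{excmbad} or \Cref{e2relbad} according again to whether $\CX_\xi$ is an $E\times CM$ or $E^2$-point. The crucial feature inherited from Section 6 is that the ``bad'' polynomials $R_{s,t,\bad}$ have degree bounded by an absolute constant and, as with the non-supersingular good reduction case, depend only on $t$ (not on the individual place $v$), since their construction only uses the $(\phi,N)$-module structure and the Hyodo-Kato isomorphism, whose matrix data is independent of the choice of bad place.

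One would then form the products
\begin{equation*}
R_{s,\infty}:=\prod_{\lambda\in\Lambda}\prod_{v\mid\infty}\prod_{t\sim\lambda}R_{s,t,v},\qquad R_{s,\nssing}:=\prod_{\lambda,t\sim\lambda}R_{s,t,\nssing},\qquad R_{s,\bad}:=\prod_{\lambda,t\sim\lambda}R_{s,t,\bad},
\end{equation*}
together with $R_{s,\ssing}:=\prod_{v\in\Sigma(s)_{\ssing}}\prod_{\lambda,t\sim\lambda}R_{s,t,v}$, and multiply them into a single polynomial $R_s$. By the same bookkeeping as in the proof of \Cref{htboundsfin}, $\deg(R_s)$ is controlled by an expression of the form $c_0[K(s):\Q]+c_0'[K(s):\Q]|\Sigma_{K,\ssing}(s,0)|$, since only the supersingular contribution scales with the number of places and all other factors have degree bounded linearly in $[K(s):\Q]$.

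The main work is verifying that $R_s\notin I_\Lambda$, i.e.\ that the composite relation is non-trivial on the functional level. By \Cref{idealprime}, $I_\Lambda$ is prime, so it suffices to check that each factor $R_{s,t,v}$ lies outside $I(\SP_4)\subset \bar{\Q}[X_{ij}^{(\lambda)}]$ for the corresponding $\lambda$, and this is precisely what the ``non-triviality'' clauses of each local proposition (including the new \Cref{excmbad} and \Cref{e2relbad}) guarantee. Once globality and non-triviality are in hand, the height bound follows directly from the Hasse principle of André--Bombieri (Ch.\ VII, \S5 of \cite{andre1989g}), exactly as in the final step of the proof of \Cref{htboundsfin}. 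The principal conceptual obstacle was establishing the bad-place local relations in \Cref{excmbad} and \Cref{e2relbad}, which required the full strength of \Cref{conjhyodokato} and \Cref{conjrelhyodokato}; once these are accepted, the height bound assembly is essentially formal.
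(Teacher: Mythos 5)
Your proposal captures the overall assembly strategy correctly, but it misses a genuinely necessary preliminary step that the paper singles out as Step~1 of its own proof: the notion of ``$v$-adic proximity'' must be \emph{sharpened} at places of bad reduction before the new local relations \Cref{excmbad} and \Cref{e2relbad} can be invoked. Those propositions rest on \Cref{conjrelhyodokato}, whose conjectured relative Hyodo--Kato isomorphism is only asserted over a $p$-adic disc that is a priori small; and, as noted in \Cref{remarkconjrelhyodokato}, since the $(\phi,N)$-module of Hyodo--Kato depends on $\mathfrak{X}_0\otimes_{\CO_K}\CO_K/\mathfrak{m}_K^2$, one needs $s$ and $\xi$ to have the same image in $\mathfrak{S}(\CO_{L_s,w}/\varpi_w^2)$ — i.e.\ reduction mod $\varpi_w^2$, not merely mod $\varpi_w$. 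The existing proximity convention of \Cref{section:vadicproximity} (built from the Berthelot--Ogus comparison at good places) only guarantees the mod-$\varpi_w$ statement, so without further shrinking the disc the hypotheses of \Cref{excmbad}/\Cref{e2relbad} need not hold for the $v\in\Sigma(s)_{\bad}$ you've listed. The paper fixes this by observing that the set of bad places of $\CX_\xi$ is finite and independent of $s$, and then decreasing the constants $\kappa_v$ from \Cref{lemgfunsnew} so that $\kappa_v\leq p(v)^{-2}$ at those places; this multiplies some G-functions by extra factors of $H(x)^{\pm 1}$ but preserves the trivial relations of \Cref{trivialrelations}, so everything downstream survives. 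Without this modification your $\Sigma(s)_{\bad}$ may contain places where you have no relation to deploy, and the globality of $R_s$ fails.

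Two minor points: you say ``five disjoint subsets'' but list only four (the paper in fact partitions at the top level into $\Sigma(s)_{\good}$ and $\Sigma(s)_{\bad}$, reusing the internal three-way split from \Cref{htboundsfin} inside the first); and you should note, as the paper does, that the handling of the case $\Sigma(s)_{\bad}=\emptyset$ reduces entirely to \Cref{htboundsfin}. The rest of your assembly — grouping factors by $\lambda$, the degree bookkeeping with only the supersingular block scaling in $|\Sigma_{K,\ssing}(s,0)|$, the use of primality of $I_\Lambda$ via \Cref{idealprime}, and the final appeal to the André--Bombieri Hasse principle — matches the paper's Step~2.
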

\begin{proof}The proof in either case is the same, replacing the usage of \Cref{excmbad} in case $(1)$ above with \Cref{e2relbad} in case $(2)$. For that reason we present only the proof of $(1)$ for brevity. 
	
	From now on let us fix $s\in \{S(\bar{\Q}):\CX_s\sim E_s\times_{\bar{\Q}}E_s'\text{ with } E_s' \text{ CM}\}$ and let $L_s/K(s)$ be the finite extension considered in the proof of either \Cref{propisoga2fin} or \Cref{archrelexcm}. In view of \Cref{remarkconjrelhyodokato} it is natural to expect that we need to alter the notion of proximity for bad places. We do this as follows:\\
	
	\textbf{Step 1:} $v$-adic proximity at bad places.\\
	
	A crucial change in this setting is needed for the ``$v$-adic proximity'' controlling function $H(x)$ introduced in \Cref{section:vadicproximity}.
	
	Let us fix $v\in \Sigma_{K}$ a place of bad reduction of (any of) the fibers $X_{\xi}$. This reduction will be necessarily multiplicative, or partly multiplicative, in nature due to our semi-stability assumptions in \Cref{reduction}.
	
	Given $s \in S(\overline{\mathbb{Q}})$ a point of interest, in order to use \Cref{conjrelhyodokato} we would want ``$v$-adic proximity to $0$'' to imply that \begin{center}$\tilde{s}$ and $\tilde{\xi}$ have the same image in $\mathfrak{S}(\CO_{L_s,w}/\varpi_{w}^{2})$,\end{center}
	where $w\in \Sigma_{L_s,f}$ divides $v$ and $\varpi_{w}$ is some generator of the maximal ideal of the completion of the localization $\CO_{L_s, w}$ of $\CO_{L_s}$ at $w$. In other words, we want $s$ to live in some rigid analytic disk of the form $\Delta_{v}\left(\xi, \frac{1}{2}\right)$.
	
	Noting that the set $\left\{v \in \sum_{k, f}: X_{\xi}\right.$ has bad reduction at $\left.v\right\}$ is finite and independent of $s$ we may further decrease if necessary the $\kappa_v$ that appear in \Cref{lemgfunsnew} so that $\kappa_{v} \leqslant p(v)^{-2}$, where $p(v)=|\CO_{K_v} / m_{v}|$ is the size of the residue field of $K$ at $v$.
	
	This will change our G-functions by multiplying some of them by a factor of this new $H(x)$, or $H(x)^{-1}$ as per the construction discussed in \Cref{section:vadicproximity}. Crucially for our purposes the construction outlined there ensures that the new G-functions will still satisfy the same trivial relations as those described in \Cref{trivialrelations}.\\
	
	As in the proof of \Cref{htboundsfin} we may thus consider the set \begin{center}
		$\Sigma(s):=\{v\in\Sigma_{L_s}:s \text{ is }v\text{-adically close to }0\}$.
	\end{center}We also let $\Sigma(s)_{\good}\subset \Sigma(s)$ to be the subset that consists of either archimedean places or places of good reduction of the fiber $\CX_\xi$, for any of the $\xi$ due to the Galois property of $x$. Furthermore we set $\Sigma(s)_{\bad}$ to be the complement of $\Sigma(s)_{\good}$ in $\Sigma(s)$.\\
	
	\textbf{Step 2:} Global non-trivial relations\\
	
	Now the proof of \Cref{htboundsfin} passes in our setting almost verbatim.
	
	If $\Sigma(s)_{\bad}=\emptyset$ the construction there gives us global nontrivial relations corresponding to some polynomial $R_{s, \good}$. From now on we thus assume that $\Sigma(s)_{\bad}\neq\emptyset$.
	
	For the $v \in \Sigma(s)_{bad}$ we may argue just as in the proof of \Cref{htboundsfin}. Indeed, we may find some $\lambda \in \Lambda$ and then apply \Cref{excmbad}(respectively \Cref{e2relbad}) to get some polynomial $R_{s,\lambda,\bad}$. This polynomial, due to the independence of the construction from $v$ in \Cref{excmbad} and \Cref{e2relbad}, will work for all $w\in\Sigma(s)_{bad}$ for which $s$ is $w$-adically close to $\xi_{j}$ for some $j \sim \lambda$. In other words they will only depend on the $\lambda$ as in the proof of \Cref{htboundsfin}.
	
	Our global non-trivial relation will then correspond to the polynomial
	\[
	R_{s}=R_{s,\good} \cdot \prod_{\lambda \in \Lambda} R_{s, \lambda,\bad},
	\]
	where some of the $R_{s,\lambda,\bad}$ might be $=1$. Globality follows by construction of $R_{s}$, while non-triviality follows as in the proof of \Cref{htboundsfin} by the fact that none of the local factors are in the ideal $I(Sp_4) \leq \overline{\mathbb{Q}} [X_{i,j}^{(\lambda)}: 1 \leq i, j \leq 4]$. 
	\end{proof}
	\subsection{Applications to unlikely intersections}\label{section:badapplications}

Following the exposition of \Cref{section:applicationsgood} we are naturally lead to:

\begin{prop}\label{lgobad}
	Let $Z\subset \mathcal{A}_2$ be a smooth irreducible curve defined over $\bar{\Q}$ that is not contained in any proper special subvariety of $\mathcal{A}_2$ and fix $N\in\N$. We consider the set\begin{center}
		$\Sha_{ZP\text{-}\spli,N}(Z):=\{s\in Z(\C):s=E\times CM\text{- or }E^2\text{-point, and }|\Sigma_{K,\ssing}(s,0)|\leq N \}$.
	\end{center}
	
	Assume that \Cref{conjhyodokato} and \Cref{conjrelhyodokato} hold. Then there exist positive constants $c_1=c_1(Z,N)$, $c_2=c_2(Z)$ such that
	\begin{equation}\label{eq:lgo}
		|\gal(\bar{\Q}/\Q)\cdot s|\geq c_1\cdot \Delta(V_s)^{c_2},
	\end{equation}
	for all $s\in\Sha_{ZP\text{-}\spli,N}(Z)$.
\end{prop}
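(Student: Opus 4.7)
The plan is to mirror the proof of \Cref{lgogood} verbatim, with \Cref{prophtboundsbad} (i.e.\ \Cref{badredmainhtbound}) playing the role of \Cref{goodreductionmainhtbound}. Under \Cref{conjhyodokato} and \Cref{conjrelhyodokato}, the ``everywhere potentially good reduction'' hypothesis on the central fiber in \Cref{goodreductionmainhtbound} is removed, so the remainder of Daw and Orr's strategy applies verbatim.

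First I would fix an $E\times CM$- or $E^2$-point $s_0\in Z(\bar{\Q})$; such a point exists because $Z$ is not contained in a proper special subvariety, and either sort of point is dense on any non-special curve in $\mathcal{A}_2$. After passing to a suitable $1$-parameter cover $f:\CX\rightarrow S$ of $(Z,s_0)$ as in \Cref{section:reductionlemmas}, the height bound of \Cref{prophtboundsbad}, applied to each $s\in\Sha_{ZP\text{-}\spli,N}(Z)$ and using the defining condition $|\Sigma_{K,\ssing}(s,0)|\leq N$, gives
\[
h(s)\leq c_1'\cdot\bigl(N\cdot[K(s):\Q]\bigr)^{c_2'},
\]
for positive constants $c_1'=c_1'(Z,s_0)$ and $c_2'=c_2'(Z,s_0)$.

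Next, I would feed this height bound into the Galois-orbit machinery of Daw and Orr. Concretely, one combines it with the isogeny estimates of Masser-W\"ustholz from \cite{masserwu,mwendoesti}, which control the minimal degree of an isogeny witnessing the splitting $\CX_s\sim E_s\times_{\bar{\Q}} E_s'$ polynomially in $h(s)$ and $[K(s):\Q]$. The Galois-equivariance of these isogenies, up to bounded index, then forces many distinct Galois conjugates of $s$. The combinatorial bookkeeping is identical to that of Proposition $9.2$ of \cite{daworr} in the $E\times CM$-case, and to that of Theorem $6.5$ of \cite{daworr2} in the $E^2$-case, with their height bounds replaced by the one above. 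Absorbing the dependence on the fixed choice of $s_0$ into the dependence on $Z$, this yields the desired lower bound $|\gal(\bar{\Q}/\Q)\cdot s|\geq c_1\cdot \Delta(V_s)^{c_2}$.

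There are no obstacles specific to this argument beyond the validity of the two input conjectures. Once \Cref{conjhyodokato} and \Cref{conjrelhyodokato} are granted, the proof is a direct translation of the good-reduction analogue \Cref{lgogood}, and all the constants track through in exactly the same way; the ``hard part'' has been outsourced to the construction of the $R_{s,\bad}$ of \Cref{excmbad} and \Cref{e2relbad}, which feeds into \Cref{prophtboundsbad}.
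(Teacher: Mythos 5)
The overall architecture of your proposal matches the paper's: decompose the relevant set, apply \Cref{prophtboundsbad} (the conditional bad-reduction height bound) in place of \Cref{goodreductionmainhtbound}, and then invoke the Daw--Orr large Galois orbit machinery via Masser--W\"ustholz exactly as in \Cref{lgogood}. That part of your argument is sound and is indeed what the paper does.

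However, there is a genuine gap in your justification for the existence of the base point $s_0$. You assert that an $E\times CM$- or $E^2$-point exists on $Z$ because ``either sort of point is dense on any non-special curve in $\mathcal{A}_2$.'' This is false, and in fact contradicts the phenomenon being studied: $E^2$- and $E\times CM$-points are precisely the \emph{unlikely} intersections of a curve in $\mathcal{A}_2$ with a family of special \emph{curves} (codimension $2$ in the ambient $3$-fold), so $1+1-3<0$, and the Zilber--Pink conjecture --- of which \Cref{badredmainzpapp} is a conditional instance --- predicts that a Hodge generic curve $Z$ contains only \emph{finitely} many such points. They are certainly not dense on $Z$, and there is no a priori guarantee that even one exists. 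You may be conflating this with the density of the non-simple locus (which is $2$-dimensional in $\mathcal{A}_2$), but a generic point of that locus is neither an $E^2$- nor an $E\times CM$-point. The paper handles this correctly by observing that if $\Sha_{ZP\text{-}\spli,N}(Z)$ (or one of its two pieces $\Sha_{E\times CM,N}(Z)$, $\Sha_{E^2,N}(Z)$) is empty the assertion is vacuous, and otherwise one simply takes $s_0$ to be a point of the nonempty piece; the existence of $s_0$ is not deduced from non-specialness of $Z$ but is built into the non-triviality of the set being estimated. Your density claim should therefore be replaced by this trichotomy; once that is done, the rest of your argument goes through as you describe.
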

\begin{proof}Let us write $K$ for a number field of definition of the curve $Z$.
	
	We may write $\Sha_{ZP\text{-}\spli,N}(Z)=\Sha_{E\times CM,N}(Z)  \sqcup \Sha_{E^2,N}(Z)$ where \begin{center}
		$\Sha_{*,N}(Z):=\{s\in Z(\C):s=*\text{-point, and }|\Sigma_{K,\ssing}(s,0)|\leq N \}$,
	\end{center}for $*\in\{E\times CM,E^2\}$. 
	
	If both these subsets were empty the result follows trivially. Similarly, if one of these subsets was empty we may ignore it. From now on assume that at least one of the $\Sha_{*,N}(Z)$ is nonempty and let $s_0\in S(\bar{\Q})$ be a point in this set. This allows us to use \Cref{prophtboundsbad} for an appropriate cover of the pair $(f:\CX\rightarrow Z,s_0)$ as the ones constructed in \Cref{section:htboundsred}. The proof now follows from the same references as in the proof of \Cref{lgogood}.
\end{proof}

\begin{proof}[Proof of \Cref{badredmainzpapp}]
	Again this follows from previous work of C. Daw and M. Orr. See the proof of \Cref{goodreductionmainzpapp} for references.
\end{proof}
	
\appendix	
\section{Mathematica code}\label{section:appendixcode}

In this appendix we include the Mathematica code used to compute the polynomials that are described in \Cref{propordinaryexcm}, \Cref{propordinarye2}, \Cref{propsupersingular} \Cref{archrelexcm}, and \Cref{e2relbad} and establish their ``non-triviality''. The code is broken into several smaller pieces. This is partly due to the computational complexity that was required, especially for the computations needed for the polynomial that appears in \Cref{e2relbad}.

\subsection{The setup}\label{section:appsetup}

These first two codes form the basis of our exposition here. Their output is recalled when needed in the subsequent codes.

\subsubsection{Computing the polynomials}\label{section:appcomp}

The first code computes the most computationally intense polynomials needed in the main text. The output of the code is stored in separate files that are loaded in the subsequent steps.\\

\begin{doublespace}
	\noindent\({\text{ClearAll}[\text{{``}Global$\grave{ }$*{''}}]}\\
	{\text{(*}\text{Define submatrices for {``}de Rham isogenies{''}} M \text{and} N.\text{*)}}\\
	{\text{As}=\{\{\text{d11},\text{d12}\},\{\text{d21},\text{d22}\}\};\text{Bs}=\{\{\text{f11},\text{f12}\},\{\text{f21},\text{f22}\}\};\text{Cs}=\{\{\text{e11},\text{e12}\},\{\text{e21},\text{e22}\}\};}\\
	{\text{A0}=\{\{1,0\},\{0,1\}\};\text{B0}=\{\{\text{b11},\text{b12}\},\{\text{b21},\text{b22}\}\};\text{C0}=\{\{\text{c11},\text{c12}\},\{\text{c21},\text{c22}\}\};}\\
	{\text{(*Define symbolic constants*)}}\\
	{\text{a0}=\text{a0};\text{b0}=\text{b0};\text{c0}=\text{c0};\text{aS}=\text{aS};\text{bS}=\text{bS};\text{cS}=\text{cS};\text{d1}=\text{d1};}\\
	{\text{(*Define the symbolic matrix Y*)}}\\
	{Y=\{\{\text{X11},\text{X12},\text{X13},\text{X14}\},\{\text{X21},\text{X22},\text{X23},\text{X24}\},\{\text{X31},\text{X32},\text{X33},\text{X34}\},\{\text{X41},}\\{\text{X42},\text{X43},\text{X44}\}\};}\\
	{\text{(*Define the block matrices M and N*)}}\\
	{M=\text{ArrayFlatten}[\{\{\text{As},\text{ConstantArray}[0,\{2,2\}]\},\{\text{Bs},\text{Cs}\}\}];}\\
	{\text{NMatrix}=\text{ArrayFlatten}[\{\{\text{A0},\text{ConstantArray}[0,\{2,2\}]\},\{\text{B0},\text{C0}\}\}];}\\
	{\text{(*}\text{Compute the intermediate product } H=M*Y, \text{ and then }\tilde{F}\text{ *)}}\\
	{H=\text{Simplify}[M.Y];}\\
	{\text{Ftilde}=\text{Simplify}[H.\text{NMatrix}];}\\
	{\text{(*}\text{Define } \Phi_s, \Phi_0, \text{ and } J_{2,3}\text{ *)}}\\
	{\text{PhiS}=\{\{1,0,0,0\},\{0,1,0,0\},\{0,0,\text{aS},0\},\{0,0,\text{bS},\text{cS}\}\};}\\
	{\text{Phi0}=\{\{1,0,0,0\},\{0,1,0,0\},\{0,0,\text{a0},0\},\{0,0,\text{b0},\text{c0}\}\};}\\
	{J=\{\{1,0,0,0\},\{0,0,1,0\},\{0,1,0,0\},\{0,0,0,1\}\};}\\
	{\text{(*}\text{Define the matrix denoted } F_{i,j} \text{ in } \text{the main text}\text{*)}}\\
	{\text{Pmatrix}=\text{Simplify}[J.\text{Ftilde}.J];}\\
	{\text{(*Extract the elements of Pmatrix*)}}\\
	{\text{P11}=\text{Pmatrix}[[1,1]];\text{P12}=\text{Pmatrix}[[1,2]];\text{P13}=\text{Pmatrix}[[1,3]];\text{P14}=\text{Pmatrix}[[1,4]];}\\
	{\text{P21}=\text{Pmatrix}[[2,1]];\text{P22}=\text{Pmatrix}[[2,2]];\text{P23}=\text{Pmatrix}[[2,3]];\text{P24}=\text{Pmatrix}[[2,4]];}\\
	{\text{P31}=\text{Pmatrix}[[3,1]];\text{P32}=\text{Pmatrix}[[3,2]];\text{P33}=\text{Pmatrix}[[3,3]];\text{P34}=\text{Pmatrix}[[3,4]];}\\
	{\text{P41}=\text{Pmatrix}[[4,1]];\text{P42}=\text{Pmatrix}[[4,2]];\text{P43}=\text{Pmatrix}[[4,3]];\text{P44}=\text{Pmatrix}[[4,4]];}\\
	{\text{(*Define G and extract its entries*)}}\\
	{G=\text{Simplify}[\text{PhiS}.\text{Pmatrix}.\text{Phi0}];}\\
	{\text{G11}=G[[1,1]];\text{G12}=G[[1,2]];\text{G13}=G[[1,3]];\text{G14}=G[[1,4]];}\\
	{\text{G21}=G[[2,1]];\text{G22}=G[[2,2]];\text{G23}=G[[2,3]];\text{G24}=G[[2,4]];}\\
	{\text{G31}=G[[3,1]];\text{G32}=G[[3,2]];\text{G33}=G[[3,3]];\text{G34}=G[[3,4]];}\\
	{\text{G41}=G[[4,1]];\text{G42}=G[[4,2]];\text{G43}=G[[4,3]];\text{G44}=G[[4,4]];}\\
	{\text{(*}\text{Define the ``permuted'' matrix } \tilde{G} \text{ *)}}\\
	{\text{Gtilde}=\{\{\text{G11},\text{G12},\text{G21},\text{G22}\},\{\text{G13},\text{G14},\text{G23},\text{G24}\},\{\text{G33},\text{G34},\text{G43},\text{G44}\},}\\{\{\text{G31},\text{G32},\text{G41},\text{G42}\}\};}\\
	{\text{(*Compute the crucial polynomials*)}}\\
	{\text{detGtilde}=\text{Det}[\text{Gtilde}];}{\text{ Qe2excm}=\text{G41}*\text{G44}-\text{G42}*\text{G43};}\\
	{\text{Ra}=\text{P11}*\text{P22}-\text{P12}*\text{P21}-\text{d1}*(-\text{X31}*\text{X13}-\text{X41}*\text{X23}+\text{X11}*\text{X33}+\text{X21}*\text{X43});}\\
	{\text{Rexcme2}=\text{G32}*\text{G44}-\text{G42}*\text{G34};}\\
	{\text{Qe2e2}=(\text{G32}*\text{G24}-\text{G14}*\text{G42})*(\text{G11}*\text{G23}-\text{G13}*\text{G24})}\\{-(\text{G12}*\text{G24}-\text{G14}*\text{G22})*(\text{G31}*\text{G23}-\text{G13}*\text{G41});}\\
	{\text{Rsupsing}=\text{d1}*(\text{P11}*\text{P22}-\text{P21}*\text{P12})*(\text{P33}*\text{P44}-\text{P34}*\text{P43})}\\
	{-(\text{P13}*\text{P24}-\text{P23}*\text{P14})(\text{P31}*\text{P42}-\text{P31}*\text{P41});}\\
	{\text{(*}\text{Expand the polynomials and save them in files}.\text{*)}}\\
	{\text{expdetGtilde}=\text{Expand}[\text{detGtilde}];\text{expRa}=\text{Expand}[\text{Ra}];\text{expRss}=\text{Expand}[\text{Rsupsing}];}\\
	{\text{expQe2excm}=\text{Expand}[\text{Qe2excm}];\text{expQe2e2}=\text{Expand}[\text{Qe2e2}];} \\ {\text{expRexcme2}=\text{Expand}[\text{Rexcme2}];}\\
	{\text{DumpSave}[\text{{``}detgtilde.mx{''}},\text{expdetGtilde}];\text{DumpSave}[\text{{``}archrelations.mx{''}},\text{expRa}];}\\
	{\text{DumpSave}[\text{{``}ordinaryexcmcenter.mx{''}},\text{expRexcme2}];}\\ {\text{DumpSave}[\text{{``}ordinarye2e2.mx{''}},\text{expQe2e2}];}\\
	{\text{DumpSave}[\text{{``}ordinarye2xcm.mx{''}},\text{expQe2excm}];\text{DumpSave}[\text{{``}supersingular.mx{''}},\text{expRss}];}\\
	{\text{(*Output a confirmation message*)}}\\
	{\text{Print}[\text{{``}Quantities saved to specified files.{''}}];}\)
\end{doublespace}
\subsubsection{Gr\"obner basis computation}\label{section:grobnerapp}
The second code computes a Gr\"obner basis for the ideal $I(\SP_4)$. The basis is stored in a separate file and recalled in the subsequent steps.

\begin{doublespace}
\noindent\({\text{(*Define the variables*)}}\\
{\text{vars}=\{\text{X11},\text{X12},\text{X13},\text{X14},\text{X21},\text{X22},\text{X23},\text{X24},\text{X31},\text{X32},\text{X33},\text{X34},\text{X41},\text{X42},\text{X43},\text{X44}\};}\\
{\text{(*Define the generators of the ideal*)}}\\
{\text{f1}=-\text{X31} \text{X12}-\text{X41} \text{X22}+\text{X11} \text{X32}+\text{X21} \text{X42};}\\
{\text{f2}=-\text{X31} \text{X13}-\text{X41} \text{X23}+\text{X11} \text{X33}+\text{X21} \text{X43}-1;}\\
{\text{f3}=-\text{X31} \text{X14}-\text{X41} \text{X24}+\text{X11} \text{X34}+\text{X21} \text{X44};}\\
{\text{f4}=-\text{X32} \text{X13}-\text{X42} \text{X23}+\text{X12} \text{X33}+\text{X22} \text{X43};}\\
{\text{f5}=-\text{X32} \text{X14}-\text{X42} \text{X24}+\text{X12} \text{X34}+\text{X22} \text{X44}-1;}\\
{\text{f6}=-\text{X33} \text{X14}-\text{X43} \text{X24}+\text{X13} \text{X34}+\text{X23} \text{X44};}\\
{\text{(*Compute and store the Gr{\" o}bner basis of the ideal*)}}\\
{\text{groebnerBasis}=\text{GroebnerBasis}[\{\text{f1},\text{f2},\text{f3},\text{f4},\text{f5},\text{f6}\},\text{vars}];}\\
{\text{(*Store the Gr{\" o}bner basis in a file for later use*)}}\\
{\text{DumpSave}[\text{{``}groebnerbasis.mx{''}},\text{groebnerBasis}];}\\
{\text{(*Output a confirmation message*)}}\\
{\text{Print}[\text{{``}The Gr{\" o}bner basis has been saved to groebnerbasis.mx{''}}];}\)
\end{doublespace}

\subsection{$E^2$-points and bad reduction}\label{section:appsub2}
The first code we present deals with the ``non-archimedean relation'' at $E^2$-points at places of bad reduction described in \Cref{e2relbad}. This is the most computationally intense code that we needed. We give a brief description of the code. The structure of the codes for the rest of the polynomials we deal with is identical to this one.

The code starts by recalling $\text{expDetGtilde}$ from the code in \Cref{section:appcomp} as well as the Gr\"obner basis computed in \Cref{section:grobnerapp}. It then computes the remainder of the division of the polynomial $R_{s,\bad}$, denoted by $\text{DetGtilde}$ in the code in \Cref{section:appcomp}, defined in \Cref{e2relbad} by this basis. The remainder is stored in a separate file for future use.

In the next part of the code, the program outputs a list of each monomial that appears in the aforementioned remainder as well as its coefficient. The last part of the code factorizes these coefficients. This makes the ``non-triviality'' of $R_{s,\bad}$ much easier to check. ``Chunks'' are defined to lessen the computational load.\\

\begin{doublespace}
\noindent\({\text{ClearAll}[\text{{``}Global$\grave{ }$*{''}}];}\\
{\text{(*Load the output from the first two codes*)}}\\
{\text{Get}[\text{{``}detgtilde.mx{''}}];\text{Get}[\text{{``}groebnerbasis.mx{''}}];}\\
{\text{(*Define vars to include only polynomial variables*)}}\\
{\text{vars}=\{\text{X11},\text{X12},\text{X13},\text{X14},\text{X21},\text{X22},\text{X23},\text{X24},\text{X31},\text{X32},\text{X33},\text{X34},\text{X41},\text{X42},\text{X43},\text{X44}\};}\\
{\text{(*Ensure constants are treated as symbolic coefficients*)}}\\
{\text{SetAttributes}[\{\text{d11},\text{d12},\text{d21},\text{d22},\text{f11},\text{f12},\text{f21},\text{f22},\text{b11},\text{b12},\text{b21},\text{b22},}\\{\text{c11},\text{c12},\text{c21},\text{c22},\text{e11},\text{e12},\text{e21},\text{e22},\text{a0},\text{b0},\text{c0},\text{aS},\text{bS},\text{cS},\text{d1}\},\text{Constant}];}\\
{\text{(*Compute the remainder with respect to the Gr{\" o}bner basis*)}}\\
{\text{reddetGtilde}=\text{PolynomialReduce}[\text{expdetGtilde},\text{groebnerBasis},\text{vars}];}\\
{\text{remdetGtilde}=\text{Last}[\text{reddetGtilde}];}\\
{\text{(*Extract coefficients and monomials of the remainder*)}}\\{\text{pairsdetGtilde}=\text{CoefficientRules}[\text{remdetGtilde},\text{vars}];}\\
{\text{(*}\text{Format the result as a list with two columns: monomials and coefficients}\text{*)}}\\{\text{ListdetGtilde}=\text{Table}[\{\text{Times}\text{@@}(\text{vars}{}^{\wedge}\text{rule}[[1]]),\text{rule}[[2]]\},\{\text{rule},\text{pairsdetGtilde}\}];}\\
{\text{(*Define a function to process one chunk*)}}\\
{\text{processChunk}[\text{chunk$\_$}]\text{:=}\text{Table}[\{\text{entry}[[1]],\text{Factor}[\text{entry}[[2]]]\},\{\text{entry},\text{chunk}\}];}\\
{\text{(*Set chunk size*)}}\\
{\text{chunkSize}=100; \text{(*}\text{Adjust based on your system}\text{*)}}\\
{\text{(*Break the list into chunks then process the list*)}}\\
{\text{chunksdetGtilde}=\text{Partition}[\text{ListdetGtilde},\text{chunkSize},\text{chunkSize},1,\{\}];}\\
{\text{finalListdetGtilde}=\text{Flatten}[\text{processChunk}[\#]\&\text{/@}\text{chunksdetGtilde},1];}\\
{\text{(*Save the factored list and output a confirmation message *)}}\\
{\text{DumpSave}[\text{{``}finallistdetGtilde.mx{''}},\{\text{finalListdetGtilde}\}];}\\
{\text{Print}[\text{{``}List saved to specified mx file.{''}}];}\\
{\text{Print}[\text{finalListdetGtilde}];}\)
\end{doublespace}

\subsection{The archimedean relation}\label{section:apparch}
The code here deals with the ``archimedean relation'' constructed in \Cref{archrelexcm}. The code is practically identical, apart from the obvious changes, from the one presented in the previous subsection for the remainder of $R_{s,\bad}$.\\

\begin{doublespace}
	\noindent\({\text{ClearAll}[\text{{``}Global$\grave{ }$*{''}}];}\\
	{\text{(*Load the output from the previous codes*)}}\\
	{\text{Get}[\text{{``}archrelations.mx{''}}] ;\text{Get}[\text{{``}groebnerbasis.mx{''}}];}\\
	{\text{(*Variables and symbolic constants defined as before*)}}\\
	{\text{vars}=\{\text{X11},\text{X12},\text{X13},\text{X14},\text{X21},\text{X22},\text{X23},\text{X24},\text{X31},\text{X32},\text{X33},\text{X34},\text{X41},\text{X42},\text{X43},\text{X44}\};}\\
	{\text{SetAttributes}[\{\text{d11},\text{d12},\text{d21},\text{d22},\text{f11},\text{f12},\text{f21},\text{f22},\text{b11},\text{b12},\text{b21},\text{b22},}\\{\text{c11},\text{c12},\text{c21},\text{c22},\text{e11},\text{e12},\text{e21},\text{e22},\text{a0},\text{b0},\text{c0},\text{aS},\text{bS},\text{cS},\text{d1}\},\text{Constant}];}\\
	{\text{(*Compute remainder and output the list of monomials and coefficients as before*)}}\\
	{\text{redRa}=\text{PolynomialReduce}[\text{expRa},\text{groebnerBasis},\text{vars}];}\\
	{\text{remRa}=\text{Last}[\text{redRa}];}\\
	{\text{moncoeffRa}=\text{CoefficientRules}[\text{remRa},\text{vars}];}\\
	{\text{ListRa}=\text{Table}[\{\text{Times}\text{@@}(\text{vars}{}^{\wedge}\text{rule}[[1]]),\text{rule}[[2]]\},\{\text{rule},\text{moncoeffRa}\}];}\\
	{\text{processChunk}[\text{chunk$\_$}]\text{:=}\text{Table}[\{\text{entry}[[1]],\text{Factor}[\text{entry}[[2]]]\},\{\text{entry},\text{chunk}\}];}\\
	{\text{chunkSize}=100; \text{(*}\text{Adjust based on your system}\text{*)}}\\
	{\text{chunksRa}=\text{Partition}[\text{ListRa},\text{chunkSize},\text{chunkSize},1,\{\}];}\\
	{\text{finalListRa}=\text{Flatten}[\text{processChunk}[\#]\&\text{/@}\text{chunksRa},1];}\\
	{\text{(*Save the list.*)}}\\
	{\text{DumpSave}[\text{{``}finallistarchimedean.mx{''}},\{\text{finalListRa}\}];}\\
	{\text{Print}[\text{finalListRa}];}\)
\end{doublespace}

\subsection{Relations at ordinary primes}\label{section:appordinary}

Here we record the codes for the polynomials denoted by $R_{s,\simord}$ in the main text. In practice there are three different cases that appear here and we treat each of these individually. 

\subsubsection{The polynomial $R{excme2}$}\label{section:apprexcme2}
We start with the polynomial $R_{s,\simord}$ that we constructed in the proof of \Cref{propordinaryexcm} when $s$ is an $E^2$-point of our curve.

We note here the restriction, ``$a_0=c_0=1$, $b_0=0$'' which follows by construction of the polynomial in this case.\\

\begin{doublespace}
	\noindent\({\text{ClearAll}[\text{{``}Global$\grave{ }$*{''}}];}\\
	{\text{(*Load the output from the first two codes*)}}\\
	{\text{Get}[\text{{``}ordinaryexcmcenter.mx{''}}];\text{Get}[\text{{``}groebnerbasis.mx{''}}];}\\
	{\text{(*Treat variables and symbolic constants as before.*)}}\\
	{\text{vars}=\{\text{X11},\text{X12},\text{X13},\text{X14},\text{X21},\text{X22},\text{X23},\text{X24},\text{X31},\text{X32},\text{X33},\text{X34},\text{X41},\text{X42},\text{X43},\text{X44}\};}\\
	{\text{SetAttributes}[\{\text{d11},\text{d12},\text{d21},\text{d22},\text{f11},\text{f12},\text{f21},\text{f22},\text{b11},\text{b12},\text{b21},\text{b22},}\\{\text{c11},\text{c12},\text{c21},\text{c22},\text{e11},\text{e12},\text{e21},\text{e22},\text{a0},\text{b0},\text{c0},\text{aS},\text{bS},\text{cS},\text{d1}\},\text{Constant}];}\\
	{\text{(*}\text{From the construction we have the restriction:\text{*)}}}\\
	{\text{a0}=1;\text{c0}=1;\text{b0}=0;}\\
	{\text{(*Compute the remainder and output the monomial-coefficient list*)}}\\
	{\text{redRexcme2}=\text{PolynomialReduce}[\text{expRexcme2},\text{groebnerBasis},\text{vars}];}\\
	{\text{remRexcme2}=\text{Last}[\text{redRexcme2}];}\\
	{\text{pairsRexcme2}=\text{CoefficientRules}[\text{remRexcme2},\text{vars}];}\\
	{\text{ListRexcme2}=\text{Table}[\{\text{Times}\text{@@}(\text{vars}{}^{\wedge}\text{rule}[[1]]),\text{rule}[[2]]\},\{\text{rule},\text{pairsRexcme2}\}];}\\
	{\text{processChunk}[\text{chunk$\_$}]\text{:=}\text{Table}[\{\text{entry}[[1]],\text{Factor}[\text{entry}[[2]]]\},\{\text{entry},\text{chunk}\}];}\\
	{\text{chunkSize}=100; \text{(*}\text{Adjust based on your system}\text{*)}}\\
	{\text{chunksRexcme2}=\text{Partition}[\text{ListRexcme2},\text{chunkSize},\text{chunkSize},1,\{\}];}\\
	{\text{finalListRexcme2}=\text{Flatten}[\text{processChunk}[\#]\&\text{/@}\text{chunksRexcme2},1];}\\
	{\text{(*Save the list*)}}\\
	{\text{DumpSave}[\text{{``}finallistRexcme2.mx{''}},\{\text{finalListRexcme2}\}];}\\
	{\text{Print}[\text{finalListRexcme2}];}\)
\end{doublespace}

\subsubsection{The polynomial $Q{e2e2}$}\label{section:appqe2e2}
The code here deals with the polynomial constructed in \Cref{propordinarye2} and the case where $s$ is an $E^2$-point. The structure of the code is practically identical to the previous ones.\\

\begin{doublespace}
	\noindent\({\text{ClearAll}[\text{{``}Global$\grave{ }$*{''}}];}\\
	{\text{(*Load the output from the first two codes*)}}\\
	{\text{Get}[\text{{``}ordinarye2e2.mx{''}}];\text{Get}[\text{{``}groebnerbasis.mx{''}}];}\\
	{\text{(*Variables and symbolic constants treated as before*)}}\\
	{\text{vars}=\{\text{X11},\text{X12},\text{X13},\text{X14},\text{X21},\text{X22},\text{X23},\text{X24},\text{X31},\text{X32},\text{X33},\text{X34},\text{X41},\text{X42},\text{X43},\text{X44}\};}\\
	{\text{SetAttributes}[\{\text{d11},\text{d12},\text{d21},\text{d22},\text{f11},\text{f12},\text{f21},\text{f22},\text{b11},\text{b12},\text{b21},\text{b22},}\\{\text{c11},\text{c12},\text{c21},\text{c22},\text{e11},\text{e12},\text{e21},\text{e22},\text{a0},\text{b0},\text{c0},\text{aS},\text{bS},\text{cS},\text{d1}\},\text{Constant}];}\\
	{\text{(*Compute remainder and save its monomial-coefficient list*)}}\\
	{\text{redQe2e2}=\text{PolynomialReduce}[\text{expQe2e2},\text{groebnerBasis},\text{vars}];}\\
	{\text{remQe2e2}=\text{Last}[\text{redQe2e2}];}\\
	{\text{pairsQe2e2}=\text{CoefficientRules}[\text{remQe2e2},\text{vars}];}\\
	{\text{ListQe2e2}=\text{Table}[\{\text{Times}\text{@@}(\text{vars}{}^{\wedge}\text{rule}[[1]]),\text{rule}[[2]]\},\{\text{rule},\text{pairsQe2e2}\}];}\\
	{\text{processChunk}[\text{chunk$\_$}]\text{:=}\text{Table}[\{\text{entry}[[1]],\text{Factor}[\text{entry}[[2]]]\},\{\text{entry},\text{chunk}\}];}\\
	{\text{chunkSize}=100; \text{(*Adjust based on your system*)}}\\
	{\text{chunksQe2e2}=\text{Partition}[\text{ListQe2e2},\text{chunkSize},\text{chunkSize},1,\{\}];}\\
	{\text{finalListQe2e2}=\text{Flatten}[\text{processChunk}[\#]\&\text{/@}\text{chunksQe2e2},1];}\\
	{\text{(*Save the list*)}}\\
	{\text{DumpSave}[\text{{``}finallistQe2e2.mx{''}},\{\text{finalListQe2e2}\}];}\\
	{\text{Print}[\text{finalListQe2e2}];}\)
\end{doublespace}

\subsubsection{The polynomial $Q{e2excm}$}\label{section:appqe2excm}
The code here deals with the polynomial constructed in \Cref{propordinarye2} and the case where $s$ is an $E\times CM$-point. Again, the structure of this code is the same as that of the previous ones.

We note here the restriction, ``$a_s=c_s=1$, $b_s=0$'' which, once again, comes from the construction of our polynomial.\\
\begin{doublespace}
	\noindent\({\text{ClearAll}[\text{{``}Global$\grave{ }$*{''}}];}\\
	{\text{(*Load the output from the first two codes*)}}\\
	{\text{Get}[\text{{``}ordinarye2xcm.mx{''}}];\text{Get}[\text{{``}groebnerbasis.mx{''}}];}\\
	{\text{(*Variables and symbolic constants treated as per usual*)}}\\
	{\text{vars}=\{\text{X11},\text{X12},\text{X13},\text{X14},\text{X21},\text{X22},\text{X23},\text{X24},\text{X31},\text{X32},\text{X33},\text{X34},\text{X41},\text{X42},\text{X43},\text{X44}\};}\\
	{\text{(*Ensure constants are treated as symbolic coefficients*)}}\\
	{\text{SetAttributes}[\{\text{d11},\text{d12},\text{d21},\text{d22},\text{f11},\text{f12},\text{f21},\text{f22},\text{b11},\text{b12},\text{b21},\text{b22},}\\{\text{c11},\text{c12},\text{c21},\text{c22},\text{e11},\text{e12},\text{e21},\text{e22},\text{a0},\text{b0},\text{c0},\text{aS},\text{bS},\text{cS},\text{d1}\},\text{Constant}];}\\
	{\text{(*}\text{By construction here we have the restrictions:\text{*)}}}\\
	{\text{aS}=1;\text{cS}=1;\text{bS}=0;}\\
	{\text{(*Compute the remainder and its monomial-coefficient list*)}}\\
	{\text{redQe2excm}=\text{PolynomialReduce}[\text{expQe2excm},\text{groebnerBasis},\text{vars}];}\\
	{\text{remQe2excm}=\text{Last}[\text{redQe2excm}];}\\
	{\text{pairsQe2excm}=\text{CoefficientRules}[\text{remQe2excm},\text{vars}];}\\
	{\text{ListQe2excm}=\text{Table}[\{\text{Times}\text{@@}(\text{vars}{}^{\wedge}\text{rule}[[1]]),\text{rule}[[2]]\},\{\text{rule},\text{pairsQe2excm}\}];}\\
	{\text{processChunk}[\text{chunk$\_$}]\text{:=}\text{Table}[\{\text{entry}[[1]],\text{Factor}[\text{entry}[[2]]]\},\{\text{entry},\text{chunk}\}];}\\
	{\text{chunkSize}=100; \text{(*Adjust based on your system*)}}\\
	{\text{chunksQe2excm}=\text{Partition}[\text{ListQe2excm},\text{chunkSize},\text{chunkSize},1,\{\}];}\\
	{\text{finalListQe2excm}=\text{Flatten}[\text{processChunk}[\#]\&\text{/@}\text{chunksQe2excm},1];}\\
	{\text{(*Save the list*)}}\\
	{\text{DumpSave}[\text{{``}finallistQe2excm.mx{''}},\{\text{finalListQe2excm}\}];}\\
	{\text{Print}[\text{finalListQe2excm}];}\)
\end{doublespace}

\subsection{The polynomial $R{supsing}$}\label{section:apprsupsing}
The final code here deals with the polynomial constructed in \Cref{propsupersingular}.\\

\begin{doublespace}
	\noindent\({\text{ClearAll}[\text{{``}Global$\grave{ }$*{''}}];}\\
	{\text{(*Load the output from the first two codes*)}}\\
	{\text{Get}[\text{{``}supersingular.mx{''}}];\text{Get}[\text{{``}groebnerbasis.mx{''}}];}\\
	{\text{(*Variables and symbolic constants*)}}\\
	{\text{vars}=\{\text{X11},\text{X12},\text{X13},\text{X14},\text{X21},\text{X22},\text{X23},\text{X24},\text{X31},\text{X32},\text{X33},\text{X34},\text{X41},\text{X42},\text{X43},\text{X44}\};}\\
	{\text{SetAttributes}[\{\text{d11},\text{d12},\text{d21},\text{d22},\text{f11},\text{f12},\text{f21},\text{f22},\text{b11},\text{b12},\text{b21},\text{b22},}\\{\text{c11},\text{c12},\text{c21},\text{c22},\text{e11},\text{e12},\text{e21},\text{e22},\text{a0},\text{b0},\text{c0},\text{aS},\text{bS},\text{cS},\text{d1}\},\text{Constant}];}\\
	{\text{(*Compute remainder and its monomial-coefficient list*)}}\\
	{\text{redRss}=\text{PolynomialReduce}[\text{expRss},\text{groebnerBasis},\text{vars}];}\\
	{\text{remRss}=\text{Last}[\text{redRss}];}\\
	{\text{pairsRss}=\text{CoefficientRules}[\text{remRss},\text{vars}];}\\
	{\text{ListRss}=\text{Table}[\{\text{Times}\text{@@}(\text{vars}{}^{\wedge}\text{rule}[[1]]),\text{rule}[[2]]\},\{\text{rule},\text{pairsRss}\}];}\\
	{\text{processChunk}[\text{chunk$\_$}]\text{:=}\text{Table}[\{\text{entry}[[1]],\text{Factor}[\text{entry}[[2]]]\},\{\text{entry},\text{chunk}\}];}\\
	{\text{chunkSize}=100; \text{(*Adjust based on your system*)}}\\
	{\text{chunksRss}=\text{Partition}[\text{ListRss},\text{chunkSize},\text{chunkSize},1,\{\}];}\\
	{\text{finalListRss}=\text{Flatten}[\text{processChunk}[\#]\&\text{/@}\text{chunksRss},1];}\\
	{\text{(*Save the list*)}}\\
	{\text{DumpSave}[\text{{``}finallistRss.mx{''}},\{\text{finalListRss}\}];}\\
	{\text{Print}[\text{finalListRss}];}\)
\end{doublespace}

\bibliographystyle{alpha}\bibliography{biblio}

\begin{thebibliography}{BBM82}

\bibitem[And89]{andre1989g}
Y.~Andr\'{e}.
\newblock {\em {$G$}-functions and geometry}.
\newblock Aspects of Mathematics, E13. Friedr. Vieweg \& Sohn, Braunschweig,
  1989.

\bibitem[And95]{andremots}
Y.~Andr{\'e}.
\newblock Th{\'e}orie des motifs et interpr{\'e}tation g{\'e}om{\'e}trique des
  valeurs p-adiques de {G}-functions (une introduction).
\newblock pages 37--60. 1995.

\bibitem[Ayo15]{ayoub}
J.~Ayoub.
\newblock Une version relative de la conjecture des p\'eriodes de
  {K}ontsevich-{Z}agier.
\newblock {\em Ann. of Math. (2)}, 181(3):905--992, 2015.

\bibitem[BBM82]{berthbreenmessing}
P.~Berthelot, L.~Breen, and W.~Messing.
\newblock {\em Th\'eorie de {D}ieudonn\'e{} cristalline. {II}}, volume 930 of
  {\em Lecture Notes in Mathematics}.
\newblock Springer-Verlag, Berlin, 1982.

\bibitem[Beu93]{beukers}
F.~Beukers.
\newblock Algebraic values of {$G$}-functions.
\newblock {\em J. Reine Angew. Math.}, 434:45--65, 1993.

\bibitem[BO83]{bertogus}
P.~Berthelot and A.~Ogus.
\newblock {$F$}-isocrystals and de {R}ham cohomology. {I}.
\newblock {\em Invent. Math.}, 72(2):159--199, 1983.

\bibitem[Bom81]{bombg}
E.~Bombieri.
\newblock On {$G$}-functions.
\newblock In {\em Recent progress in analytic number theory, {V}ol. 2
  ({D}urham, 1979)}, pages 1--67. Academic Press, London-New York, 1981.

\bibitem[BT25]{bakkertsimermanandregroth}
B.~Bakker and J.~Tsimerman.
\newblock Functional transcendence of periods and the geometric
  {A}ndr\'e-{G}rothendieck period conjecture.
\newblock {\em Forum Math. Sigma}, 13:Paper No. e97, 24, 2025.

\bibitem[Cha18]{charlesisog}
F.~Charles.
\newblock Exceptional isogenies between reductions of pairs of elliptic curves.
\newblock {\em Duke Math. J.}, 167(11):2039--2072, 2018.

\bibitem[Del85]{delignegabber}
P.~Deligne.
\newblock Le lemme de {G}abber.
\newblock {\em Ast{\'e}risque}, 127(5):131--150, 1985.

\bibitem[Dem72]{demazure}
M.~Demazure.
\newblock {\em Lectures on {$p$}-divisible groups}, volume Vol. 302 of {\em
  Lecture Notes in Mathematics}.
\newblock Springer-Verlag, Berlin-New York, 1972.

\bibitem[DO21a]{daworr2}
C.~Daw and M.~Orr.
\newblock {Quantitative Reduction Theory and Unlikely Intersections}.
\newblock {\em International Mathematics Research Notices}, 07 2021.

\bibitem[DO21b]{daworr}
C.~Daw and M.~Orr.
\newblock Unlikely intersections with {$E\times{CM}$} curves in
  {$\mathcal{A}_2$}.
\newblock {\em Ann. Sc. Norm. Super. Pisa Cl. Sci. (5)}, 22(4):1705--1745,
  2021.

\bibitem[DO22]{daworr4}
C.~Daw and M.~Orr.
\newblock Zilber-{P}ink in a product of modular curves assuming multiplicative
  degeneration.
\newblock {\em arXiv preprint arXiv:2208.06338}, 2022.

\bibitem[DO23a]{daworr5}
C.~Daw and M.~Orr.
\newblock The large {G}alois orbits conjecture under multiplicative
  degeneration.
\newblock {\em arXiv preprint arXiv:2306.13463}, 2023.

\bibitem[DO23b]{daworr3}
C.~Daw and M.~Orr.
\newblock Lattices with skew-{H}ermitian forms over division algebras and
  unlikely intersections.
\newblock {\em J. \'Ec. polytech. Math.}, 10:1097--1156, 2023.

\bibitem[DOP25]{daworrpap}
C.~Daw, M.~Orr, and G.~Papas.
\newblock Some new cases of {Z}ilber-{P}ink in ${Y}(1)^3$, 2025.

\bibitem[DR18]{dawren}
C.~Daw and J.~Ren.
\newblock Applications of the hyperbolic {A}x-{S}chanuel conjecture.
\newblock {\em Compositio Mathematica}, 154(9):1843--1888, 2018.

\bibitem[Elk89]{elkies}
N.~D. Elkies.
\newblock Supersingular primes for elliptic curves over real number fields.
\newblock {\em Compositio Math.}, 72(2):165--172, 1989.

\bibitem[Har77]{hhorne}
R.~Hartshorne.
\newblock {\em Algebraic geometry}.
\newblock Springer-Verlag, New York-Heidelberg, 1977.
\newblock Graduate Texts in Mathematics, No. 52.

\bibitem[HK94]{hyodokato}
O.~Hyodo and K.~Kato.
\newblock Semi-stable reduction and crystalline cohomology with logarithmic
  poles.
\newblock Number 223, pages 221--268. 1994.
\newblock P\'eriodes $p$-adiques (Bures-sur-Yvette, 1988).

\bibitem[Lan87]{langellfuns}
S.~Lang.
\newblock {\em Elliptic functions}, volume 112 of {\em Graduate Texts in
  Mathematics}.
\newblock Springer-Verlag, New York, second edition, 1987.
\newblock With an appendix by J. Tate.

\bibitem[MW93]{masserwu}
D.~Masser and G.~W\"{u}stholz.
\newblock Isogeny estimates for abelian varieties, and finiteness theorems.
\newblock {\em Ann. of Math. (2)}, 137(3):459--472, 1993.

\bibitem[MW94]{mwendoesti}
D.~W. Masser and G.~W\"{u}stholz.
\newblock Endomorphism estimates for abelian varieties.
\newblock {\em Math. Z.}, 215(4):641--653, 1994.

\bibitem[Ogu84]{ogus}
A.~Ogus.
\newblock {$F$}-isocrystals and de {R}ham cohomology. {II}. {C}onvergent
  isocrystals.
\newblock {\em Duke Math. J.}, 51(4):765--850, 1984.

\bibitem[Pap22]{papasbigboi}
G.~Papas.
\newblock Unlikely intersections in the {T}orelli locus and the {G}-functions
  method.
\newblock {\em arXiv preprint arXiv:2201.11240}, 2022.

\bibitem[Pap23a]{papaseffbrsieg}
G.~Papas.
\newblock Effective {B}rauer-{S}iegel on some curves in ${Y}(1)^n$.
\newblock {\em arXiv preprint arXiv:2310.04943}, 2023.

\bibitem[Pap23b]{papaszp}
G.~Papas.
\newblock {Some cases of the {Z}ilber–{P}ink {C}onjecture for curves in
  $\mathcal{A}_g$}.
\newblock {\em International Mathematics Research Notices}, page rnad201, 08
  2023.

\bibitem[Pap24]{papaszpy1}
G.~Papas.
\newblock Zilber-pink in ${Y}(1)^n$: Beyond multiplicative degeneration, 2024.

\bibitem[Pap25]{papaspadicgfuns2}
G.~Papas.
\newblock On the v-adic values of {G}-functions 2, 2025.

\bibitem[PZ08]{pilazannier}
J.~Pila and U.~Zannier.
\newblock Rational points in periodic analytic sets and the {M}anin-{M}umford
  conjecture.
\newblock {\em Atti Accad. Naz. Lincei Rend. Lincei Mat. Appl.},
  19(2):149--162, 2008.

\bibitem[Rob00]{robertpadic}
A.~M. Robert.
\newblock {\em A course in {$p$}-adic analysis}, volume 198 of {\em Graduate
  Texts in Mathematics}.
\newblock Springer-Verlag, New York, 2000.

\bibitem[Sie14]{siegel}
C.~L. Siegel.
\newblock \"{U}ber einige {A}nwendungen diophantischer {A}pproximationen
  [reprint of {A}bhandlungen der {P}reu\ss ischen {A}kademie der
  {W}issenschaften. {P}hysikalisch-mathematische {K}lasse 1929, {N}r. 1].
\newblock In {\em On some applications of {D}iophantine approximations},
  volume~2 of {\em Quad./Monogr.}, pages 81--138. Ed. Norm., Pisa, 2014.

\bibitem[Sil86]{silvermanell}
J.~H. Silverman.
\newblock {\em The arithmetic of elliptic curves}, volume 106 of {\em Graduate
  Texts in Mathematics}.
\newblock Springer-Verlag, New York, 1986.

\bibitem[Sil92]{silverberg}
A.~Silverberg.
\newblock Fields of definition for homomorphisms of abelian varieties.
\newblock {\em J. Pure Appl. Algebra}, 77(3):253--262, 1992.

\bibitem[ST68]{serretate}
J.-P. Serre and J.~Tate.
\newblock Good reduction of abelian varieties.
\newblock {\em Ann. of Math. (2)}, 88:492--517, 1968.

\bibitem[Tsu99]{tsujipadicetale}
T.~Tsuji.
\newblock {$p$}-adic \'etale cohomology and crystalline cohomology in the
  semi-stable reduction case.
\newblock {\em Invent. Math.}, 137(2):233--411, 1999.

\bibitem[Voi21]{voight}
J.~Voight.
\newblock {\em Quaternion algebras}, volume 288 of {\em Graduate Texts in
  Mathematics}.
\newblock Springer, Cham, [2021] \copyright 2021.

\bibitem[Von20]{vonk}
J.~Vonk.
\newblock Crystalline cohomology of towers of curves.
\newblock {\em International Mathematics Research Notices},
  2020(21):7454--7488, 2020.

\end{thebibliography}
\end{document}